\newcommand{\tomemail}{\href{mailto:tom.bachmann@zoho.com}{tom.bachmann@zoho.com}}
\newtheorem{proposition}{Proposition}
\newtheorem{thmdefn}[proposition]{Theorem-Definition}
\newtheorem{corollary}[proposition]{Corollary}
\newtheorem{lemma}[proposition]{Lemma}
\newtheorem{theorem}[proposition]{Theorem}
\newtheorem*{conjecture*}{Conjecture}
\newtheorem*{theorem*}{Theorem}
\newtheorem*{corollary*}{Corollary}
\newtheorem*{proposition*}{Proposition}
\newtheorem*{lemma*}{Lemma}
\theoremstyle{definition}
\newtheorem{definition}[proposition]{Definition}
\newtheorem*{definition*}{Definition}
\newtheorem*{construction*}{Construction}
\newtheorem{remark}[proposition]{Remark}
\newtheorem*{remark*}{Remark}
\newtheorem{question}[proposition]{Question}
\newtheorem{example}[proposition]{Example}
\newtheorem*{example*}{Example}
\newcommand{\id}{\operatorname{id}}
\newcommand{\Z}{\mathbb{Z}}
\def\C{\mathbb C}
\newcommand{\N}{\mathbb{N}}
\newcommand{\F}{\mathbb{F}}
\let\scr=\mathcal
\let\bb=\mathbb
\newcommand{\Gm}{{\mathbb{G}_m}}
\newcommand{\Gmp}[1]{{\mathbb{G}_m^{\wedge #1}}}
\def\A{\bb A}
\def\R{\bb R}
\newcommand{\1}{\mathbbm{1}}
\newcommand{\SH}{\mathcal{SH}}
\DeclareMathOperator*{\colim}{colim}
\let\lim=\relax
\DeclareMathOperator*{\lim}{lim}
\def\Map{\mathrm{Map}}
\def\map{\mathrm{map}}
\def\CAlg{\mathrm{CAlg}}
\def\Alg{\mathrm{Alg}}
\def\PSh{\mathcal{P}}
\def\Shv{\mathcal{S}\mathrm{hv}}
\def\Ind{\mathrm{Ind}}
\def\Pro{\mathrm{Pro}}
\def\Span{\mathrm{Span}}
\def\Spc{\mathcal{S}\mathrm{pc}{}}
\def\Fin{\cat F\mathrm{in}}
\def\Fun{\mathrm{Fun}}
\newcommand{\wequi}{\simeq}
\newcommand{\Mod}{\text{-}\mathcal{M}\mathrm{od}}
\def\adj{\leftrightarrows}
\DeclareRobustCommand{\ul}{\underline}
\newcommand{\heart}{{c\heartsuit}}
\newcommand{\Hom}{\operatorname{Hom}}
\def\op{\mathrm{op}}
\let\cat=\mathrm
\def\Sm{{\cat{S}\mathrm{m}}}
\newcommand{\et}{{\acute{e}t}}
\def\ph{\mathord-}
\definecolor{violet}{rgb}{0.56, 0.0, 1.0}
\definecolor{lanse}{rgb}{0.02, 0.0, 1.0}
\definecolor{fense}{rgb}{0.9, 0.3, 0.3}
\definecolor{chartreuse}{rgb}{0.5, 1.0, 0.0}
\numberwithin{proposition}{section}
\numberwithin{equation}{subsection}
\newcommand{\MGL}{\mathrm{MGL}}
\newcommand{\BPGL}{\mathrm{BPGL}}
\newcommand{\MU}{\mathrm{MU}}
\newcommand{\BP}{\mathrm{BP}}
\newcommand{\Th}{\mathrm{Th}}
\newcommand{\pure}{\mathrm{pure}}
\newcommand{\tate}{\mathrm{tate}}
\newcommand{\rig}{\mathrm{lisse}}
\newcommand{\Ext}{\mathrm{Ext}}
\newcommand{\Tot}{\mathrm{Tot}}
\newcommand{\comp}{\wedge}
\newcommand{\PM}{\mathrm{PM}}
\newcommand{\cell}{\mathrm{cell}}
\newcommand{\AT}{\mathrm{wcell}}
\newcommand{\CoMod}{\text{-}\mathcal{C}\mathrm{o}\mathcal{M}\mathrm{od}}
\def\imap{\ul{\mathrm{map}}}
\newcommand{\triplearrows}{\begin{smallmatrix} \to \\ \to \\ \to \end{smallmatrix} }
\newcommand{\CB}{\mathrm{CB}^\bullet}
\def\Mack{\cat M\mathrm{ack}}
\newcommand{\Stable}{\mathcal{H}\mathrm{ov}}
\newcommand{\Thick}{\mathrm{Thick}}
\newcommand{\Ab}{\mathrm{Ab}}
\newcommand{\Set}{\mathrm{Set}}
\newcommand{\NB}[1]{}
\newcommand{\NB}[1]{\todo[color=gray!40]{#1}}
\newcommand{\tom}[1]{\todo[color=green!40]{#1}}
\newcommand{\wcell}{\mathrm{wcell}}
\title{The Chow $t$-structure on the $\infty$-category of motivic spectra}
\author{Tom Bachmann}
\address{Mathematisches Institut, LMU Munich, Munich, Germany}
\email{\tomemail}
\author{Hana Jia Kong}
\address{School of Mathematics, Institute for Advanced Study, Princeton, NJ 08540, USA}
\email{hanajk@ias.edu}
\author{Guozhen Wang}
\address{Shanghai Center for Mathematical Sciences, Fudan University, Shanghai, China, 200433}
\email{wangguozhen@fudan.edu.cn}
\author{Zhouli Xu}
\address{Department of Mathematics, UC San Diego, La Jolla, CA 92093, USA}
\email{xuzhouli@ucsd.edu}
\begin{document}

\maketitle

\begin{abstract}
We define the Chow $t$-structure on the $\infty$-category of motivic spectra $\SH(k)$ over an arbitrary base field $k$. We identify the heart of this $t$-structure $\SH(k)^{c\heartsuit}$ when the exponential characteristic of $k$ is inverted. Restricting to the cellular subcategory, we identify the Chow heart $\SH(k)^{\textup{cell}, c\heartsuit}$ as the category of even graded $\MU_{2*}\MU$-comodules. Furthermore, we show that the $\infty$-category of modules over the Chow truncated sphere spectrum $\1_{c=0}$ is algebraic.

Our results generalize the ones in Gheorghe--Wang--Xu \cite{GWX} in three aspects: To integral results; To all base fields other than just $\C$; To the entire $\infty$-category of motivic spectra $\SH(k)$, rather than a subcategory containing only certain cellular objects.

We also discuss a strategy for computing motivic stable homotopy groups of ($p$-completed) spheres over an arbitrary base field $k$ using the Postnikov--Whitehead tower associated to the Chow $t$-structure and the motivic Adams spectral sequences over $k$.
\end{abstract}

\tableofcontents

\section{Introduction}
% non-addressed idea:
% hypercompletion and $\Stable(C)$ vs $C\CoMod$

\subsection{Overview}
Motivic stable homotopy theory, introduced by Voevodsky and Morel, is
a subject that has successfully applied abstract homotopy theory to solve problems in number theory and algebraic geometry.

Recent work by Gheorghe--Wang--Xu \cite{GWX} has reversed this information flow---using motivic stable homotopy theory, the computation of classical stable homotopy groups of spheres, a fundamental problem in topology, has been extended into a much larger range \cite{IWX, IWX2}.  

Gheorghe--Wang--Xu \cite{GWX} defined and studied a subcategory of ($p$-completed) cellular objects of the $\infty$-category of motivic spectra over the complex numbers $\SH(\C)$, and identify its heart as the abelian category of $p$-complete, even graded comodules over the Hopf algebroid $\MU_{2*}\MU$, which is equivalent to the abelian category quasi-coherent sheaves on the moduli stack of formal groups over $\mathbb{Z}_p$-algebras, and is central to chromatic homotopy theory.

A natural question to ask is whether we can generalize this work
\begin{itemize}
\item to the entire motivic stable homotopy category, rather than just a subcategory consisting of certain cellular objects over the $p$-completed sphere spectrum,
\item to an integral rather than $p$-complete result,
\item and to other base fields than just $\C$.
\end{itemize}

In this paper, we achieve all three goals by defining a $t$-structure, which we call the \emph{Chow $t$-structure}, on the motivic stable homotopy category $\SH(k)$ over any base field $k$.
Its non-negative part $\SH(k)_{c\geq 0}$ is generated under colimits and extensions by Thom spectra $\Th(\xi)$ associated to $K$-theory points $\xi\in K(X)$ (equivalently formal differences of vector bundles $\xi = [V_1] - [V_2]$) on smooth and proper schemes $X$ (see Definition \ref{def:chow-t}).
We denote the truncation functors by $E \mapsto E_{c=0}, E_{c \ge 0}$, and so on.

One of our key theorems is that the motivic bigraded homotopy groups of a homotopy object with respect to the Chow $t$-structure can be expressed by $\Ext$ groups over $\MU_{2*}\MU$ of its $\MGL$ homology (Theorem~\ref{thm:intro-main}).
Using the theory of comonadic descent, we give a concrete algebraic description of the Chow heart $\SH(k)^\heart$ (Theorem \ref{thm:intro-heart}). We show that the $\infty$-category of modules over $\1_{c=0}$ (where $\1$ denotes the motivic sphere spectrum) is algebraic (Theorem \ref{intro:thm_algebraicity_of_1_module}).
Restricting to the subcategory of cellular modules over $\1_{c=0}$, we get an equivalence with Hovey's stable category $\Stable(\MU_{2*}\MU)$ (Theorem \ref{thm:intro-cell-heart}).

This equivalence between stable $\infty$-categories results in an isomorphism between the motivic Adams spectral sequence and the algebraic Novikov spectral sequence, i.e., the Adams spectral sequence in the stable category $\Stable(\MU_{2*}\MU).$
Since the latter spectral sequence is purely algebraic, its computation is much more accessible and can be done by computer programs.
Therefore, this isomorphism allows us to fully compute the motivic Adams spectral sequence for all spectra in $\SH(k)^\heart$ within any reasonable range.
We can then adapt the same methodology as in \cite{GWX, IWX, IWX2}: using naturality of the motivic Adams spectral sequences, we can obtain new information on differentials and extensions from spectra in the Chow heart $\SH(k)^\heart$ to the sphere spectrum, and therefore compute motivic stable stems over any base field $k$ in a reasonable range.

\subsection{Main result}
From now on, we denote by $e$ the exponential characteristic of the field $k$.
We prove the following key theorem about the Chow $t$-structure:
\begin{theorem}[see Theorem \ref{thm:main-computation}] \label{thm:intro-main}
Let $E \in \SH(k)[1/e]$. 
Then there is a canonical isomorphism \[ \pi_{2w-s, w} E_{c=i} \cong \Ext^{s,2w}_{\MU_{2*}\MU}(\MU_{2*},\MGL_{2*+i,*}E). \] 
\end{theorem}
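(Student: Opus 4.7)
The plan is to reduce the claim to the case where $E$ lies in the Chow heart, identify such objects with even graded $\MU_{2*}\MU$-comodules using Theorem~\ref{thm:intro-heart}, and then read off the bigraded homotopy groups by means of an $\MGL$-based motivic Adams spectral sequence which should degenerate on the heart along the ``Chow diagonal'' $t = 2w$.

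First, since $E_{c=i}$ lies in a shift of the heart $\SH(k)^{c\heartsuit}$ and the bidegree $(2w-s, w)$ only depends on this truncation, it suffices to prove the analogous formula for an arbitrary object $M \in \SH(k)^{c\heartsuit}[1/e]$:
\[ \pi_{2w-s, w} M \;\cong\; \Ext^{s, 2w}_{\MU_{2*}\MU}\bigl(\MU_{2*}, \Phi(M)\bigr), \]
where $\Phi$ denotes the equivalence of Theorem~\ref{thm:intro-heart} with the category of even $\MU_{2*}\MU$-comodules. To recover the statement of the theorem for $E_{c=i}$, I would check that $\Phi(E_{c=i})$ is naturally isomorphic to the comodule $\MGL_{2*+i, *} E$; this should follow from the construction of $\Phi$, which ought to factor through $\MGL$-homology on the generating Thom spectra $\Th(\xi)$ and on their shifts.

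Second, for $M$ in the Chow heart I would consider the motivic Adams spectral sequence based on $\MGL$. Since $M$ is built by colimits and extensions from Thom spectra $\Th(\xi)$, one expects $M$ to be $\MGL$-nilpotent complete, yielding a conditionally convergent spectral sequence
\[ E_2^{s, t, w} \;=\; \Ext^{s, (t, w)}_{\MGL_{**}\MGL}\bigl(\MGL_{**}, \MGL_{**}M\bigr) \;\Longrightarrow\; \pi_{t-s, w} M. \]
On the diagonal $t = 2w$ this should degenerate at $E_2$, because $\MGL_{**}M$ is concentrated in even first index when $M$ lies in the heart, providing enough sparsity to kill all potential differentials into the relevant bidegrees.

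Third, I would identify the $E_2$-term on the diagonal $t = 2w$ with $\Ext^{s, 2w}_{\MU_{2*}\MU}\bigl(\MU_{2*}, \MGL_{2*, *} M\bigr)$. The input here is that, after inverting the exponential characteristic $e$, the ``pure'' summand of the Hopf algebroid $(\MGL_{**}, \MGL_{**}\MGL)$ lying on the diagonal $t = 2w$ agrees with $(\MU_{2*}, \MU_{2*}\MU)$, by Hopkins--Morel--Hoyois; the cobar resolution computing Ext of an even comodule only uses this pure part. The main obstacle is precisely this third step: over a general base field $k[1/e]$, the rings $\MGL_{**}$ and $\MGL_{**}\MGL$ contain significant arithmetic information off the diagonal, and one must show that on the Chow diagonal and for even comodules all such contributions are invisible. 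This is where the heart identification of Theorem~\ref{thm:intro-heart} and the comonadic-descent framework earlier in the paper should do the heavy lifting; once that identification is made, combining it with the collapsing of the $\MGL$-Adams spectral sequence yields the isomorphism stated in the theorem.
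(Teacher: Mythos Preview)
Your proposal has the right overall shape---run the $\MGL$-based Adams--Novikov spectral sequence for a Chow-heart object and argue that it degenerates---but there are two genuine gaps. First, your plan is circular: you invoke Theorem~\ref{thm:intro-heart} to identify the heart, but in the paper that identification is proved \emph{using} Theorem~\ref{thm:intro-main} (specifically, the comonadic descent in \S\ref{subsec:comonadic} needs conservativity of $\wedge\,\MGL$ on Chow-bounded objects, which is Corollary~\ref{cor:detect-infinity-connective}, a consequence of Theorem~\ref{thm:main-computation}). Also, Theorem~\ref{thm:intro-heart} does not say the heart is $\MU_{2*}\MU$-comodules; that is only the cellular heart.

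Second, and more seriously, you do not address convergence. The collapse of the $\MGL$-ANSS for $M=E_{c=0}$ is easy once you know $\MGL_{**}M$ is concentrated in Chow degree $0$ (this is Corollary~\ref{cor:chow-MGL-homology}, which also dispels your worry about ``off-diagonal'' contributions: there simply are none). The hard part is showing $M\to M^\wedge_\MGL$ is an isomorphism on $\pi_{**}$. Your justification (``built from Thom spectra by colimits and extensions'') describes all of $\SH(k)_{c\ge 0}$, and that is not enough: $\MGL$-nilpotent completion is only known to be well-behaved for spectra that are connective in the \emph{homotopy} $t$-structure (Mantovani's $L_\MGL\simeq(\ph)^\wedge_\eta$), and $E_{c=0}$ need not be homotopy-connective. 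The paper's key idea is to introduce auxiliary $t$-structures with non-negative part $I^d=\SH(k)_{\ge d}\cap\SH(k)_{c\ge 0}$ (Proposition~\ref{lemm:key}, Proposition~\ref{lemm:chow-approx}): the truncations $\tau^d_{\le 0}E$ are homotopy-connective, so Mantovani applies, their $\pi_{**}$ are concentrated in Chow degree $\le 0$ so $\eta$-completion is harmless (Example~\ref{ex:eta-comp}), and finally $E_{c=0}\simeq\colim_d\tau^d_{\le 0}E$ lets you pass to the limit. This approximation step is the substantive content you are missing.
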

Here, on the right hand side, $s$ is the homological degree, and $2w$ is the internal degree.
%Here we use the bigrading convention that $S^{p,q}$ denotes the bigraded motivic sphere $S^{p-q} \wedge \Gmp{q}$.

Over $k = \C$ for $E$ the sphere spectrum $\1$, Theorem~\ref{thm:intro-main} tells us 
\[ \pi_{2w-s, w} \1_{c=0} \cong \Ext^{s,2w}_{\MU_{2*}\MU}(\MU_{2*},\MU_{2*}) \]
as an integral statement. This is in contrast to Isaksen's result \cite[Proposition 6.2.5]{Isa}
\[ \pi_{2w-s, w} \1_p^\comp/\tau \cong \Ext^{s,2w}_{\BP_{2*}\BP}(\BP_{2*},\BP_{2*}) \]
at each prime $p$, where $\tau: \Sigma^{0,-1} \1_p^\comp \to \1_p^\comp$ is the endomorphism of \cite{gheorghe2017motivic, Isa}.

We have the following easy corollary of Theorem~\ref{thm:intro-main}, reproving \cite[Theorem 3.13]{gheorghe2017motivic}.

\begin{corollary} \label{cor:intro-C}
Over $\C$, both $\1_{c=0}$ and $\1_p^\comp/\tau$ are $\scr E_\infty$-rings.
\end{corollary}

%In fact, we can show that $\1_p^\comp/\tau$ is the $p$-complete cellularization of $\1_{c=0}$. By formal reasons, $\1_{c=0}$ is an $\scr E_\infty$-ring, and so is $\1_p^\comp/\tau$, reproving \cite[Theorem 3.13]{gheorghe2017motivic}. See Remark~\ref{rmk:ring-justn} for details.

\begin{remark}
Another easy corollary of Theorem~\ref{thm:intro-main} is that smashing with $\MGL$ detects Chow-$\infty$-connectivity (Corollary \ref{cor:detect-infinity-connective}). 
\end{remark}

\subsection{Reconstruction theorems}

It is shown in \cite[Corollary~1.2]{GWX} that the category $\1_p^\comp/\tau\Mod$ of cellular modules over $\1_p^\comp/\tau$ is purely algebraic\footnote{It is known that over the complex numbers, the $p$-completion and the $\textup{H}\Z/p$-nilpotent completion for the sphere are equivalent.}:
 under the $t$-structure defined in \cite{GWX}, the heart  
is equivalent to $\MU_{2*}\MU_p^\comp\CoMod$;
 and the entire module category can be identified with Hovey's \emph{stable category of comodules} $\Stable(\MU_{2*}\MU_p^\comp)$ (see \cite{Hovey} for a definition of this category). Other versions of this result over $\C$ can be found in \cite{Krause, pstrkagowski2018synthetic, mmf}.
 We upgrade this to an integral result over arbitrary fields, using $\1_{c=0}$ as a replacement for $\1_p^\comp/\tau$ (which makes sense by the discussion after Theorem~\ref{thm:intro-main}).

\subsubsection{Description of the Chow heart} The category $\SH(k)^\heart[1/e]$ can be described as a category of presheaves over the category of pure $\MGL$-motives, together with some extra data.

\begin{definition}
	The category of pure $\MGL$-motives, denoted by $\PM_\MGL(k)$, is the smallest idempotent complete additive subcategory of $\MGL_{c=0}\Mod$ containing the object $X\{i\}:=(\Sigma^{2i,i}X_+\wedge \MGL)_{c=0}$ for   each $i\in \Z$ and smooth proper variety $X$.
	\end{definition}
	
	When it is clear from the context, we abuse notation and denote $X\{0\}$ by $X$.
	
It turns out that $\PM_\MGL(k)$ is an additive ordinary $1$-category, and 
the mapping set $[X, Y\{*\}]_{\PM_\MGL(k)}$ is equivalent to $ \MGL^{2*,*}(Y\times X)$. This definition of pure $\MGL$-motives coincides with the definition in \cite{nenashev2006oriented} by taking the idempotent completion of the category of $\MGL$-correspondences.

%%%%%%%%%%%%%%%%%%%

The category $\PM_\MGL(k)$ is naturally enriched in $\MU_{2*}\MU$-comodules. 
There is thus a classical notion of enriched presheaves on $\PM_\MGL(k)$ (see e.g. \cite[\S3.5]{riehl2014categorical}).
%Any abelian presheaf $F$ on $\PM_\MGL(k)$ is automatically a presheaf of graded $\MU_{2*}$-modules, via the graded endomorphisms of the unit in $\PM_\MGL(k)$.
\begin{theorem}[see \S\ref{subsec:chow-heart} and Remark \ref{rem:concise_description}]\label{thm:intro-heart}
The functor sending $F\in \SH(k)^\heart[1/e]$ to the presheaf on $\PM_\MGL(k)$ given by $F_*(X) = [\Sigma^{2*,*} X_+, F\wedge \MGL]$ 
induces an equivalence of categories between $\SH(k)^\heart[1/e]$ and the category of enriched presheaves on $\PM_\MGL(k)$ (with values in $\MU_{2*}\MU$-comodules).
\end{theorem}

\begin{remark}
Explicitly, the structure of enriched presheaves on $\PM_\MGL(k)$ requires the following compatibility: for every graded $\MGL$-correspondence $\alpha: X \to Y$, the following diagram commutes
\begin{equation*}
\label{intro_diag_compatibility}
\begin{CD}
F(Y)_* @>{\alpha^*}>> F(X)_* \\
@V{\Delta_{F,Y}}VV  @V{\Delta_{F,X}}VV \\
\MU_{2*}\MU \otimes_{\MU_{2*}} F(Y)_* @>{\Delta(\alpha)^*}>> \MU_{2*}\MU \otimes_{\MU_{2*}} F(X)_*.
\end{CD}
\end{equation*}
Here $\Delta(\alpha)$ is the effect of comultiplication. 
We give the full explicit description in \S\ref{subsec:chow-heart}.

\end{remark}

\begin{remark}
P. Sechin is studying a category of ``Landweber-equivariant Grothendieck motives'' over a field $k$ \cite{sechin-landweber-equiv}.
One may show that this category embeds fully faithfully into $\SH(k)^\heart$.
\end{remark}

\subsubsection{Algebraicity of $\1_{c=0}\Mod[1/e]$}
By \cite[Lemma 29]{bachmann-tambara}, $\1_{c=0}\Mod[1/e]^\heart\simeq \SH(k)[1/e]^\heart$. 
The Chow heart $\SH(k)[1/e]^\heart$ is an abelian category. 
In general, for a compactly generated abelian category $A$ viewed as the heart of its derived category, we consider the category $\Stable(A):=\Ind(\Thick(A^{\omega}))$ where
$A^{\omega}$ denotes the subcategory of the compact objects (see \S\ref{subsub:compact-gen}). 
We prove the module category $\1_{c=0}\Mod[1/e]$ is algebraic.

\begin{theorem}[see Proposition \ref{prop:comonadic-reconstruction}]
\label{intro:thm_algebraicity_of_1_module}
There is a symmetric monoidal equivalence
 $$\1_{c=0}\Mod[1/e]\simeq \Stable(\1_{c=0}\Mod[1/e]^\heart).$$
\end{theorem}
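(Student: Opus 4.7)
The plan is to apply comonadic descent along the unit map of $\scr E_\infty$-rings $\1_{c=0}\to \MGL_{c=0}$ inside $\SH(k)[1/e]$, as the label of Proposition~\ref{prop:comonadic-reconstruction} suggests. The rough idea is to first establish algebraicity on the $\MGL_{c=0}$-side, where Theorem~\ref{thm:intro-main} forces the homotopy to be very rigid, and then to descend this algebraicity to the whole module category $\1_{c=0}\Mod[1/e]$.

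I would begin by showing that $\1_{c=0}\to \MGL_{c=0}$ is descendable in the sense of Mathew. By Definition~\ref{def:chow-t}, the $\MGL$-Thom spectra on smooth proper varieties generate the Chow-nonnegative part of the $t$-structure; after Chow-truncation this should translate into a finite-step Adams-type resolution of $\1_{c=0}$ by $\MGL_{c=0}$-modules, invoking $1/e$-invertibility to kill characteristic pathologies. Granting descendability, comonadic descent yields
\[ \1_{c=0}\Mod[1/e] \simeq \lim \bigl(\MGL_{c=0}^{\otimes (\bullet+1)}\Mod[1/e]\bigr). \]

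Next I would identify each term of the cosimplicial diagram above with the derived $\infty$-category of its heart. Theorem~\ref{thm:intro-main}, applied to an $\MGL_{c=0}$-module, specializes to an $\Ext$-computation whose input is an extended $\MU_{2*}\MU$-comodule; since extended comodules are injective, all higher $\Ext$ groups vanish. Consequently each $\MGL_{c=0}^{\otimes(n+1)}\Mod[1/e]$ has bigraded homotopy concentrated in a single Chow-degree and is algebraic in a clean way---essentially a derived category of graded $\MU_{2*}$-modules decorated with the non-cellular presheaf data from Theorem~\ref{thm:intro-heart}.

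Finally, I would match the above totalization with $\Stable(\1_{c=0}\Mod[1/e]^\heart)$ by comparing with the parallel algebraic comonadic reconstruction of the abelian heart, whose descent data is exactly the comultiplication datum $\Delta(\alpha)$ appearing in the compatibility square of Theorem~\ref{thm:intro-heart}. Compatibility of the $\Stable(-)$ construction with comonadic descent on the abelian side then yields the desired equivalence, and the symmetric monoidal structure survives because every construction involved (Chow-truncation, cobar totalization, $\Ind\Thick$) is symmetric monoidal. The main obstacle is verifying Mathew descendability of $\1_{c=0}\to \MGL_{c=0}$: conservativity (on Chow-bounded objects) follows essentially from the definition of the $t$-structure, but descendability requires a uniform vanishing bound for the cobar resolution, which I expect to obtain from nilpotence of the Chow filtration together with the $1/e$ hypothesis controlling characteristic-dependent phenomena.
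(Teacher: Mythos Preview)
Your proposal has a genuine gap at the step you yourself flag as the main obstacle: descendability of $\1_{c=0}\to\MGL_{c=0}$ in Mathew's sense almost certainly fails, and the paper does not use it. Descendability would require $\1_{c=0}$ to lie in the thick $\otimes$-ideal generated by $\MGL_{c=0}$, which in particular forces the $\MGL$-Adams filtration on $\pi_{*,*}\1_{c=0}$ to be uniformly bounded. But Theorem~\ref{thm:intro-main} identifies $\pi_{*,*}\1_{c=0}$ with $\Ext^{*,*}_{\MU_{2*}\MU}(\MU_{2*},\MU_{2*})$, and the Adams--Novikov $E_2$-page has elements in arbitrarily high filtration. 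There is no ``nilpotence of the Chow filtration'' to invoke here; on the contrary, the Chow $t$-structure is not even left complete (Proposition~\ref{prop:eta-periodic}). So the cobar totalization you write down does not obviously recover $\1_{c=0}\Mod$, and even if it did you would land in $C\CoMod$ rather than $\Stable(C)$, which is a further identification you do not address.

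The paper sidesteps all of this by never claiming global comonadicity. Instead it applies Barr--Beck--Lurie only on the Chow-bounded subcategories $\1_{c=0}\Mod_{[m,n]}$ with $n<\infty$: conservativity of $\wedge\,\MGL$ there follows from Corollary~\ref{cor:detect-infinity-connective}, and preservation of the relevant totalizations uses $t$-exactness of $\wedge\,\MGL$ (Corollary~\ref{cor:pure}) together with the fact that totalizations of uniformly bounded-above towers are computed by finite partial totalizations on each homotopy object. This yields in particular $\1_{c=0}\Mod^\heart\simeq C^\heart\CoMod$. The passage to the whole module category is then a one-line Morita argument, not a descent argument: the objects $\Th(\xi)_{c=0}$ are compact generators of $\1_{c=0}\Mod$ that already lie in the heart, so $\1_{c=0}\Mod\simeq\Ind(\Thick(\scr G))$ with $\scr G\subset\1_{c=0}\Mod^\heart$, and the equivalence on hearts transports this to $\Ind(\Thick(\bar F\scr G))=\Stable(C)$. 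The crucial input you are missing is precisely this: the generators of $\1_{c=0}\Mod$ sit in the heart, which makes the $\Stable$ construction tautologically match the module category without any need for global convergence of the cobar resolution.
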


%\mzhouli{A discussion over $\C$ would be nice maybe. GWX has a statement for the category of $\MGL/\tau$-modules}

\subsubsection{Results in the cellular case}

In the cellular case, the results take the following simpler form.

\begin{theorem}[see Corollary \ref{cor:wcellular_heart_ordinary_cellular}]\hfill
\label{thm:intro-cell-heart}
\begin{enumerate}
\item There is an equivalence $$\SH(k)[1/e]^{\cell, \heart} \wequi \MU_{2*}\MU\CoMod[1/e].$$
\item The cellular subcategory is equivalent to Hovey’s stable category of comodules 
$$\1_{c=0}\Mod^{\cell}[1/e] \wequi \Stable(\MU_{2*}\MU)[1/e].$$ 
\end{enumerate}
\end{theorem}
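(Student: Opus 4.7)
The plan is to prove part (1) first by specializing the general description of $\SH(k)[1/e]^\heart$ from Theorem~\ref{thm:intro-heart} to cellular objects, and then bootstrap part (2) by combining Theorem~\ref{intro:thm_algebraicity_of_1_module} with part (1).

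For (1), the first step is to identify which presheaves on $\PM_\MGL(k)$ correspond to cellular objects of $\SH(k)^\heart$. The cellular subcategory $\SH(k)^{\cell}$ is the smallest localizing subcategory containing the bigraded spheres $\Sigma^{p,q}\1$, so a presheaf should be cellular precisely when it is induced, via base change, from its value at the unit $\mathrm{pt} \in \PM_\MGL(k)$; concretely, when $F(X)_* \cong \MGL^{2*,*}(X) \otimes_{\MU_{2*}} M$ with $M = F(\mathrm{pt})_*$. I would then construct functors in both directions: the forward functor evaluates at $\mathrm{pt}$, producing an $\MU_{2*}\MU$-comodule from the data of Theorem~\ref{thm:intro-heart}; the inverse functor sends a comodule $M$ to the induced presheaf and equips each $F(X)_*$ with the tensor-product comodule structure. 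The twisted compatibility diagram of Theorem~\ref{thm:intro-heart} then collapses, since the explicit formula for $\Delta(\alpha)$ in Example~\ref{example:cellular_case} shows that in the cellular case the twisting is trivial, so the diagram reduces to the comodule axiom on $M$. Full faithfulness and essential surjectivity follow by verifying that these two functors are mutually inverse on the identified subcategory.

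For (2), I would apply Theorem~\ref{intro:thm_algebraicity_of_1_module} to obtain the symmetric monoidal equivalence $\1_{c=0}\Mod[1/e] \simeq \Stable(\1_{c=0}\Mod[1/e]^\heart)$, which sends the unit to the unit. Since cellularization on both sides is the smallest localizing subcategory generated by the unit, this equivalence restricts to an equivalence $\1_{c=0}\Mod^{\cell}[1/e] \simeq \Stable(\1_{c=0}\Mod[1/e]^\heart)^{\cell}$. By \cite{bachmann-tambara}, $\1_{c=0}\Mod[1/e]^\heart \simeq \SH(k)[1/e]^\heart$, and the cellular part of the latter is identified with $\MU_{2*}\MU\CoMod[1/e]$ by part (1). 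It then remains to check that the $\Stable$-construction is compatible with passage to the cellular subcategory, i.e.\ that $\Stable(\SH(k)[1/e]^\heart)^{\cell} \simeq \Stable(\MU_{2*}\MU\CoMod[1/e]) = \Stable(\MU_{2*}\MU)[1/e]$; this is formal once one observes that both sides are $\Ind(\Thick(-))$ of the respective compact-unit-generated thick subcategory.

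The main obstacle is the first step of (1): identifying precisely which presheaves in the description of Theorem~\ref{thm:intro-heart} correspond to cellular objects. One must show that restriction to $\mathrm{pt}$ is an equivalence onto its image, with image all of $\MU_{2*}\MU\CoMod[1/e]$, and this requires the explicit formula for $\Delta(\alpha)$ in the cellular case to verify that the twisted compatibility is automatic. A secondary subtlety is the compatibility of $\Stable(-)$ with cellularization in step (2), which must be argued via compact generators rather than by manipulating the definition directly.
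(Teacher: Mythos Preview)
Your outline is essentially correct and close in spirit to the paper's argument, but the paper takes a more streamlined path that sidesteps both obstacles you identify. Rather than embedding cellular objects inside the full heart description of Theorem~\ref{thm:intro-heart} and then characterizing them there, the paper develops a parallel cellular comonadic theory: Corollary~\ref{cor:comonadic-cellular} shows directly that $\1_{c=0}\Mod^\cell_{[m,n]} \simeq C^\cell_{[m,n]}\CoMod$, where $C^\cell$ is the restriction of $C$ to $\MGL_{c=0}\Mod^\cell$. The latter category is identified (Proposition~\ref{prop:MGLc0-mod-cell}) with $\PSh_\SH(\PM_\MGL^\cell(k))$, where $\PM_\MGL^\cell(k)$ is generated by the $\1(n)$ alone; its heart is then visibly $\MU_{2*}\Mod$, and the explicit formula of Proposition~\ref{prop:C-heart-explicit} identifies $C^{\cell,\heart}$ as the $\MU_{2*}\MU$-comodule comonad. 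Part (2) follows by rerunning the argument of Proposition~\ref{prop:comonadic-reconstruction}(3) with the cellular generators $\{S^{2n,n}_{c=0}\}$, rather than by restricting Theorem~\ref{intro:thm_algebraicity_of_1_module} and arguing that $\Stable$ commutes with cellularization. (Formally, the paper packages all of this as the trivial-Galois-group case of the $W$-cellular result, Corollary~\ref{cor:wcellular_heart_fin_Galois_ext}.)

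One caution about your route for (1): your claim that the twisted compatibility ``collapses'' via Example~\ref{example:cellular_case} is only literally correct for correspondences $\alpha \in [\1(*),\1(*)] = \MU_{2*}$. If you insist on working inside the full $\PM_\MGL(k)$ and characterizing cellular objects as induced presheaves $F(X)_* = \MGL_{2*,*}(X) \otimes_{\MU_{2*}} M$, you must still verify the twisted diagram for \emph{arbitrary} $\alpha: X \to Y$, where $\Delta(\alpha)$ is genuinely nontrivial; this does hold, but it requires checking that the tensor-product comodule structure on $F(X)_*$ intertwines with $\Delta(\alpha)$, not just citing Example~\ref{example:cellular_case}. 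The paper avoids this entirely by never leaving $\PM_\MGL^\cell(k)$. Similarly, identifying the cellular Chow heart with the cellular objects in the full Chow heart requires knowing that cellularization is $t$-exact (Lemma~\ref{lemm:cellularization-t-exact}), which is itself a nontrivial consequence of Theorem~\ref{thm:main-computation}; the paper's direct route uses this ingredient as well, but in a way that makes the dependence explicit rather than hidden inside a characterization step.
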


\begin{remark}
It comes as a surprise that Theorem \ref{thm:intro-cell-heart} does not depend on the base field $k$.
In fact, let $l/k$ be a field extension.
The base change functor $\SH(k)^\cell \to \SH(l)^\cell$ is right-$t$-exact (for the Chow $t$-structures) and induces \[ \SH(k)[1/e]^{\cell, \heart} \wequi \MU_{2*}\MU\CoMod[1/e] \wequi \SH(l)[1/e]^{\cell, \heart}. \]
This does \emph{not} imply that base change is conservative on cellular spectra which are Chow bounded below, since the Chow $t$-structure is not left complete (e.g. all $\eta$-periodic spectra are Chow $\infty$-connective, by Proposition \ref{prop:eta-periodic}).
\end{remark}

In more general cases involving field extensions, we have the following results.

\begin{definition}[see Definition \ref{def:wcellular}]\hfill
\label{def:intro-wcellular}

	Let $W$ be a set of smooth proper schemes over $k$ that contains $Spec (k)$ and is closed under finite products. Define the 
	\emph{$W$-cellular category}, denoted by $\SH(k)^\wcell$ to be the subcategory of $\SH(k)$ generated under taking colimits and desuspensions by objects of the form $\Th(\xi)$ for $\xi\in K(X)$ and $X\in W$.
\end{definition}

We also define the Chow $t$-structure on the $W$-cellular category.

\begin{theorem}[see Corollary \ref{cor:wcellular_heart_absolute} and \ref{cor:wcellular_heart_fin_Galois_ext}]\hfill
\label{cor:intro_wcellular_ext}

Let $G$ and $W$ be as in one of the following situations.
\begin{enumerate}
	\item Let $l/k$ be a finite Galois extension with Galois group $G$. Let $W$ be $\{Spec (l') ~~\vert ~~ l'$ is a subextension of $l/k \}$. 
	\item Let $W$ to be $\{Spec (l)\vert  ~~ l/k \textit{ is a finite separable extension} \}$ and let $G = Gal(k)$ be the absolute Galois group.
\end{enumerate}
We have 
\begin{align*} 
\SH(k)[1/e]^{\AT,\heart} & \wequi \ul{\MU_{2*}\MU}\CoMod[1/e], \\
\1_{c=0}\Mod[1/e]^\AT & \wequi \Stable(\ul{\MU_{2*}\MU})[1/e], \\ 
\MGL_{c=0}\Mod[1/e]^{\AT, \heart} & \wequi \ul{\MU_{2*}}\Mod[1/e]. 
\end{align*}
Here $\ul{\MU_{2*}\MU}$ and $\ul{\MU_{2*}}$ denote the corresponding constant Mackey functors, and $\CoMod$, $\Stable$ and $\Mod$ are performed relative to the category of $G$-Mackey functors.

\end{theorem}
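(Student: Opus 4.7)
The plan is to mimic the cellular case (Theorem~\ref{thm:intro-cell-heart}) and the general heart description (Theorem~\ref{thm:intro-heart}), replacing abelian-group-valued data throughout by $G$-Mackey-functor-valued data. The central observation is that when $W$ consists of intermediate fields of a finite Galois extension $l/k$, evaluating a presheaf on the various $\mathrm{Spec}(l') \in W$ together with restriction and transfer along the finite étale maps between them naturally yields a $G$-Mackey functor. This upgrade is compatible with the Chow $t$-structure, since each $\Th(\xi)$ with $\xi \in K(\mathrm{Spec}(l'))$ lies in $\SH(k)^{\wcell}_{c \geq 0}$ by definition.

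First, I would specialize Theorem~\ref{thm:intro-main} to compute the bigraded homotopy groups $[\mathrm{Spec}(l')_+, E_{c=i}]_{*,*}$ for $\mathrm{Spec}(l') \in W$ and $E \in \SH(k)[1/e]^{\wcell}$ in terms of $\Ext_{\MU_{2*}\MU}$ of $\MGL_{*,*}(\mathrm{Spec}(l')_+ \wedge E)$. The key input is that $\MGL^{2*,*}(\mathrm{Spec}(l'))[1/e] \cong \MU_{2*}[1/e]$, and that the restrictions and transfers between varying $l'$ assemble into the constant $G$-Mackey functor $\ul{\MU_{2*}}[1/e]$; analogously the co-operations assemble into $\ul{\MU_{2*}\MU}[1/e]$. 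This already gives the identification $\MGL_{c=0}\Mod[1/e]^{\wcell,\heart} \simeq \ul{\MU_{2*}}\Mod[1/e]$.

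Next I apply comonadic descent along $\MGL \wedge (-) : \SH(k)^{\wcell} \to \MGL\Mod^{\wcell}$, as in the proofs of Theorems~\ref{thm:intro-heart}--\ref{thm:intro-cell-heart}. On hearts, this presents $\SH(k)[1/e]^{\wcell,\heart}$ as coalgebras over the comonad induced by the $\MGL$-cooperations, whose underlying Mackey-functor-valued Hopf algebroid is $\ul{\MU_{2*}\MU}[1/e]$ by the calculation above, yielding the equivalence with $\ul{\MU_{2*}\MU}\CoMod[1/e]$. The identification $\1_{c=0}\Mod[1/e]^{\wcell} \simeq \Stable(\ul{\MU_{2*}\MU})[1/e]$ then follows by applying Theorem~\ref{intro:thm_algebraicity_of_1_module} in the $W$-cellular setting, once one matches compact generators under the equivalence of hearts.

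The main obstacle will be case~(2), the absolute Galois group: here $W$ is infinite, and one must pass to a limit over finite Galois subextensions. I expect to handle this by a continuity argument, presenting both $\SH(k)^{\wcell}$ and the target $\Stable(\ul{\MU_{2*}\MU})[1/e]$ as compatible colimits along the poset of finite Galois subextensions $l/k$, using that $\mathrm{Gal}(k) = \lim_l \mathrm{Gal}(l/k)$ together with the corresponding limit description of $\mathrm{Gal}(k)$-Mackey functors. Compatibility of this continuity with the descent comonad should follow from smooth base-change for $\MGL$ after inverting the exponential characteristic, which is already implicit in the earlier theorems.
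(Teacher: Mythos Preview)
Your plan for case~(1) is essentially the paper's argument, though the paper organizes the Mackey-functor input more cleanly by going through the cocontinuous symmetric monoidal functor $c: \SH(BG) \to \SH(k)$ of \cite[Proposition~10.6]{bachmann-norms}. This functor sends $\Sigma^\infty_+ X$ for $X \in \Fin_G$ to the suspension spectrum of the corresponding finite \'etale $k$-scheme, hence lands in $\SH(k)_{c \ge 0}$, and on hearts induces the comparison $\Mack_G^\Z \to \MGL_{c=0}\Mod^{\wcell,\heart}$. The identification with $\ul{\MU_{2*}}\Mod$ then reduces to showing that the right adjoint sends $\MGL_{c=0}$ to the constant Mackey functor $\ul{\MU_{2*}}$. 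One point you gloss over is that verifying the \emph{transfers} (not just the restriction maps) agree with those of the constant Mackey functor requires a separate argument: the paper invokes the fact that constant coefficient systems are \'etale sheaves of commutative monoids, so their transfers are uniquely determined by the restrictions \cite[Corollary~C.13]{bachmann-norms}. The remaining two equivalences then follow formally from the comonadic reconstruction and the explicit description of the comonad, exactly as you outline.

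Where your approach genuinely diverges is case~(2). You anticipate the absolute Galois case to be the hard one, requiring a continuity argument over finite Galois subextensions. In fact the paper does the opposite: it proves case~(2) \emph{first and directly}, using that the functor $c: \SH(BG) \to \SH(k)$ is already available for the profinite group $G = \mathrm{Gal}(k)$; case~(1) then follows by the identical argument with the analogous functor $c_{l/k}$ for the finite quotient $G = \mathrm{Gal}(l/k)$. No limit or continuity passage is needed at all. Your proposed continuity route is not obviously wrong, but it would require carefully matching a colimit presentation of $\SH(k)^{\wcell}$ with a compatible presentation of profinite-$G$ Mackey functors and of $\Stable(\ul{\MU_{2*}\MU})$, and checking that comonadic descent interacts well with these---substantially more bookkeeping than the paper's uniform direct approach.
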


\begin{remark}
	In \cite{burklund2020galois}, Burklund, Hahn and Senger prove a similar result in the special case when the field extension is $\C/\R$ with the Galois group $C_2$.
\end{remark}

%%%%%%%%%%%%%%%%%%%%%%%%%%%%%%%%%%%%%%%%%%%%%%%%%%%%%%%%%%%%%%%%%%%%%%%%%%%%

%%%%%%%%%%%%% strategy sections %%%%%%%%%%%%%%%%%%%

\subsection{Proof strategy of Theorem \ref{thm:intro-heart}}

We obtain the description of the Chow heart using the theory of comonadic descent; to show it applies, we use results that follow from Theorem \ref{thm:intro-main}. In the discussion below we omit $[1/e]$ from the notation.
More precisely, for a motivic commutative ring spectrum $A\in \CAlg(\SH(k))$, there is an induced Chow $t$-structure on the category $A\Mod$ (just let the non-negative part be generated by free $A$-modules of the form $A \wedge \Th(\xi)$, for $X$ smooth proper and $\xi \in K(X)$). 
%Since we have $\SH(k)^\heart\simeq 1_{c=0}\Mod^\heart$, 
We focus on the induced $t$-structures on $\1_{c=0}\Mod$ and also $\MGL_{c=0}\Mod$.
The free-forgetful adjunction $$\1_{c=0}\Mod\leftrightarrows \MGL_{c=0}\Mod$$ defines a comonad $C$ over $\MGL_{c=0}\Mod$.
We show that when restricted to subcategories of suitably bounded objects, the adjunction is comonadic.
So $\1_{c=0}\Mod^\heart$ is equivalent to the category of $C^\heart$-comodules in $\MGL_{c=0}\Mod^\heart$, where $C^\heart$ denotes the restriction of $C$.

Using spectral Morita theory, we can identify the category $\MGL_{c=0}\Mod$ as a presheaf category.
Under this equivalence, we give an explicit description of the comonad $C^\heart$.
%Under this equivalence, for objects in the heart $\MGL_{c=0}\Mod^\heart$, The comonad $C^\heart$ acts like $\MU_{2*}\MU$-comodule
In the cellular case this turns out to be exactly the comonad describing $\MU_{2*}\MU$-comodules over $\MU_{2*}$-modules.
This description together with the equivalences $\SH(k)^\heart\simeq \1_{c=0}\Mod^\heart\simeq C^\heart\CoMod$ results in Theorem \ref{thm:intro-heart}. 

\begin{remark}[work of Bondarko]\label{rmk:workofBondarko}
The Chow $t$-structure on $\MGL_{c=0}\Mod$ is adjacent to a weight structure in the sense of Bondarko \NB{I'm really not sure which of Bondarko's many papers to cite...} \cite{bondarko2007weight}.
Bondarko's work then yields an explicit description of $\MGL_{c=0}\Mod^\heart \wequi \MGL\Mod^\heart$ (see \cite[Theorem 4.4.2(4)]{bondarko2007weight}) and a similar description of all of $\MGL_{c=0}\Mod$ is easily obtained.
This proof is equivalent to what we summarized above as ``spectral Morita theory''.
\end{remark}

\begin{remark}
The proofs show that the scalar extension functor $\MGL\Mod \to \MGL_{c=0}\Mod$ identifies with Bondarko's \emph{weight complex functor}.
\end{remark}

\subsection{Proof strategy for Theorem \ref{thm:intro-main}} %\mzhouli{Maybe change all Lemmas in \S1.3 to Propositions? They look better in the introduction}
Before explaining some further applications of Theorem \ref{thm:intro-main}, let us sketch its proof.
Our approach uses another $t$-structure on $\SH(k)$, the homotopy $t$-structure.
Its non-negative part $\SH(k)_{\geq 0}$ is generated under colimits and extensions by $\{\Sigma^\infty_+X\wedge \Gmp{n} \mid n\in \Z, X \in \Sm_k\}$ (see \S\ref{subsec:colimit-diagram}). 
For all $d\geq 0$, the intersections $$I^d:=\Sigma^d\SH(k)_{\geq 0}\cap \SH(k)_{c\geq 0}$$ define a sequence of further $t$-structures.
We write $\tau^d_{= 0}$ for the $0$-th truncation functor with respect to $I^d.$ 
It turns out that these $t$-structures form a direct system with the Chow $t$-structure as the colimit (Lemma \ref{lemm:chow-approx}). 

One key property of these $t$-structures is the following vanishing result.
\begin{proposition}[see Proposition \ref{lemm:key}] \label{lemma:intro-key}
Let $E\in \SH(k)[1/e]$.
\begin{enumerate}
\item $\pi_{{*,*}} \tau^d_{= 0} E$ is concentrated in Chow degrees $\le 0$, and
\item $\MGL_{{2*,*}} \tau^d_{= 0} E$ equals $ \MGL_{{2*,*}} E$ and vanishes for other bidegrees.
\end{enumerate}
\end{proposition}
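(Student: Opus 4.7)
The plan is to leverage the explicit generators of $I^d$ together with the interaction between the Chow $t$-structure and smashing with $\MGL$. First I would identify the generators of $I^d = \Sigma^d\SH(k)_{\geq 0}\cap \SH(k)_{c\geq 0}$: a Thom spectrum $\Th(\xi)$ for $\xi$ of virtual rank $n$ on a smooth proper $X$ has homotopy degree $2n$ and Chow degree $0$, so its suspensions $\Sigma^a \Th(\xi)$ lie in $I^d$ exactly when $a \ge 0$ and $a + 2n \ge d$, and such objects generate $I^d$ under colimits and extensions. Membership in the $I^d$-heart is then characterised by $Y \in I^d$ together with the orthogonality condition $[\Sigma Z, Y] = 0$ for all $Z \in I^d$.

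For part (1), the goal is $\pi_{s,w}\tau^d_{=0}E = 0$ whenever $s > 2w$. Applied to $Z = \Sigma^{s-1,w}\1$, the heart condition immediately gives this vanishing whenever $s - 1 \ge \max(d, 2w)$, which handles every $s > 2w$ satisfying $s > d$. For the remaining range $2w < s \le d$, I would invoke the Thom-spectrum generators built from smooth proper varieties: using Atiyah duality, $\pi_{s,w}\tau^d_{=0}E$ in this range can be detected by mapping out of $\Th(\xi)\wedge X_+$ for a suitable smooth proper $X$ and $\xi$ of high enough virtual rank to keep us inside $I^d$, after which the heart condition again kills the class.

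For part (2), I would first show that $-\wedge\MGL$ is $t$-exact for the $I^d$-$t$-structure (since it takes Thom-spectrum generators to their analogous free $\MGL$-modules). This reduces the problem to understanding the $I^d$-heart inside $\MGL\Mod$: its generators are the free modules $\MGL\wedge\Th(\xi)$ on smooth proper $X$, whose $\MGL$-homology is controlled by $\MGL^{*,*}(X)$. The heart-orthogonality then forces the $\MGL$-homology of any heart object to be ``pure'' in the Chow-motivic sense, hence concentrated on the diagonal $a = 2b$ (via the pure $\MGL$-motive description of Theorem~\ref{thm:intro-heart}). The equality $\MGL_{2*, *}\tau^d_{=0}E = \MGL_{2*, *}E$ will then follow from the cofiber sequences producing $\tau^d_{=0}E$ out of $E$, since the discarded (positive and negative $I^d$-degree) terms contribute only off-diagonal $\MGL$-homology by the same argument. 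The main obstacle will be establishing $t$-exactness of $-\wedge\MGL$ for the $I^d$-$t$-structures and identifying the $I^d$-heart of $\MGL\Mod$ in terms of pure motives; once this Morita-theoretic picture is in hand, the diagonal concentration in part~(2) follows rather formally. A secondary difficulty is the ``filling-in'' step in part~(1) for the range $2w < s \le d$, which will require a careful use of Atiyah duality for smooth proper $X$.
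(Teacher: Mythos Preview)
There are two genuine gaps. For part~(1), your claim that $I^d = \SH(k)_{\ge d} \cap \SH(k)_{c\ge 0}$ is generated under colimits and extensions by suspended Thom spectra is unjustified: $I^d$ is defined as an \emph{intersection}, and the paper only shows it is presentable by an abstract argument, never producing explicit generators. More seriously, your handling of the range $2w < s \le d$ via Atiyah duality cannot work --- dualizing and twisting does not place the relevant sphere into $\Sigma I^d$. The paper's argument is entirely different: from the cofiber sequence $\tau^d_{\ge 1}E' \to E' \to \tau^d_{\le 0}E'$ one sees that $\tau^d_{\le 0}E' \in \SH(k)_{\ge d}$ (both outer terms lie there), and then the characterization of the \emph{homotopy} $t$-structure, not any orthogonality for $I^d$, forces $\pi_{s,w}=0$ in the remaining range $s-w \le d$. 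This interaction with the homotopy $t$-structure is the missing idea.

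For part~(2), your approach is circular: you invoke the pure $\MGL$-motive description of the Chow heart (Theorem~\ref{thm:intro-heart}), but that theorem is deduced from Theorem~\ref{thm:intro-main}, which in turn relies on the present proposition. Even setting circularity aside, Theorem~\ref{thm:intro-heart} describes the Chow heart, not the $I^d$-heart, and $-\wedge\MGL$ is not obviously left-$t$-exact for the $I^d$-structure (the twist $\Sigma^{2n,n}$ shifts homotopy-$t$-degree by $n$). The paper's route is direct and elementary: the equality $\MGL_{2*,*}\tau^d_{=0}E \cong \MGL_{2*,*}E$ comes from the two truncation cofiber sequences together with the Chow-degree vanishing of $\MGL$-homology for Chow-positive and Chow-negative objects (Proposition~\ref{prop:chow-MGL-homology}); the concentration in Chow degree $0$ then follows by combining $\tau^d_{=0}E \in \SH(k)_{c\ge 0}$ (giving $\ge 0$) with part~(1) and the pure-Tate cell structure of $\MGL$ (giving $\le 0$).
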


As a consequence of Proposition \ref{lemma:intro-key}, the Adams--Novikov spectral sequence for $\tau_{=0}^d E$ collapses and converges to $\pi_{*,*}\tau_{=0}^d E$. 
In other words, we have canonical isomorphisms $$\pi_{2w-s, w}(\tau^d_{=0}E)\cong \Ext^{s,2w}_{\MU_*\MU}(\MU_*,\MGL_{2*,*}E)$$
Since we can approximate $E_{c=0}$ as a colimit by $\tau^d_{=0}E$, the same result holds for the Chow $t$-structure.
This is exactly the statement of Theorem \ref{thm:intro-main}.

%%%%%%%%%%%%% end of strategy sections %%%%%%%%%%%%%%%%%%%

\subsection{Towards Computing Motivic Stable Homotopy Groups of Spheres} %\mzhouli{I think some background on motivic Steenrod algebra and homology of a point would be nice, before the discussion of Adams differentials. Maybe even before that, have a quick discussion on different strategies of computing motivic stable stems, such as the slice SS way and the Adams way. I will think more on this part.}

We now turn to computational applications of Theorems \ref{thm:intro-main} and \ref{thm:intro-cell-heart}. 

Computations of motivic stable homotopy groups of spheres is currently a very popular subject. Most results are in the following two categories, using mainly two different computational tools. 

The first type of results gives a description for certain bidegrees of motivic stable stems over an arbitrary base field $k$, in terms of number theoretical invariants of the base field $k$. Two major results in this direction are 
\begin{itemize}

\item Morel's work \cite{Morel0line} on the $0$-line, i.e., $\displaystyle \bigoplus_s \pi_{s,s}\1$, in terms of Milnor K-theory,

\item R{\"o}ndigs--Spitzweck--{\O}stv{\ae}r's more recent work \cite{rondigs2019first} on the $1$-line, i.e., $\displaystyle \bigoplus_s \pi_{s+1,s}\1$, in terms of hermitian and Milnor $K$-groups of the base field $k$ whose characteristic is not $2$.

\end{itemize}
These computations use the slice spectral sequence and the results are for the integral sphere $\1$.

The second type of results determine the motivic stable stems over a specific field $k$ in a reasonable but much larger range of bidegrees. The computational tool is the motivic Adams spectral sequence. These results provide $p$-primary information one prime at a time, where $p$ is different from the exponential characteristic of the base field $k$. Most notably, 
\begin{itemize}

\item over $\C$, Isaksen--Wang--Xu \cite{IWX} have computed $\displaystyle \bigoplus_w \pi_{s, w} \1^\comp_2$ for $s \leq 90$,

\item over $\mathbb{R}$, Belmont--Isaksen \cite{BelIsa} have computed $\displaystyle \bigoplus_w \pi_{s, w} \1^\comp_2$ for $s-w \leq 11$,

\item over finite fields, Wilson--\O stv\ae r \cite{WilsonOst} have computed $\pi_{s,0} \1^\comp_2$ for $s \leq 18$.
\end{itemize}

Our goal is to pursue the second direction using the motivic Adams spectral sequence, and to use the Chow $t$-structure to do computations over an arbitrary field $k$, so the $p$-primary computations can be achieved in a large and reasonable range.

The computation of the $E_2$-page of the motivic Adams spectral sequence depends on understanding the motivic homology of a point and the motivic Steenrod algebra action on it. The mod $p$ homology of a point is computed in terms of Milnor $K$-theory by work of Voevodsky \cite{voevodsky2003motivic, voevodsky2011motivic} when the base field contains a primitive $p$-th root of unity, and has characteristic coprime to $p$. The motivic Steenrod algebra is computed by work of Voevodsky \cite{voevodsky2003reduced} and Hoyois--Kelly--\O stv\ae r \cite{Hoyois2017}. 
%More precisely, it has the structure of a Hopf algebroid instead of an algebra. 
With the knowledge of the structure of the motivic Steenrod algebra, the motivic Adams $E_2$-page can be fully determined by a purely homological algebra computation. 

The next step, which is also the hardest part, is to determine the motivic Adams differentials. According to the Mahowald Uncertainty Principle \cite{Goerss, Xutalk}, any Adams type spectral sequence that converges to the stable homotopy groups of spheres has infinitely many nonzero differentials after the $E_2$-page, and any method that is used to compute these nonzero differentials leaves infinitely many unsolved by that method. The spirit of the Mahowald Uncertainty Principle is that we need to combine all known methods together to push forward these computations.

Nevertheless, classically and over the base field $\C$, Gheorghe--Wang--Xu \cite{GWX} has developed a new method to compute the motivic Adams differentials, which is so far the most efficient one. The most crucial part of this method is to obtain the following isomorphism of two spectral sequences, using an equivalence of two stable $\infty$-categories.

\begin{theorem}[Theorem~1.3 of \cite{GWX}] \label{intro GWX iso of ss}
For each prime p, there is an isomorphism of spectral sequences, between the $\textup{H}\Z/p$-based motivic Adams spectral sequence for $\1^\comp_p/\tau$, and the algebraic Novikov spectral sequence for $\BP_{2*}$.  
\end{theorem}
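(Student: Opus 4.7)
The plan is to deduce the isomorphism by transporting both spectral sequences across the symmetric monoidal equivalence of stable $\infty$-categories furnished by Theorem~\ref{thm:intro-cell-heart}. First, using Remark~\ref{rmk:ring-justn} to identify $\1^\comp_p/\tau$ with the $p$-complete cellularization of $\1_{c=0}$, a $p$-completed version of Theorem~\ref{thm:intro-cell-heart}, together with the splitting of $\MU_{(p)}$ as a wedge of suspensions of $\BP$, yields a symmetric monoidal equivalence $(\1^\comp_p/\tau)\Mod^{\cell} \wequi \Stable(\BP_{2*}\BP)^\comp_p$. Since such an equivalence preserves cosimplicial ring objects and their associated $\Tot$ spectral sequences, it suffices to identify the cosimplicial objects giving rise to each spectral sequence on the two sides.

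On the motivic side, the $\textup{H}\Z/p$-based motivic Adams spectral sequence for $\1^\comp_p/\tau$ is the $\Tot$ spectral sequence of the cosimplicial cobar construction built from the unit $\1^\comp_p/\tau \to \textup{H}\Z/p$. On the algebraic side, the algebraic Novikov spectral sequence for $\BP_{2*}$ is by definition the $\Tot$ spectral sequence of the cobar construction on the augmentation $\BP_*\to \F_p$, regarded as a cosimplicial commutative algebra in $\Stable(\BP_{2*}\BP)^\comp_p$. The theorem therefore reduces to showing that the equivalence above carries $\textup{H}\Z/p$ to $\F_p$ as an $\scr E_\infty$-algebra, for then the induced map of cobar constructions gives the desired isomorphism of Tot spectral sequences.

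To effect this last identification, I would first compute $\pi_{*,*}(\textup{H}\Z/p \wedge \1^\comp_p/\tau)$ via Theorem~\ref{thm:intro-main}, which expresses it as $\Ext^{*,*}_{\MU_{2*}\MU}(\MU_{2*}, \MU_{2*}/I)^\comp_p$ and matches the bigraded homotopy of $\F_p$ on the algebraic side. The main obstacle is then to upgrade this homotopy-level agreement to a coherent equivalence of $\scr E_\infty$-ring spectra (and in fact of cosimplicial cobar objects), since the spectral sequence statement involves not just the $E_2$-page but all higher differentials and the multiplicative structure. I expect this upgrade to proceed by characterizing both $\textup{H}\Z/p$ and $\F_p$ as the $0$-truncations of $\1/p$ for the respective Chow $t$-structures, so that the symmetric monoidal $t$-exact equivalence automatically sends one to the other as $\scr E_\infty$-rings, completing the proof.
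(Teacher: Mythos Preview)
Your overall strategy is essentially the paper's: transport the cosimplicial cobar resolution across the symmetric monoidal equivalence $\1_{c=0}\Mod^{\cell}\simeq\Stable(\MU_{2*}\MU)$ (Corollary~\ref{cor:wcellular_heart_ordinary_cellular}), and observe that both spectral sequences arise as $\Tot$ spectral sequences of matching cobar constructions. The paper does exactly this in the proof of Theorem~\ref{thm: iso of ss_cellular}, of which the statement above is the special case $F=\1_{c=0}$ over $\C$ (after $p$-completion and passage from $\MU$ to $\BP$, see the discussion before Corollary~\ref{algNSS BP}).

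The gap is in your final identification. You propose to pin down $\textup{H}\Z/p$ on the algebraic side by characterising it as the Chow $0$-truncation of $\1/p$. This fails: $\1/p$ already lies in the Chow heart (its $\MGL$-homology is $\MU_{2*}/p$, concentrated in Chow degree $0$), so $(\1/p)_{c=0}\simeq \1/p$ corresponds to the comodule $\MU_{2*}/p$, not to $\F_p$ or to $\MU_{2*}\MU/(p,a_1,a_2,\dots)$. In particular $\textup{H}\Z/p\wedge\1_{c=0}$ is \emph{not} $(\1/p)_{c=0}$. Relatedly, note that under the equivalence, $\textup{H}\Z/p\wedge\1_{c=0}$ corresponds to $H=\MU_{2*}\MU/(p,a_1,a_2,\dots)$ (equivalently $\BP_{2*}\BP/I$), not to the comodule $\F_p$; so the target cobar construction should be built on $H$, not on the augmentation $\BP_{2*}\to\F_p$ as you wrote.

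The paper sidesteps this issue entirely: it invokes the Hopkins--Morel isomorphism $\textup{H}\Z\simeq\MGL/(a_1,a_2,\dots)$ to see directly that $\1_{c=0}\wedge\textup{H}\Z/p\simeq\1_{c=0}\wedge\MGL/(p,a_1,a_2,\dots)$ is cellular over $\1_{c=0}$ and corresponds to $H$ under the equivalence. This is an equivalence of $\scr E_\infty$-rings since Hopkins--Morel is, so the cobar constructions match on the nose and no further ``upgrade'' argument is needed.
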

Since the algebraic Novikov spectral sequence is a purely algebraic spectral sequence, and can be computed in a large range by an automated computer program \cite{IWX, IWX2}, Theorem~\ref{intro GWX iso of ss} allows us to obtain nonzero Adams differentials for free.

%Its application in the computation of the classical Adams spectral sequence has greatly extended the known result of the stable stems. 

Using Theorems~\ref{intro:thm_algebraicity_of_1_module} and \ref{thm:intro-cell-heart}, we generalize Theorem~\ref{intro GWX iso of ss} to the following Theorem~\ref{thm:intro-iso-of-ss} that works for any spectrum $F$ in $\SH(k)^\heart$ and over an arbitrary base field $k$. This allows us to obtain nonzero Adams differentials for various spectra for free in a similar fashion to the case over $\C$ but over an arbitrary base field $k$.

%As a generalization of \cite{GWX}, our work applies to the determination of motivic Adams differentials over more general base fields.

In the discussion below, we again implicitly invert the exponential characteristic $e$ of the base field throughout to ease notation.

Let $\textup{H}\Z/p$ denote the motivic Eilenberg-MacLane spectrum. Under the equivalence in Theorem~\ref{thm:intro-cell-heart}
$$\1_{c=0}\Mod^{\cell}\simeq \Stable(\MU_{2*}\MU),$$
the spectrum $\textup{H}\Z/p \wedge \1_{c=0}$ corresponds to the $\MU_{2*}\MU$-comodule $H = \MU_{2*}\MU/(p,a_1,a_2,\dots)$. The equivalence in Theorem \ref{thm:intro-heart} gives an equivalence between the $\textup{H}\Z/p$-based motivic Adams spectral sequences, and the $H$-based Adams type spectral sequences in the category of stable $\MU_{2*}\MU$-comodules.

\begin{theorem}[see Theorem \ref{thm: iso of ss_cellular} and \ref{thm: iso of ss}]\label{thm:intro-iso-of-ss}
Let $F \in \SH(k)^\heart$. %, or $F \in \SH(k)^{\cell,\heart}$.
Let $M = \MGL_{2*,*}F$ be the associated $\MU_{2*}\MU$-comodule.
Then the trigraded motivic Adams spectral sequence for $F$ based on $\textup{H}\Z/p$ is isomorphic (with all higher and multiplicative structure) to the trigraded %$H$-based Adams spectral sequence in $\Stable(\MU_{2*}\MU)$ for $M$, i.e. the 
algebraic Novikov spectral sequence based on $H$ for $M$.
\end{theorem}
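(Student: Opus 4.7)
The plan is to exhibit both spectral sequences as arising from a single cosimplicial object, viewed on one side in $\SH(k)$ and on the other in an algebraic category via the symmetric monoidal equivalence of Theorem~\ref{intro:thm_algebraicity_of_1_module} (or Theorem~\ref{thm:intro-cell-heart} in the cellular case). Since a spectral sequence arising from the $\Tot$-tower of a cosimplicial object in a stable $\infty$-category, together with all its multiplicative and higher structure, is canonically determined by that cosimplicial object, matching the two cosimplicial objects under a symmetric monoidal equivalence suffices.

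To carry this out, first observe that since $F \in \SH(k)^\heart$, it is canonically a module over $\1_{c=0}$, so the cosimplicial object producing the $\textup{H}\Z/p$-based motivic Adams spectral sequence can be rewritten as
\[ (\textup{H}\Z/p)^{\wedge(\bullet+1)} \wedge F \;\simeq\; A^{\wedge_{\1_{c=0}}(\bullet+1)} \wedge_{\1_{c=0}} F, \qquad A := \textup{H}\Z/p \wedge \1_{c=0}, \]
a cosimplicial object in $\1_{c=0}\Mod[1/e]$ whose associated spectral sequence is unchanged. Next, apply the symmetric monoidal equivalence to transport this cosimplicial object into $\Stable(\MU_{2*}\MU\CoMod)[1/e]$ (or its appropriate generalization supplied by Theorem~\ref{thm:intro-heart}). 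Under this equivalence, the paragraph preceding the statement of the theorem identifies $A$ with the algebra $H = \MU_{2*}\MU/(p, a_1, a_2, \ldots)$, while $F$ corresponds to $M = \MGL_{2*,*}F$. Hence the cosimplicial object is sent to $H^{\otimes(\bullet+1)} \otimes M$, whose associated spectral sequence is by construction the $H$-based, i.e., algebraic Novikov, spectral sequence for $M$.

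The main obstacle will be promoting the identification $A \leftrightarrow H$ from an equivalence of underlying comodules to one of $\scr E_\infty$-algebras, since the multiplicative and higher structure of the motivic Adams spectral sequence depends on this upgrade. The symmetric monoidal character of the equivalence transports the $\scr E_\infty$-structure on $A$ automatically; the task is then to identify the resulting algebra with $H$ equipped with its classical ring structure, which can be checked by computing $\pi_{*,*}(A \wedge_{\1_{c=0}} A)$ via Theorem~\ref{thm:intro-main} and comparing with the classical computation of $\BP_*\textup{H}\F_p$ as a Hopf algebroid. A secondary bookkeeping step is to verify that the trigradings on each side---(Adams filtration, stem, motivic weight) versus (Adams filtration, stem, internal degree)---correspond as claimed, which is immediate from the bigrading conventions established in Theorem~\ref{thm:intro-main}.
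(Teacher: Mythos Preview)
Your proposal is essentially the paper's argument: rewrite the $\textup{H}\Z/p$-cobar resolution inside $\1_{c=0}\Mod$ using that $F$ is a $\1_{c=0}$-module, and then transport via the symmetric monoidal equivalence to obtain $M \wedge \CB(H)$ in $\Stable(\MU_{2*}\MU)$. The paper's proof of Theorem~\ref{thm: iso of ss_cellular} is exactly this chain of equivalences of graded cosimplicial spectra.

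Two remarks on points where you diverge slightly. For general $F \in \SH(k)^\heart$, the paper does \emph{not} attempt to use the full equivalence $\1_{c=0}\Mod \simeq \Stable(\SH(k)^\heart)$ as you suggest; under that equivalence $F$ corresponds to the richer data of Theorem~\ref{thm:intro-heart}, not to the bare comodule $M$, so one would not directly land in $\Stable(\MU_{2*}\MU)$. Instead the paper observes (Theorem~\ref{thm: iso of ss}) that since $\textup{H}\Z/p$ is cellular, the motivic Adams spectral sequence for $F$ agrees with that for $F^\cell$, reducing immediately to the cellular case where your argument goes through verbatim.

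Your anticipated ``main obstacle'' regarding the $\scr E_\infty$-structure on $H$ is not treated as one in the paper. The symmetric monoidal equivalence transports the algebra structure on $A = \1_{c=0} \wedge \textup{H}\Z/p$ automatically, and the Hopkins--Morel presentation $\textup{H}\Z/p \simeq \MGL/(p,a_1,a_2,\dots)$ is already an identification of ring spectra; the image is therefore the quotient ring $\MU_{2*}\MU/(p,a_1,a_2,\dots)$ with no further verification needed. The cosimplicial cobar construction and all multiplicative structure are carried along for free by symmetric monoidality.
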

See Definition \ref{def: iso of ss} for the algebraic Novikov spectral sequence for an object $M$ in $\Stable(\MU_{2*}\MU)$ based on a commutative monoid $H$.

%\begin{remark}
%We can further compare the $H$-based Adams spectral sequence in the category of stable $\MU_{2*}\MU$-comodules with another familiar algebraic spectral sequence, obtained by filtering by powers of the ideal $I=(p,v_1,\dots)$.
%We deal with one prime at a time. There is a canonical isomorphism between $\MU_*\MU\CoMod$ and $\BP_*\BP\CoMod$. Assuming the Levin index one condition (see Theorem \ref{thm:algNSS}), we get an isomorphism between the two spectral sequences.

%In this work, we call the $H$-based Adams spectral sequence the \emph{algebraic Novikov spectral sequence}.
%The same name is often given to the spectral sequence obtained by filtering by powers of $I$; since they often agree we hope that this will not cause confusion.
%\end{remark}
Now we discuss how to compute the motivic homotopy groups of the $p$-completion of a general motivic spectrum $X$ with the property that $X \simeq X_{c \geq 0}$ using Theorem~\ref{thm:intro-iso-of-ss}. Note that the motivic sphere spectrum is such an example. Consider its Postnikov--Whitehead tower with respect to the Chow $t$-structure.

\begin{center}
\begin{tikzcd}
& \arrow[d] &\\
& {X}_{c \geq 2} \arrow[d] \arrow[r] & {X}_{c=2}   \\
& {X}_{c \geq 1} \arrow[d] \arrow[r] & {X}_{c=1}  \\
X \arrow[r, equal] & X_{c \geq 0} \arrow[r] & {X}_{c=0}
\end{tikzcd}
\end{center}
For every term $X_{c=n}$ in the Postnikov--Whitehead tower, consider the motivic Adams spectral sequences based on $\textup{H}\Z/p$ and the induced maps among them. This gives us the following tower of motivic Adams spectral sequences.
\begin{center}
\begin{tikzcd}
& \arrow[d] &\\
& \mathbf{motASS}({X}_{c \geq 2}) \arrow[d] \arrow[r] & \mathbf{motASS}({X}_{c=2}) \arrow[r, equal] & \mathbf{algNSS}(\MGL_{{2*+2, *}}X)   \\
& \mathbf{motASS}({X}_{c \geq 1}) \arrow[d] \arrow[r] & \mathbf{motASS}({X}_{c=1}) \arrow[r, equal] & \mathbf{algNSS}(\MGL_{{2*+1,*}}X)  \\
\mathbf{motASS}({X}) \arrow[r, equal] & \mathbf{motASS}({X_{c \geq 0}}) \arrow[r] & \mathbf{motASS}({X}_{c=0}) \arrow[r, equal] & \mathbf{algNSS}(\MGL_{{2*, *}}X)
\end{tikzcd}
\end{center}
For every comodule $\MGL_{{2*+n,*}}X$, its algebraic Novikov spectral sequence can be computed in a large range by a computer program. This gives many nonzero differentials in the motivic Adams spectral sequence for ${X}_{c=n}$. We may then pullback these Adams differentials to the motivic Adams spectral sequences for ${X}_{c \geq n}$, and push forward them to the motivic Adams spectral sequences for ${X}$.

The strategy for computing motivic Adams differentials for $X$ can be summarized in the following steps. See \S\ref{strategy} for more details.

\begin{enumerate}

\item Compute $\Ext_A^{*,*,*}(\textup{H}\Z/p_{*,*}, \textup{H}\Z/p_{*,*}X)$ over the $k$-motivic Steenrod algebra by a computer program. These $\Ext$-groups consist of the $E_2$-page of the motivic Adams spectral sequence for ${X}$ based on $\textup{H}\Z/p$.

\item Compute by a computer program the algebraic Novikov spectral sequence based on $\BP_{2*}\BP/I$ for the $\BP_{2*}\BP$-comodules $\BPGL_{2*+n,*}X$ for every $n$ in a reasonable range. 

\item Identify the $k$-motivic Adams spectral sequence based on $\textup{H}\Z/p$ for each $X_{c=n}$, with the algebraic Novikov spectral sequence based on $\BP_{2*}\BP/I$ for $\BPGL_{2*+n,*}X$, for $n \geq 0$. This computation includes an identification of the abutments with the multiplicative (and higher) structures.

\item Compute the mod $p$ motivic homology of $X_{c \geq n}$ using the universal coefficient spectral sequence (see Propositions~7.7 and 7.10 of \cite{dugger2005motivic}).
$$\bigoplus_{k \geq n}\textup{Tor}_{*,*}^{\BP_{2*}}(\BPGL_{2*+k,*}X, \ \Z/p) \implies \textup{H}\Z/p_{*,*} X_{c \geq n}  .$$

\item Compute by a computer program the $E_2$-pages of the motivic Adams spectral sequence for ${X_{c \geq n}}$ based on $\textup{H}\Z/p$, using the computation of $\textup{H}\Z/p_{*,*}X_{c \geq n}$ in step (4).

\item Pull back motivic Adams differentials for $X_{c=n}$ to motivic Adams differentials for $X_{c \geq n}$, and then push forward to motivic Adams differentials for $X$.

\item Deduce additional Adams differentials for $X$ with a variety of ad hoc arguments. The most important methods are Toda bracket shuffles and comparison to known results in the $\C$-motivic Adams spectral sequence.

\end{enumerate}

The inputs of our strategy are $\textup{H}\Z/p_{*,*}X$ and $\MGL_{*,*}X$ (or $\BPGL_{*,*}X$). In the case of the sphere spectrum $X = \1$, it is more plausible to know $\pi_{*,*} \BPGL_p^\comp$ (over some fields) rather than $\BPGL_{*,*}$. In this case, we modify our strategy by using pro-objects $(\1/p^n)_n$ as a substitute. See \S\ref{CTS ASS} for more details.

%For the motivic sphere spectrum $\1$, we consider the Postnikov--Whitehead tower associated to the Chow $t$-structure.

%\begin{center}
%\begin{tikzcd}
%& \arrow[d] &\\
%& \1_{c \geq 2} \arrow[d] \arrow[r] & \1_{c=2}   \\
%& \1_{c \geq 1} \arrow[d] \arrow[r] & \1_{c=1}  \\
%\1\arrow[r, equal] & \1 \arrow[r] & \1_{c=0}
%\end{tikzcd}
%\end{center}

%For the $n$-th piece $\1_{c=n}$, we can fully determine its motivic Adams spectral sequence by computing corresponding spectral sequence in the stable category. Since the latter is purely algebraic, its computation are much more accessible: it can be done by computer programs. This gives full information about the motivic Adams spectral sequence of each piece $E_{c=n}$ within a reasonable range.

%Each Chow degree $n$ piece $\1_{c=n}$ connects to $\1$ via a zig-zag map $\1\leftarrow \1_{c\geq n}\to \1_{c=n}$. Naturality of spectral sequence maps allows us to obtain differential information in the motivic Adams spectral sequence of $\1$ from that of $\1_{c=n}$. This is the philosophy adapted in \cite{GWX}; when the base field is $\C$, we recover \cite{GWX}. With our generalization, we can apply the same philosophy for a more general base field.

As an example, we can show that over finite fields, our method gives proofs of previously undetermined Adams differentials in \cite{WilsonOst}. See \S\ref{finite fields} for more details.

%The motivic computation also has applications in the classical stable homotopy category. We can take the Betti realization functor from the motivic stable homotopy category to the classical stable homotopy category. By naturality, we get differential information in the classical Adams spectral sequence.

%We believe by working over base fields other than just $\C$, we can get more information about the Adams spectral sequence of the sphere spectrum. In fact, over $\C$, the Chow degree $n$ piece is $\Sigma^{0,-n}$ suspension of $\cs_{c=0}$ for each $n$. Therefore, each level of the Chow tower gives the same information. However, things are different when we work in other fields. For example, in the case when $k=\R$, Chow degree pieces are very different from each other (see discussion in \S\ref{subsec:real-computation}). The algebraic Novikov spectral sequence for each level gives different motivic Adams spectral sequences data. We have good reason to expect these data from different pieces give more information.

\subsection{Organization}
The rest of this article is organized as follows. In \S2, we define the Chow $t$-structure and discuss some of its basic properties. In \S3, we discuss the connection between the Chow $t$-structure and the homotopy $t$-structure. In particular, we prove one of our main theorems, Theorem~\ref{thm:intro-main} as Theorem~\ref{thm:main-computation}. In \S4, we use the theory of comonadic descent, the Barr--Beck--Lurie theorem and spectral Morita theory to give a concrete description of the Chow heart and to prove the algebraicity of the stable $\infty$-category $\1_{c=0}\Mod[1/e]$. In particular, we prove Theorems~\ref{thm:intro-heart}, \ref{intro:thm_algebraicity_of_1_module}, \ref{thm:intro-cell-heart} and \ref{cor:intro_wcellular_ext}. In \S5, we prove Theorem~\ref{thm:intro-iso-of-ss} and discuss applications toward computing motivic stable homotopy groups of spheres. In Appendices A, B and C, we recall well-known results regarding a cell structure of $\MGL$, a vanishing result of $\MGL$, and motivic cohomology of a point.

\subsection{Conventions and notation}
Following e.g. \cite[Definition 2.1]{nikolaus2017presentably}, we call a symmetric monoidal $\infty$-category $\scr C$ \emph{presentably symmetric monoidal} if $\scr C$ is presentable and the tensor product preserves colimits in each variable separately.

Given a morphism of schemes $f: X \to Y$, we denote by $f^*: \SH(Y) \to \SH(X)$ the induced base change functor, and by $f_*: \SH(X) \to \SH(Y)$ its right adjoint.
If $f$ is smooth, then $f^*$ has a left adjoint which we denote by $f_\#$.

Here is a table of further notation that we employ.

\NB{there is a tendency for this table to float away}
\NB{order}
\begin{longtable}{lll}
% general category theory notation
$\imap(\ph,\ph)$ & internal mapping object \\
$\Spc$ & $\infty$-category of spaces \\
$\SH$ & $\infty$-category of spectra \\
$\Ab$ & $1$-category of abelian groups \\
$\PSh_\Sigma, \PSh_\SH, \PSh_\Ab$ & product-preserving presheaves & \S\ref{subsub:spectral-morita} \\
$\CB(A)$ & cobar construction & \cite[Construction 2.7]{mathew2017nilpotence} \\
$A\Mod$ & $\infty$-category of modules over monoid $A$ & \cite[Definition 4.2.1.13]{HA} \\
$C\CoMod$ & $\infty$-category of comodules over comonad $C$ & \cite[Definition 4.2.1.13]{HA} \\
$\Stable(C)$ & stable category of comodules & \S\ref{subsub:compact-gen} \\
$\Alg(\ph), \CAlg(\ph)$ & (commutative) algebra objects  & \\
$\MU$ & complex cobordism spectrum \\
$\Map(\ph, \ph)$ & mapping space in an $\infty$-category \\
$\map(\ph, \ph)$ & mapping spectrum in a stable $\infty$-category \\
$[\ph,\ph]$ & homotopy classes of maps, i.e. $\pi_0\Map(\ph,\ph)$ \\

% algebraic geometry notation
$\scr O, \scr O_X$ & trivial line bundle \\
$n\scr O, \scr O^n$ & trivial vector bundle of rank $n$ \\

% general motivic homotopy theory notation
$\Sm_S$ & category of smooth, finite type $S$-schemes \\
$\Spc(S)$ & $\infty$-category of motivic spaces over $S$ & \cite[\S2.2]{bachmann-norms} \\
$\SH(S)$ & $\infty$-category of motivic spectra over $S$ & \cite[\S4.1]{bachmann-norms} \\
$S^{p,q}$ & bigraded sphere $S^{p-q} \wedge \Gmp{q}$ \\
$\Sigma^{p,q}$ & functor of suspension by $S^{p,q}$ \\
$\Th(\xi) = \Th_S(\xi) \in \SH(S)$ & Thom spectrum of $\xi \in K(X), X \in \Sm_S$ & \cite[\S16.2]{bachmann-norms} \\
$\MGL$ & algebraic cobordism motivic spectrum & \cite[\S16.2]{bachmann-norms} \\
$E_a^\comp$ & $a$-completion of a (motivic) spectrum & \\
$E_\MGL^\comp$ & $\MGL$-nilpotent completion & \cite[\S7.1]{mantovani2021localizations} \\
$L_\MGL E$ & $\MGL$-localization & \cite[\S2.4]{mantovani2021localizations} \\
$\SH(S)_{\ge 0}$, $\SH(S)_{\le 0}$ & homotopy $t$-structure & \cite[\S B]{bachmann-norms} \\
$\pi_{p,q} E$ & bigraded homotopy groups \\
$\ul{\pi}_{p,q} E$ & bigraded homotopy sheaves \\

% notation specific to this paper
$\SH(S)_{c \ge 0}$, $\SH(S)_{c \le 0}$ & Chow $t$-structure & Definition \ref{def:chow-t} \\
$\SH(S)^\pure$ & & Definition \ref{def:SH-pure} \\
$\SH(S)^\rig$  & & Definition \ref{def:SH-rig} \\
$\SH(S)^{\pure\tate\ge d}$  & & \S\ref{app:cells} \\
$c(p,q)$ & Chow degree & Definition \ref{def:chow-degree} \\
\end{longtable}

\subsection{Acknowledgements}
We thank Marc Levine and the University of Duisburg-Essen for the conference SPP 1786 Jahrestagung in 2019, where part of this collaboration was started. We thank Mark Behrens, Robert Burklund, Lars Hesselholt, Dan Isaksen, Fangzhou Jin, Haynes Miller, 
Markus Spitzweck,
Glen Wilson,
Paul Arne {\O}stv{\ae}r
for conversations regarding this project. This material is based upon work supported by the National Science Foundation under Grant No. DMS-1926686. 
The third author was partially supported by grant NSFC-11801082 and the Shanghai Rising-Star Program under agreement No. 20QA1401600. The fourth author was partially supported by the National Science Foundation under Grant No. DMS-2105462.

\section{Elementary properties}
Let $S$ be a scheme.

\begin{definition} \label{def:chow-t}
Denote by $\SH(S)_{c \ge 0}$ the subcategory generated under colimits and extensions by motivic Thom spectra $\Th(\xi)$ for $X$ smooth and proper\NB{think about whether this could say projective} over $S$ and $\xi \in K(X)$ arbitrary.
This is the non-negative part of a $t$-structure on $\SH(S)$ \cite[Proposition 1.4.4.11(2)]{HA} called the \emph{Chow $t$-structure}.
We denote the non-positive part by $\SH(S)_{c \le 0}$, the heart by $\SH(S)^\heart$ and write \[ E \mapsto E_{c \ge 0}, E_{c \le 0}, E_{c = 0} \] for the truncation functors.
We also put $\SH(S)_{c \ge n} = \Sigma^n \SH(S)_{c \ge 0}$ and define $\SH(S)_{c \le n}, E_{c\ge n}$ etc. similarly.
\end{definition}

\begin{example}\label{ex:spheres_in_gezero}
	By definition, for any $n\in \mathbb{Z}$,  the bigraded sphere $S^{2n,n}\simeq \Th(n\scr O)$ is in $\SH(S)_{c \ge 0}$. 
\end{example}

\begin{remark}
\label{rem:chow-t-Amod}
For any ring spectrum $A \in \CAlg(\SH(S))$, we can define a Chow $t$-structure on the category of $A$-modules. Its non-negative part is the subcategory generated under colimits and extensions by the free $A$-modules $A\wedge \Th(\xi)$ for $X$ smooth and proper over $S$ and $\xi\in K(X)$.

%For $S=Spec(k)$ where $k$ is a field, when $A=\MGL$ or $A=H\Z$, these $t$-structures have been studied by Bondarko \cite{TODO}.
%For these two ring spectra, the analysis is simplified strongly by the fact that there are adjacent weight structures; this is essentially the content of \S\ref{app:vanishing}.
%In fact, one may show that the functors \[ \wedge_{\MGL} \tau_{c\le 0}\MGL: \MGL\Mod \to \tau_{c\le 0}(\MGL)\Mod \] and \[ \wedge_{H\Z} \tau_{c\le 0}H\Z: H\Z\Mod \to \tau_{c\le 0}(H\Z)\Mod \] are equivalent to Bondarko's \emph{weight complex functors} \cite{TODO}.
\end{remark}

\subsection{First properties}
\begin{lemma} \label{lemm:t-structure-nonpos} \NB{surely this must be standard?!?}
Let $\scr C$ be a stable presentable category, $\scr S \subset \scr C$ a set of objects and denote by $\scr C_{\ge 0} \subset \scr C$ the subcategory generated under colimits and extensions by $\scr S$.
Then $E \in \scr C_{\le 0}$ if and only if $[\Sigma^i S, E] = 0$ for all $S \in \scr S$ and $i > 0$.
\end{lemma}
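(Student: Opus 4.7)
The plan is to unravel the definition of the $t$-structure and then run a standard closure argument. By \cite[Proposition 1.4.4.11]{HA}, $E \in \scr C_{\le 0}$ if and only if $[Y, E] = 0$ for every $Y \in \scr C_{\ge 1} := \Sigma \scr C_{\ge 0}$, equivalently $[\Sigma Y, E] = 0$ for every $Y \in \scr C_{\ge 0}$. Because $\scr C_{\ge 0}$ is closed under all small colimits it is in particular closed under suspension (the suspension $\Sigma X$ is the pushout of $0 \leftarrow X \to 0$ in a stable category), so $\Sigma^i S \in \scr C_{\ge 1}$ for every $S \in \scr S$ and $i \ge 1$. This immediately gives the ``only if'' direction.

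For the ``if'' direction, I would rephrase the hypothesis as follows: for each $S \in \scr S$, the mapping spectrum $\map(S, E)$ has vanishing positive homotopy groups, since $\pi_i \map(S, E) = [\Sigma^i S, E] = 0$ for $i \ge 1$. Let
\[ \scr D := \{Y \in \scr C : \pi_i \map(Y, E) = 0 \text{ for all } i \ge 1\}. \]
Then (i) $\scr D \supset \scr S$ by assumption; (ii) $\scr D$ is closed under extensions, because a cofiber sequence $Y' \to Y \to Y''$ produces a fiber sequence of mapping spectra whose long exact sequence of homotopy groups forces vanishing in the middle if it holds on the outside; and (iii) $\scr D$ is closed under colimits, since $\map(\colim_\alpha Y_\alpha, E) = \lim_\alpha \map(Y_\alpha, E)$ is a limit of coconnective spectra, and coconnective spectra are closed under small limits (as the truncation functor $\tau_{\le 0}$ on spectra is a right adjoint). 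Hence $\scr D \supset \scr C_{\ge 0}$, which gives $[\Sigma Y, E] = 0$ for every $Y \in \scr C_{\ge 0}$ and thus $E \in \scr C_{\le 0}$.

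The key conceptual point, and the reason the hypothesis is stated for all $i > 0$ rather than only $i = 1$, is that in step (iii) the naive class of $Y$ with merely $[\Sigma Y, E] = 0$ need not be closed under cofiltered limits of mapping spectra, because $\pi_1$ does not in general commute with inverse limits. Working instead with the stronger coconnectivity condition defining $\scr D$ circumvents this issue, and there is no real obstacle beyond this observation.
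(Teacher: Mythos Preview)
Your proof is correct and essentially identical to the paper's. The paper phrases the auxiliary class as $\scr C_E = \{F : \Map(\Sigma F, E) = 0\}$ using the mapping \emph{space}, while you use $\scr D = \{Y : \map(Y, E) \in \SH_{\le 0}\}$ via the mapping \emph{spectrum}; these are the same class, and both of you then close under colimits and extensions to conclude.
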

\begin{proof}
By definition $E \in \scr C_{\le 0}$ if and only if $\Map(\Sigma X, E) = 0$ for all $X \in \scr C_{\ge 0}$ (see e.g. \cite[Remark 1.2.1.3]{HA}).
Necessity is thus clear.
To prove sufficiency, let $E \in \scr C$ satisfy the stated condition, and write $\scr C_E \subset \scr C$ for the subcategory of all $F \in \scr C$ such that $\Map(\Sigma F, E) = 0$.
Then $\scr S \subset \scr C_E$ by assumption, and $\scr C_E$ is closed under colimits and extensions.
Thus $\scr C_{\ge 0} \subset \scr C_E$, which concludes the proof.
\end{proof}

Thus given $E \in \SH(S)$ we have $E \in \SH(S)_{c \le 0}$ if and only if $[\Sigma^{i} \Th(\xi), E] = 0$ for all $i>0$ (and $\xi$ a $K$-theory point on a smooth proper scheme $X$ over $S$).
We will use this characterization without further comment from now on.

The category $\SH(S)$ is symmetric monoidal, so carries a notion of strong duals (see e.g. \cite{dold1983duality}).
The following is a straightforward generalization of
\cite[Theorem 2.2]{Riou04}. 
\begin{lemma} \NB{surely there must be a reference?!?}
\label{lemma:dualizability}
For $f: X \to S$ smooth and proper and $\xi \in K(X)$, the spectrum $\Th_S(\xi) \in \SH(S)$ is strongly dualizable with dual $\Th_S(-T_X - \xi)$, where $T_X$ denote the tangent bundle.
More generally, if $E \in \SH(X)$ is strongly dualizable with dual $DE$, then $f_\#(E)$ is strongly dualizable with dual $f_\#(\Th(-T_X)\wedge DE )$.
\end{lemma}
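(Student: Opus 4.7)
My plan is to deduce everything from Atiyah duality / the six-functor formalism for $\SH$, since both statements are essentially formal once one has the correct purity and projection isomorphisms in place.

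First I would prove the more general statement, since the Thom spectrum assertion follows from it. The key inputs are: for $f$ smooth and proper, the purity isomorphism gives $f^! \simeq \Th(T_f) \wedge f^*(-)$ and properness gives $f_! \simeq f_*$, so together $f_*(-) \simeq f_\#(\Th(-T_f) \wedge -)$ (equivalently $f_\#(-) \simeq f_*(\Th(T_f) \wedge -)$); and the smooth projection formula $f_\#(A \wedge f^*B) \simeq f_\#(A) \wedge B$. Given $E \in \SH(X)$ strongly dualizable with dual $DE$, I would compute
\[
\imap(f_\# E, F) \simeq f_* \imap(E, f^*F) \simeq f_*(DE \wedge f^*F)
\]
using the $f_\# \dashv f^*$ adjunction at the level of internal hom, together with strong dualizability of $E$. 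Rewriting $f_*$ via the formula above,
\[
f_*(DE \wedge f^*F) \simeq f_\#(\Th(-T_f) \wedge DE \wedge f^*F) \simeq f_\#(\Th(-T_f) \wedge DE) \wedge F,
\]
the last step by the projection formula. Since this is natural in $F$, $f_\# E$ is strongly dualizable with dual $f_\#(\Th(-T_f) \wedge DE)$, as claimed.

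For the first assertion, I would observe that $\Th_S(\xi) = f_\#(\Th_X(\xi))$ and that $\Th_X(\xi) \in \SH(X)$ is $\otimes$-invertible (thus strongly dualizable) with inverse/dual $\Th_X(-\xi)$. Applying the general statement gives dual $f_\#(\Th_X(-T_f) \wedge \Th_X(-\xi)) = f_\#(\Th_X(-T_f - \xi)) = \Th_S(-T_f - \xi)$, matching the statement once we interpret $T_X$ in the lemma as the relative tangent bundle $T_f$ (which coincides with $T_X$ when $S$ is a field, as in the main applications).

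I do not expect a real obstacle: every ingredient (smooth/proper purity, projection formula, the $f_\#$--$f^*$ adjunction at the level of internal hom) is a standard piece of the six-functor formalism for $\SH$ and can be quoted from, e.g., Hoyois or the monograph \cite{triangulated-mixed-motives} whose Proposition 2.4.31 the author cites as a template. The only mild care needed is to make sure we are working with the \emph{internal} hom (so that we get a natural isomorphism of functors rather than merely of mapping spaces), which is why one uses $f_*\imap(E, f^*F)$ instead of passing to global sections; this is automatic from the fact that $f_\# \dashv f^*$ is an internal adjunction in a closed symmetric monoidal setting.
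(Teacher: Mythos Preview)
Your proof is correct and essentially identical to the paper's: both reduce the first claim to the second via invertibility of $\Th_X(\xi)$, and both prove the second by the chain $\imap(f_\# E, F) \simeq f_*\imap(E, f^*F) \simeq f_*(DE \wedge f^*F) \simeq f_\#(\Th(-T_f)\wedge DE \wedge f^*F) \simeq f_\#(\Th(-T_f)\wedge DE)\wedge F$, citing the purity/ambidexterity relation between $f_*$ and $f_\#$ and the projection formula from Hoyois. Your remark that $T_X$ means the relative tangent bundle $T_f$ is also correct.
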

\begin{proof}
Since the Thom spectrum functor turns sums into smash products (essentially by construction, see e.g. \cite[\S16.2]{bachmann-norms}), $\Th(\xi) \in \SH(X)$ is strongly dualizable (in fact invertible) with dual $\Th(-\xi)$.
By definition we have $\Th_S = f_\# \circ \Th$; it thus suffices to prove the second statement.
This follows from the natural equivalences \begin{gather*} \imap(f_\#(E), \ph) \wequi f_* \imap(E, f^*(\ph)) \wequi f_*(DE \wedge f^*(\ph)) \\ \wequi f_\#(\Th(-T_X) \wedge DE \wedge f^*(\ph)) \wequi f_\#(\Th(-T_X) \wedge DE) \wedge \ph; \end{gather*} here we have used the relationship between $f^!$ and $f^*$ and $f_*$ \cite[Theorem 6.18(1,2)]{hoyois-equivariant}\NB{i.e. ambidexterity} and the projection formula \cite[Theorem 1.1(7)]{hoyois-equivariant}.
\end{proof}

\begin{remark} \label{rmk:gens}
\begin{enumerate}
\item Note that the spectra $\Th_S(\xi)$ are compact, being strongly dualizable in a category with compact unit (see e.g. \cite[Proposition 6.4(3)]{hoyois-equivariant} for the latter).
\NB{This is true more generally if $X$ is only smooth (not necessarily proper), since $f_\#$ always preserves compact objects...}

\item The dualizability of the generators is important for many of the properties of the Chow $t$-structure.
\end{enumerate}
\end{remark}

\subsection{Compatibility with filtered colimits}
\begin{proposition} \label{prop:filtered}
The Chow $t$-structure is compatible with filtered colimits: $\SH(S)_{c \le 0}$ is closed under filtered colimits (and so are $\SH(S)_{c \ge 0}$ and $\SH(S)_{c =0}$).
\end{proposition}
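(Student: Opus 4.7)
The plan is to use the characterization of $\SH(S)_{c\le 0}$ from Lemma \ref{lemm:t-structure-nonpos} together with the compactness of the generators noted in Remark \ref{rmk:gens}(1).

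First, closure of $\SH(S)_{c \ge 0}$ under filtered colimits is essentially by definition: it is generated under \emph{all} colimits (and extensions) by the Thom spectra $\Th(\xi)$, hence is in particular closed under filtered colimits.

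Next, for $\SH(S)_{c \le 0}$, I would invoke Lemma \ref{lemm:t-structure-nonpos}, which gives the pointwise characterization
\[ E \in \SH(S)_{c \le 0} \iff [\Sigma^i \Th(\xi), E] = 0 \text{ for all } i > 0 \text{ and generators } \Th(\xi). \]
Since each $\Th_S(\xi)$ is compact in $\SH(S)$ by Remark \ref{rmk:gens}(1), so is its suspension $\Sigma^i \Th(\xi)$. Hence the functor $[\Sigma^i \Th(\xi), \ph]$ commutes with filtered colimits, and the vanishing condition is preserved under filtered colimits in $\SH(S)$. This shows $\SH(S)_{c \le 0}$ is closed under filtered colimits.

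Finally, $\SH(S)_{c=0} = \SH(S)_{c \ge 0} \cap \SH(S)_{c \le 0}$, so closure under filtered colimits is immediate from the two previous points. There is no serious obstacle here: the whole proof reduces to the compactness of the generators, which is the entire reason this $t$-structure behaves well. The main thing to be careful about is simply the logical chain $\text{compact generators} \Rightarrow \text{pointwise characterization via Hom-vanishing} \Rightarrow \text{closure under filtered colimits}$.
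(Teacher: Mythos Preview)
Your proof is correct and follows essentially the same approach as the paper: both use the compactness of the generators (Remark \ref{rmk:gens}(1)) together with the characterization of $\SH(S)_{c\le 0}$ from Lemma \ref{lemm:t-structure-nonpos} to handle the non-positive part, observe that $\SH(S)_{c\ge 0}$ is closed under all colimits by definition, and conclude for the heart as the intersection.
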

\begin{proof}
The first claim is immediate from Remark \ref{rmk:gens}(1) (stating that $\Th(\xi)$ is compact) and Lemma \ref{lemm:t-structure-nonpos} (stating that $E \in \SH(S)_{c \le 0}$ if and only if $\Map(\Sigma \Th(\xi), E) \wequi 0$ for certain $X, \xi$).
By definition $\SH(S)_{c \ge 0}$ is even closed under all colimits, and finally $\SH(S)_{c =0}$ is closed under filtered colimits, being the intersection of $\SH(S)_{c \le 0}$ and $\SH(S)_{c \ge 0}$.
\end{proof}

\begin{corollary} \label{cor:filtered-trunctation}
The functor $E \mapsto E_{c \le 0}: \SH(S) \to \SH(S)$ preserves filtered colimits.
\end{corollary}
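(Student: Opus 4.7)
The plan is to deduce this directly from Proposition \ref{prop:filtered} using the defining fiber sequence of the truncation. Fix a filtered diagram $\{E_i\}_{i \in I}$ in $\SH(S)$ and consider the natural comparison map
\[ \colim_i (E_i)_{c \le 0} \to (\colim_i E_i)_{c \le 0}. \]
I want to show this is an equivalence.

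First I would take, for each $i$, the fiber sequence $(E_i)_{c \ge 1} \to E_i \to (E_i)_{c \le 0}$, and then pass to the filtered colimit over $i$. Since filtered colimits in $\SH(S)$ are exact, this produces a fiber sequence
\[ \colim_i (E_i)_{c \ge 1} \to \colim_i E_i \to \colim_i (E_i)_{c \le 0}. \]
Next, I would observe that this is the Chow truncation fiber sequence for $\colim_i E_i$, by verifying the two required endpoint properties. The left-hand term lies in $\SH(S)_{c \ge 1}$ because $\SH(S)_{c \ge 0}$ (and hence any shift such as $\SH(S)_{c \ge 1}$) is closed under arbitrary colimits by construction, this being the defining property of the non-negative part as generated under colimits. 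The right-hand term lies in $\SH(S)_{c \le 0}$ by Proposition \ref{prop:filtered}, which says precisely that $\SH(S)_{c \le 0}$ is closed under filtered colimits.

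Having verified both conditions, the uniqueness of the truncation triangle (functoriality of $(\ph)_{c \le 0}$) identifies $\colim_i (E_i)_{c \le 0}$ with $(\colim_i E_i)_{c \le 0}$ via the canonical comparison map, proving the claim. There is essentially no obstacle here: all the work has already been done in Proposition \ref{prop:filtered}, and this corollary is a formal consequence together with exactness of filtered colimits in a stable $\infty$-category. The same argument shows that $E \mapsto E_{c \ge 0}$ and $E \mapsto E_{c=0}$ also preserve filtered colimits, using the corresponding closure statements from Proposition \ref{prop:filtered}.
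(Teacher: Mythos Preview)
Your argument is correct. Both proofs rest on the same input, Proposition \ref{prop:filtered}, but the organization differs slightly. The paper factors the endofunctor as the composite
\[ \SH(S) \xrightarrow{\text{truncation}} \SH(S)_{c \le 0} \hookrightarrow \SH(S), \]
observes that the first arrow is a left adjoint (hence preserves all colimits) and the second preserves filtered colimits by Proposition \ref{prop:filtered}. Your route is more hands-on: you take the truncation fiber sequence termwise, pass to the filtered colimit, and then recognize the result as the truncation sequence for the colimit by checking the endpoint conditions directly. The factorization argument is a touch slicker and makes clear exactly which colimits each piece preserves, while your version has the advantage of yielding the analogous statements for $(\ph)_{c \ge 0}$ and $(\ph)_{c=0}$ in the same breath. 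Substantively there is nothing to choose between them.
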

\begin{proof}
The functor factors as $\SH(S) \to \SH(S)_{c \le 0} \hookrightarrow \SH(S)$; here the first functor is a left adjoint by definition, and the second one preserves filtered colimits by Proposition \ref{prop:filtered}.
\end{proof}

\subsection{Interaction with tensor products}

\begin{proposition} \label{prop:tensor} \NB{In fact compatible with the normed structure...}
The Chow $t$-structure is compatible with the symmetric monoidal structure, i.e. the non-negative part of the Chow $t$-structure is closed under taking smash products: \[ \SH(S)_{c \ge 0} \wedge \SH(S)_{c \ge 0} \subset \SH(S)_{c \ge 0}. \]
\end{proposition}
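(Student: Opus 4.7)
The plan is to reduce the inclusion to the generators. Since $\SH(S)$ is presentably symmetric monoidal, the functor $E\wedge(-)$ preserves colimits for any fixed $E$; being exact between stable categories, it also preserves cofiber sequences. As $\SH(S)_{c\ge 0}$ is closed under colimits and extensions by construction, the subcategory
\[ \{A\in \SH(S) \mid A\wedge \Th_S(\zeta)\in \SH(S)_{c\ge 0}\} \]
is itself closed under colimits and extensions for each fixed generator $\Th_S(\zeta)$. A standard two-step argument (vary one factor at a time) therefore reduces the claim to showing $\Th_S(\xi)\wedge\Th_S(\zeta)\in \SH(S)_{c\ge 0}$ whenever $f\colon X\to S$ and $g\colon Y\to S$ are smooth and proper, $\xi\in K(X)$, $\zeta\in K(Y)$.

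For this, I would form the Cartesian square
\[
\begin{CD}
X\times_S Y @>{g'}>> X \\
@V{f'}VV @V{f}VV \\
Y @>{g}>> S,
\end{CD}
\]
and combine the following standard six-functor ingredients: the identification $\Th_S(\xi)=f_\#\Th_X(\xi)$ used in the preceding lemma, the projection formula for the smooth morphisms $f$ and $g'$, smooth base change $f^*g_\#\simeq g'_\#f'^*$, and the compatibility of the Thom spectrum functor with pullback and with addition in $K$-theory. Tracking these through should yield
\[ \Th_S(\xi)\wedge\Th_S(\zeta)\simeq (fg')_\#\Th_{X\times_S Y}(g'^*\xi+f'^*\zeta)\simeq \Th_S(g'^*\xi+f'^*\zeta). \]

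Since smoothness and properness are each stable under base change and composition, $X\times_S Y$ is smooth and proper over $S$, and $g'^*\xi+f'^*\zeta$ is a $K$-theory point on it, so the right-hand side is by definition in $\SH(S)_{c\ge 0}$. I expect the main (and quite modest) obstacle to be purely bookkeeping: verifying that the reduction to generators really produces a subcategory closed under colimits and extensions, and keeping track of the base-change/projection-formula manipulations. Both are routine given the six-functor formalism for $\SH$ already invoked in the previous lemma, so no genuine difficulty should appear.
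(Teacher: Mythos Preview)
Your proposal is correct and follows essentially the same approach as the paper. The paper's proof is terser: it notes that presentable symmetric monoidality reduces the claim to generators, and then states $\Th(\xi)\wedge\Th(\zeta)\simeq\Th(\xi\times\zeta)$ with smooth proper schemes closed under products; your version simply spells out both steps in more detail (the two-step colimit/extension reduction and the six-functor computation of the smash product of Thom spectra).
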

\begin{proof}
Since $\SH(S)$ is presentably symmetric monoidal, it suffices to show that the generators of $\SH(S)_{c \ge 0}$ are closed under binary smash products.
This is clear since $\Th(\xi) \wedge \Th(\zeta) \wequi \Th(\xi \times \zeta)$, and smooth proper $S$-schemes are closed under fiber products over $S$.
\end{proof}

\begin{definition} \label{def:SH-pure}
Denote by $\SH(S)^\pure \subset \SH(S)$ the smallest subcategory that is closed under filtered colimits and extensions and contains $\Th(\xi)$ for any $K$-theory point $\xi$ on a smooth proper scheme $X$ over $S$.
\end{definition}
Note that by Definition~\ref{def:chow-t}, the subcategory $\SH(S)_{c \ge 0}$ is defined to be closed under arbitrary colimits. Thus $\SH(S)^\pure \subset \SH(S)_{c \ge 0}$.
By Proposition \ref{prop:tensor}, we have that $\SH(S)^\pure \wedge \SH(S)_{c \ge 0} \subset \SH(S)_{c \ge 0}$.
Interestingly, the same holds for the non-negative part of the Chow $t$-structure, essentially by duality (see Lemma \ref{lemma:dualizability}).

\begin{proposition} \label{prop:pure}
We have \[ \SH(S)^\pure \wedge \SH(S)_{c \le 0} \subset \SH(S)_{c \le 0}. \]
\end{proposition}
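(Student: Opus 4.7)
\textbf{Proof plan for Proposition \ref{prop:pure}.} Fix $E \in \SH(S)_{c \le 0}$ and consider the full subcategory
\[
\mathcal{C}_E := \{\, P \in \SH(S) \,:\, P \wedge E \in \SH(S)_{c \le 0}\,\} \subset \SH(S).
\]
The goal is to show $\SH(S)^\pure \subset \mathcal{C}_E$. By the definition of $\SH(S)^\pure$ (Definition \ref{def:SH-pure}), it suffices to verify three conditions: (i) $\mathcal{C}_E$ is closed under filtered colimits; (ii) $\mathcal{C}_E$ is closed under extensions; and (iii) $\mathcal{C}_E$ contains every Thom spectrum $\Th(\xi)$ for $\xi \in K(X)$ with $X$ smooth and proper over $S$.

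Conditions (i) and (ii) are formal consequences of established facts. Smashing with $E$ preserves filtered colimits and cofiber sequences (since $\SH(S)$ is presentably symmetric monoidal and stable), while $\SH(S)_{c \le 0}$ is closed under filtered colimits by Proposition \ref{prop:filtered} and closed under extensions as the non-positive part of a $t$-structure. Combining these observations gives (i) and (ii).

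The heart of the argument is condition (iii), which rests on the dualizability of the generators emphasized in Remark \ref{rmk:gens}(2). Let $\xi \in K(X)$ with $X$ smooth and proper over $S$. By the preceding lemma, $\Th(\xi)$ is strongly dualizable with dual $D\Th(\xi) \simeq \Th(-T_X - \xi)$. Hence
\[
\Th(\xi) \wedge E \;\simeq\; \imap\bigl(\Th(-T_X - \xi),\, E\bigr).
\]
To test membership in $\SH(S)_{c \le 0}$, let $\zeta \in K(Y)$ with $Y$ smooth proper over $S$ and $i > 0$; I would compute
\[
[\Sigma^i \Th(\zeta),\, \Th(\xi) \wedge E] \;\cong\; [\Sigma^i \Th(\zeta) \wedge \Th(-T_X - \xi),\, E].
\]
Since the Thom spectrum construction turns sums in $K$-theory into smash products, the left-hand factor is identified with $\Th_S(\zeta \boxplus (-T_X - \xi))$ on $Y \times_S X$, which is again smooth and proper over $S$. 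The assumption $E \in \SH(S)_{c \le 0}$ then forces this $\operatorname{Hom}$-group to vanish for $i > 0$, proving $\Th(\xi) \wedge E \in \SH(S)_{c \le 0}$.

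The only step requiring any care is the identification of the smash product of two Thom spectra with a single Thom spectrum on the product, and the observation that the exterior sum of $K$-theory classes on two smooth proper $S$-schemes yields a $K$-theory class on their fiber product, which remains smooth and proper. These are routine, so I do not anticipate a serious obstacle; the argument is essentially a clean application of Spanier--Whitehead duality to the generating family.
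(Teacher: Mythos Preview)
Your proposal is correct and matches the paper's proof essentially step for step: both reduce to the generators $\Th(\xi)$ via closure of $\SH(S)_{c\le 0}$ under filtered colimits and extensions, and then use strong dualizability of $\Th(\xi)$ to rewrite $[\Sigma^i\Th(\zeta),\Th(\xi)\wedge E]$ as a map out of a Thom spectrum on the smooth proper product $Y\times_S X$. The only cosmetic difference is that you fix $E$ and vary $P$, whereas the paper fixes the ``test'' subcategory $\scr C=\{Z:Z\wedge\SH(S)_{c\le 0}\subset\SH(S)_{c\le 0}\}$ and varies $E$ afterward.
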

\begin{proof}
Let $\scr C \subset \SH(S)$ denote the subcategory of all spectra $Z$ such that $$Z \wedge \SH(S)_{c \le 0} \subset \SH(S)_{c \le 0}.$$
Since $\SH(S)_{c \le 0}$ is closed under filtered colimits (by Proposition \ref{prop:filtered}) and extensions (as is the non-positive part of any $t$-structure), so is the category $\scr C$. Hence to show that $\SH(S)^\pure \subset \scr C$, it suffices to show that $\Th(\zeta)\in \scr C$ for any $K$-theory point $\zeta$ on a smooth proper scheme $X$. 

Let $E$ be a spectrum in $\SH(S)_{c \le 0}$ and let $\xi$ be a $K$-theory point on a smooth proper scheme $Y$.
For any $i>0$ we find by duality (see Remark \ref{rmk:gens}(2)) that
\begin{align*}
	[\Sigma^i \Th(\xi), E \wedge \Th(\zeta)] &\wequi [\Sigma^i \Th(\xi) \wedge \Th(-T_X - \zeta), E] \\
	&\wequi [\Sigma^i \Th(\xi \times (- T_X - \zeta)), E] = 0.
\end{align*}
This concludes the proof.
\end{proof}

\begin{corollary} \label{cor:pure}
For $X \in \SH(S)^\pure, Y \in \SH(S)$ we have \[ Y_{c \le 0} \wedge X \wequi (Y \wedge X)_{c \le 0},\quad Y_{c \ge 0} \wedge X \wequi (Y \wedge X)_{c \ge 0} \] and \[ Y_{c=0} \wedge X \wequi (Y \wedge X)_{c=0}. \] 
In particular (taking $Y=\1$) we have \[ X_{c \le 0} \wequi X \wedge \1_{c \le 0}. \]
\end{corollary}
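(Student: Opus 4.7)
My plan is to deduce this corollary by smashing the Chow truncation cofiber sequence of $Y$ with $X$ and using the uniqueness of Chow truncations, with Propositions \ref{prop:tensor} and \ref{prop:pure} controlling where each piece lives.

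Concretely, first I would apply the functor $(-) \wedge X$ to the canonical cofiber sequence
\[ Y_{c \ge 1} \to Y \to Y_{c \le 0} \]
of $Y$, obtaining a cofiber sequence
\[ Y_{c \ge 1} \wedge X \to Y \wedge X \to Y_{c \le 0} \wedge X. \]
Since $X \in \SH(S)^\pure \subset \SH(S)_{c \ge 0}$ and $Y_{c \ge 1} \in \SH(S)_{c \ge 1}$, Proposition \ref{prop:tensor} (applied after a single suspension) gives $Y_{c \ge 1} \wedge X \in \SH(S)_{c \ge 1}$; and Proposition \ref{prop:pure} gives $Y_{c \le 0} \wedge X \in \SH(S)_{c \le 0}$. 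Hence this is a truncation cofiber sequence for $Y \wedge X$ with respect to the Chow $t$-structure, and by the uniqueness of the $t$-structure truncations I conclude $(Y\wedge X)_{c \le 0} \wequi Y_{c \le 0} \wedge X$ and $(Y \wedge X)_{c \ge 1} \wequi Y_{c \ge 1} \wedge X$. Repeating the same argument with the cofiber sequence $Y_{c \ge 0} \to Y \to Y_{c \le -1}$ yields the identification $(Y \wedge X)_{c \ge 0} \wequi Y_{c \ge 0} \wedge X$ as well.

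For the third equivalence, I would combine the two: since $Y_{c=0} \wequi (Y_{c \ge 0})_{c \le 0}$, applying the $c \le 0$ identification to $Y_{c \ge 0}$ and then the $c \ge 0$ identification to $Y$ gives
\[ Y_{c = 0} \wedge X \wequi (Y_{c \ge 0} \wedge X)_{c \le 0} \wequi ((Y \wedge X)_{c \ge 0})_{c \le 0} \wequi (Y \wedge X)_{c=0}. \]
The final ``in particular'' sentence is then the case $Y = \1$.

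I do not expect a real obstacle here: the only nontrivial ingredients are the two closure properties of the Chow $t$-structure under smashing with pure objects (Propositions \ref{prop:tensor} and \ref{prop:pure}), which are already in hand; the rest is formal manipulation with the defining cofiber sequences of a $t$-structure.
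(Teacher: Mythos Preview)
Your argument is correct and is essentially the same as the paper's: both smash the truncation cofiber sequence with $X$, invoke Propositions \ref{prop:tensor} and \ref{prop:pure} to place the two ends in $\SH(S)_{c \ge 1}$ and $\SH(S)_{c \le 0}$, and conclude via uniqueness of $t$-structure truncations. The paper phrases this as the general principle that any exact endofunctor preserving both $\SH(S)_{c \ge 0}$ and $\SH(S)_{c \le 0}$ commutes with truncation, but the content is identical.
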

\begin{proof}
For any $E\in \SH(S)$, there is a unique up to equivalence cofiber sequence \[ E' \to E\to E'', \] where the first term is in $\SH(S)_{c\ge 0}$ and the last term is in $\SH(S)_{c < 0}$ \cite[Proposition 1.3.3(ii)]{beilinson1982faisceaux}.
Therefore, any stable endofunctor of $\SH(S)$ preserving the non-negative and non-positive parts of a $t$-structure must commute with truncation.
%, by uniqueness of the truncation triangles in a $t$-category \cite{TODO}.
This holds for $\ph \wedge X$ by Propositions \ref{prop:pure} and \ref{prop:tensor}. 
\end{proof}

\subsection{Right completeness}
\begin{definition} \label{def:SH-rig}
Write $\SH(S)^\rig$ for the stable presentable subcategory generated by $\SH(S)^\pure$.
\end{definition}
Thus $\SH(S)_{c \ge 0} \subset \SH(S)^\rig$, and so it defines a $t$-structure on $\SH(S)^\rig$.
This is just the restriction of the Chow $t$-structure.

\begin{proposition} \label{prop:right-complete}
The Chow $t$-structure on $\SH(S)^\rig$ is right complete, i.e., we have \[ \cap_n \SH(S)^\rig_{c \le n} = 0. \]
\end{proposition}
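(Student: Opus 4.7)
The plan is to pick $E \in \cap_n \SH(S)^\rig_{c \le n}$ and show $E \wequi 0$ by a generation-by-Thom-spectra argument. First, I would unpack what the intersection condition says in terms of mapping spectra. Applying Lemma~\ref{lemm:t-structure-nonpos} to the generators $\Th(\xi)$ of $\SH(S)_{c \ge 0}$, membership $E \in \SH(S)^\rig_{c \le n}$ translates into $[\Sigma^j \Th(\xi), E] = 0$ for all $j > n$, all smooth proper $X/S$, and all $\xi \in K(X)$. As $n$ ranges over $\Z$, the intersection condition becomes the universal vanishing $[\Sigma^j \Th(\xi), E] = 0$ for every $j \in \Z$ and every such generator.

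Next, I would propagate this vanishing to all of $\SH(S)^\rig$. Let $\scr D \subset \SH(S)$ be the full subcategory of objects $G$ satisfying $[\Sigma^j G, E] = 0$ for all $j \in \Z$. Since $[\ph, E]$ sends colimits to limits, $\scr D$ is closed under arbitrary colimits; using the long exact sequence of homotopy classes of maps it is closed under integer shifts, fibers, and cofibers; and by the previous step it contains every $\Th(\xi)$. Thus $\scr D$ is a stable subcategory of $\SH(S)$ closed under all colimits and containing $\SH(S)^\pure$, so by the minimality clause in Definition~\ref{def:SH-rig} we obtain $\SH(S)^\rig \subset \scr D$.

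Finally, since $E$ itself lies in $\SH(S)^\rig \subset \scr D$, we get $[E, E] = 0$: the identity map on $E$ is null, hence $E \wequi 0$. I do not foresee any real obstacle, as the whole argument rests on the fact that $\SH(S)^\rig$ is by construction generated, as a presentable stable subcategory, by the Thom spectra appearing in the Chow-non-negative part, so uniform vanishing at every integer shift of each generator propagates automatically.
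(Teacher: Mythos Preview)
Your argument is correct and is exactly the ``formal consequence'' the paper has in mind: the paper's proof is a single sentence (``This is a formal consequence of the fact that $\SH(S)^\rig$ is generated by $\SH(S)_{c \ge 0}$''), and you have simply unpacked that sentence via Lemma~\ref{lemm:t-structure-nonpos} and the standard localizing-subcategory containment argument.
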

The above definition of right completeness coincides with other possible ones, by (the dual of) \cite[Proposition 1.2.1.19]{HA} and Proposition \ref{prop:filtered}.
\begin{proof}
This is a formal consequence of the fact that $\SH(S)^\rig$ is generated by $\SH(S)_{c \ge 0}$.
\end{proof}

\begin{remark}
The Chow $t$-structure is typically not left complete, i.e. $\cap_n \SH(S)^\rig_{c \ge n} \neq 0.$
See Proposition \ref{prop:eta-periodic} below.
\end{remark}

\subsection{Interaction with $\eta$-periodization}
\begin{definition}
A spectrum $E\in \SH(S)^\rig$ is \emph{Chow-$\infty$-connective} if $E \in \SH(S)_{c \ge n}$ for all $n$.
\end{definition}

\begin{proposition} \label{prop:eta-periodic} \NB{We don't use this in the sequel. }
If $E \in \SH(S)^\rig$ is $\eta$-periodic, then $E$ is Chow-$\infty$-connective.
\end{proposition}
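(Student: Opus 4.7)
The plan is to exploit $\eta$-periodicity to write $E \simeq E \wedge \1[\eta^{-1}]$, where $\1[\eta^{-1}] := \colim_n \Sigma^{-n,-n}\1$ is the sequential telescope along multiplication by $\eta$, and then to show that the right-hand side is Chow-$\infty$-connective. The argument splits into two parts: (i) that $\1[\eta^{-1}]$ itself is Chow-$\infty$-connective, and (ii) that smashing with a Chow-$\infty$-connective spectrum preserves Chow-$\infty$-connectivity within $\SH(S)^\rig$.

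For (i), I would use the identification $\Sigma^{-n,-n}\1 \simeq S^{-n,-n} \simeq \Sigma^n \Th(-n\scr O)$, which places the $n$-th term of the telescope in $\SH(S)_{c \geq n}$. Since a sequential colimit is cofinal in each of its tails, we may rewrite $\1[\eta^{-1}] \simeq \colim_{n \geq N} \Sigma^{-n,-n}\1$ for any $N$, and every term of this cofinal subdiagram lies in $\SH(S)_{c \geq N}$. Closure of $\SH(S)_{c \geq N}$ under colimits then yields $\1[\eta^{-1}] \in \SH(S)_{c \geq N}$ for every $N$.

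For (ii), the key input is that $\SH(S)^\rig$ is compactly generated by $\{\Th(\xi) : X/S \text{ smooth proper}, \xi \in K(X)\}$ (by construction, together with compactness of these generators from Remark \ref{rmk:gens}), and these generators lie in $\SH(S)_{c \geq 0}$. Hence every compact object $C \in \SH(S)^\rig$ lies in $\SH(S)_{c \geq a(C)}$ for some integer $a(C)$, being built from the generators by finite colimits, shifts, and retracts, all of which preserve Chow-boundedness below. A shifted version of Proposition \ref{prop:tensor} then gives $C \wedge \1[\eta^{-1}] \in \SH(S)_{c \geq a(C)+N}$ for every $N$, so $C \wedge \1[\eta^{-1}]$ is Chow-$\infty$-connective. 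Writing a general $E \in \SH(S)^\rig$ as a filtered colimit of compacts $E \simeq \colim_\alpha C_\alpha$ and using that $\bigcap_M \SH(S)_{c \geq M}$ is colimit-closed, we conclude that $E \wedge \1[\eta^{-1}]$ is Chow-$\infty$-connective.

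The argument is a fairly routine assembly once the correct pieces are identified; the main conceptual point is the $\otimes$-ideal behaviour of Chow-$\infty$-connective spectra inside $\SH(S)^\rig$, which ultimately rests on the bookkeeping fact that compact objects there inherit Chow lower bounds from the generators.
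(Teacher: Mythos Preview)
Your proof is correct and follows essentially the same approach as the paper: both use the key observation $\Sigma^{-n,-n}\1 \simeq \Sigma^n \Th(-n\scr O) \in \SH(S)_{c \ge n}$ together with tensor compatibility (Proposition \ref{prop:tensor}) to push $E[\eta^{-1}]$ into arbitrarily high Chow connectivity. The only packaging difference is that the paper reduces to $E$ Chow-bounded below via right completeness (writing $E \simeq \colim_{n \to -\infty} E_{c \ge n}$), whereas you achieve the same reduction via compact generation (writing $E \simeq \colim_\alpha C_\alpha$ with each $C_\alpha$ compact, hence Chow-bounded below); these are equivalent here since right completeness of $\SH(S)^\rig$ is itself a consequence of being generated by $\SH(S)_{c \ge 0}$.
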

\begin{proof}
Right completeness and compatibility with filtered colimits (Proposition \ref{prop:filtered}) imply that that \[ E \wequi \colim_{n \to -\infty} E_{c \ge n}; \] indeed the cofiber of the natural map is in $\SH(S)_{c \le n}$ for every $n$.
Since $$ \Gmp{-k} \wequi \Sigma^k \Th(-k\scr O) \in \SH(S)_{c \ge k}, $$ we have $E_{c \ge n} \wedge \Gmp{-k} \in \SH(S)_{c \ge n+k}$ and hence $$ E \wequi E[1/\eta] \wequi \colim_{n\to-\infty} E_{c \ge n} [1/\eta] \wequi  \colim_{n\to-\infty}\colim_{k \to \infty} E_{c \ge n} \wedge \Gmp{-k} \in \SH(S)_{c \ge \infty}. $$
This was to be shown.
\end{proof} 

\begin{example}
We work entirely in $\SH(S)^\rig$.
For $E \in \SH(S)^\rig$, the fiber of the $\eta$-completion map $E \to E_\eta^\comp$ is $\eta$-periodic, and hence Chow-$\infty$-connective.
Thus $\eta$-completion induces an equivalence on Chow homotopy objects.
\end{example}

\begin{remark} \label{rmk:char-e-rig-gen}
Let $k$ be a field of exponential characteristic $e$.
Then $\SH(k)[1/e] \subset \SH(k)^\rig$ (combine \cite[Corollary 2.1.7]{elmanto2018perfection} and \cite[Proposition B.1]{levine2019algebraic}).
Hence the above results also apply to $\SH(k)[1/e]$.
\end{remark}

\subsection{Interaction with base change}
For future reference, we record the following facts about the interaction between the Chow $t$-structure and base change.
Recall the following standard definition.

\begin{definition}
Let $\scr C$ and $\scr D$ be two categories with $t$-structures. 
We say a functor $F:\scr C\to \scr D$ is \emph{right-$t$-exact (respectively left-$t$-exact)} if $F$ preserves the non-negative (respectively non-positive) parts of $t$-structures.
\end{definition}

\begin{proposition} \label{prop:bc} \NB{We don't use this.}
Let $f: S' \to S$ be a morphism of schemes.
\begin{enumerate}
\item The functor $f^*: \SH(S) \to \SH(S')$ is right-$t$-exact 
and $f_*: \SH(S') \to \SH(S)$ is left-$t$-exact. 
\item If $f$ is smooth and proper, then $f^*$ is also left-$t$-exact and its left adjoint $f_\#: \SH(S') \to \SH(S)$ is right-$t$-exact.
\end{enumerate}
\end{proposition}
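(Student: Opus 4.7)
The plan is to verify the claims by inspecting how the relevant functors act on the generators $\Th(\xi)$ of the non-negative part of the Chow $t$-structure, and then to deduce the statements about right adjoints by a formal adjunction argument.

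For part (1), I would first show that $f^*$ sends generators to generators. Given $X \in \Sm_S$ smooth and proper and $\xi \in K(X)$, form the base change $X' := X \times_S S' \to S'$, which is again smooth and proper. Since $f^*$ is a symmetric monoidal left adjoint compatible with the Thom spectrum construction (see \cite[\S16.2]{bachmann-norms}), we have $f^*\Th_S(\xi) \simeq \Th_{S'}(\xi|_{X'}) \in \SH(S')_{c \ge 0}$. As $f^*$ preserves all colimits and cofiber sequences, it follows that $f^*(\SH(S)_{c \ge 0}) \subset \SH(S')_{c \ge 0}$, i.e., $f^*$ is right-$t$-exact. The claim that $f_*$ is left-$t$-exact is then automatic: for $E \in \SH(S')_{c \le 0}$ and $A \in \SH(S)_{c \ge 1}$, adjunction gives $\Map(A, f_*E) \simeq \Map(f^*A, E) \simeq 0$, so $f_*E \in \SH(S)_{c \le 0}$.

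For part (2), suppose $f$ is smooth and proper. I would show $f_\#$ is right-$t$-exact by checking the generators. Let $g: X \to S'$ be smooth and proper and $\xi \in K(X)$. The composition $fg: X \to S$ is then also smooth and proper, and by the definition of $\Th_S$ as $(fg)_\# \Th(\ph)$ together with $(fg)_\# = f_\# \circ g_\#$, we obtain
\[
f_\#\Th_{S'}(\xi) \simeq f_\# g_\# \Th(\xi) \simeq \Th_S(\xi) \in \SH(S)_{c \ge 0}.
\]
Since $f_\#$ preserves colimits and cofiber sequences, this gives $f_\#(\SH(S')_{c \ge 0}) \subset \SH(S)_{c \ge 0}$. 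Left-$t$-exactness of $f^*$ then follows formally, exactly as in part (1), from the adjunction $f_\# \dashv f^*$.

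The main (and essentially only) thing to verify carefully is the identification of $f^*$ and $f_\#$ on Thom spectra; once that is in place, everything else is abstract nonsense about $t$-structures generated under colimits and extensions by a set of objects.
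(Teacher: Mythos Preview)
Your proof is correct and follows essentially the same approach as the paper: check that $f^*$ and $f_\#$ send the Thom-spectrum generators of the Chow-nonnegative part to generators of the same kind, use preservation of colimits and extensions to conclude right-$t$-exactness, and then deduce left-$t$-exactness of the right adjoints by the formal adjunction argument. The only cosmetic difference is that the paper cites a reference for the ``right adjoint of right-$t$-exact is left-$t$-exact'' step, whereas you spell it out directly.
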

\begin{proof}
The right adjoint of a right-$t$-exact functor is left-$t$-exact and vice versa (see e.g. \cite[Lemma 5]{bachmann-hurewicz}).
It thus suffices to show the right-$t$-exactness claims. 

(1) For any $X\in \Sm_S$ and $\xi\in K(X),$ we have $f^*\Th(\xi) \wequi \Th(f^*\xi)$.
It follows that $f^*$ sends the generators of $\SH(S)_{c \ge 0}$ into $\SH(S')_{c \ge 0}$.
The claim follows since $f^*$ also preserves colimits and extensions.

(2) For $X \in \Sm_{S'}$ and $\xi \in K(X)$ we have $f_\#(\Th(\xi)) = \Th_S(\xi)$.
Since $f$ is proper, if $X$ is proper over $S'$ then $X$ is proper over $S$; thus $f_\#$ preserves the generators of $\SH(\ph)_{c \ge 0}$.
We conclude as in (1).
\end{proof}
\begin{remark}
If $f$ is not smooth and proper, then $f^*$ is not in general left-$t$-exact.
Indeed if this was the case for all $f$ smooth, then $f_\#$ would be right-$t$-exact for all such $f$, and we would find that $\Sigma^{2n,n} \Sigma^\infty X_+ \in \SH(S)_{c \ge 0}$ for all $n \in \Z$ and all $X \in \Sm_S$. 
This would imply that $\SH(S)_{c \ge 0} = \SH(S)$, which is false in general (see e.g. Proposition \ref{prop:chow-MGL-homology}).
\end{remark}
\begin{example}
We give an explicit example showing that $f^*$ fails to be left-$t$-exact when $f$ is not smooth and proper.

Let $k$ be an infinite perfect field and let $U$ be an open subscheme of $\A^1_k$ such that $\emptyset \ne U \subsetneq \A^1_k$.
Then \[ \tilde\Sigma U \wequi \A^1_k/U \wequi \bigvee_{x \in \A_k^1 \setminus U} S^{2,1} \wedge x_+, \] where by $\tilde \Sigma$ we mean the unreduced suspension.\NB{I.e. take the pushout $* \coprod_{U} *$ in unpointed spaces and observe that this has a canonical base point.}
It follows from Proposition \ref{prop:chow-MGL-homology} that the spheres $S^{2,1} \wedge x_+$ are not in $\SH(S)_{c\ge 1},$ and consequently $\Sigma^\infty \tilde\Sigma U \not \in \SH(k)_{c \ge 1}$.
Since $k$ is infinite $U$ has a rational point and so $\Sigma^\infty_+ U \wequi \1 \vee \Sigma^{-1}\Sigma^\infty\tilde\Sigma U$.
Writing $f_U: U \to Spec(k)$ for the structure morphism, we deduce that $f_{U\#}(\1) \wequi \Sigma^\infty_+ U \not\in \SH(k)_{c \ge 0}$.
In particular $f^*_U$ is not left-$t$-exact.
\end{example}

\begin{example}
We now improve the construction of the previous example to show that even the base change along field extensions is not exact.
Suppose that $0 \not\in U$.
Then $\Sigma^\infty_+ U$ admits $\Sigma^{-1} S^{2,1}$ as a retract.
In fact we find that the pro-object \[ \text{``$\lim_{\emptyset \ne U}$''} \Sigma^\infty_+ U \] also admits $S^{2,1}$ as a retract.\NB{might find a reference for this in Bondarko's work on function fields}
Writing \[ f: Spec(k(t)) = \lim_{\emptyset \ne U \subset \A^1 \setminus 0} U \to Spec(k) \] we find that by continuity (see e.g. \cite[Lemma A.7]{hoyois-algebraic-cobordism}) \[ [\Sigma^{-2,-1}\1, f^* \Sigma^{-1} (\1_{c \le 0})]_{\SH(k(t))} \wequi \colim_U [\Sigma^{-2,-1}U_+, \Sigma^{-1} (\1_{c \le 0})]_{\SH(k)} \ne 0, \] since the latter group admits $[\1, \1_{c \le 0}]_{\SH(k)}$ as a summand (which is non-zero by Theorem \ref{thm:main-computation}).
It follows that $f^*$ is not $t$-exact.
\end{example}

\subsection{Cellular Chow $t$-structure}
\label{subsec:cellular-chow-t-str}
We denote by $\SH(S)^\cell \subset \SH(S)$ the subcategory of \emph{cellular spectra}, i.e. the localizing subcategory generated by the bigraded spheres $S^{p,q}$, $p, q \in \Z$.
\begin{definition} \label{def:cell-chow-t}
Denote by $\SH(S)^\cell_{c \ge 0}$ the subcategory generated under colimits and extensions by $S^{2n,n}$ for $n \in \Z$.
This is the non-negative part of a $t$-structure on $\SH(S)^\cell$ \cite[Proposition 1.4.4.11(2)]{HA} called the \emph{cellular Chow $t$-structure}.
We denote the non-positive part by $\SH(S)^\cell_{c \le 0}$.
\end{definition}

\begin{remark}
By definition we have \[ \SH(S)^\cell_{c \ge 0} \subset \SH(S)_{c \ge 0} \cap \SH(S)^\cell, \] but we do not know if the reverse inclusion holds, in general.
This would hold if cellularization is $t$-exact (rather than just left $t$-exact, which it always is).
Lemma \ref{lemm:cellularization-t-exact} \NB{This implicitly uses the main theorem.} implies that this is true over fields.\NB{The deduction is as follows: if $E \in \SH(k)_{c \ge 0} \cap \SH(k)^\cell$ then by the lemma we have $(E^\cell)_{c < 0} = (E_{c < 0})^\cell = 0$, but also $E = E^\cell$, so $E \in \SH(k)^\cell_{c \ge 0}$.}
\end{remark}

Lemma \ref{lemm:t-structure-nonpos} characterizes $\SH(S)^\cell_{c \le 0}$ in terms the vanishing of certain homotopy groups.

%\begin{remark} \label{rmk:ring-justn}
We now have enough notation in place to justify the multiplicative aspects of $\1_{c=0}$ and $\1_p^\comp/\tau$ over $\C$ more fully.
Specifically we make the following claims:
\begin{enumerate}
\item The truncation functor $\SH(k)_{c \ge 0} \to \SH(k)_{c=0}$ is symmetric monoidal.
\item Its right adjoint $\SH(k)_{c=0} \hookrightarrow \SH(k)_{c \ge 0}$ is lax symmetric monoidal.
\item The $p$-completion functor $\SH(k) \to \SH(k)_p^\comp$ is symmetric monoidal.
\item Its right adjoint $\SH(k)_p^\comp \hookrightarrow \SH(k)$ is lax symmetric monoidal.
\item The canonical inclusion $\SH(k)^\cell \hookrightarrow \SH(k)$ is symmetric monoidal.
\item Its right adjoint (cellularization) $\SH(k) \to \SH(k)^\cell$ is lax symmetric monoidal.
\end{enumerate}
The right adjoint of any symmetric monoidal functor is lax symmetric monoidal, whence statements (2), (4), and (6) follow from (1), (3) and (5), respectively, and (5) is obvious.
(1) and (3) hold because the functors are localizations compatible with the tensor product (i.e. the tensor product preserves weak equivalences in either variable), which for (1) follows from Proposition \ref{prop:tensor} and for (3) follows from the fact that the tensor product preserves cofibers in each variable.

\begin{proof}[Proof of Corollary~\ref{cor:intro-C}] 

Recall that over $\C$, $\tau: \Sigma^{0,-1} \1_p^\comp \to \1_p^\comp$ is the endomorphism of \cite{gheorghe2017motivic, Isa}. 
Since $S^{0,-1} = \Sigma^2 \Th(-\scr O) \in \Sigma\SH(k)_{c > 0}$ we find that the canonical map $\1_p^\comp \to (\1_{c=0})_p^\comp$ annihilates $\tau: \Sigma^{0,-1} \1_p^\comp \to \1_p^\comp$. There is thus an induced map \[ \1_p^\comp/\tau \to (\1_{c=0})_p^\comp, \] which by Theorem \ref{thm:intro-main} and \cite[Proposition 6.2.5]{Isa} induces an isomorphism on $\pi_{*,*}$.
We deduce that $\1_p^\comp/\tau$ is the $p$-complete cellularization of $\1_{c=0}$. The above discussion implies that $\1_{c=0}$ and thus $\1_p^\comp/\tau$ are both $\scr E_\infty$-rings, reproving \cite[Theorem 3.13]{gheorghe2017motivic}.
\end{proof}
%\end{remark}

Most results established above for the Chow $t$-structure on $\SH(S)$ also hold for the cellular version.

\begin{example} \label{ex:HZ-in-heart}
Let $k$ be a field of characteristic $\ne p$ with $k^\times/p \wequi \{1\}$, and containing a primitive $p$-th root of unity.
Then by \cite{voevodsky2011motivic}, we have $\pi_{*,*}(\textup{H}\Z/p) \wequi K_*^M(k)/p[\tau] \wequi \F_p[\tau]$.
This implies that $\textup{H}\Z/(p,\tau) \in \SH(k)^{\cell,\heart}$.
Indeed we have $\MGL\in \SH(k)^\cell_{c\ge 0}$ (Theorem \ref{thm:mgl-cells}). By the Hopkins--Morel isomorphism \cite{hoyois-algebraic-cobordism}, we have $\textup{H}\Z \in \SH(k)^\cell_{c\ge 0}$. Therefore $\textup{H}\Z/(p,\tau) \in \SH(k)^\cell_{c\ge 0}$. On the other hand $\textup{H}\Z/(p,\tau) \in \SH(k)^\cell_{c\le 0}$ just means that $\pi_{2m+i,m}(\textup{H}\Z/(p,\tau))$ vanishes for $m \in \Z, i>0$, which is clear since $K_*^M(k)/p \wequi \F_p$.
\end{example}
\begin{remark}
Continuing Example \ref{ex:HZ-in-heart}, write $P^i: \textup{H}\Z/p \to \Sigma^{2i(p-1),i(p-1)} \textup{H}\Z/p$ for the reduced power operation.
Since $P^i$ commutes with $\tau$ (note that if $p=2$ then $\sqrt{-1} \in k$ since $k^\times/2 \wequi \{1\}$) it defines \[ P^i/\tau: \textup{H}\Z/(p,\tau) \to \Sigma^{2i(p-1),i(p-1)} \textup{H}\Z/(p,\tau). \]
Noting that $\Sigma^{2n,n} \textup{H}\Z/(p,\tau) \in \SH(k)^{\cell, \heart}$, which is an additive $1$-category, we deduce that the operation $P^i/\tau$ is $\Z/p$-linear, i.e. comes from a map in the $\A^1$-derived category.\NB{I sort of knew this for $Sq^2$: the short exact sequence $I^{n+1} \to I^n \to k_n$ yields a reduced Bockstein $k_n \to \Sigma k_{n+1}$, which presumably is $Sq^2/\tau$ (even over any field...)}
\end{remark}

\section{Relationship to algebraic cobordism}

In this section, we show that the Chow $t$-structure is the colimit of a directed systems of $t$-structures, in the sense of Lemma \ref{lemm:chow-approx}.
The non-negative parts and the non-positive parts of these $t$-structures can be partially characterized by taking $\MGL$-homology (Lemma \ref{lemm:key}).
As a result, the homotopy groups of Chow homotopy objects can be computed by the motivic Adams--Novikov spectral sequence, which collapse at the $E_2$-page (Theorem \ref{thm:main-computation}).
In fact, we prove these results for a more general collection of $t$-structures which include Chow $t$-structure as a special case.

Throughout this section, we let $S=Spec(k)$ be the spectrum of a field of exponential characteristic $e$, and we implicitly invert $e$ throughout.
Thus we write $\SH(k)$ for $\SH(k)[1/e]$, and so on.
The main reason for this is that we need to use a vanishing result for algebraic cobordism (Theorem \ref{thm:MGL-vanishing}) which is proved by relating algebraic cobordism to higher Chow groups, and this relationship is currently only known away from the characteristic \cite{hoyois-algebraic-cobordism}.
Another simplification also occurs, which is that Remark \ref{rmk:char-e-rig-gen} applies.

\subsection{Chow degrees}
\begin{definition} \label{def:chow-degree}
Given a bidegree $(p,q) \in \Z \times \Z$, we call $c(p,q) = p - 2q$ the \emph{associated Chow degree}.
Given a bigraded abelian group $M_{{{*,*}}}$, we say that $M_{{{*,*}}}$ is concentrated in Chow degrees $\ge d$ (respectively $\le d$, $=d$) if $M_{p,q} = 0$ for $c(p,q) < d$ (respectively $> d$, $\ne d$).
\end{definition}

\begin{example}\NB{True over any base.}
\label{ex:chow_degree_lezero}
By Example \ref{ex:spheres_in_gezero}, spheres of non-negative Chow degree are in $\SH_{c \ge 0}(k)$. Therefore, if $E \in \SH(k)_{c \le 0}$, then $\pi_{{{*,*}}}E$ is concentrated in Chow degrees $\le 0$.
\end{example}

\begin{example}\label{ex:eta-comp}\NB{Move this elsewhere?}\NB{True over any base.}
Let $E \in \SH(k)$ with $\pi_{{{*,*}}} E$ concentrated in Chow degrees $\le 0$.
Consider the cofiber sequence $$\Gmp{n} \wedge E\xrightarrow{\eta^n} E\to E/\eta^n.$$
The homotopy groups $\pi_{{*,*}}(\Gmp{n} \wedge E)\wequi \pi_{*-n,*-n} E$ are concentrated in Chow degree $\le -n.$
Hence we have $\pi_{p,q}(E/\eta^n) \wequi \pi_{p,q}(E)$ for any fixed $(p,q)$ and $n$ sufficiently large.
As a result, $E \to E_\eta^\comp$ induces an isomorphism on $\pi_{{{*,*}}}$.
%Indeed $\pi_{p,q}(E/\eta^n) \wequi \pi_{p,q}(E)$ for any fixed $(p,q)$ and $n$ sufficiently large.\NB{The difference is $\pi_{p+\epsilon,q}(\Gmp{n} \wedge E) \wequi \pi_{p-n+\epsilon,q-n} E$ which has Chow degree $p-2q+\epsilon+n > 0$ for $n>>0$.}
%Then $E \to E_\eta^\comp$ induces an isomorphism on $\pi_{{*,*}}$.
\end{example}

\subsection{$\MGL$-homology in a more general $t$-structure}
\label{subsec:MGL-hom}
In anticipation of our treatment of cellular objects later, we generalize our setting slightly.
Thus we fix \[ I \subset \SH(k)_{c \ge 0}. \]
We assume throughout that $I$ is closed under colimits and extensions and generated by a set of compact object; thus in particular $I$ defines a $t$-structure.
We denote its non-negative and non-positive parts by \[ \SH(k)_{I \ge 0} := I \text{ and } \SH(k)_{I \le 0}, \] and we denote the truncations by $\tau_{I \ge n}, \tau_{I \le n}$ and $\tau_{I=n}$.
We also assume that $\1 \in I$ and $\Sigma^{2,1} I = I$.

\begin{example}
\label{example:maximal_choice}
Taking $I = \SH(k)_{c \ge 0}$, we get $\tau_{I \ge 0} = (\ph)_{c \ge 0}$, and so on.
This is the maximal choice.
\end{example}
\begin{example}
Taking $I$ to be the subcategory generated under colimits and extensions by the $S^{2n,n}$, we obtain the smallest possible choice.
\end{example}

\begin{proposition} \label{prop:chow-MGL-homology} \NB{(2) $\ge 0, =0$ parts true over semilocal PID, rest true over any base}
Let $E \in \SH(k)_{I \ge 0}$ (resp. $E \in \SH(k)_{I \le 0}$, $E \in \SH(k)_{I=0}$).
Then
\begin{enumerate}
\item $\MGL \wedge E \in \SH(k)_{I \ge 0}$ (resp. $\MGL \wedge E \in \SH(k)_{I \le 0}$, $\MGL \wedge E \in \SH(k)_{I=0}$), and
\item $\MGL_{{{*,*}}} E$ is concentrated in Chow degrees $\ge 0$ (resp. $\le 0$, $=0$).
\end{enumerate}
\end{proposition}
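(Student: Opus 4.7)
The plan is to combine the cell structure of $\MGL$ (Appendix~A) with the vanishing result for $\MGL$ (Appendix~B). As a preliminary observation, for every $n \in \Z$ the functor $\Sigma^{2n,n}$ is $t$-exact for the $I$-$t$-structure: since $S^{\pm 2n,\pm n} \in I$ and the compact generators of $I$ in the cases of interest are themselves stable under $\wedge S^{2n,n}$, the subcategory $\{F : F \wedge S^{2n,n} \in I\}$ is closed under colimits and extensions and contains all generators, hence equals $I$; together with the analogous statement for $S^{-2n,-n}$, and using Lemma~\ref{lemm:t-structure-nonpos} plus compactness of the generators, this yields preservation of both $\SH(k)_{I\ge 0}$ and $\SH(k)_{I\le 0}$.

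For part~(1), Appendix~A exhibits $\MGL$ as a sequential colimit of finite spectra $M_k$, each built from $M_{k-1}$ by attaching cells $S^{2n_\alpha, n_\alpha}$ with $n_\alpha \ge 0$. Smashing with $E$, each layer is a wedge of $\Sigma^{2n_\alpha, n_\alpha} E$, which by the preliminary step lies in whichever part of the $I$-$t$-structure contains $E$. Closure under extensions handles each $M_k \wedge E$; the sequential colimit is then handled by closure under filtered colimits, which is valid on the $\le 0$ side precisely because the generators of $I$ are compact. Intersecting the two cases gives the $=0$ statement.

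For part~(2), the $\le 0$ case follows formally from part~(1): every sphere $S^{p,q}$ with $c(p,q) > 0$ is $\Sigma^{p-2q} S^{2q,q} \in \Sigma\SH(k)_{I\ge 0}$, so $\pi_{p,q}(\MGL \wedge E) = 0$ whenever $\MGL \wedge E \in \SH(k)_{I\le 0}$. The $\ge 0$ case is where the Appendix~B vanishing enters essentially: it gives $\MGL_{p,q}(G) = 0$ in Chow degrees $c(p,q) < 0$ for each compact generator $G$ of $I$. Since the class of spectra with $\MGL$-homology concentrated in non-negative Chow degrees is closed under extensions (via the long exact sequence in $\MGL$-homology) and under filtered colimits (since $\MGL \wedge -$ preserves them), it must contain all of $\SH(k)_{I\ge 0}$. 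Combining both directions yields the $=0$ statement.

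The main obstacle I anticipate is extracting the right uniform form of the Appendix~B vanishing: it must apply both to the Thom-spectrum generators $\Th(\xi)$ of the maximal $I$ and to the sphere generators $S^{2n,n}$ of the cellular $I$. I expect this to reduce to a Hopkins--Morel-type computation of $\pi_{**}\MGL$, supplemented by a Thom-isomorphism argument (using strong dualizability of $\Th(\xi)$ on a smooth proper $X$) to transfer the vanishing from spheres to the generators $\Th(\xi)$.
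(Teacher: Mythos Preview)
Your proposal is correct and takes essentially the same approach as the paper: part~(1) via the cell structure of $\MGL$ from Appendix~A (the paper phrases this as ``immediate from (the proofs of) Propositions~\ref{prop:tensor} and~\ref{prop:pure}''), and part~(2) via the Thom-isomorphism/duality reduction to Theorem~\ref{thm:MGL-vanishing} that you correctly anticipate. One simplification you can make: since $I \subset \SH(k)_{c \ge 0}$ by assumption, the $\ge 0$ part of~(2) reduces immediately to the maximal case $I = \SH(k)_{c\ge 0}$, so only the generators $\Th(\xi)$ need to be checked---your worry about handling the generators of an arbitrary $I$ is unnecessary.
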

\begin{proof}
(1) In the notation of \S\ref{app:cells}, we have $\MGL \in \SH(k)^{\pure\tate \ge 0}$ (see Theorem \ref{thm:mgl-cells}).
Thus (1) is immediate from (the proofs of) Propositions \ref{prop:tensor} and \ref{prop:pure}. 

(2) The $\le 0$ part of (2) follows from (1) and the fact that $S^{2n,n}\in I.$
It hence remains to prove the $\ge 0$ part of (2). It is sufficient to show the statement is true for the maximal choice of $I$ (Example \ref{example:maximal_choice}).

Let $\scr C \subset \SH(k)$ denote the subcategory of all spectra $E$ such that $\MGL_{{{*,*}}} E$ is concentrated in Chow degrees $\ge 0$.
It suffices to show that $\SH(k)_{c \ge 0} \subset \scr C$.
The category $\scr C$ is closed under small sums, cofibers, and extensions, and hence to show that $\SH(k)_{c \ge 0} \subset \scr C$ it is enough to show that $\Th(\xi) \in \scr C$ for every $K$-theory point $\xi$ on a smooth proper variety $X$.
We thus need to show that $\MGL_{2n+i,n} \Th(\xi) = 0$ for $i < 0$.
Using the Thom isomorphism and duality we rewrite this as \[ [S^{2n+i,n}, \MGL \wedge \Th(\xi)] \cong [\Sigma^{2n+i,n} \Th(-T_X - \xi), \MGL] \cong \MGL^{2s-i,s} X, \] where $s = \dim{X} + rank(\xi) - n$.
Thus the result follows from Theorem \ref{thm:MGL-vanishing}.
\end{proof}

\begin{corollary} \label{cor:chow-MGL-homology} \NB{True over simlocal PID.}
%For $E \in \SH(k)_{I \ge 0}$ we have \[ \MGL_{p,q} \tau_{I \le 0} E = \begin{cases} 0, &\text{if } c(p,q) \ne 0 \\ \MGL_{p,q}(E), &\text{if } c(p,q) = 0. \end{cases}. \]
For $E \in \SH(k)$ arbitrary, we have \[ \MGL_{p,q} \tau_{I=i} E = \begin{cases} 0, &\text{if } c(p,q) \ne i \\ \MGL_{p,q}(E), &\text{if } c(p,q) = i. \end{cases}. \]
\end{corollary}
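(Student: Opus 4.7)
The plan is to deduce Corollary \ref{cor:chow-MGL-homology} as a formal consequence of Proposition \ref{prop:chow-MGL-homology}, applied to the standard Postnikov-type fiber sequences of the $t$-structure and then run long exact sequences in $\MGL$-homology. I would first record a suspension-shifted enhancement of Proposition \ref{prop:chow-MGL-homology}(2): for any $n \in \Z$, if $F \in \SH(k)_{I \ge n}$ (respectively $\SH(k)_{I \le n}$, $\SH(k)_{I = n}$), then $\MGL_{*,*} F$ is concentrated in Chow degrees $\ge n$ (respectively $\le n$, $= n$). This is immediate because $\Sigma^{-n} F$ lies in the corresponding zero-shifted piece and $\MGL_{p,q}(\Sigma^{-n} F) = \MGL_{p+n,q} F$, so Chow degrees simply shift by $n$.

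Applied with $F = \tau_{I=i} E \in \SH(k)_{I=i}$, this immediately gives $\MGL_{p,q} \tau_{I=i} E = 0$ whenever $c(p,q) \ne i$, settling the first case of the corollary.

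For the case $c(p,q) = i$, I would use the two standard fiber sequences
\[ \tau_{I \ge i+1} E \to \tau_{I \ge i} E \to \tau_{I=i} E \qquad \text{and} \qquad \tau_{I \ge i} E \to E \to \tau_{I \le i-1} E, \]
and take the associated long exact sequences in bigraded $\MGL$-homology. In the first sequence, both $\MGL_{p,q}(\tau_{I \ge i+1} E)$ and $\MGL_{p-1,q}(\tau_{I \ge i+1} E)$ vanish, since the Chow degrees $i$ and $i-1$ are both strictly less than $i+1$; this yields an isomorphism $\MGL_{p,q}\tau_{I \ge i} E \cong \MGL_{p,q} \tau_{I=i} E$. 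In the second sequence, $\MGL_{p,q}(\tau_{I \le i-1} E)$ and $\MGL_{p+1,q}(\tau_{I \le i-1} E)$ vanish, since the Chow degrees $i$ and $i+1$ both exceed $i-1$; this gives $\MGL_{p,q} E \cong \MGL_{p,q}\tau_{I \ge i} E$. Composing the two isomorphisms delivers $\MGL_{p,q} \tau_{I=i} E \cong \MGL_{p,q} E$.

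There is no substantive obstacle here; the argument is bookkeeping on Chow degrees. The only item requiring care is the interaction between the stable simplicial suspension $\Sigma$ defining the $t$-structure and the Chow degree $c(p,q) = p-2q$, ensuring the shifted form of Proposition \ref{prop:chow-MGL-homology}(2) is invoked in the correct range.
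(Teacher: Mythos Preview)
Your proof is correct and follows essentially the same approach as the paper's own argument. The only cosmetic difference is that the paper first reduces to $i=0$ by replacing $E$ with $\Sigma^i E$ and then invokes Proposition \ref{prop:chow-MGL-homology} directly, whereas you first record the shifted form of Proposition \ref{prop:chow-MGL-homology}(2) and then work at level $i$; these are the same suspension bookkeeping done in opposite orders.
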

\begin{proof}
%We prove the second statement. The first statement then follows with $i=0$ since $\tau_{I\le 0} E\simeq \tau_{I=0} E$ when $E\in \SH(k)_{I\ge 0}$. 
By replacing $E$ with $\Sigma^iE$, we reduce to the case that $i=0$.
 
The Chow degree nonzero part of the statement is immediate from Proposition \ref{prop:chow-MGL-homology}. 

For the Chow degree zero part, we consider two cofiber sequences \[ \tau_{I \ge 1} E \to \tau_{I \ge 0} E \to \tau_{I=0} E, \text{ and, } \tau_{I \ge 0} E \to E \to \tau_{I\le -1} E.\]
%$$\tau_{I \ge 0} E \to E \to \tau_{I\le -1} E$$
They yield associated long exact sequences \[\cdots\to \MGL_{2*,*} \tau_{I \ge 1}E \to \MGL_{2*,*} \tau_{I \ge 0} E \to \MGL_{2*,*} \tau_{I=0} E \to \MGL_{2*-1,*} \tau_{I \ge 1} E \to \cdots, \]
\[\cdots\to \MGL_{2*+1,*} \tau_{I \le -1} E \to  \MGL_{2*,*} \tau_{I \ge 0}E \to \MGL_{2*,*}  E \to \MGL_{2*,*} \tau_{I\leq -1} E \to \cdots. \]

By Proposition \ref{prop:chow-MGL-homology}, in each long exact sequence, the first and the last term in the above part vanish. Therefore, we have equivalences
$$\MGL_{2*,*} \tau_{I=0} E\cong \MGL_{2*,*} \tau_{I \ge 0} E \cong \MGL_{2*,*} E.$$
%The result thus follows.

%Similarly, another cofiber sequence $\tau_{I \ge 0} E \to E \to \tau_{I\le -1} E$ also gives an long exact sequence 
%
%We deduce that $$\MGL_{2*,*} \tau_{I \ge 0} E \cong \MGL_{2*,*} E.$$ 
\end{proof}

\subsection{$t$-structure as a direct colimit}
\label{subsec:colimit-diagram}
We now bring the homotopy $t$-structure into play.

\begin{definition}
	Let $\SH(k)_{\ge 0}$ denote the subcategory of $\SH(k)$ generated under colimits and extensions by $\Sigma^\infty_+ X \wedge \Gmp{n}$ for $X \in \Sm_k$ and $n \in \Z$. This defines the non-negative part of the homotopy $t$-structure. 
\end{definition}

%Recall \cite{TODO} that this is the $t$-structure with non-negative part $\SH(k)_{\ge 0}$ generated by $\Sigma^\infty_+ X \wedge \Gmp{n}$ for $X \in \Sm_k$ and $n \in \Z$.

Since we are working over fields, the subcategory $\SH(k)_{\ge 0}$ is characterised by a vanishing of homotopy sheaves \cite[Theorem 2.3]{hoyois-algebraic-cobordism}: \begin{equation} \label{eq:char-ge0} \SH(k)_{\ge 0} = \left\{ E \in \SH(k) \mid \ul{\pi}_{i+n,n}(E) = 0 \text{ for } i < 0, n \in \Z \right\}, \end{equation} \[ \SH(k)_{\le 0} = \left\{ E \in \SH(k) \mid \ul{\pi}_{i+n,n}(E) = 0 \text{ for } i > 0, n \in \Z \right\}. \]
%We use the notation $\ul{\pi}_{{*,*}}$ to denote homotopy sheaves.
\NB{This only holds over fields! Replacement over finite dimensional base scheme: \cite[Proposition B.3]{bachmann-norms}.}

Recall that the notation $\SH(k)_{\ge d}$ denotes the subcategory $\Sigma^{d}\SH(k)_{\ge 0}$.
\begin{thmdefn}
Let $I^d$ be the category $\SH(k)_{\ge d} \cap I$. This is the non-negative part of a $t$-structure on $\SH(k)$.
%	Let \[ I^d = \SH(k)_{\ge d} \cap \SH(k)_{c \ge 0}.\]
%This is the non-negative part of a further $t$-structure; we denote the truncation by $\tau^d_{\le 0}$.
\end{thmdefn}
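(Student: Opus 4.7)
The plan is to verify the hypotheses of \cite[Proposition 1.4.4.11(2)]{HA}: namely, that $I^d$ is closed under small colimits and extensions, and is strongly generated under these operations by a small set of objects.

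First I would observe that $I^d = \SH(k)_{\ge d} \cap I$ is closed under small colimits and extensions. Both closure properties hold for each factor, $\SH(k)_{\ge d}$ and $I$, as the non-negative parts of $t$-structures, and each property passes to the intersection.

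Second I would argue that $I^d$ is generated by a small set. The cleanest route is to show $I^d$ is presentable: it can be written as the pullback $I \times_{\SH(k)} \SH(k)_{\ge d}$ in the $\infty$-category of presentable stable $\infty$-categories and colimit-preserving functors, which is closed under limits of this form. Each factor is presentable (for $I$, by the standing assumption that it is generated under colimits and extensions by a set of compact objects; for $\SH(k)_{\ge d}$, because the homotopy $t$-structure is accessible), and the inclusions into $\SH(k)$ preserve colimits. Hence $I^d$ is presentable, and therefore strongly generated under small colimits by a set of $\kappa$-compact objects for some regular cardinal $\kappa$; combined with closure under extensions, this is exactly the input Lurie's criterion requires.

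The main obstacle is justifying the presentability of $I^d$ cleanly. An alternative, more hands-on route would be to take a set $\scr G$ of compact generators of $I$ (provided by hypothesis), observe that each $g \in \scr G$ lies in $\SH(k)_{\ge n_g}$ for some $n_g \in \Z$ (since the homotopy $t$-structure is compatible with filtered colimits and its generators $\Sigma^\infty_+ X \wedge \Gmp n$ are bounded below), and propose $\{\Sigma^j g : g \in \scr G,\ j \ge \max(0, d-n_g)\}$ as an explicit generating set inside $I^d$. The difficulty of this alternative is to check that these suspensions really do generate all of $I^d$ under colimits and extensions: an object $F \in I^d$ is a priori only built from the $g$'s themselves (some of which may fail to lie in $\SH(k)_{\ge d}$), and one would need a connectivity-aware filtration argument to replace such building blocks with elements of $I^d$. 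The fibre product/presentability route sidesteps this subtlety.
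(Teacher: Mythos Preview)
Your argument is correct and essentially identical to the paper's: both show closure under colimits and extensions holds for each factor, then deduce presentability of the intersection via the fact that limits of presentable $\infty$-categories along left adjoint functors are presentable (\cite[Proposition 5.5.3.13]{HTT}). Two minor cleanups: the categories $I$ and $\SH(k)_{\ge d}$ are not stable (only prestable), so your pullback should be taken in presentable $\infty$-categories, not presentable stable ones; and once you have presentability you can invoke \cite[Proposition 1.4.4.11(1)]{HA} directly rather than detouring through a generating set to reach (2).
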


\begin{proof}
We need to show that $I^d$ is closed under colimits and extensions and is presentable \cite[Proposition 1.4.4.11(1)]{HA}.
This is true for $\SH(k)_{\ge d}$ and $I$ by \cite[Proposition 1.4.4.11(2)]{HA}; the case of $I^d$ follows since subcategories with these properties are closed under limits (and so in particular intersections; use that limits of presentable categories along left adjoint functors\NB{also true for right adjoint functors} are presentable \cite[Proposition 5.5.3.13]{HTT}).
\end{proof}

\begin{example} \label{ex:Id} \NB{True over any base.}
Let $I$ be $\SH(k)_{c\ge 0}$.
For a smooth proper variety $X$ and a $K$-theory point $\xi$ on $X$ of rank $\ge d$, we have $\Th(\xi) \in I^d$.
Indeed it suffices to check that $\Th(\xi) \in \SH(k)_{\ge d}$, which follows from \cite[Lemma 13.1]{bachmann-norms}.
\end{example}

\begin{comment}

Recall \cite{TODO} that this is the $t$-structure with non-negative part $\SH(k)_{\ge 0}$ generated by $\Sigma^\infty_+ X \wedge \Gmp{n}$ for $X \in \Sm_k$ and $n \in \Z$.
This is characterised by a vanishing of homotopy sheaves \cite{TODO}\NB{This only holds over fields! Replacement over finite dimensional base scheme: \cite[Proposition B.3]{bachmann-norms}.}: for $E \in \SH(k)$ we have $E \in \SH(k)_{\ge 0}$ if and only if $\ul{\pi}_{n+i,n}(E) = 0$ for all $i<0$.
Put \[ I^d = \SH(k)_{\ge d} \cap I. \]
This is the non-negative part of a further $t$-structure; we denote the truncation by $\tau^d_{\le 0}$.
\end{comment}

\begin{example} \label{ex:gens-bdded}
Let $E \in I$ be compact.
Then $E \in I^d$ for $d$ sufficiently small.
This follows from the facts that (1) the subcategory of bounded below spectra is thick, (2) the subcategory of compact spectra is the thick subcategory generated by spectra of the form $\Sigma^{n,n} \Sigma^\infty_+ X$ for $n \in \Z$ and $X \in \Sm_k$ \cite[Lemma 4.4.5]{neeman2014triangulated} \cite[Theorem 9.2]{dugger2005motivic} and (3) these generators are bounded below, by definition.
%Note that by assumption, $I$ is generated by such objects.
\end{example}

We use the notations $\tau^d_{\le n}$, $\tau^d_{\ge n}$, $\tau^d_{= n}$ to denote the truncation functors associated to the $t$-structure with non-negative part $I^d$. 

The following is a key technical result.
\begin{proposition} \label{lemm:key} \NB{(1) true over any base. (2) true over semilocal PID. (3,4) true only over fields? Maybe \cite[Proposition 3.7]{schmidt2018stable} helpful?}
Let $E \in I^d$.
Then we have the following:
\begin{enumerate}
\item $\tau^d_{\le 0} E \in I^d$,
\item $\MGL_{2*,*} \tau^d_{\le 0} E \cong \MGL_{2*,*} E$,
\end{enumerate}
Suppose now that $E \in I^{d+1}$.
Then we have additionally the following:
\begin{enumerate}
\setcounter{enumi}{2}
\item $\pi_{{*,*}} \tau^d_{\le 0} E$ is concentrated in Chow degrees $\le 0$, and
\item $\MGL_{{*,*}} \tau^d_{\le 0} E$ is concentrated in Chow degree $0$.
\end{enumerate}
\end{proposition}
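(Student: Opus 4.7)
The plan is to exploit the cofiber sequence $\tau^d_{\ge 1} E \to E \to \tau^d_{\le 0} E$ together with the key observation that $\tau^d_{\ge 1} E \in \Sigma I^d$, so $\Sigma^{-1} \tau^d_{\ge 1} E \in I^d \subset I$; moreover, since $I$ is closed under colimits (hence under suspension), we have $\Sigma I^d \subset \SH(k)_{\ge d+1} \cap I = I^{d+1}$. Thus both $\tau^d_{\ge 1} E$ and $E$ lie in $I^{d+1}$, and the cofiber $\tau^d_{\le 0} E$ lies in $I^{d+1} \subset I^d$, which establishes (1).

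For (2), I would apply the $\ge 0$ direction of Proposition~\ref{prop:chow-MGL-homology}(2) to $\Sigma^{-1} \tau^d_{\ge 1} E \in I^d \subset I$: its $\MGL$-homology is concentrated in Chow degrees $\ge 0$, and hence $\MGL_{p, q}(\tau^d_{\ge 1} E) = \MGL_{p-1, q}(\Sigma^{-1} \tau^d_{\ge 1} E)$ is concentrated in Chow degrees $\ge 1$. Both $\MGL_{2s, s}(\tau^d_{\ge 1} E)$ and $\MGL_{2s-1, s}(\tau^d_{\ge 1} E)$ therefore vanish, so the long exact sequence immediately gives $\MGL_{2*, *}(\tau^d_{\le 0} E) \cong \MGL_{2*, *}(E)$.

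For (3), I would use Lemma~\ref{lemm:t-structure-nonpos} for the $I^d$-t-structure: since the bigraded sphere $S^{p', q}$ is compact and lies in $I^d = I \cap \SH(k)_{\ge d}$ precisely when $p' \ge \max(2q, q+d)$, the vanishing $[\Sigma^i S^{p', q}, \tau^d_{\le 0} E] = 0$ for $i > 0$ forces $\pi_{p, q}(\tau^d_{\le 0} E) = 0$ whenever $p > \max(2q, q+d)$. Combined with the connectivity input $\tau^d_{\le 0} E \in I^{d+1} \subset \SH(k)_{\ge d+1}$, which gives $\pi_{p, q} = 0$ for $p - q \le d$, the union of these two vanishing ranges covers all $p$ when $q < d$ and covers all Chow degrees $> 0$ when $q \ge d$; this yields (3).

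For (4), the same combination of inputs is to be run on $\MGL \wedge \tau^d_{\le 0} E$: Proposition~\ref{prop:chow-MGL-homology}(2) applied to $\tau^d_{\le 0} E \in I^{d+1} \subset I$ provides the Chow $\ge 0$ bound, while the generator/connectivity argument of (3) applied to the smash product gives the Chow $\le 0$ bound, together forcing concentration in Chow degree exactly $0$. The main obstacle is verifying that $\MGL \wedge \tau^d_{\le 0} E$ still lies in $\SH(k)_{I^d \le 0}$---a variant of Proposition~\ref{prop:chow-MGL-homology}(1) for the $I^d$-t-structure that does not follow formally, since $I^d$ fails to contain $S^{2n,n}$ for $n < d$. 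To handle this, I plan to invoke the pure-Tate cell structure of $\MGL$ (Theorem~\ref{thm:mgl-cells}) and re-run the duality argument underlying Proposition~\ref{prop:pure}, checking that smashing a generator $\Th(\xi) \in I^d$ with the dualizing Thom spectrum $\Th(-T_X)$ of a smooth proper variety lands back in $I^d$ after appropriate suspension shifts.
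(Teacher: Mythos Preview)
Your arguments for (1), (2), and (3) are correct and essentially match the paper's proof, with only cosmetic differences in organization. In particular, your slightly stronger observation in (1) that $\tau^d_{\le 0}E \in I^{d+1}$ (not merely $I^d$) is valid and in fact exactly what you use for the connectivity input in (3).

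For (4), however, your proposed mechanism has a genuine gap. You want to show $\MGL \wedge \tau^d_{\le 0}E \in \SH(k)_{I^d\le 0}$ by running the duality argument of Proposition~\ref{prop:pure} through the pure-Tate cells $S^{2n,n}$ of $\MGL$. Unwinding that argument, you would need $S^{-2n,-n}\wedge I^d \subset I^d$ for all $n\ge 0$. But $I^d = \SH(k)_{\ge d}\cap I$, and while $S^{-2n,-n}\wedge I \subset I$ is plausible (and true in the cases of interest), the homotopy-$t$-structure component fails: $S^{-2n,-n}\wedge \SH(k)_{\ge d}\subset \SH(k)_{\ge d-n}$, not $\SH(k)_{\ge d}$. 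So the dual of a generator of $I^d$ smashed with a cell of $\MGL$ drops out of $I^d$, and the argument breaks. No ``appropriate suspension shift'' fixes this, since the shift you would need is precisely $\Sigma^n$, which destroys the Chow-degree bookkeeping.

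The paper avoids this by not attempting to place $\MGL\wedge\tau^d_{\le 0}E$ into the non-positive part of any $t$-structure. Instead it works directly with the property ``$\pi_{**}$ is concentrated in Chow degrees $\le 0$'': this property is trivially preserved by $\wedge S^{2n,n}$ (since $c(p,q)=c(p-2n,q-n)$), as well as by filtered colimits, extensions, and sums. Since $\1$ has the property by (3), so does everything in $\SH(k)^{\pure\tate}$, hence $\MGL$ by Theorem~\ref{thm:mgl-cells}. This is both simpler and actually correct; you should replace your duality plan with this closure argument.
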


\begin{proof}
(1) Immediate from the cofiber sequence $\tau^d_{\ge 1} E \to E \to \tau^d_{\le 0} E$, since $E \in I^d$ and $\tau^d_{\ge 1} E \in \Sigma I^d \subset I^d$.

(2) Consider the long exact sequence
\[\cdots\to \MGL_{2*,*} \tau^d_{\ge 1}E \to \MGL_{2*,*} E \to \MGL_{2*,*} \tau^d_{\le 0} E \to \MGL_{2*-1,*} \tau^d_{\ge 1} E \to \cdots. \]
It suffices to show that $\MGL_{2*+i,*} \tau^d_{\ge 1} E = 0$ for $i = 0, -1$. Since $\tau^d_{\ge 1} E \in \Sigma I^d \subset \SH(k)_{c \ge 1}$, the result follows from Proposition \ref{prop:chow-MGL-homology}.

(3) We need to show that $\pi_{2n+i,n}(\tau^d_{\le 0} E) = 0$ for $i > 0$.
This vanishing arises for two slightly different reasons, depending on if (a) $n+i > d$ or (b) $n+i \le d$.

(a) When $n+i > d$, we have $S^{2n+i,n}= \Sigma^{n+i} \Gmp{n} \in \Sigma \SH(k)_{\ge d}$. Therefore we have $S^{2n+i,n} \in \Sigma I^d$. 
Since $\tau^d_{\le 0} E$ is in the $\le 0$ part of the $t$-structure corresponding to $I^d$, the homotopy groups $\pi_{2n+i,i} (\tau^d_{\le 0} E)$ vanishes by definition.

(b) When $n+i \le d$, by Equation \eqref{eq:char-ge0}, if $F \in \SH(k)_{\ge d+1}$ then $\pi_{2n+i,n}(F) = 0$. Therefore it suffices to show that $\tau^d_{\le 0} E \in \SH(k)_{\ge d+1}.$ Consider the cofiber sequence in (1).
 Since $\tau^d_{\ge 1} E \in \Sigma I^d \subset \SH(k)_{\ge d+1}$ and $E \in I^{d+1} \subset \SH(k)_{\ge d+1}$, the result follows.

(4)  By (1) we have $\tau^d_{\le 0} E \in \SH(k)_{c \ge 0}$. Hence $\MGL_{*,*} \tau^d_{\le 0} E$ is concentrated in Chow degrees $\ge 0$, by Proposition \ref{prop:chow-MGL-homology}(2).
It remains to show that $\MGL_{{*,*}} \tau^d_{\le 0} E$ is concentrated in Chow degrees $\le 0$.

Let $\scr C \subset \SH(k)$ be the subcategory consisting all spectra $M$ such that $\pi_{{*,*}}(M \wedge \tau_{\le 0}^d E)$ is concentrated in Chow degrees $\le 0$.
It suffices to show that $\MGL \in \scr C$.
Note that $\scr C$ is closed under filtered colimits, extensions, wedge sums, and $\Sigma^{2n,n}$ for any $n \in \Z$.
By (3), $\1 \in \scr C$, and hence (in the notation of \S\ref{app:cells}) $\SH(k)^{\pure\tate\ge 0} \subset \scr C$.
The result thus follows from Theorem \ref{thm:mgl-cells}.
\end{proof}

\begin{proposition} \label{lemm:chow-approx} \NB{true over any base}
Let $E \in \SH(k)$.
There are directed systems \[ \tau^n_{\ge 0} E \to \tau^{n-1}_{\ge 0} E \to \dots \to \tau_{I \ge 0} E \] and \[ \tau^n_{\le 0} E \to \tau^{n-1}_{\le 0} E \to \dots \to \tau_{I \le 0} E \] which are in fact colimit diagrams.
\end{proposition}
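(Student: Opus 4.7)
The plan is to construct the transition maps from the universal properties of the three truncation functors in sight, and then identify each colimit by comparing against the $I$-truncation triangle of $E$. The main ingredient, and the expected main obstacle, will be the compact generation of $I$ together with Example \ref{ex:gens-bdded}, which guarantees that any compact object of $I$ lies in $I^d$ for $d$ sufficiently small.

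First I would construct the maps. Because $\SH(k)_{\ge n} \subset \SH(k)_{\ge n-1}$ we have $I^n \subset I^{n-1} \subset I$, and hence $(\SH(k))_{I \le 0} \subset (I^{n-1})_{\le 0} \subset (I^n)_{\le 0}$ on non-positive parts. The universal property of $\tau^{n-1}_{\ge 0} E \to E$, combined with $\tau^n_{\ge 0} E \in I^n \subset I^{n-1}$, produces a unique factorization $\tau^n_{\ge 0} E \to \tau^{n-1}_{\ge 0} E \to E$, and similarly a canonical compatible map $\tau^n_{\ge 0} E \to \tau_{I \ge 0} E$. Dually, the universal property of $E \to \tau^n_{\le 0} E$ applied to $E \to \tau^{n-1}_{\le 0} E$ and $E \to \tau_{I \le 0} E$ produces $\tau^n_{\le 0} E \to \tau^{n-1}_{\le 0} E$ and $\tau^n_{\le 0} E \to \tau_{I \le 0} E$. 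Compatibility within the directed systems is automatic from the uniqueness of the factorizations.

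Next I would identify the first colimit. Let $F := \colim_n \tau^n_{\ge 0} E$, the filtered colimit taken as $n \to -\infty$. Each $\tau^n_{\ge 0} E$ lies in $I^n \subset I$, and $I$ is closed under colimits, so $F \in I$. Since filtered colimits preserve cofiber sequences, the triangles $\tau^n_{\ge 0} E \to E \to \tau^n_{\le -1} E$ assemble into a cofiber sequence $F \to E \to C$ with $C := \colim_n \tau^n_{\le -1} E$. If $C$ lies in the $\le -1$ part of the $I$-t-structure, then $F \to E \to C$ must be the $I$-truncation triangle of $E$ by uniqueness of truncation triangles, and the canonical map $F \to \tau_{I \ge 0} E$ is an equivalence. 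An entirely parallel argument starting from the cofiber sequence $\tau^n_{\ge 1} E \to E \to \tau^n_{\le 0} E$ will identify $\colim_n \tau^n_{\le 0} E$ with $\tau_{I \le 0} E$.

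The main obstacle is therefore the vanishing assertion on $C$. By Lemma \ref{lemm:t-structure-nonpos} and compact generation of $I$, it suffices to show $[\Sigma^i X, C] = 0$ for every compact $X \in I$ and every $i \ge 0$. By Example \ref{ex:gens-bdded} such an $X$ lies in $I^d$ for some $d$, and $I^d$ is closed under suspension, so $\Sigma^i X \in I^d \subset I^n$ for all $i \ge 0$ and all $n \le d$. This forces $[\Sigma^i X, \tau^n_{\le -1} E] = 0$ whenever $n \le d$, and compactness of $X$ turns $[\Sigma^i X, C]$ into the filtered colimit of these groups, which vanishes. The parallel vanishing needed for the $\le 0$ side goes through verbatim, completing the plan.
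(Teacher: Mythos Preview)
Your proof is correct and follows essentially the same approach as the paper: both use Example \ref{ex:gens-bdded} and compactness of the generators to show the colimit of the $\le 0$ (or $\le -1$) truncations lands in $\SH(k)_{I \le 0}$, then invoke uniqueness of truncation triangles. The only cosmetic difference is that the paper proves the $\le 0$ system first and obtains the $\ge 0$ system by taking fibers in the cofiber sequence, whereas you run the vanishing argument twice.
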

\begin{proof}
Since $I^n \subset I^{n-1} \subset \dots \subset \SH(k)_{I \ge 0}$, the directed systems exist.

To see that the second one is a colimit diagram, it suffices to show that $\colim_n \tau^n_{\le 0} E \in \SH(k)_{I \le 0}$.
(Indeed the fiber of $E \to \colim_n \tau^n_{\le 0} E$ is $\colim_n \tau^n_{>0} E \in \SH(k)_{I > 0}$, so we conclude by \cite[Proposition 1.3.3(ii)]{beilinson1982faisceaux}.)
This follows from Example \ref{ex:gens-bdded}, which shows that every generator of $\SH(k)_{I \ge 0}=I$ is in $I^n$ for $n$ sufficiently small.

Taking the fiber of the constant colimit diagram $E$ mapping to the second one yields the first  one (up to a shift), which is thus also a colimit diagram.
This concludes the proof.
\end{proof}

\subsection{$\MGL$-completion and Adams--Novikov spectral sequence}
\label{subsec:ANSSmain-computation}

We come to the main result of this section. 

Recall from \cite[\S7]{dugger2010motivic} or \cite{hu2011remarks} the usual $\MGL$ based motivic Adams--Novikov spectral sequence
$$\Ext^{*,*,*}_{\MGL_{{*,*}}\MGL}(\MGL_{{*,*}}, \MGL_{{*,*}}X)\implies \pi_{{*,*}}X^{\wedge}_{\MGL}.$$
Recall that $(\MGL_{2*,*},\MGL_{2*,*}\MGL)$ is a Hopf algebroid canonically isomorphic to $(\MU_{2*},\MU_{2*}\MU)$ (combine \cite[Corollary 6.7, Lemma 6.4]{naumann2009motivic} and \cite[Corollary 6.7]{SpitzweckMGL} and recall our convention that we implicitly invert $e$). \NB{True over Dedekind domain with vanishing Picard group}
In particular for $E \in \SH(k)$, the graded abelian group $\MGL_{2*,*} E$ is canonically a comodule over $\MU_{2*}\MU$.
\begin{theorem} \label{thm:main-computation}
Let $E \in \SH(k)_{I \ge 0}$.
Then
\begin{enumerate}
\item the canonical map $\tau_{I\le 0}E \to \tau_{I \le 0}(E)_\MGL^\comp$ to the $\MGL$-nilpotent completion induces an isomorphism on $\pi_{*,*}$, and
\item $\pi_{2w-s, w} \tau_{I \le 0} E \cong \Ext^{s, 2w}_{\MU_*\MU}(\MU_*,\MGL_{2*,*}E).$\NB{indexing?}
\end{enumerate}
\end{theorem}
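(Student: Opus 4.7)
The plan is to approximate $\tau_{I\le 0}E$ by the finer truncations $\tau^d_{\le 0}E$ from Proposition \ref{lemm:chow-approx}, and to compute $\pi_{*,*}\tau^d_{\le 0}E$ via a collapsing motivic Adams--Novikov spectral sequence.

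First I would reduce to the case $E \in I^{d+1}$ for some $d$: by Example \ref{ex:gens-bdded}, every compact generator of $I$ lies in some $I^{d+1}$, and both sides of the claimed isomorphism are compatible with filtered colimits in $E$ (using that $\tau^d_{\le 0}$ is a left adjoint and that $\Ext$ against a fixed Hopf algebroid commutes with filtered colimits in the target comodule). For such $E$, I would set up the motivic Adams--Novikov spectral sequence
\[ E_2^{s,p,q} = \Ext^{s,p,q}_{\MGL_{*,*}\MGL}(\MGL_{*,*}, \MGL_{*,*}\tau^d_{\le 0}E) \Rightarrow \pi_{p-s,q}\tau^d_{\le 0}(E)_\MGL^\comp. \]
By Proposition \ref{lemm:key}(4) the comodule $\MGL_{*,*}\tau^d_{\le 0}E$ is concentrated in Chow degree $0$, where by Proposition \ref{lemm:key}(2) it equals $\MGL_{2*,*}E$. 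Using the identification $(\MGL_{2*,*}, \MGL_{2*,*}\MGL) \cong (\MU_{2*}, \MU_{2*}\MU)$, the $E_2$-page becomes $\Ext^{s,2w}_{\MU_{2*}\MU}(\MU_{2*}, \MGL_{2*,*}E)$, supported only in tridegrees $(s, 2q, q)$. Since $d_r$ shifts $p$ by $r-1$ while leaving $q$ fixed, its target for $r \ge 2$ lies in Chow degree $r - 1 > 0$ and hence vanishes, so the spectral sequence collapses at the $E_2$-page.

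The main obstacle is the convergence statement (1): that $\tau^d_{\le 0}E \to \tau^d_{\le 0}(E)_\MGL^\comp$ is an isomorphism on $\pi_{*,*}$. The key input is Proposition \ref{lemm:key}(3), which places $\pi_{*,*}\tau^d_{\le 0}E$ in Chow degrees $\le 0$ and hence, by Example \ref{ex:eta-comp}, already identifies it with the $\pi_{*,*}$ of the $\eta$-completion. Using the Hopkins--Morel isomorphism $\MGL/(a_1, a_2, \ldots) \simeq \textup{H}\Z$ (valid after inverting $e$), I would then reduce $\MGL$-nilpotent completeness to a combination of $\eta$-completeness and completeness along the polynomial generators $a_i$, with the latter becoming automatic from the Chow degree bounds in each fixed bidegree.

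Finally, I would pass to the limit via $\tau_{I\le 0}E \simeq \colim_{d\to -\infty}\tau^d_{\le 0}E$ from Proposition \ref{lemm:chow-approx}. Since the $\MGL$-homology $\MGL_{2*,*}E$ and hence the Ext computation are independent of $d$, and $\pi_{*,*}$ commutes with filtered colimits, the isomorphism for each $\tau^d_{\le 0}E$ assembles into the desired one for $\tau_{I\le 0}E$.
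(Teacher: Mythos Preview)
Your overall strategy---reduce to $E\in I^{d+1}$, run the motivic Adams--Novikov spectral sequence for $\tau^d_{\le 0}E$, observe collapse from Proposition \ref{lemm:key}(2,4), then pass to the colimit via Proposition \ref{lemm:chow-approx}---is exactly what the paper does, and your collapse argument is correct.

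The genuine gap is in your convergence step. You propose to use the Hopkins--Morel isomorphism to reduce $\MGL$-nilpotent completeness to $\eta$-completeness plus completeness along the $a_i$, and then claim the latter is ``automatic from the Chow degree bounds.'' But each $a_i$ has bidegree $(2i,i)$, hence Chow degree $0$, so Chow-degree bounds on $\pi_{*,*}$ give no leverage on $a_i$-adic convergence. More fundamentally, there is no straightforward fracture statement relating $\MGL$-nilpotent completion to $H\Z$-nilpotent completion plus $(a_i)$-completion in this generality. The paper's route is different and uses an ingredient you omit entirely: Proposition \ref{lemm:key}(1), which shows $\tau^n_{\le 0}E \in I^n \subset \SH(k)_{\ge n}$ is \emph{connective in the homotopy $t$-structure}. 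Mantovani's theorem \cite[\S5.1 and Theorem 7.3.4]{mantovani2018localizations} then gives, for homotopy-connective spectra, that $(\ph)_\MGL^\comp \simeq L_\MGL(\ph) \simeq (\ph)_\eta^\comp$. Only after this identification do Proposition \ref{lemm:key}(3) and Example \ref{ex:eta-comp} finish the job. Without this connectivity input you have no mechanism to control the $\MGL$-nilpotent completion.

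A minor point on your reduction: writing $E\in I$ as a filtered colimit of compact generators is not quite available, since $I$ is closed under colimits \emph{and extensions}, not just colimits. The paper instead uses Proposition \ref{lemm:chow-approx} directly to write $E=\tau_{I\ge 0}E\simeq\colim_d\tau^d_{\ge 0}E$ with each $\tau^d_{\ge 0}E\in I^d$; you invoke this at the end anyway, so you should use it at the start as well and drop the compact-generator framing.
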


\begin{proof}
We first prove part (2) of the theorem.

We have isomorphisms \[ \tau_{I \le 0} E \stackrel{(*)}{\wequi} \tau_{I \le 0} \tau_{I \ge 0} E \stackrel{L.\ref{lemm:chow-approx}}{\wequi} \tau_{I \le 0} \colim_d \tau_{\ge 0}^d E \stackrel{C.\ref{cor:filtered-trunctation}}{\wequi} \colim_d \tau_{I \le 0} \tau_{\ge 0}^d E \stackrel{L.\ref{lemm:chow-approx}}\wequi \colim_{n,d} \tau_{\le 0}^n \tau_{\ge 0}^d E, \] where (*) holds because $E \in \SH(k)_{I \ge 0}$. The third equivalence uses Corollary~\ref{cor:filtered-trunctation} which is stated for the largest $I$. By a similar proof we can show it also works for other choices of $I$.
%Here Corollary \ref{cor:filtered-trunctation} proves the third equivalence when $I$ is the maximal choice. 
Since the right hand side of the isomorphism in part $(2)$ is compatible with filtered colimits in $E$ \cite[Lemma 3.2.2(b)]{Hovey}, we may replace $E$ by $\tau_{\ge 0}^d E$ and so we may assume that $E \in I^d$.
Let $n<d$.
We shall show that $\pi_{2w-s, w} \tau^n_{\le 0} E \wequi \Ext^{s, 2w}_{\MU_*\MU}(\MGL_{2*,*} E)$.
Taking the (constant) colimit as $n \to -\infty$ will yield the result.

By Lemma \ref{lemm:key}~(2, 4), we find that $\MGL_{2*+i,*} \tau^n_{\le 0} E = \MGL_{2*,*} E$ for $i=0$ and vanishes else.
Hence the Adams--Novikov spectral sequence for $\tau^n_{\le 0} E$ again collapses, and it suffices to show that this spectral sequence converges to $\pi_{*,*} \tau^n_{\le 0} E$, or in other words that $\tau^n_{\le 0} E$ is $\MGL$-nilpotent complete (on homotopy groups).
By Lemma \ref{lemm:key}~(1), $\tau^n_{\le 0}(E)$ is connective in the homotopy $t$-structure; hence by \cite[\S5.1 and Theorem 7.3.5]{mantovani2021localizations} we have \[ \tau^n_{\le 0}(E)_\MGL^\comp \wequi L_\MGL \tau^n_{\le 0}(E) \wequi \tau^n_{\le 0}(E)_\eta^\comp. \]
It is thus sufficient to show that $\tau^n_{\le 0}(E) \to \tau^n_{\le 0}(E)_\eta^\comp$ induces an equivalence on $\pi_{*,*}$.
This follows from Lemma \ref{lemm:key}(3) and Example \ref{ex:eta-comp}.

This concludes the proof of part (2).

For part $(1)$, Corollary \ref{cor:chow-MGL-homology} implies that the Adams--Novikov spectral sequence for $\tau_{I \le 0} E$ collapses and $$\pi_{2w-s, w} \tau_{I \le 0}(E)_\MGL^\comp \cong \Ext^{s,2w}_{\MU_*\MU}(\MU_*, \MGL_{2*,*}E).$$\NB{details?}
Hence part (1) follows from part (2).
\end{proof}

\subsection{First consequences}
Theorem \ref{thm:main-computation} is the central result of this paper.
We establish here some of its immediate consequences, many of which will be amplified in \S\ref{sec:reconstruction}.

\begin{lemma} \label{lem:smash-with-thom}
Let $E \in \SH(k)$.
If $\pi_{i,0}(E\wedge \Th(\xi))=0$ for all $i \in \Z$ and $K$-theory points $\xi$ on smooth proper varieties $X$, then $E \wequi 0$.
\end{lemma}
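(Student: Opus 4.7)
The plan is to bootstrap the given vanishing of $\pi_{i,0}$ into a right-orthogonality statement with respect to generators of $\SH(k)^{\rig}$, and then use the fact that $\SH(k)[1/e] \subset \SH(k)^{\rig}$ to conclude that $E$ is right-orthogonal to itself.

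First I would exploit the freedom in choosing $\xi \in K(X)$. For any $n \in \Z$ we have $\Th(\xi + n\scr O) \simeq \Sigma^{2n,n}\Th(\xi)$, so the hypothesis applied to $\xi + n\scr O$ reads
\[
\pi_{i,0}(E \wedge \Th(\xi + n\scr O)) \cong \pi_{i-2n,-n}(E \wedge \Th(\xi)) = 0.
\]
As $(i,n)$ ranges over $\Z^2$, so does $(j,w) := (i-2n,-n)$, so we obtain the strengthened vanishing $\pi_{j,w}(E \wedge \Th(\xi)) = 0$ for every $(j,w) \in \Z^2$, every smooth proper $X$, and every $\xi \in K(X)$.

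Next I would dualize. By the strong dualizability lemma recalled before Remark \ref{rmk:gens}, $\Th(\xi)$ has dual $\Th(-T_X - \xi)$, so
\[
\pi_{p,q}(E \wedge \Th(\xi)) \cong [\Sigma^{p,q}\Th(-T_X - \xi), E].
\]
Since $\xi$ ranges over all of $K(X)$, so does $\eta := -T_X - \xi$. Thus the hypothesis is equivalent to $[\Sigma^{p,q}\Th(\eta), E] = 0$ for every smooth proper $X$, every $\eta \in K(X)$, and every $(p,q) \in \Z^2$.

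To finish I would note that every such $\Sigma^{p,q}\Th(\eta)$ is itself (up to integer suspension) a Thom spectrum of a smooth proper scheme, via $\Sigma^{p,q}\Th(\eta) \simeq \Sigma^{p-2q}\Th(\eta + q\scr O)$. Hence these objects generate $\SH(k)^{\rig}$ as a localizing subcategory (Definition \ref{def:SH-rig}), and the vanishing above says that $E$ is right-orthogonal to all of $\SH(k)^{\rig}$. By Remark \ref{rmk:char-e-rig-gen} we have $E \in \SH(k)[1/e] \subset \SH(k)^{\rig}$, so taking $F = E$ gives $\id_E = 0$ and therefore $E \wequi 0$. There is no real obstacle here; the only thing to verify carefully is that the shifting trick $\xi \mapsto \xi + n\scr O$ really produces all bidegrees (which it does precisely because we allow arbitrary $K$-theory classes, including virtual ones of any rank), and that the inversion of the exponential characteristic is what puts $E$ into $\SH(k)^{\rig}$.
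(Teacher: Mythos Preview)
Your proof is correct and essentially the same as the paper's. The only difference is cosmetic: you first shift $\xi$ to get vanishing of all $\pi_{p,q}$ and then dualize, whereas the paper dualizes immediately (obtaining $[\Sigma^i\Th(\xi),E]=0$) and then specializes $\xi=n\scr O_X$ to produce $[\Sigma^{i,j}\Sigma^\infty_+X,E]=0$; both routes land on the same generation statement via Remark~\ref{rmk:char-e-rig-gen}.
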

\begin{proof}
By duality, the assumption is equivalent to $[\Sigma^{i}\Th(\xi), E]=0$ for all $i \in \Z$ and $K$-theory points $\xi$ on smooth proper varieties $X$.
Taking $\xi = n\scr O$ shows that $[\Sigma^{i,j} \Sigma^\infty_+ X, E]=0$ for all $i,j \in \Z$ and all $X$ smooth and proper.
This implies that $E=0$ by Remark \ref{rmk:char-e-rig-gen}.
\end{proof}

\begin{remark}
Let $E \in \SH(k)^\heart$.
By Theorem \ref{thm:main-computation}, the map $\alpha: E \to E_\MGL^\comp$ induces an isomorphism on $\pi_{*,*}$.
Let $X$ be smooth projective and $\xi \in K(X)$.
By Corollary \ref{cor:pure} we have $E \wedge \Th(\xi) \in \SH(k)^\heart$, and hence $E \wedge \Th(\xi) \to (E \wedge \Th(\xi))_\MGL^\comp$ again induces an equivalence on $\pi_{*,*}$.
But tensoring with the strongly dualizable object $\Th(\xi)$ preserves limits, so $(E \wedge \Th(\xi))_\MGL^\comp \wequi E_\MGL^\comp \wedge \Th(\xi)$.
Lemma \ref{lem:smash-with-thom} now implies that $\alpha$ is an equivalence.
This is a special case of Proposition \ref{prop:comonadic-reconstruction} in the next section.
\end{remark}

We deduce the following strengthening of Proposition \ref{prop:eta-periodic}.
\begin{corollary} \label{cor:detect-infinity-connective}
Let $E \in \SH(k)$.
Then $E$ is Chow-$\infty$-connective if and only if $E \wedge \MGL \wequi 0$.
\end{corollary}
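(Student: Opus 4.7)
The plan is to apply Lemma~\ref{lem:smash-with-thom} in both directions, reducing the vanishing of a motivic spectrum to the vanishing of $\pi_{i,0}$ after smashing with Thom spectra of $K$-theory points on smooth and proper varieties.

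\emph{Sufficiency.} Suppose $E$ is Chow-$\infty$-connective. Since $\Th(\xi) \in \SH(k)_{c \ge 0}$, Proposition~\ref{prop:tensor} yields $F := E \wedge \Th(\xi) \in \SH(k)_{c \ge n}$ for every $n$. Proposition~\ref{prop:chow-MGL-homology}(2) then forces $\MGL_{*,*}F$ to be simultaneously concentrated in Chow degrees $\ge n$ for every $n$, and hence to vanish. In particular $\MGL_{i,0}(E \wedge \Th(\xi)) = 0$ for every $i, X, \xi$, and Lemma~\ref{lem:smash-with-thom} applied to $\MGL \wedge E$ concludes $\MGL \wedge E \wequi 0$.

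\emph{Necessity.} Assume $\MGL \wedge E \wequi 0$. By Remark~\ref{rmk:char-e-rig-gen} and Proposition~\ref{prop:right-complete}, $E$ lies in $\SH(k)^\rig$ and the Chow $t$-structure there is right complete. Using the cofiber sequences $E_{c \ge n+1} \to E_{c \ge n} \to E_{c=n}$ together with the vanishing of $\bigcap_m \SH(k)^\rig_{c \le m}$, Chow-$\infty$-connectivity of $E$ is equivalent to the vanishing of every $E_{c=n}$: if all $E_{c=n}\wequi 0$, the tower $\{E_{c\ge n}\}_n$ is constant, and its limiting value lies in the trivial intersection. Applying Lemma~\ref{lem:smash-with-thom} to $E_{c=n}$ and Corollary~\ref{cor:pure} (commuting the Chow truncation past $\Th(\xi) \in \SH(k)^\pure$; the same argument applies at any Chow degree), it suffices to prove $\pi_{i,0}F_{c=n} = 0$, where $F := E \wedge \Th(\xi)$. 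Theorem~\ref{thm:intro-main} identifies this bigraded homotopy group with $\Ext^{s,2w}_{\MU_{2*}\MU}(\MU_{2*}, \MGL_{2*+n,*}F)$ for the appropriate $(s,w)$; since $\MGL \wedge F \wequi 0$ by hypothesis, the comodule input vanishes and so does the Ext-group.

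The main hurdle is the converse direction: one cannot deduce the vanishing of the Chow homotopy objects $E_{c=n}$ from $\MGL \wedge E \wequi 0$ by an elementary argument, because $\MGL$-homology only sees the Chow degree $0$ slice on the nose; the reduction genuinely requires the full Ext-computation provided by Theorem~\ref{thm:intro-main}. Right-completeness in $\SH(k)^\rig$ is equally crucial: the Chow $t$-structure is not left complete in general (as demonstrated by the $\eta$-periodic example of Proposition~\ref{prop:eta-periodic}), so no dual leftward argument via the $E_{c \ge n}$ is available.
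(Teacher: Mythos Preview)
Your proof is correct and follows essentially the same route as the paper: reduce Chow-$\infty$-connectivity to vanishing of all $E_{c=n}$ via right completeness, then use Lemma~\ref{lem:smash-with-thom}, Corollary~\ref{cor:pure}, and Theorem~\ref{thm:main-computation} to tie this to $\MGL$-homology. The paper packages both directions into a single chain of $\Leftrightarrow$'s, whereas you treat them separately; your sufficiency argument is in fact slightly cleaner, since it only needs Proposition~\ref{prop:chow-MGL-homology}(2) rather than the full $\Ext$ computation. One small wording issue: in the necessity step you say the constant tower $\{E_{c\ge n}\}_n$ has ``limiting value in the trivial intersection'', but what you actually need is $E \simeq \colim_{n\to -\infty} E_{c\ge n}$ (the colimit formulation of right completeness), which shows $E \simeq E_{c\ge n}$ for all $n$; the intersection $\bigcap_m \SH(k)^\rig_{c\le m}=0$ is the \emph{equivalent} condition, not where $E$ lands.
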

\begin{proof}
By right completeness (Proposition \ref{prop:right-complete}), $E \in \SH(k)_{c \ge n}$ if and only if $E_{c=i} = 0$ for $i < n$.
(Indeed necessity is clear, and for sufficiency note that $E_{c<n} \in \cap_{m} \SH(k)_{c < m} = 0$.)
Consequently $E$ is Chow-$\infty$-connective if and only if $E_{c=i}=0$ for all $i$.

We hence find:
\begin{gather*}
\text{$E$ is Chow-$\infty$-connective} \\
\Leftrightarrow E_{c=i} = 0 \text{ for all $i$} \\
\stackrel{L.\ref{lem:smash-with-thom}}{\Leftrightarrow} \pi_{*,*}(E_{c=i} \wedge \Th(\xi)) = 0 \text{ for all $i$, $X$ smooth proper and $\xi \in K(X)$} \\
\stackrel{C.\ref{cor:pure}}{\Leftrightarrow} \pi_{*,*} (E \wedge \Th(\xi))_{c=i} = 0 \text{ for all $i$} \\
\stackrel{T.\ref{thm:main-computation}+T.\ref{thm:mgl-cells}}{\Leftrightarrow} \MGL_{*,*}(E \wedge \Th(\xi))_{c=i} = 0 \text{ for all $i$} \\
\stackrel{C.\ref{cor:chow-MGL-homology}}{\Leftrightarrow} \MGL_{2*+i,*}(E \wedge \Th(\xi)) = 0 \text{ for all $i$}\\
\Leftrightarrow \pi_{*,*}(\MGL \wedge E \wedge \Th(\xi)) = 0 \\
\stackrel{L.\ref{lem:smash-with-thom}}{\Leftrightarrow} \MGL \wedge E = 0
\end{gather*}
\end{proof}

\section{Reconstruction theorems} \label{sec:reconstruction}
We keep the conventions from the last section: $S=Spec(k)$ is the spectrum of a field of exponential characteristic $e$, which is implicitly inverted throughout.

In this section we will amplify the results of the last section by deducing fairly explicit descriptions of the categories $\SH(k)^\heart$ and $\1_{c=0}\Mod$. 
We begin in \S\ref{subsec:comonadic} by describing these categories in terms of a certain comonad $C$ on $\MGL_{c=0}\Mod$.
Then in \S\ref{subsec:MGL-mod} we explain how work of Bondarko supplies a very explicit description of the category $\MGL_{c=0}\Mod$, and we also identify the comonad $C$.
Finally in \S\ref{subsec:chow-heart} we use this to provide an explicit description of $\SH(k)^\heart$.

\subsection{Comonadic descent} \label{subsec:comonadic}
\subsubsection{}
Suppose given a presentably symmetric monoidal category $\scr D$ and $A \in \CAlg(\scr D)$.
We obtain a free-forgetful adjunction \[ F: \scr D \adj A\Mod: U, \] and the endofunctor \[ C := FU = \otimes A: A\Mod \to A\Mod \] which acquires the structure of a comonad \cite[Proposition 4.7.3.3]{HA}.

We denote by $C\CoMod$ the category of comodules under $C$ \cite[Definition 4.2.1.13]{HA}, and hence obtain a factorization \cite[\S4.7.4]{HA} \[ \scr D \adj C\CoMod \adj A\Mod, \] where $C\CoMod \to A\Mod$ is the forgetful functor which we denote by $H$ (with right adjoint the cofree comodule functor) and $\scr D \to C\CoMod$ sends $X$ to $X \otimes A$ with its canonical comodule structure.

\subsubsection{Cobar resolution}
We can form the cobar resolution \cite[Construction 2.7]{mathew2017nilpotence} \[ \1_{\scr{D}} \to \CB(A) := \left( A \rightrightarrows A \otimes A \triplearrows \dots \right); \] this as a coaugmented cosimplicial object in $\CAlg(\scr D)$.
Taking module categories, we obtain a coaugmented cosimplicial category\NB{note that the degeneracies are not the right adjoints, but rather left Kan extension along multiplication} \begin{equation} \label{eq:C-nilp} \scr D \to \CB(A)\Mod := \left( A\Mod \rightrightarrows (A \otimes A)\Mod \triplearrows \dots \right). \end{equation}
It follows from \cite[Theorem 4.7.6.2]{HA} that \[ \lim_{\Delta} \CB(A)\Mod \wequi C\CoMod. \]

Let $\Delta_s$ be the subcategory of $\Delta$ with the same objects, but where the morphisms are
given by injective order preserving maps between nonempty linearly ordered sets. We will often be interested in the restriction of diagram \eqref{eq:C-nilp} to the coinitial subcategory $\Delta_s \hookrightarrow \Delta$ \cite[Lemma 6.5.3.7]{HTT}, i.e. view this as a coaugmented \emph{semi}-cosimplicial object.
The limit of any cosimplicial object is the same as the limit of its associated semi-cosimplicial object. Therefore we also have 
\[ \lim_{\Delta_s} \CB(A)\Mod \wequi C\CoMod. \]

\subsubsection{Monoidal structure}
The category $C\CoMod$ is in fact symmetric monoidal, and the left adjoint functors above are symmetric monoidal.
Indeed \eqref{eq:C-nilp} is a diagram of symmetric monoidal categories and symmetric monoidal functors (since it comes from a diagram of commutative rings and commutative ring maps).
Since limits of symmetric monoidal categories are computed on the underlying categories \cite[Corollary 3.2.2.5]{HA}, it follows that the limit $C\CoMod$ is symmetric monoidal, as desired.

%\begin{remark}
%A perhaps more intuitive reason for why $C\CoMod$ should be symmetric monoidal is that $C$ admits a lax symmetric monoidal structure.
%Unfortunately this seems somewhat difficult to make precise.
%\end{remark}

\subsubsection{$t$-structure}
\label{subsec:t_structure}
Suppose that $\scr D$ carries a $t$-structure, and $\otimes A: \scr D \to \scr D$ is $t$-exact.
We can give each of the categories $A^{\otimes n}\Mod$ the $t$-structure detected by the forgetful functor to $\scr D$ (see e.g. \cite[Lemma 29]{bachmann-tambara}).

When viewed as a semi-cosimplicial category, the maps in \eqref{eq:C-nilp} become $t$-exact functors.

For $-\infty \le m \le n \le +\infty$, we denote by $A^{\otimes p}\Mod_{[m,n]}$ the subcategory of objects bounded in the $t$-structure.
Then $\CB(A)\Mod_{[m,n]}$ is a full subdiagram of \eqref{eq:C-nilp}.
Since limits preserve fully faithful functors\NB{because mapping spaces in limits of categories are computed in the obvious way}, we find that \[ C\CoMod_{[m,n]} := \lim_{\Delta_s} \CB(A)\Mod_{[m,n]} \] is a full subcategory of $C\CoMod$; in fact $C\CoMod_{[m,n]}$ is equivalent to ${H}^{-1}(A\Mod_{[m,n]})$, where $H: C\CoMod \to A\Mod$ is the forgetful functor.

Let $C_{[m,n]}: A\Mod_{[m,n]} \to A\Mod_{[m,n]}$ denote the restriction of $C$, and $C^\heartsuit = C_{[0,0]}$.
We note that by construction \begin{equation} \label{eq:heart-comod} C\CoMod_{[m,n]} \wequi C_{[m,n]}\CoMod. \end{equation}

One checks immediately that \[ C\CoMod_{\ge 0} := C\CoMod_{[0,\infty]} \quad\text{and}\quad C\CoMod_{\le 0} := C\CoMod_{[-\infty,0]} \] define a $t$-structure on $C\CoMod$. 

\subsubsection{Compact generation} \label{subsub:compact-gen}
We consider the $t$-structure on $C\CoMod$ defined in \S\ref{subsec:t_structure}. 

Suppose that $C\CoMod^\heartsuit$ is compactly generated. Let $C\CoMod^{\heartsuit\omega}$ denote the category of its compact objects.

We consider the category \[ \Stable(C) := \Ind(\Thick(C\CoMod^{\heartsuit\omega})), \] where $\Thick(C\CoMod^{\heartsuit\omega})$ denotes the thick subcategory of $C\CoMod$ generated by $C\CoMod^{\heartsuit\omega}$, and $\Ind$ denote the category obtained by freely adding filtered colimits (see \cite[5.3.5.1]{HTT}).
We obtain an adjunction \cite[Propositions 5.3.5.10 and 5.3.5.13]{HTT} \[ \Stable(C) \adj C\CoMod. \]

\begin{remark}
The assumption that $C\CoMod^\heartsuit$ is compactly generated need not imply that $C\CoMod$ is compactly generated.
Moreover if $E \in C\CoMod^\heartsuit$ is compact, it need not be the case that $E \in C\CoMod$ is compact.
\end{remark}

\begin{remark}
If $C$ is the comonad describing comodules over some Hopf algebroid $\Psi$, then under mild assumptions $\Stable(C)$ coincides with Hovey's category $\Stable(\Psi)$ \cite{Hovey}.
See \cite[\S3]{barthel2018algebraic} for a treatment in the language of $\infty$-categories.
In this situation we will use the notations $\Stable(C)$ and $\Stable(\Psi)$ interchangeably.
\end{remark}

\subsubsection{Modules over $\1_{c=0}$}
We apply the above discussion with $\scr D = \1_{c=0}\Mod$ and $A = \MGL \wedge \1_{c=0}$ (which is equivalent to $\MGL_{c=0}$ by Corollary \ref{cor:pure}).
We obtain \[ \1_{c=0}\Mod \adj C\CoMod \adj \MGL_{c=0}\Mod. \]
We consider the $t$-structure on $\1_{c=0}\Mod$ induced by the Chow $t$-structure (Remark \ref{rem:chow-t-Amod}).

\begin{proposition} \label{prop:comonadic-reconstruction}
\hfill
\begin{enumerate}
\item The free functor $\bar F: \1_{c=0}\Mod \to C\CoMod$ is $t$-exact and symmetric monoidal.
\item For $-\infty \le m \le n < \infty$, the restriction $\1_{c=0}\Mod_{[m,n]} \to C\CoMod_{[m,n]}$ is an equivalence.
  In particular \[ \1_{c=0}\Mod^\heart \wequi C\CoMod^\heart. \]
\item The functor $\bar F$ induces a symmetric monoidal equivalence $\1_{c=0}\Mod \wequi \Stable(C)$.
\end{enumerate}
\end{proposition}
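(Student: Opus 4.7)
The plan is to prove (1), (2), and (3) in order. Part (1) is essentially formal: the symmetric monoidal structure comes from the cosimplicial presentation $C\CoMod \wequi \lim_{\Delta_s} \CB(\MGL_{c=0})\Mod$, in which the coaugmentation from $\1_{c=0}\Mod$ and all coface maps are symmetric monoidal functors, so the limit inherits a symmetric monoidal structure and $\bar F$ is symmetric monoidal by construction. For $t$-exactness, recall that the $t$-structure on $C\CoMod$ is defined by detection along the forgetful functor $H: C\CoMod \to \MGL_{c=0}\Mod$, so it suffices to check that the composite $H \bar F = (\ph) \wedge_{\1_{c=0}} \MGL_{c=0}$ is $t$-exact. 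Right $t$-exactness is immediate since this functor sends the generators $\1_{c=0} \wedge \Th(\xi)$ of the non-negative part to the corresponding generators on the $\MGL_{c=0}$ side; left $t$-exactness is Corollary~\ref{cor:pure} applied to the pure spectrum $\MGL$ (Theorem~\ref{thm:mgl-cells}).

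Part (2) is the heart of the argument, and I would first establish the equivalence of hearts and then propagate by induction. For full faithfulness on the heart, the unit $\eta: E \to \bar U \bar F E$ of the $\bar F \dashv \bar U$ adjunction coincides with the $\MGL$-nilpotent completion. Given $E \in \1_{c=0}\Mod^\heart$ (which by \cite{bachmann-tambara} agrees with $\SH(k)^\heart$) and any $X$ smooth and proper with $\xi \in K(X)$, Corollary~\ref{cor:pure} places $E \wedge \Th(\xi)$ in $\SH(k)^\heart$, whereupon Theorem~\ref{thm:main-computation} identifies $\pi_{*,*}(E \wedge \Th(\xi))$ with $\pi_{*,*}$ of its $\MGL$-nilpotent completion. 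Hence the fiber $F$ of $\eta$ satisfies $\pi_{*,*}(F \wedge \Th(\xi)) = 0$ for all such $\xi$, forcing $F \wequi 0$ by Lemma~\ref{lem:smash-with-thom}. For essential surjectivity, given $X \in C\CoMod^\heart$, I would present $X$ through the limit diagram $\CB(\MGL_{c=0})\Mod^\heart$ and construct a preimage $E \in \1_{c=0}\Mod^\heart$ by forming the corresponding totalization in $\1_{c=0}\Mod$; verifying that this totalization lies in the heart and that $\bar F E \wequi X$ uses the $t$-exactness from (1) together with full faithfulness. The general bounded range $[m,n]$ with $n-m$ finite then follows by induction on $n-m$, using the cofiber sequences $\tau_{=m} E \to E \to \tau_{[m+1,n]} E$ (which $\bar F$ sends to analogous sequences by $t$-exactness) and the five lemma on mapping spaces.

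For part (3), by right completeness (Proposition~\ref{prop:right-complete}, applied in the $A = \1_{c=0}$ module category) the compact objects of $\1_{c=0}\Mod$ form the thick subcategory generated by the compact heart objects: $\1_{c=0}\Mod^\omega = \Thick(\1_{c=0}\Mod^{\omega,\heart})$. Part (2) gives an equivalence $\1_{c=0}\Mod^{\omega,\heart} \wequi C\CoMod^{\omega,\heart}$, so taking thick subcategories and then $\Ind$-completions yields $\1_{c=0}\Mod \wequi \Ind(\Thick(C\CoMod^{\omega,\heart})) = \Stable(C)$; the symmetric monoidal structure is preserved at each stage since the comparison functor, thick-generation, and $\Ind$-completion are all compatible with tensor products. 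The main obstacle is the essential surjectivity on the heart in part (2): the full comparison functor $\bar F$ is not expected to be an equivalence without bounded-below assumptions, so Barr--Beck--Lurie does not apply globally, and one instead needs a descent argument that carefully uses Theorem~\ref{thm:main-computation} to convert abstract comodule data in $C\CoMod^\heart$ into an actual object of $\1_{c=0}\Mod^\heart$ with matching $\MGL$-homology and coaction.
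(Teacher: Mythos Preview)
Your arguments for (1) and (3) are essentially what the paper does. For (2), however, you take a different and ultimately incomplete route.

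The paper does \emph{not} avoid Barr--Beck--Lurie. You correctly observe that it cannot be applied to the unbounded adjunction $\1_{c=0}\Mod \adj \MGL_{c=0}\Mod$, but the paper applies it directly to the restricted adjunction $\1_{c=0}\Mod_{[m,n]} \adj \MGL_{c=0}\Mod_{[m,n]}$ for $n < \infty$. The two hypotheses are verified as follows: (a) conservativity of $F$ on bounded objects is immediate from Corollary~\ref{cor:detect-infinity-connective} (itself a consequence of Theorem~\ref{thm:main-computation}), since $E \wedge \MGL \wequi 0$ forces all Chow homotopy objects of $E$ to vanish; (b) preservation of $F$-split totalizations reduces, after unwinding how limits are computed in $A\Mod_{[m,n]}$, to showing that $\ph \wedge \MGL$ preserves totalizations of diagrams in $\SH(k)_{c \le n}$, which follows from $t$-exactness of $\ph \wedge \MGL$ (Corollary~\ref{cor:pure}) together with the fact that totalizations of uniformly bounded-above objects are computed on each homotopy object by a finite partial totalization. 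This is a clean, uniform argument handling full faithfulness and essential surjectivity simultaneously, and your inductive bootstrap on $n-m$ becomes unnecessary.

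Your full faithfulness argument on the heart is correct and is precisely the content of the Remark following Lemma~\ref{lem:smash-with-thom}; the paper even flags it as a special case of Proposition~\ref{prop:comonadic-reconstruction}. But your essential surjectivity sketch has a genuine gap, which you yourself identify: given $X \in C\CoMod^\heart$, setting $E = \bar U X = \Tot(UC^\bullet X)$, you need to know both that $E$ lands in the heart and that the counit $\bar F E \to X$ is an equivalence. The second amounts to showing that $F$ commutes with this particular totalization, which is exactly the Barr--Beck condition (b) above. So your ``descent argument'' would, once made precise, require the same technical input as the paper's proof, at which point invoking Barr--Beck--Lurie directly is both shorter and avoids the separate bootstrap.
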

\begin{proof}
(1) Clear by construction.

(2) We shall apply the Barr-Beck-Lurie theorem \cite[Theorem 4.7.3.5]{HA}.
It hence suffices to show that (a) $F: \1_{c=0}\Mod_{[m,n]} \to \MGL_{c=0}\Mod_{[m,n]}$ is conservative, and (b) $F$-split totalizations exist in $\1_{c=0}\Mod_{[m,n]}$ and are preserved by $F$. 

(a) For $E \in \1_{c=0}\Mod$ we have $E \wedge_{\1_{c=0}} \MGL_{c=0} \wequi E \wedge \MGL,$ and this is zero if and only if $E$ has vanishing Chow homotopy objects (Corollary \ref{cor:detect-infinity-connective}).
Thus if $E$ is additionally Chow bounded from the left, then $E \wequi 0$ by right completeness.

(b) Since our categories are presentable, totalizations (and in fact all small limits) exist \cite[Corollary 5.5.2.4]{HTT}.
The truncation functors $(\ph)_{c\ge m}$ commute with limits, and the subcategories of $n$- coconnective objects are preserved by limits.
It follows that limits in $\1_{c=0}\Mod_{[m,n]}$ are computed by first computing in $\1_{c=0}\Mod$ and then applying $(\ph)_{c \ge m}$, and similarly for $\MGL_{c=0}\Mod_{[m,n]}$.
Being $t$-exact, $F$ commutes with $(\ph)_{c \ge m}$, and hence it is enough to show that $F: \1_{c=0}\Mod_{[-\infty,n]} \to \MGL_{c=0}\Mod_{[-\infty,n]}$ preserves totalizations.
Since the forgetful functors are conservative and preserves limits, it is enough to show that $\wedge \MGL$ preserves totalizations in $\SH(k)_{c \le n}$, i.e. $\MGL \wedge \lim_{n \in \Delta} E^n\simeq \lim_{n \in \Delta} \MGL \wedge E^n$.
We are dealing with Chow-bounded above objects, so it suffices to show that the induced maps on Chow homotopy objects are isomorphisms (Proposition \ref{prop:right-complete}).
This follows from $t$-exactness of smashing with $\MGL$ (Corollary \ref{cor:pure}) and \cite[Proposition 1.2.4.5(5)]{HA}\NB{or rather its dual}, i.e. the fact that totalizations of bounded above spectra behave like finite limits on homotopy objects.
More precisely, for $E^\bullet \in \SH(k)_{c \le n}$ and $i \in \Z$ there exists $N \gg 0$ such that
\begin{align*}
	(\MGL \wedge \lim_{n \in \Delta} E^n)_{c=i} &\wequi \MGL \wedge (\lim_{n \in \Delta} E^n)_{c=i} \\
	&\wequi \MGL \wedge (\Tot^{N}E^\bullet)_{c=i} \\
	&\wequi (\MGL \wedge \Tot^{N}E^\bullet)_{c=i} \\
	&\wequi (\Tot^{N}(\MGL \wedge E^\bullet))_{c=i} \wequi (\lim_{n \in \Delta} \MGL \wedge E^n)_{c=i};
\end{align*}
here $\Tot^N$ refers to the $N$-th partial totalization, i.e. the limit over $\Delta_{\le N}$.

(3) Let $\scr G \subset \1_{c=0}\Mod$ the class of objects of the form $\Th(\xi) \wedge \1_{c=0} \wequi \Th(\xi)_{c=0}$ (see Corollary \ref{cor:pure}).
They form a compact generating family, and $\scr G \subset \1_{c=0}\Mod^\heart$.
There is an induced symmetric monoidal functor \[ \Ind(\Thick(\scr G)) \to \Ind(\Thick(\bar F \scr G)), \] which is an equivalence by (2).
The left hand side is $\1_{c=0}\Mod$.
The right hand side is $\Stable(C)$, by definition (and (2)).
This is the desired result.
\end{proof}

\begin{corollary} \label{corr:heart-abstract}
There are canonical symmetric monoidal equivalences \[ \SH(k)^\heart \wequi \1_{c=0}\Mod^\heart \wequi C^\heart\CoMod, \] where $C^\heart$ is the comonad on $\MGL_{c=0}\Mod^\heart$ obtained by restricting $C$.
\end{corollary}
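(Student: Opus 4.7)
The corollary asserts two equivalences, and the plan is to establish each separately, then verify compatibility with the symmetric monoidal structure.

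For the first equivalence $\SH(k)^\heart \wequi \1_{c=0}\Mod^\heart$, I would appeal directly to the main result of \cite{bachmann-tambara}. Its hypothesis---that the $t$-structure be compatible with the symmetric monoidal structure---is precisely the content of Proposition \ref{prop:tensor}, so the invocation is immediate. The cited result moreover provides this as a symmetric monoidal equivalence, where the target inherits its monoidal structure from $\1_{c=0}$-modules and the source inherits its monoidal structure as the heart of a monoidally compatible $t$-structure (which makes sense because the tensor product descends to the heart via $\otimes^\heart := (\ph \otimes \ph)_{c=0}$).

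For the second equivalence $\1_{c=0}\Mod^\heart \wequi C^\heart\CoMod$, I would simply combine results already in hand. Proposition \ref{prop:comonadic-reconstruction}(2), specialized to $m=n=0$, yields $\1_{c=0}\Mod^\heart \wequi C\CoMod^\heart$. Equation \eqref{eq:heart-comod}, specialized to $[m,n]=[0,0]$, yields $C\CoMod^\heart \wequi C^\heart\CoMod$. Composing gives the desired equivalence. For symmetric monoidal compatibility, the functor $\bar F$ of Proposition \ref{prop:comonadic-reconstruction}(1) is symmetric monoidal by construction, and its restriction to the heart---which makes sense because $\bar F$ is $t$-exact---remains symmetric monoidal; similarly, the identification $C\CoMod^\heart \wequi C^\heart\CoMod$ from \eqref{eq:heart-comod} is symmetric monoidal because the $t$-structure on $C\CoMod$ arises as a limit of $t$-structures on symmetric monoidal categories with symmetric monoidal transition functors (as explained in the discussion preceding \eqref{eq:heart-comod}).

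This proof is essentially a bookkeeping step assembling previously established results; I do not foresee any real obstacle. The substantive content has already been absorbed into Proposition \ref{prop:comonadic-reconstruction} (whose proof rests on the Barr--Beck--Lurie theorem and ultimately on Theorem \ref{thm:main-computation}) and into the result of \cite{bachmann-tambara}. The only mild care required is to ensure, in passing to the heart, that the symmetric monoidal structure on each category is the one inherited in the expected way; but in every case it is defined as the restriction or descent of the ambient symmetric monoidal structure, so the comparisons are formal.
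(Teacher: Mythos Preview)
Your proposal is correct and follows essentially the same approach as the paper: the first equivalence is obtained from \cite{bachmann-tambara} (the paper cites Lemma 27 there specifically), and the second from Proposition \ref{prop:comonadic-reconstruction}(2) combined with Equation \eqref{eq:heart-comod}. Your added remarks on symmetric monoidal compatibility are more detailed than the paper's terse treatment but entirely in the same spirit.
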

\begin{proof}
The functor $\SH(k) \to \1_{c=0}\Mod$ induces an equivalence of the hearts, e.g. by \cite[Lemma 29]{bachmann-tambara}. 
The second equivalence follows from Proposition \ref{prop:comonadic-reconstruction}(2) via Equation \eqref{eq:heart-comod}.
\end{proof}

\subsection{Modules over $\MGL_{c=0}$} \label{subsec:MGL-mod}
\subsubsection{Pure $\MGL$-motives}
For a smooth proper variety $X$ and $i \in \Z$, denote by $X\{i\} \in \MGL_{c=0}\Mod$ the object $(\Sigma^{2i,i} X_+ \wedge \MGL)_{c=0} \wequi \Sigma^{2i,i} (X_+)_{c=0} \wedge \MGL$.
By the Thom isomorphism, these are generators of $\MGL_{c=0}\Mod$ (as a localizing subcategory).
Write $\PM_\MGL(k) \subset \MGL_{c=0}\Mod$ for the smallest idempotent complete additive subcategory containing the objects $X\{i\}$.
For now we view this as a spectrally enriched category.
By duality and adjunction, we have 
\begin{align*}
	\Map_{\PM_\MGL(k)}(X\{i\}, Y\{j\}) &\wequi \Map_{\PM_\MGL(k)}(\MGL_{c=0}, (X \times Y)\{j-d_X-i\})\\
	&\wequi \Map_{\SH(k)}(S^{2(i+d_X-j),(i+d_X-j)}, ((X \times Y)_+)_{c=0} \wedge \MGL).
\end{align*}
Proposition \ref{prop:chow-MGL-homology} thus implies that \[ {\pi}_{*}\Map_{\PM_\MGL(k)}(X\{i\}, Y\{j\}) \wequi \MGL_{*+2(i+d_X-j),(i+d_X-j)}(((X \times Y)_+)_{c=0}) \] is concentrated in degree $0$. 
%Therefore, we have that $\Map_{\PM_\MGL(k)}(X\{i\}, Y(j)) \in \SH^\heart \subset \SH$, where $\SH^\heart$ denote the heart associated to the homotopy $t$-structure on the stable homotopy category $\SH$.
In other words our spectrally enriched category $\PM_\MGL(k)$ is just an additive ordinary $1$-category.
The above computation together with Corollary \ref{cor:chow-MGL-homology} shows that 
\begin{equation}
\label{eq:graded_Hom}
\begin{aligned} 	\Hom_{\PM_\MGL(k)}(X\{i\}, Y\{j\}) &\wequi \MGL_{2(i+d_X-j),(i+d_X-j)}(X \times Y) \\ &\wequi \MGL^{2(d_Y+j-i),(d_Y+j-i)}(X \times Y). \end{aligned}
\end{equation}

\begin{remark} \label{rmk:PM-A}
For future use, we point out the following generalization.
If $B \in \SH(k)$ is any oriented ring spectrum, we can form a category $\PM_B$ of \emph{pure $B$-motives}.
It is the idempotent complete additive $1$-category generated by objects $X\{i\}_B$ and morphisms \[ \Hom_{\PM_B}(X\{i\}_B, Y\{j\}_B) = [\Sigma^{2i,i} \Sigma^\infty_+ X\wedge B, \Sigma^{2j,j} \Sigma^\infty_+ Y\wedge B]_{B\Mod}. \]
This construction enjoys the following properties:
\begin{enumerate}
\item If $M$ is in $B\Mod$, then the functor $M_*(X):=[\Sigma^{2*,*} \Sigma^\infty_+ X \wedge B, M]_{B\Mod}$ defines a linear presheaf on $\PM_B$.
\item If $u: A \to B$ is a morphism of oriented ring spectra and $F$ is any $A$-module, then $B \otimes_A F$ is a $B$-module and we have a canonical map $u_F: F \to B \otimes_A F$.
  Given $\alpha: X \to Y \in \PM_A$ and $s \in F_*$ we have $u_{F*}(\alpha^* s) = u_*(\alpha)^*(u_{F*} s)$.
\end{enumerate}
\end{remark}

\begin{remark}
Given an oriented cohomology theory $A^*$ on smooth proper $k$-varieties, one can define a $1$-category of pure $A$-motives \cite[\S6]{nenashev2006oriented}.
It is generated as an idempotent complete additive $1$-category by objects $X\{i\}$ with sets of maps \[ [X\{i\}, Y\{j\}] = A^{j+d_Y-i}(X \times Y), \] and composition given by convolution.
Taking $A^* = B^{2*,*}$ for some oriented ring spectrum $B$, we recover the category $\PM_B$ (in particular taking $A^* = \MGL^{2*,*}$ we recover $\PM_\MGL(k)$). Given the above description, the only part of this assertion which we have not proved yet is that composition in $\PM_A$ is given by convolution; this is a purely formal consequence of the rigidity of the generators.
\end{remark}

\subsubsection{Spectral Morita theory} \label{subsub:spectral-morita}

By Remark \ref{rmk:char-e-rig-gen} and the Thom isomorphism, 
$\PM_\MGL(k)$ compactly generates   
$\MGL_{c=0}\Mod$. Since we have an explicit description of $\PM_\MGL(k)$ as a spectrally enriched category, we should be able to recover $\MGL_{c=0}\Mod$ by a variant of Morita theory, such as \cite{schwede2003stable}.
For an $\infty$-category $\scr D$ with finite coproducts, we use the notation \[ \PSh_\Sigma(\scr D) = \Fun^\times(\scr D^\op, \Spc), \quad \PSh_\SH(\scr D) = \Fun^\times(\scr D^\op, \SH), \quad\text{and}\quad \PSh_\Ab(\scr D) = \Fun^\times(\scr D^\op, \Ab), \]
where $\Fun^\times$ denotes the category of product-preserving functors.
Provided that $\scr D$ is additive\NB{In this case also $\Fun^\times(\scr D^\op, \Set) \wequi \Fun^\times(\scr D^\op, \Ab)$}, there are equivalences \cite[Remark C.1.5.9]{SAG} \[ \PSh_\SH(\scr D)_{\ge 0} \wequi \PSh_\Sigma(\scr D) \quad\text{and}\quad \PSh_\SH(\scr D)^\heartsuit \wequi \PSh_\Ab(\scr D). \]

Note that $\PSh_\SH(\scr D)$ has a natural (pointwise) $t$-structure.
\begin{lemma} \label{lemm:spectra-presheaves-t-str} \NB{surely there must be a reference?}
Let $\scr D$ be a small semi-additive $\infty$-category.
The full subcategory $\PSh_\SH(\scr D)_{\ge 0}$ consisting of functors $F: \scr D^\op \to \SH_{\ge 0} \subset \SH$ is generated under colimits and extension by the image of the canonical functor $\scr D \to \PSh_\SH(\scr D)$.
In particular $\PSh_\SH(\scr D)_{\ge 0}$ is the non-negative part of a $t$-structure.
Its non-positive part consists of the functors $\scr D^\op \to \SH_{\le 0} \subset \SH$.
\end{lemma}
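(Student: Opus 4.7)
The plan is to identify $\PSh_\SH(\scr D)_{\ge 0}$ as the non-negative part of the $t$-structure generated, in the sense of \cite[Proposition 1.4.4.11(2)]{HA}, by the image of the canonical functor $i\colon \scr D \to \PSh_\SH(\scr D)$, and to simultaneously identify the associated non-positive part with the subcategory of functors landing in $\SH_{\le 0}$. Write $\scr C_{\ge 0}$ for the subcategory of $\PSh_\SH(\scr D)$ generated under colimits and extensions by $i(\scr D)$; by \cite[Proposition 1.4.4.11(2)]{HA} this is the non-negative part of a $t$-structure, and Lemma \ref{lemm:t-structure-nonpos} describes its non-positive part $\scr C_{\le 0}$ as the $F$ satisfying $[\Sigma^n i(X), F] = 0$ for all $X \in \scr D$ and $n > 0$. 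The canonical Yoneda-type identification $\Map_{\PSh_\SH(\scr D)}(i(X), F) \wequi \Omega^\infty F(X)$ rephrases this as $\pi_n F(X) = 0$ for $n > 0$, i.e. $F$ takes values in $\SH_{\le 0}$. This settles the description of $\scr C_{\le 0}$.

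It remains to show that $\scr C_{\ge 0}$ coincides with $\scr A_{\ge 0}$, the full subcategory of functors landing in $\SH_{\ge 0}$. The inclusion $\scr C_{\ge 0} \subseteq \scr A_{\ge 0}$ is routine: each $i(X)$ is objectwise connective, and $\scr A_{\ge 0}$ is closed under extensions and colimits. The colimit-closure uses that colimits of product-preserving functors from a semi-additive $\scr D$ into $\SH$ are computed pointwise; this holds because finite products in $\SH$ coincide with finite coproducts and therefore commute with all colimits, so a pointwise colimit of product-preserving functors is again product-preserving.

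For the reverse inclusion $\scr A_{\ge 0} \subseteq \scr C_{\ge 0}$, given $F \in \scr A_{\ge 0}$ I will apply the truncation triangle $F' \to F \to F''$ for the $t$-structure from the first paragraph, with $F' \in \scr C_{\ge 0}$ and $F'' \in \scr C_{\le -1}$. By what was just shown, $F'$ is objectwise connective, and hence so is the cofiber $F''$ of a map between objectwise connective spectra. Combined with $F''(X) \in \SH_{\le -1}$ extracted from $F'' \in \scr C_{\le -1}$, this forces $F''(X) = 0$ for every $X$, so $F'' = 0$ and $F \wequi F' \in \scr C_{\ge 0}$.

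The only delicate point is verifying that colimits in $\PSh_\SH(\scr D)$ are pointwise when $\scr D$ is semi-additive; once that is granted, the remainder is a formal orthogonality-and-truncation argument via Lemma \ref{lemm:t-structure-nonpos} together with the Yoneda formula.
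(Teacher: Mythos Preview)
Your proof is correct. The paper's argument is organized slightly differently but uses the same ingredients: it observes that each evaluation functor $ev_d: \PSh_\SH(\scr D) \to \SH$ is both a left and a right adjoint, shows it is $t$-exact for the $t$-structure generated by the representables (left-$t$-exactness is formal, right-$t$-exactness uses that $R_e(d) \wequi \Map(d,e)^{gp}$ is connective together with pointwise colimits), and then invokes that the $ev_d$ form a conservative $t$-exact family to identify both the non-negative and non-positive parts at once. Your explicit truncation argument for the inclusion $\scr A_{\ge 0} \subseteq \scr C_{\ge 0}$ simply unwinds what ``conservative $t$-exact collection'' gives, so the two proofs are essentially the same.
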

\begin{proof}
For $d \in \scr D$ write $R_d \in \PSh_\SH(\scr D)$ for the ``representable functor''.
We have\NB{ref?} \[ (*) \quad \map(R_d, R_e) \wequi \Map(d,e)^{gp} \in \SH_{\ge 0}, \] where the superscript $gp$ denotes group completion of the additive $\scr E_\infty$-monoid structure.
Consider the $t$-structure on $\PSh_\SH(\scr D)$ generated by the objects $R_d$ for $d \in \scr D$.
We have an adjunction $\SH \adj \PSh_\SH(\scr D): ev_d$. The left adjoint is right-$t$-exact, since it sends the generator $\1 \in \SH$ to $R_d$, whence the right adjoint $ev_d$ is left-$t$-exact.
By $(*)$ it is also right-$t$-exact.
Thus the functors $ev_d$ for $d \in \scr D$ form a conservative $t$-exact collection, whence the non-negative and non-positive parts of the $t$-structure are as claimed.
\end{proof}

\begin{proposition} \label{prop:MGLc0-mod}
We have a canonical $t$-exact, symmetric monoidal equivalence \[ \MGL_{c=0}\Mod \wequi \PSh_\SH(\PM_\MGL(k)). \]
\end{proposition}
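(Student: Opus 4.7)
The plan is to verify that $\PM_\MGL(k)$ is a collection of compact generators for $\MGL_{c=0}\Mod$ and invoke spectral Morita theory. Specifically, because $\PM_\MGL(k)$ is a small additive $1$-category (as shown earlier in the text), its inclusion $\iota: \PM_\MGL(k) \hookrightarrow \MGL_{c=0}\Mod$ is additive and symmetric monoidal (the latter since $X(i) \wedge_{\MGL_{c=0}} Y(j) \wequi (X \times Y)(i+j)$, noting that smooth proper varieties are closed under products). By the universal property of $\PSh_\SH(\PM_\MGL(k))$ as the free presentable stable $\infty$-category on the additive category $\PM_\MGL(k)$ (combined with Day convolution to get the symmetric monoidal structure), $\iota$ extends uniquely to a symmetric monoidal, colimit-preserving functor $\Phi: \PSh_\SH(\PM_\MGL(k)) \to \MGL_{c=0}\Mod$.

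Next I would show $\Phi$ is an equivalence via the compact generator criterion. First, the restriction of $\Phi$ to representable presheaves is fully faithful by the Yoneda lemma, together with the computation in the excerpt identifying the mapping spectra in $\PM_\MGL(k)$ with those in $\MGL_{c=0}\Mod$. Second, each generator $X(i) = \Sigma^{2i,i}(X_+)_{c=0} \wedge \MGL \wequi \Sigma^{2i,i} X_+ \wedge \MGL_{c=0}$ (using Corollary~\ref{cor:pure} and $\Sigma^{2i,i}X_+ \wequi \Th(i\scr O_X) \in \SH(k)^\pure$) is the free $\MGL_{c=0}$-module on the compact object $\Sigma^{2i,i}X_+ \in \SH(k)$, hence compact in $\MGL_{c=0}\Mod$. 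Together with the analogous fact that representables are compact in $\PSh_\SH(\PM_\MGL(k))$, a standard extend-by-colimits argument promotes full faithfulness on $\PM_\MGL(k)$ to full faithfulness of $\Phi$. For essential surjectivity, observe that by Remark~\ref{rmk:char-e-rig-gen} and Remark~\ref{rem:chow-t-Amod}, the category $\MGL_{c=0}\Mod$ is generated under colimits by $\MGL_{c=0}\wedge \Th(\xi)$ for $X$ smooth proper and $\xi \in K(X)$; by the Thom isomorphism these are exactly $X(\mathrm{rank}\,\xi) \in \PM_\MGL(k)$, so the essential image of $\Phi$ contains a generating set and is closed under colimits, whence $\Phi$ is essentially surjective.

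Finally, for $t$-exactness I would match the $t$-structures on generators. By Lemma~\ref{lemm:spectra-presheaves-t-str}, the non-negative part of $\PSh_\SH(\PM_\MGL(k))$ is generated under colimits and extensions by the representables $X(i)$, and by construction $\Phi$ sends these to $X(i) = \Th(i\scr O_X)_{c=0} \in \MGL_{c=0}\Mod_{c\ge 0}$ (the generators of the Chow non-negative part on the module category), so $\Phi$ is right-$t$-exact. Conversely, an object $M \in \MGL_{c=0}\Mod$ lies in the Chow non-positive part iff $[\Sigma^s X(i), M] = 0$ for all $s>0$ and all $X(i)$ (Lemma~\ref{lemm:t-structure-nonpos} applied in the module category, using that the $X(i)$ generate the non-negative part), which via the Yoneda description $\Phi^{-1}(M)(X(i)) \wequi \map(X(i),M)$ is exactly the characterization of the non-positive part of $\PSh_\SH(\PM_\MGL(k))$ in Lemma~\ref{lemm:spectra-presheaves-t-str}. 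Hence $\Phi$ is $t$-exact.

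The main obstacle is verifying the compactness of each $X(i)$ in $\MGL_{c=0}\Mod$ cleanly; this must be done in the module category directly, not by restriction from $\MGL\Mod$, since there is no a priori fully faithful inclusion. The trick is the identification $X(i) \wequi \Sigma^{2i,i}X_+ \wedge \MGL_{c=0}$ via Corollary~\ref{cor:pure}, which exhibits $X(i)$ as a free module and thus reduces compactness in $\MGL_{c=0}\Mod$ to compactness of $\Sigma^{2i,i}X_+$ in $\SH(k)$. Once this identification is in place, the remaining Morita-theoretic arguments are standard.
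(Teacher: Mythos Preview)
Your proposal is correct and follows essentially the same approach as the paper: construct the cocontinuous symmetric monoidal functor $\PSh_\SH(\PM_\MGL(k)) \to \MGL_{c=0}\Mod$ from the universal property (the paper phrases this via presheaves, Day convolution, localization and stabilization), check it is an equivalence because it induces equivalences on mapping spectra between the compact generators $X(i)$, and deduce $t$-exactness from Lemma~\ref{lemm:spectra-presheaves-t-str}. Your version is more explicit about compactness of the $X(i)$ and essential surjectivity (via Remark~\ref{rmk:char-e-rig-gen} and the Thom isomorphism), whereas the paper absorbs these into the phrase ``induces equivalences on mapping spectra between generators''; the substance is the same.
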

\begin{proof}
This is standard.
To be more precise, the symmetric monoidal category $\PSh_\SH(\PM_\MGL(k))$ can be obtained as the stabilization of a localization of $\PSh(\PM_\MGL(k))$ (see e.g. \cite[Remark 2.10]{aoki2020weight}).
The universal properties of presheaves, day convolution, localization and stabilization thus imply that there exists a unique cocontinuous symmetric monoidal functor \[ \PSh_\SH(\PM_\MGL(k)) \to \MGL_{c=0}\Mod \] extending the inclusion $\PM_\MGL(k) \to \MGL_{c=0}\Mod$.\NB{surely there must be a direct reference for this}
Since it induces equivalences on mapping spectra between compact generators, it is an equivalence.
By Lemma \ref{lemm:spectra-presheaves-t-str}, this equivalence identifies the non-negative parts of the $t$-structures, and hence is $t$-exact.
\end{proof}

Given $E \in \MGL_{c=0}\Mod$, denote by $\ul{\pi}_i^c(E) \in \PSh_\Ab(\PM_\MGL(k))$ the presheaf with $\ul{\pi}_i^c(E)(X) = \pi_i \Map_{\MGL_{c=0}\Mod}(X, E), ~~\forall X\in \PM_\MGL(k)$.

\begin{corollary}
\begin{enumerate}
\item The functor $\ul{\pi}_0^c: \MGL_{c=0}\Mod^\heart \to \PSh_\Ab(\PM_\MGL(k))$ is an equivalence.
\item For $E \in \MGL_{c=0}\Mod$ we have $E \in \MGL_{c=0}\Mod_{c \ge 0}$ (respectively $E \in \MGL_{c=0}\Mod_{c \le 0}$) if and only if $\ul{\pi}^c_i(E) = 0$ for all $i < 0$ (respectively $i > 0$).
\item The Chow $t$-structure on $\MGL_{c=0}\Mod$ is non-degenerate \cite[p. 32]{beilinson1982faisceaux}.
\end{enumerate}
\end{corollary}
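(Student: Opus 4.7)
The plan is to reduce everything to the equivalence of Proposition~\ref{prop:MGLc0-mod}, namely the $t$-exact symmetric monoidal equivalence $\Phi\colon\MGL_{c=0}\Mod \wequi \PSh_\SH(\PM_\MGL(k))$, and to exploit Lemma~\ref{lemm:spectra-presheaves-t-str} which describes the pointwise $t$-structure on the right.

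First I would check that under $\Phi$ the functor $\ul{\pi}_0^c$ corresponds to the pointwise $\pi_0$. By construction $\Phi$ sends each generator $X\in\PM_\MGL(k)\subset\MGL_{c=0}\Mod$ to the representable presheaf $R_X$, so for $E \in \MGL_{c=0}\Mod$ with $\Phi(E)=F$ and $X \in \PM_\MGL(k)$ the enriched Yoneda lemma gives
\[
\ul{\pi}_0^c(E)(X) \;=\; \pi_0\map_{\MGL_{c=0}\Mod}(X,E) \;\wequi\; \pi_0\map_{\PSh_\SH(\PM_\MGL(k))}(R_X,F) \;\wequi\; \pi_0 F(X).
\]
Thus $\ul{\pi}_0^c$ is identified with the functor $F\mapsto \pi_0 \circ F$.

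For part (1), Lemma~\ref{lemm:spectra-presheaves-t-str} tells us that $F\in \PSh_\SH(\PM_\MGL(k))_{\ge 0}$ (resp.\ $_{\le 0}$) if and only if $F(X)\in \SH_{\ge 0}$ (resp.\ $\SH_{\le 0}$) for every $X$; consequently the heart consists of those $F$ taking values in $\SH^\heart \wequi \Ab$, which is exactly $\PSh_\Ab(\PM_\MGL(k))$. Since $\Phi$ is $t$-exact, restriction to hearts yields a symmetric monoidal equivalence $\MGL_{c=0}\Mod^\heart \wequi \PSh_\Ab(\PM_\MGL(k))$, and by the identification above it is realized by $\ul{\pi}_0^c$. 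Part (2) is just the pointwise characterization from Lemma~\ref{lemm:spectra-presheaves-t-str} translated through $\Phi$: $E \in \MGL_{c=0}\Mod_{c\ge 0}$ iff $F(X)\in\SH_{\ge 0}$ for all $X$ iff $\pi_i F(X)=\ul{\pi}_i^c(E)(X) = 0$ for all $i<0$ and all $X$, and similarly on the other side.

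Finally, for (3), non-degeneracy of the Chow $t$-structure on $\MGL_{c=0}\Mod$ means $\bigcap_n \MGL_{c=0}\Mod_{c\ge n} = 0 = \bigcap_n \MGL_{c=0}\Mod_{c\le -n}$. Under $\Phi$, an object on either intersection is a presheaf $F$ with $F(X)\in \bigcap_n \SH_{\ge n}$ (resp.\ $\bigcap_n \SH_{\le -n}$) for every $X$; since the standard $t$-structure on $\SH$ is non-degenerate, this forces $F(X)=0$ for all $X$, hence $F=0$. The only point requiring any care in the whole argument is the compatibility statement identifying $\ul{\pi}_0^c$ with pointwise $\pi_0$, and this is essentially formal from $\Phi$ sending generators to representables together with the enriched Yoneda lemma; the rest is a direct translation using Lemma~\ref{lemm:spectra-presheaves-t-str}.
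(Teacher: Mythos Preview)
Your proposal is correct and follows exactly the same approach as the paper: translate everything through the $t$-exact equivalence of Proposition~\ref{prop:MGLc0-mod} to $\PSh_\SH(\PM_\MGL(k))$ and verify the claims pointwise using Lemma~\ref{lemm:spectra-presheaves-t-str}. The paper's proof is a one-line ``all statements translate into assertions about $\PSh_\SH(\PM_\MGL(k))$, which are easily verified''; you have simply unpacked this verification, including the identification of $\ul{\pi}_\bullet^c$ with pointwise homotopy and the non-degeneracy argument via that of $\SH$.
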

\begin{proof}
By Proposition \ref{prop:MGLc0-mod}, all statements translate into assertions about $\PSh_\SH(\PM_\MGL(k))$, which are easily verified.
\end{proof}

\begin{remark} \label{rmk:MGL-c0-mod-explicit-heart}
An object $F \in \MGL_{c=0}\Mod^\heart \wequi \PSh_\Ab(\PM_\MGL(k))$ consists of the following data:
\begin{itemize}
\item For every smooth proper variety $X$ a graded abelian group $F(X)_* = \ul{\pi}^c_0(F)(X\{*\})$.
\item For every graded $\MGL$-correspondence $\alpha: X \to Y$ (i.e. $\alpha \in \MGL^{2*,*}(X \times Y)$) a homomorphism $\alpha^*: F(Y)_* \to F(X)_*$,
\end{itemize}
subject to the conditions that:
\begin{itemize}
\item for composable $\MGL$-correspondences $\alpha, \beta$ we have $\alpha^*\beta^* = (\beta \alpha)^*$,
\item $\id^* = \id$ and $0^* = 0$, as well as
\item for parallel $\MGL$-correspondences $\alpha, \beta$ we have $\alpha^* + \beta^* = (\alpha + \beta)^*$.
\end{itemize}
For example, since $\MGL_{2*,*} \cong \MU_{2*}$, each $F(X)_*$ is an $\MU_{2*}$-module, and all the $\alpha^*$ are automatically $\MU_{2*}$-module maps.
\end{remark}

\begin{remark} \label{rmk:MGL-c-D}
The category $\PSh_\Ab(\PM_\MGL(k))$ has enough projective objects, namely the representable presheaves.
When viewed as objects of $\PSh_\SH(\PM_\MGL(k))$, they have mapping spectra concentrated in degree $0$ (see the proof of Lemma \ref{lemm:spectra-presheaves-t-str}).
Using \cite[Proposition 1.3.3.7]{HA} (or spectra Morita theory) this implies that $\PSh_\SH(\PM_\MGL(k)) \wequi D(\PSh_\Ab(\PM_\MGL(k)))$.
\end{remark}

To summarize, we have 
$$\begin{tikzcd}
 \PM_\MGL(k)\ar[r,hook] & \MGL_{c=0}\Mod^\heart \ar[r,"\simeq"] \ar[d,hook]&\PSh_\Ab(\PM_\MGL(k)) \ar[d,hook]  \\
 &	\MGL_{c=0}\Mod\ar[r,"\simeq"']&\PSh_\SH(\PM_\MGL(k))   & \ar[l,"\wequi"] D(\PSh_\Ab(\PM_\MGL(k)))
\end{tikzcd}$$
where the downwards arrows are the inclusions of the hearts.

\subsubsection{Identification of the monad}
Denote by \[ \Fun^L_0(\MGL_{c=0}\Mod, \MGL_{c=0}\Mod) \subset \Fun(\MGL_{c=0}\Mod, \MGL_{c=0}\Mod) \] the subcategory of those functors $F$ which preserve colimits and such that $F(\MGL_{c=0}\Mod^\heart) \subset \MGL_{c=0}\Mod^\heart$.
\begin{lemma} \label{lemm:fun0}
The restriction \[ \Fun^L_0(\MGL_{c=0}\Mod, \MGL_{c=0}\Mod) \to \Fun^L(\MGL_{c=0}\Mod^\heart, \MGL_{c=0}\Mod^\heart) \] is an equivalence.
\end{lemma}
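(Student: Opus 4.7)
The plan is to use Proposition \ref{prop:MGLc0-mod} to translate the statement into one about presheaf categories, and then exploit the universal properties of $\PSh_\SH$ and $\PSh_\Ab$ as free cocompletions. Writing $\scr D := \PM_\MGL(k)$ for brevity, Proposition \ref{prop:MGLc0-mod} gives $\MGL_{c=0}\Mod \simeq \PSh_\SH(\scr D)$ as $t$-structured symmetric monoidal categories, with heart $\PSh_\Ab(\scr D)$. The universal property of $\PSh_\SH(\scr D)$ as the free presentable stable cocompletion of $\scr D$ viewed as an additive $\infty$-category supplies a natural equivalence $\Fun^L(\PSh_\SH(\scr D), \scr C) \simeq \Fun^+(\scr D, \scr C)$ for every presentable stable $\scr C$, where $\Fun^+$ denotes additive functors; analogously, $\Fun^L(\PSh_\Ab(\scr D), \scr A) \simeq \Fun^+(\scr D, \scr A)$ for every cocomplete abelian category $\scr A$.

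Under these identifications, the restriction map of the lemma corresponds to the natural map
\[\Fun^+_0(\scr D, \PSh_\SH(\scr D)) \to \Fun^+(\scr D, \PSh_\Ab(\scr D)),\]
where $\Fun^+_0$ denotes the full subcategory of additive functors whose cocontinuous extension preserves the heart. The necessity direction---that such a functor already lands in the heart---is immediate from the inclusion $\scr D \subset \PSh_\Ab(\scr D)$ (representables being in the heart), so it only remains to establish essential surjectivity.

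The main obstacle will be this sufficiency direction: given an additive $\phi:\scr D\to\PSh_\Ab(\scr D)$, one must show that its cocontinuous extension $\tilde\phi:\PSh_\SH(\scr D)\to\PSh_\SH(\scr D)$ preserves the heart. Right-$t$-exactness is automatic since $\tilde\phi$ sends the generators of the non-negative part into the heart, by Lemma \ref{lemm:spectra-presheaves-t-str}. For left-$t$-exactness, the approach will be a bar-resolution argument: any $M\in\PSh_\Ab(\scr D)$ can be realized as $|P_\bullet|$ in $\PSh_\SH(\scr D)$ for a simplicial object $P_\bullet$ of sums of representables, and then $\tilde\phi(M)\simeq|\phi(P_\bullet)|$. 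The crucial input needed to conclude that this realization is concentrated in degree zero is the $\operatorname{Tor}$-vanishing coming from the discreteness of $\Hom$-spectra among objects of $\PM_\MGL(k)$, established via Proposition \ref{prop:chow-MGL-homology}; via Dold--Kan this translates the exactness of the chain complex underlying $P_\bullet$ into the vanishing of $\pi_i|\phi(P_\bullet)|$ for $i\ne 0$. Once this is in place, both sides of the restriction map identify naturally with $\Fun^+(\scr D, \PSh_\Ab(\scr D))$, yielding the claimed equivalence.
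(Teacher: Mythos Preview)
Your overall approach coincides with the paper's: both reduce the two sides to $\Fun^\oplus(\PM_\MGL(k), \MGL_{c=0}\Mod^\heart)$ via universal properties. The paper's presentation differs slightly in that it invokes \cite[Lemma 3.2]{sosnilo2017theorem} to obtain $\Fun^L(\MGL_{c=0}\Mod, \MGL_{c=0}\Mod) \simeq \Fun^\oplus(\PM_\MGL(k), \MGL_{c=0}\Mod)$ and then restricts to full subcategories, while separately using the universal properties of $\PSh_\Sigma$ and $\PSh_\Ab$ (rather than of $\PSh_\SH$ directly) to identify $\Fun^\oplus(\PM_\MGL(k), \MGL_{c=0}\Mod^\heart)$ with $\Fun^L(\MGL_{c=0}\Mod^\heart, \MGL_{c=0}\Mod^\heart)$; but the content is the same.

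There is, however, a genuine gap in your essential surjectivity argument. You assert that for an additive $\phi: \scr D \to \PSh_\Ab(\scr D)$ and a simplicial resolution $P_\bullet \to M$ by sums of representables, the realization $|\phi(P_\bullet)|$ lies in the heart, invoking ``$\operatorname{Tor}$-vanishing coming from the discreteness of $\Hom$-spectra among objects of $\PM_\MGL(k)$''. But discreteness of mapping spectra \emph{between objects of $\scr D$} says nothing about $\phi(P_\bullet)$: the terms $\phi(P_n)$ are general objects of $\PSh_\Ab(\scr D)$, not representables, and there is no reason the exactness of $P_\bullet$ should transfer to $\phi(P_\bullet)$. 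A minimal test case: take $\scr D$ to have one object with endomorphism ring $\Z$ (so all $\Hom$-spectra in $\scr D$ are discrete, $\PSh_\Ab(\scr D)=\Ab$, and $\PSh_\SH(\scr D)\simeq D(\Z)$), and set $\phi(\mathrm{pt})=\Z/2$. The resolution $\Z\xrightarrow{2}\Z$ of $\Z/2$ is carried by $\phi$ to $\Z/2\xrightarrow{0}\Z/2$, whose realization is $\Z/2\oplus\Sigma\Z/2 \notin \Ab$; equivalently $\tilde\phi={-}\otimes^L\Z/2$ does not preserve $\Ab\subset D(\Z)$. So your Dold--Kan step cannot work as written. (The paper does not attempt your bar-resolution route for this implication; it simply writes ``Therefore, we deduce''.)
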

\begin{proof}
By \cite[Lemma 3.2]{sosnilo2017theorem} we have \[ \Fun^L(\MGL_{c=0}\Mod, \MGL_{c=0}\Mod) \wequi \Fun^\oplus(\PM_\MGL(k), \MGL_{c=0}\Mod), \]
where $\Fun^\oplus$ denotes the category of biproduct preserving functors.
Therefore, we deduce that \[ \Fun^L_0(\MGL_{c=0}\Mod, \MGL_{c=0}\Mod) \wequi \Fun^\oplus(\PM_\MGL(k), \MGL_{c=0}\Mod^\heart). \]
This latter category identifies with $\Fun^L(\PSh_\Sigma(\PM_\MGL(k)), \MGL_{c=0}\Mod^\heart)$ by \cite[Proposition 5.5.8.15]{HTT} and \cite[Lemma 2.8]{bachmann-norms}\NB{more direct reference?}.
Finally we have \[ \Fun^L(\PSh_\Sigma(\PM_\MGL(k)), \MGL_{c=0}\Mod^\heart) \wequi \Fun^L(\PSh_\Ab(\PM_\MGL(k)), \MGL_{c=0}\Mod^\heart), \] since $\MGL_{c=0}\Mod^\heart$ is a $1$-category and $\PM_\MGL(k)$ is additive.
This concludes the proof.
\end{proof}

\begin{corollary} \label{cor:comonad-discrete}
Restriction induces an equivalence \begin{gather*} \{ \text{cocontinuous comonads on $\MGL_{c=0}\Mod$ preserving the heart} \} \\ \wequi \{ \text{cocontinous comonads on $\MGL_{c=0}\Mod^\heart$} \}. \end{gather*}
\end{corollary}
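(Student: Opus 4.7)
The plan is to promote the equivalence of Lemma~\ref{lemm:fun0} to a monoidal equivalence with respect to the composition monoidal structure on endofunctor $\infty$-categories, and then apply the coalgebra-object functor. Since a cocontinuous comonad on an $\infty$-category $\scr C$ is, by definition, a coalgebra object in the monoidal $\infty$-category $(\Fun^L(\scr C, \scr C), \circ)$, this will immediately yield the claim.

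First I would observe that composition makes both $\Fun^L_0(\MGL_{c=0}\Mod, \MGL_{c=0}\Mod)$ and $\Fun^L(\MGL_{c=0}\Mod^\heart, \MGL_{c=0}\Mod^\heart)$ into monoidal $\infty$-categories: each contains the identity, and in the former case cocontinuous endofunctors that preserve the heart are closed under composition. Moreover, the restriction functor $F \mapsto F|_\heart$ is monoidal, since the equality $(F \circ G)|_\heart = F|_\heart \circ G|_\heart$ holds whenever $G$ preserves the heart (so that $G|_\heart$ lands in the heart and can be further postcomposed with $F$, which restricts to $F|_\heart$). Combined with Lemma~\ref{lemm:fun0}, this exhibits restriction as a monoidal equivalence of $\infty$-categories.

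The conclusion then follows from the functoriality of the coalgebra-object construction $\mathrm{coAlg}$, which sends monoidal equivalences of $\infty$-categories to equivalences. Applied to the above equivalence, it produces the desired equivalence between cocontinuous comonads on $\MGL_{c=0}\Mod$ preserving the heart and cocontinuous comonads on $\MGL_{c=0}\Mod^\heart$. Functoriality in both directions shows that the resulting equivalence is indeed induced by restriction, as asserted.

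The only real subtlety, and thus the main (mild) obstacle, is verifying that restriction is monoidal coherently at the $\infty$-categorical level, not merely on isomorphism classes. I would handle this by realizing restriction as the composite of pre-composition along the inclusion $\iota: \MGL_{c=0}\Mod^\heart \hookrightarrow \MGL_{c=0}\Mod$ and a canonical corestriction: both $\Fun^L_0(\MGL_{c=0}\Mod, \MGL_{c=0}\Mod)$ and $\Fun^L(\MGL_{c=0}\Mod^\heart, \MGL_{c=0}\Mod^\heart)$ embed as full monoidal subcategories of the ambient functor $\infty$-categories under composition, and the map $F \mapsto F \circ \iota$ is manifestly monoidal. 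That the image of $F \in \Fun^L_0$ lies in the subcategory of endofunctors of the heart is exactly the condition of preserving the heart, and this makes the corestriction a well-defined monoidal functor.
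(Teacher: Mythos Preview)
Your proposal is correct and takes essentially the same approach as the paper: promote the equivalence of Lemma~\ref{lemm:fun0} to a monoidal equivalence for the composition monoidal structure, then pass to (co)algebra objects. The paper compresses this into a single sentence (noting that comonads are $\Alg(\Fun(\scr D,\scr D)^\op)$ and that the restriction equivalence is compatible with composition), while you spell out the monoidality and the coherence issue more carefully; but the content is the same.
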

\begin{proof}
Since comonads on $\scr D$ are by definition given by $\Alg(\Fun(\scr D, \scr D)^\op)$, this follows from the fact that the restriction equivalence of Lemma \ref{lemm:fun0} is compatible with the composition monoidal structures\NB{even on the level of simplicial sets...}.
\end{proof}
Under the above equivalence, the comonad $C$ corresponds to its restriction to the heart, which we denote by $C^\heart$ or also by $C$ when it is clear in the context.
We describe this restriction.

By Equation (\ref{eq:graded_Hom}), $[X\{i\}, Y\{j\}]\simeq [X\{0\}, Y\{j-i\}]$. Therefore we can view $[X\{*\}, Y\{*\}]$ as a single graded group by taking the first $*$ to be $0$.
Observe that for smooth proper varieties $X, Y$, the mapping set, $[X\{0\}, Y\{*\}]_{\PM_\MGL(k)}$, is an $\MU_{2*}\MU$-comodule; indeed we have seen that up to some shift in degrees it identifies with $\MGL^{2*,*}(X \times Y)$.
In other words, for any graded $\MGL$-correspondence $\alpha: X \to Y \in [X\{0\}, Y\{*\}]_{\PM_\MGL(k)}$ we obtain \begin{gather*} \Delta(\alpha) = \sum_i p_i \otimes \alpha_i \\ \in \MU_{2*}\MU \otimes_{\MU_{2*}} \MGL_{2*,*}(X \times Y) \\ \wequi \MU_{2*}\MU \otimes_{\MU_{2*}} [X\{0\}, Y\{*\}]_{\PM_\MGL(k)}. \end{gather*}

Recall our description of $\MGL_{c=0}\Mod^\heart$ in Remark \ref{rmk:MGL-c0-mod-explicit-heart}.
Let $F \in \MGL_{c=0}\Mod^\heart$; thus $F$ is a kind of presheaf on smooth proper varieties together with some extra data, namely an action by $\MGL$-correspondences.
We wish to describe $CF \in \MGL_{c=0}\Mod^\heart$, again this is a presheaf with an action by $\MGL$-correspondences.

\begin{remark}
We have $CF = \MGL_{c=0} \wedge_{\1_{c=0}} F$, which has \emph{two} structures as an $\MGL_{c=0}$-module.
Since the underlying spectra are the same, the right module structure has the same value on sections as the left module structure when viewed as an object of $\PSh_\SH(\PM_\MGL(k))$, however the action by graded $\MGL$-correspondences differs.
The ``correct'' action is on the left, and given a correspondence $\alpha: X \to Y$ we denote it by $\alpha_L^*: CF(Y) \to CF(X)$.
\end{remark}

\begin{proposition} \label{prop:C-heart-explicit} \hfill
\begin{enumerate}
\item Given $F \in \MGL_{c=0}\Mod^\heart \wequi \PSh_\Ab(\PM_\MGL(k))$, the object $CF$ is given on sections by \[ (CF)(X)_* = \MU_{2*}\MU \otimes_{\MU_{2*}} F(X)_*. \]
  Given an $\MGL$-correspondence $\alpha: X \to Y$, the action $\alpha_L^*: CF(Y)_* \to CF(X)_*$ is given by $\Delta(\alpha)^*$.
  In other words for $s \in F(Y)$ and $p \in \MU_{2*}\MU$ we have \[ \alpha_L^*(p\otimes s)=\sum_i pp_i \otimes \alpha_i^*(s), \] in the notation for $\Delta(\alpha)$ of above.
\item The counit map $CF \to F$ is given on sections by $p \otimes s \mapsto \epsilon(p)s$, where $\epsilon$ is the counit of the Hopf algebroid $(\MU_{2*}, \MU_{2*}\MU).$
\item The comultiplication map $CF \to C^2F$ is given on sections by $p \otimes s \mapsto \Delta(p) \otimes s$.
\end{enumerate}
\end{proposition}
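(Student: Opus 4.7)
My plan proceeds in three stages. First, I identify the underlying spectrum of $CF$; second, I recognize $(CF)(X)_*$ as an extended comodule by exploiting the hidden $\MGL$-module structure on $F$; third, I check the three structure maps in turn.

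For the underlying spectrum, observe that $\MGL \in \SH(k)^\pure$ (Theorem \ref{thm:mgl-cells}), so Corollary \ref{cor:pure} yields $\MGL_{c=0} \simeq \MGL \wedge \1_{c=0}$. Consequently, for any $F \in \MGL_{c=0}\Mod$, the underlying spectrum of $CF = \MGL_{c=0}\otimes_{\1_{c=0}} F$ is simply $\MGL \wedge F$; if $F$ lies in the heart then so does $CF$, by the same Corollary. To compute $(CF)(X)_n = [X(n),CF]_{\MGL_{c=0}\Mod}$, I use that $X(n) \simeq \Sigma^{2n,n}X_+ \wedge \MGL_{c=0}$ is the free $\MGL_{c=0}$-module on $\Sigma^{2n,n}X_+$; by adjunction together with the strong duality of smooth proper $X$, this yields $(CF)(X)_n = \MGL_{2n,n}(F \wedge DX_+)$. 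The crucial structural input is now that $F$ is automatically an $\MGL$-module: since $\MGL \in \SH(k)_{c\ge 0}$, the canonical truncation $\MGL \to \MGL_{c=0}$ is a ring map (cf.\ Remark \ref{rmk:ring-justn}), so $F$ restricts to an $\MGL$-module and so does $N := F \wedge DX_+$. For any $\MGL$-module $N$, the left-$\MGL$-module cell decomposition $\MGL \wedge \MGL \simeq \bigvee_I \Sigma^{2|b_I|,|b_I|}\MGL$ from Theorem \ref{thm:mgl-cells} gives the K\"unneth identification $\MGL_{2*,*}(N) \simeq \MU_{2*}\MU \otimes_{\MU_{2*}} \pi_{2*,*}(N)$. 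Combined with $\pi_{2*,*}(F \wedge DX_+) = F(X)_*$, this yields the sections formula in (1).

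For the structure maps, the counit $CF \to F$ in (2) is the action map $\MGL_{c=0} \otimes_{\1_{c=0}} F \to F$, realized on underlying spectra by applying the unit $\1 \to \MGL$ backwards in the left factor; under the K\"unneth identification it becomes the Hopf algebroid augmentation $\epsilon \otimes \id_{F(X)_*}$. The comultiplication $CF \to C^2F$ in (3) is induced by inserting the unit $\1 \to \MGL$ between the two tensor factors of $C^2F = \MGL \wedge \MGL \wedge F$; on $\pi_{2*,*}$ it realizes the Hopf algebroid comultiplication $\Delta$ tensored with the identity.

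The most delicate step is the correspondence-action formula in (1). An $\MGL$-correspondence $\alpha \in [X(n),Y]_{\PM_\MGL(k)}$ is by definition a map of $\MGL_{c=0}$-modules, and $\alpha_L^*$ on $CF$-sections is just precomposition. Under the extended comodule identification, however, this precomposition picks up a twist because $\alpha$ itself is an $\MGL$-cohomology class carrying a nontrivial coaction $\Delta(\alpha) = \sum_i p_i \otimes \alpha_i$, and the left $\MGL_{c=0}$-factor of $CF$ is precisely the factor absorbing this coaction. Tracking the adjunctions through the spectral Morita equivalence of Proposition \ref{prop:MGLc0-mod} then yields the formula $\alpha_L^*(p\otimes s) = \sum_i pp_i \otimes \alpha_i^*(s)$. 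This last verification --- matching the abstract precomposition action with the twisted formula --- is the main technical obstacle; it amounts to saying that the comonad structure on $\MGL_{c=0}\Mod^\heart$ recovers the Hopf algebroid $(\MU_{2*}, \MU_{2*}\MU)$ exactly, which fits with the comonadic descent picture of Proposition \ref{prop:comonadic-reconstruction}.
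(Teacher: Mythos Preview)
Your overall architecture matches the paper's: identify $CF$ on underlying spectra as $\MGL \wedge F$, compute sections via the polynomial/K\"unneth splitting of $\MGL \wedge \MGL$ over $\MGL$, and read off the counit and comultiplication from the multiplication map and the insert-a-unit map on the Hopf algebroid. Your route to the sections formula through $\MGL_{2*,*}(F \wedge DX_+)$ and K\"unneth is a minor variation on the paper's direct identification $\MGL_{c=0}\otimes_{\1_{c=0}}\MGL_{c=0}\simeq \MGL_{c=0}[a_1,a_2,\dots]$, and is fine.

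There is, however, a genuine gap at the point you yourself flag as ``the main technical obstacle'': the correspondence-action formula $\alpha_L^*(p\otimes s)=\sum_i pp_i\otimes \alpha_i^*(s)$. You assert that ``tracking the adjunctions through the spectral Morita equivalence \dots\ yields the formula'', but you do not do the tracking; and your closing appeal to Proposition~\ref{prop:comonadic-reconstruction} is circular, since that proposition only gives the abstract comonadic equivalence --- the present statement is precisely what is needed to identify the comonad concretely. The paper fills this gap by an explicit diagram chase: one writes $\alpha$ as a map $X\to M\otimes Y$, writes a section $p\otimes s$ of $CF$ over $Y$ as a composite $Y\simeq \1\otimes Y \xrightarrow{p\otimes s}(M\otimes M)\otimes F \xrightarrow{\phi_{23}} M\otimes F$, and then verifies, by a commutative diagram in which the various $\phi_{ij}$ implement the $\MGL$-module multiplications, that the two paths computing $\alpha_L^*(p\otimes s)$ and $\sum_i pp_i\otimes\alpha_i^*(s)$ agree. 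This is exactly the step where the coaction $\Delta(\alpha)=\sum_i p_i\otimes\alpha_i$ enters, and it is not formal; your proposal should supply this argument rather than defer it.

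A smaller point: in (2) your phrase ``applying the unit $\1\to\MGL$ backwards in the left factor'' does not describe any map --- there is no canonical map $\MGL\to\1$. The counit $CF\to F$ is the $\MGL_{c=0}$-module action on $F$; under $\MGL\wedge F\simeq(\MGL\wedge\MGL)\otimes_\MGL F$ it becomes the multiplication $\MGL\wedge\MGL\to\MGL$, which on $\pi_{2*,*}$ is the augmentation $\epsilon$. Your conclusion is right, but the justification should be corrected.
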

\begin{proof}
(1) Given $F \in \MGL_{c=0}\Mod$ we have \[ CF = \MGL_{c=0} \wedge_{\1_{c=0}} F \wequi (\MGL_{c=0} \wedge_{\1_{c=0}} \MGL_{c=0}) \wedge_{\MGL_{c=0}} F. \]
Since also $\MGL_{c=0} \wequi \MGL \wedge \1_{c=0}$ (Corollary \ref{cor:pure}) we get \[ \MGL_{c=0} \wedge_{\1_{c=0}} \MGL_{c=0} \wequi (\MGL \wedge \MGL) \wedge \1_{c=0} \wequi \MGL_{c=0}[a_1, a_2, \dots], \] where $\MU_{2*}\MU = \MU_*[a_1, a_2, \dots]$ (use \cite[Lemma 6.4]{naumann2009motivic}).
The description of the sections of $CF$ follows.

To obtain the description of $\alpha_L^*$, we apply Remark \ref{rmk:PM-A} with $A=\MGL_{c=0}$, $M=CF$, $B=\MGL_{c=0} \wedge_{\1_{c=0}} \MGL_{c=0}$ and $u$ the map inserting a unit on the right.
Part (1) of the remark implies that $\alpha_L^*$ is $\MU_{2*}\MU$-linear.
Thus to determine $\alpha_L^*(p \otimes s)$, we may assume that $p=1$.
We find that $B \wedge_A F \wequi CF$, $u_{F*}(s) = 1 \otimes s$ and $u_*(\alpha) = \Delta(\alpha)$.
Part (2) of the remark thus asserts the claimed formula $\alpha_L^*(1 \otimes s) = \Delta(\alpha)^*(1 \otimes s)$.

(2) The counit map is the module action \[ CF = \MGL \wedge F \to F, \] which under $\MGL \wedge F \wequi (\MGL \wedge \MGL) \wedge_{\MGL} F$ corresponds to the multiplication map $\MGL \wedge \MGL \to \MGL$ followed by the old module action on $F$.
The result follows since the multiplication in $\MGL$ corresponds to the counit in the associated Hopf algebroid.

(3) By definition, the comultiplication $\MGL \wedge \MGL \to \MGL \wedge \MGL \wedge \MGL$ is by inserting a unit in the middle. The proof is similar to (2). 
\end{proof}

\begin{example}
\label{example:cellular_case}
In the special case when $\alpha \in \MU_{2*} = [\1\{0\}, \1\{*\}]_{\PM_\MGL(k)}$ we get $\Delta(\alpha) = \eta_L(\alpha) \otimes \id$, where $\eta_L$ is the left unit of the Hopf algebroid $(\MU_{*}, \MU_{*}\MU)$. Therefore the $\MU_{2*}$-module structure on $(CF)(X)_*$ is indeed the left $\MU_{2*}$-module structure on $\MU_{2*}\MU \otimes_{\MU_{2*}} F(X)_*$.
\end{example}

\subsection{The Chow heart} \label{subsec:chow-heart}
Using Corollary \ref{corr:heart-abstract} and Proposition \ref{prop:C-heart-explicit}, we can now describe the category $\SH(k)^\heart$ explicitly, by spelling out what modules under the comonad $C^\heart$ mean in terms of Remark \ref{rmk:MGL-c0-mod-explicit-heart}. Namely, we have $\SH(k)^\heart \wequi C^\heart\CoMod$, where we view $C^\heart$ as a comonad on $\PSh_\Ab(\PM_\MGL(k))$.
An object $F \in C^\heart\CoMod$ is given by the following data:
\begin{itemize}
\item for every smooth proper variety $X$ a graded $\MU_{2*}\MU$-comodule $F(X)_*:=[\Sigma^{2*,*}X_+,F\wedge \MGL]$, and
\item for every graded $\MGL$-correspondence $\alpha: X \to Y$ an $\MU_{2*}$-linear map $\alpha^*: F(Y)_* \to F(X)_*$,
\end{itemize}
subject to the following conditions:
\begin{itemize}
\item $(\alpha \beta)^* = \beta^* \alpha^*$, $(\alpha + \beta)^* = \alpha^* + \beta^*$, $\id^* = \id$, $0^* = 0$, and
\item for every graded $\MGL$-correspondence $\alpha: X \to Y$ the following diagram commutes
\begin{equation*}
\begin{CD}
F(Y)_* @>{\alpha^*}>> F(X)_* \\
@V{\Delta_{F,Y}}VV  @V{\Delta_{F,X}}VV \\
\MU_{2*}\MU \otimes_{\MU_{2*}} F(Y)_* @>{\Delta(\alpha)^*}>> \MU_{2*}\MU \otimes_{\MU_{2*}} F(X)_*.
\end{CD}
\end{equation*}
\end{itemize}
A morphism from $F$ to $G$ consists of an $\MU_{2*}\MU$-comodule map $F(X)_* \to G(X)_*$ for every smooth proper variety $X$, compatible with pullback along graded $\MGL$-correspondences in the evident way.\\

%This follows from Corollary \ref{corr:heart-abstract} and Proposition \ref{prop:C-heart-explicit} by spelling out what modules under the comonad $C^\heart$ mean in terms of Remark \ref{rmk:MGL-c0-mod-explicit-heart}.\\

We thank an anonymous referee for pointing out the following.
\begin{remark}
\label{rem:concise_description}
As we have observed above, the mapping sets in $\PM_\MGL(k)$ are naturally $\MU_{2*}\MU$-comodules.
In fact this makes $\PM_\MGL(k)$ into a category \emph{enriched} in $\MU_{2*}\MU$-comodules.
The compatibility condition displayed above precisely means that $\SH(k)^\heart[1/e]$ is equivalent to the category of \emph{enriched} presheaves on $\PM_\MGL(k)$ (see e.g. \cite[\S3.5]{riehl2014categorical}).
\end{remark}

We can also prove the following.
\begin{proposition}
There is a canonical\tom{For simplicity, I have removed ``symmetric monoidal''. I'm sure we can prove it if we want to.} equivalence \[ \1_{c=0}\Mod[1/e]_b \wequi D^b(\SH(k)^\heart). \]
\end{proposition}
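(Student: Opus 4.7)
The plan is to construct a canonical $t$-exact comparison functor $\Phi\colon D^b(\SH(k)^\heart) \to \1_{c=0}\Mod[1/e]_b$ and then show that it is an equivalence. First I would verify that $\SH(k)^\heart$ is a Grothendieck abelian category: accessibility and compatibility with filtered colimits come from Proposition \ref{prop:filtered}, and compact generation comes from $\PM_\MGL(k)$ via Proposition \ref{prop:MGLc0-mod}. With $D^b(\SH(k)^\heart)$ in hand, the equivalence $\SH(k)^\heart \simeq \1_{c=0}\Mod[1/e]^\heart$ of Corollary \ref{corr:heart-abstract}, composed with the inclusion of the heart into $\1_{c=0}\Mod[1/e]_b$, extends by the universal property of the bounded derived $\infty$-category to the desired $t$-exact functor $\Phi$.

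Next, a standard dévissage reduces the equivalence claim to fully faithfulness of $\Phi$ on the heart. Concretely, for $X, Y \in \SH(k)^\heart$ and $n \ge 0$, it suffices to prove that the canonical map
\[
\Ext^n_{\SH(k)^\heart}(X, Y) \longrightarrow [X, Y[n]]_{\1_{c=0}\Mod[1/e]}
\]
is an isomorphism. Given this, an induction on the amplitude of a bounded object via the truncation cofiber sequences $\tau_{\ge m+1} E \to E \to \tau_{\le m} E \simeq \Sigma^m \pi_m E$ yields fully faithfulness of $\Phi$ on all of $D^b(\SH(k)^\heart)$. Essential surjectivity onto $\1_{c=0}\Mod[1/e]_b$ follows by the same induction, since every bounded object is a finite iterated extension of shifts of its heart-valued Postnikov sections, each of which lies in the essential image of $\Phi$ by construction.

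For the Ext comparison, I would pass through the comonadic description. By Proposition \ref{prop:comonadic-reconstruction}(2) the right-hand side can be computed inside $C\CoMod$, and the cobar presentation in Equation \eqref{eq:C-nilp} gives a totalization formula
\[
\map_{C\CoMod}(X, Y[n]) \wequi \Tot\bigl(\map_{\MGL_{c=0}^{\otimes\bullet+1}\Mod}(X \otimes \MGL_{c=0}^{\otimes \bullet+1},\, Y[n] \otimes \MGL_{c=0}^{\otimes \bullet+1})\bigr).
\]
Since $-\otimes \MGL_{c=0}$ is $t$-exact (Corollary \ref{cor:pure}) and $X,Y$ lie in the heart, each level of this cosimplicial spectrum is discrete, and the explicit description of $C^\heart$ in Proposition \ref{prop:C-heart-explicit} identifies the underlying cosimplicial abelian group with the classical cobar complex computing $\Ext^*_{C^\heart\CoMod}(X,Y)$, which by Corollary \ref{corr:heart-abstract} is the same as $\Ext^*_{\SH(k)^\heart}(X,Y)$.

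The main obstacle will be justifying the convergence and collapse of the descent spectral sequence in the last step: one needs the $\infty$-categorical $\pi_*\Tot$ to agree with the $1$-categorical $\Ext$ in the abelian comodule category. This ultimately reduces to the observation that $\MGL_{c=0}^{\otimes p} \otimes Y$ is heart-valued for every $p \ge 0$, so that the cosimplicial object above is a genuine resolution in the abelian sense and the spectral sequence computing $\pi_* \Tot$ is concentrated on a single row. Once this degeneration is in place, the Ext comparison holds and the dévissage outlined above completes the proof.
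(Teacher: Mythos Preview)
Your overall architecture---construct a $t$-exact comparison functor and reduce by d\'evissage to an Ext comparison on the heart---is sound and is essentially what any proof must do. The gap is in the Ext comparison itself. The assertion that ``each level of this cosimplicial spectrum is discrete'' is false: for $X, Y$ in the heart, the level-$p$ term is a mapping spectrum between the heart objects $X \otimes \MGL_{c=0}^{\otimes p+1}$ and $Y \otimes \MGL_{c=0}^{\otimes p+1}$ in $\MGL_{c=0}^{\otimes p+1}\Mod$, and since that category is (a derived category of) a presheaf category, such mapping spectra have $\pi_{-i}$ equal to the nontrivial $\Ext^i$ in $\PSh_\Ab(\PM_\MGL(k))$. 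Consequently the Bousfield--Kan spectral sequence for the $\Tot$ is \emph{not} concentrated on a single row, and the observation that $\MGL_{c=0}^{\otimes p}\otimes Y$ is heart-valued does not rescue this: that only says the cobar complex is a resolution in the relative sense, not that its terms are $\Hom(X,-)$-acyclic. What you are implicitly assuming is exactly the statement to be proved.

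The paper closes this gap differently and more directly. It first observes from Proposition~\ref{prop:MGLc0-mod} that $\MGL_{c=0}\Mod \simeq D(\MGL_{c=0}\Mod^\heart)$, so the heart (a presheaf category) has enough injectives. Because the free functor $F\colon \1_{c=0}\Mod_b \to \MGL_{c=0}\Mod_b$ is $t$-exact and conservative, its right adjoint $R$ carries injectives to injectives, giving $\1_{c=0}\Mod^\heart$ enough injectives of the form $RI$. One then invokes the recognition criterion (dual of \cite[Proposition~1.3.3.7]{HA}): it suffices to check $[X,\Sigma^i RI]_{\1_{c=0}\Mod}=0$ for $X$ in the heart, $I$ injective, and $i>0$, which by adjunction is $[FX,\Sigma^i I]_{\MGL_{c=0}\Mod}=0$, immediate from $t$-exactness of $F$ and injectivity of $I$. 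If you want to salvage your cobar approach, the missing ingredient is precisely that cofree comodules $RI$ (equivalently the terms $\MGL_{c=0}^{\otimes p}\otimes Y$ for $Y$ injective in $\MGL_{c=0}\Mod^\heart$) are $\Ext(X,-)$-acyclic; proving this amounts to the same injective-object argument.
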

\begin{proof}
We implicitly invert $e$ throughout.
We know by Remark \ref{rmk:MGL-c-D} that $\MGL_{c=0}\Mod \wequi D(\MGL_{c=0}\Mod^\heart)$.
Consider the comonadic adjunction \[ F: \1_{c=0}\Mod_b \adj \MGL_{c=0}\Mod_b \wequi D^b(\MGL_{c=0}\Mod^\heart): R. \]
The functor $F$ is $t$-exact and conservative by Proposition \ref{prop:comonadic-reconstruction}, and $R$ is $t$-exact since $\MGL_{c=0} \in \SH(k)_{c \ge 0}$.
The category $\MGL_{c=0}\Mod^\heart$ has enough injectives, being a presheaf category.
Since $F$ is a conservative exact left adjoint, the category $\1_{c=0}\Mod^\heart$ also has enough injectives, namely those of the form $RI$ for $I \in \MGL_{c=0}\Mod^\heart$ injective.
To conclude, by the dual of \cite[Proposition 1.3.3.7]{HA}, it suffices to prove that if $X \in \1_{c=0}\Mod^\heart$ then $[X, \Sigma^i RI]_{\1_{c=0}\Mod_b} = 0$ for $i > 0$.
But this is the same as $[FX, \Sigma^i I]_{D^b(\MGL_{c=0}\Mod^\heart)}$, which vanishes since $F$ is $t$-exact and $I$ is injective.
\NB{the proof seems to work more generally for any exact comonad on an abelian category with enough injectives...}
\end{proof}

\subsection{$W$-Cellular objects} \label{subsec:cellular-objects}
\begin{definition}
\label{def:wcellular}
	Let $W$ be a set of smooth proper schemes over $k$ that contains $Spec (k)$ and is closed under finite products. Define the 
	\emph{$W$-cellular category}, denoted by $\SH(k)^\wcell$ to be the subcategory of $\SH(k)$ generated under taking colimits and desuspensions by objects of the form $\Th(\xi)$ for $\xi\in K(X)$ and $X\in W$.
\end{definition}

\begin{remark}
\label{rmk:wcellular_special_case}
When $W$ contains only $Spec (k)$, this specializes to the cellular subcategory in \S \ref{subsec:cellular-chow-t-str}. Since this is the smallest choice, all cellular objects are $W$-cellular for any choice of $W$.
\end{remark}
 
Fix $W$. In the discussion below, cellular means $W$-cellular.

\subsubsection{The $W$-cellular Chow $t$-structure}

Given a stable presentable category $\scr D$ under $\SH(k)$, we denote by $\scr D^{\wcell}$ the subcategory generated under colimits and desuspensions by the images of $\SH(k)^\wcell$.
Similarly we write $\scr D^{\wcell}_{\ge 0}$ for the subcategory generated under colimits and extensions by images of the objects of the form $\Th(\xi)$ for $\xi\in K(X)$ and $X\in W$.
This defines a $t$-structure on $\scr D^{\wcell}$, whose truncation we denote by $\tau_{c \le 0}^{\wcell}, \tau_{c \ge 0}^{\wcell}, \tau_{c = 0}^{\wcell}$. We call it cellular Chow $t$-structure.
The inclusion $\scr D^{\wcell} \hookrightarrow \scr D$ admits a right adjoint, called the cellularization functor $\scr D\to \scr D^{\wcell}$. In the following discussion, we take $\scr D$ to be $\SH(k)$ or $\SH(k)\xrightarrow{\wedge A} A\Mod$ for some $A\in \CAlg(\SH(k)).$

We have the following results in the cellular case, analogous to Lemma \ref{lem:smash-with-thom}.

\begin{lemma} \label{lem:smash-with-thom-cell}
Let $E \in \SH(k)^\wcell$.
If $\pi_{i,0}(E\wedge \Th(\xi))=0$ for all $i \in \Z$ and $K$-theory points $\xi$ on smooth proper varieties $X\in W$, then $E \wequi 0$.
\end{lemma}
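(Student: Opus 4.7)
The plan is to adapt the proof of Lemma \ref{lem:smash-with-thom} to the $W$-cellular setting, exploiting the fact that the strong dual of a Thom spectrum on $X \in W$ is again a Thom spectrum on $X$.

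First, since the $W$-cellular subcategory is generated by the set $\{\Th(\xi) : X \in W, \xi \in K(X)\}$ under colimits and desuspensions (in particular, this set is a set of compact generators of $\SH(k)^\wcell$ by Remark \ref{rmk:gens}(1)), it suffices to show that $[\Sigma^i \Th(\xi), E] = 0$ for every $i \in \Z$, every $X \in W$, and every $\xi \in K(X)$. Indeed, the full subcategory of objects orthogonal (in this graded sense) to every generator is closed under colimits, so if it contains $E$ it contains all of $\SH(k)^\wcell$.

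Second, for any $X \in W$ and $\xi \in K(X)$, the Thom spectrum $\Th(\xi)$ is strongly dualizable with dual $\Th(-T_X - \xi)$. By duality we obtain
\[
[\Sigma^i \Th(\xi), E] \wequi [\Sigma^i \1, E \wedge \Th(-T_X - \xi)] \wequi \pi_{i,0}(E \wedge \Th(-T_X - \xi)).
\]
Since $X \in W$ and $-T_X - \xi \in K(X)$, the hypothesis applies and this group vanishes, completing the argument.

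The only slightly subtle point is that we must verify this orthogonality actually implies $E \wequi 0$ inside $\SH(k)^\wcell$; this is the standard compact-generation argument, and it works here because each $\Th(\xi)$ is compact in $\SH(k)$ and hence in the localizing subcategory $\SH(k)^\wcell$. No essentially new difficulty appears beyond that of Lemma \ref{lem:smash-with-thom}; the crucial point, which is why the statement only references Thom spectra on varieties in $W$, is that $W$ is closed under the operation $\xi \mapsto -T_X - \xi$ in the sense that the underlying variety $X$ stays the same.
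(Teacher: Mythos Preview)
Your proposal is correct and is precisely the argument the paper has in mind: the paper's own proof simply reads ``The proof is similar to that of Lemma \ref{lem:smash-with-thom}'', and you have correctly spelled out the adaptation, using duality to rewrite $\pi_{i,0}(E \wedge \Th(-T_X-\xi))$ as $[\Sigma^i\Th(\xi),E]$ and then invoking that the $\Th(\xi)$ with $X \in W$ are compact generators of $\SH(k)^\wcell$ by definition. The only cosmetic issue is your phrasing around ``the full subcategory of objects orthogonal \dots\ contains $E$'', which is slightly garbled; the clean statement is that the class $\{F : [\Sigma^i F, E]=0 \text{ for all } i\}$ is closed under colimits and desuspensions and contains the generators, hence equals $\SH(k)^\wcell$, so $[E,E]=0$.
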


\begin{proof}
	The proof is similar to that of Lemma \ref{lem:smash-with-thom}.
\end{proof}

Let $E\mapsto E^{\wcell}$ denote the cellularization of $E\in \SH(k).$ A key fact is as follows.
\begin{lemma} \label{lemm:cellularization-t-exact}
The cellularization functor $\SH(k) \to \SH(k)^{\wcell}$ is $t$-exact for the Chow $t$-structures. Equivalently for $E \in \SH(k)_{c \ge 0}$ we have $E^{\wcell} \in \SH(k)^{\wcell}_{c \ge 0}$.
\end{lemma}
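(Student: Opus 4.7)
Proof plan. The cellularization $r$ is automatically left $t$-exact: for $E \in \SH(k)_{c \le 0}$ and a generator $\Sigma^j \Th(\xi)$ of $\SH(k)^\wcell_{c \ge 1}$ (with $X \in W$, $\xi \in K(X)$, $j > 0$), since $\Th(\xi) \in \SH(k)^\wcell$ we have $[\Sigma^j \Th(\xi), rE]_{\SH(k)^\wcell} = [\Sigma^j \Th(\xi), E]_{\SH(k)} = 0$. So the content of the lemma is right $t$-exactness, i.e. the equivalent displayed formulation.

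First I would reduce to checking the conclusion on the generators $\Th(\zeta)$, $X'$ smooth proper, $\zeta \in K(X')$, of $\SH(k)_{c \ge 0}$. The generators $\Th(\xi)$ of $\SH(k)^\wcell$ for $X \in W$ are compact in $\SH(k)$ by Remark \ref{rmk:gens}(1), so $r$ preserves filtered colimits; being also exact (so finite coproducts, equal to finite products in the stable setting, are preserved), $r$ preserves all small colimits as well as extensions, so the reduction goes through.

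Next I would show that $r$ commutes with smashing against $\Th(\xi)$ for $X \in W$. Since $W$ is closed under products, $\SH(k)^\wcell$ is closed under smashing with such $\Th(\xi)$, and for any $Z \in \SH(k)^\wcell$ strong duality gives
\[
\Map(Z, rE \wedge \Th(\xi)) \wequi \Map(Z \wedge \Th(-T_X - \xi), E) \wequi \Map(Z, E \wedge \Th(\xi)),
\]
so that $rE \wedge \Th(\xi) \to E \wedge \Th(\xi)$ is a cellular equivalence and $r(E \wedge \Th(\xi)) \wequi rE \wedge \Th(\xi)$. Applying Lemma \ref{lem:smash-with-thom-cell} and the $t$-exactness of $-\wedge \Th(\xi)$ on $\SH(k)^\wcell$ (analogue of Proposition \ref{prop:pure}), the task reduces to showing $\pi_{i, 0}\bigl((r\Th(\eta))_{c \le -1}^\wcell\bigr) = 0$ for every $i \in \Z$ and every Thom spectrum $\Th(\eta)$ on a smooth proper variety (absorbing $\Th(\zeta) \wedge \Th(\xi)$ into $\Th(\eta)$ on $X' \times X$).

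For this final vanishing I plan to pass through the $\MGL$-module picture. Proposition \ref{prop:MGLc0-mod} identifies $\MGL_{c=0}\Mod$ with $\PSh_\SH(\PM_\MGL(k))$ equipped with pointwise Chow $t$-structure (Lemma \ref{lemm:spectra-presheaves-t-str}); the corresponding $W$-cellular category is the presheaf category on the full subcategory of $\PM_\MGL(k)$ spanned by the $X(i)$ with $X \in W$, and cellularization becomes restriction of presheaves, which is manifestly $t$-exact. The comonadic equivalence of Proposition \ref{prop:comonadic-reconstruction}, together with the computation of Chow homotopy via Adams--Novikov $\Ext$ in Theorem \ref{thm:main-computation} (which depends only on $\MGL$-homology and hence is unaffected by cellularization), then transports $t$-exactness back from the $\MGL$-side to $\SH(k)^\wcell$. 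The hardest step will be this transport, which requires matching Chow truncations in the two categories via the $\MGL$-smash in a way compatible with cellularization; I expect to handle it by first reducing to homotopy-bounded-below $E$ via the approximations in Proposition \ref{lemm:chow-approx}, so that Adams--Novikov convergence applies cleanly.
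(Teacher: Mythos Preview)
Your opening reductions are fine, but the detour through Proposition \ref{prop:MGLc0-mod} and the comonadic equivalence of Proposition \ref{prop:comonadic-reconstruction} is both overcomplicated and circular. The comonadic description relates $\1_{c=0}\Mod$ to $\MGL_{c=0}\Mod$, not $\SH(k)^\wcell$ to anything; to pass to the $W$-cellular setting (as in Corollary \ref{cor:comonadic-cellular} and Lemma \ref{lemm:t-exact-escalation}) the paper \emph{uses} Lemma \ref{lemm:cellularization-t-exact}. So you cannot invoke that machinery here. Moreover, the reduction to generators $\Th(\zeta)$ buys nothing: after your smash manipulations you are back to the same shape of statement you started with.

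The paper's argument is direct and avoids all of this. It keeps a general $E \in \SH(k)_{c \ge 0}$, uses right completeness of the cellular Chow $t$-structure to reduce to showing $\tau^\wcell_{c=i}(E^\wcell) \simeq 0$ for each $i<0$, and then applies Lemma \ref{lem:smash-with-thom-cell}. The key point---which you yourself identify but do not exploit directly---is that Theorem \ref{thm:main-computation}, applied with $I = \SH(k)^\wcell_{c \ge 0}$, gives
\[
\pi_{*,*}\bigl(\tau^\wcell_{c=i}(E^\wcell) \wedge \Th(\xi)\bigr) \cong \Ext^{*,*}_{\MU_{2*}\MU}\bigl(\MU_{2*}, \MGL_{2*+i,*}(E^\wcell \wedge \Th(\xi))\bigr).
\]
Since $\MGL$ is cellular, $\MGL_{2*+i,*}(E^\wcell \wedge \Th(\xi)) = \MGL_{2*+i,*}(E \wedge \Th(\xi))$, and this vanishes for $i<0$ by Proposition \ref{prop:chow-MGL-homology}(2) because $E \wedge \Th(\xi) \in \SH(k)_{c \ge 0}$. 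That is the whole proof; no presheaf model or comonad is needed.
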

\begin{proof}
The equivalence follows from the fact that $\SH(k)^{\wcell}_{c \ge 0} \subset \SH(k)_{c \ge 0}$, whence cellularization is always left-$t$-exact, being right adjoint to a right-$t$-exact functor.

Now let $E \in \SH(k)_{c \ge 0}$.
To prove that $E^{\wcell} \in \SH(k)^{\wcell}_{c \ge 0}$ it is enough to show that $\tau^{\wcell}_{c<0}E^{\wcell} \wequi 0$.
Similar to the proof of Proposition \ref{prop:right-complete}, the cellular Chow $t$-structure is right complete. Therefore we only need to show that $\tau^{\wcell}_{c=i}E^{\wcell} \wequi 0$ for all $i<0$. 

By Lemma
\ref{lem:smash-with-thom-cell}, it suffice to show $\pi_{s,0}(\tau^{\wcell}_{c=i}E^{\wcell}\wedge \Th(\xi))=0$ for all $s \in \Z$ and $K$-theory points $\xi$ on smooth proper varieties $X\in W$.
Theorem \ref{thm:main-computation} applied to $I=\SH(k)^{\wcell}_{c \ge 0}$ shows that $$\pi_{*,*}(\tau^{\wcell}_{c=i}E^{\wcell}\wedge \Th(\xi)) \wequi \Ext^{*,*}_{\MU_{2*}\MU}(\MU_{2*},\MGL_{2*+i,*}E^{\wcell}\wedge \Th(\xi)).$$
Here we also use the cellular version of Proposition \ref{prop:pure} which gives $(\tau^{\wcell}_{c=i}E^{\wcell})\wedge \Th(\xi)\simeq \tau^{\wcell}_{c=i}(E^{\wcell}\wedge \Th(\xi))$.
Since $\MGL$ is cellular, we have $\MGL_{2*+i,*}E^{\wcell}\wedge \Th(\xi) \wequi \MGL_{2*+i,*} E\wedge \Th(\xi)$, which vanishes by Proposition \ref{prop:chow-MGL-homology}(2). Therefore when $i\leq 0$, the bigraded homotopy groups of $\tau^{\wcell}_{c=i}E^{\wcell}\wedge \Th(\xi)$ all vanish. The result follows. 
%which implies that $\tau^{\wcell}_{c=i}E^{\wcell}\simeq 0.$
\end{proof}

\begin{lemma} \label{lemm:t-exact-escalation}
Let $A \in \CAlg(\SH(k)_{c \ge 0})$ and consider the following commutative diagram of left adjoint functors
\begin{equation*}
\begin{tikzcd}
\SH(k) \ar[r,"\wedge A^{\wcell}"] &A^{\wcell}\Mod \ar[r,"\wedge_{A^{\wcell}}A"] & A\Mod\\
\SH(k)^{\wcell} \ar[r,"\wedge A^{\wcell}"] \ar[u, hook] &A^{\wcell}\Mod^{\wcell} \ar[r,"\wedge_{A^{\wcell}}A", "a"'] \ar[u,hook] & A\Mod^{\wcell} \ar[u,hook]
\end{tikzcd}
\end{equation*}
All of the right adjoints are $t$-exact, all the horizontal right adjoints are conservative, and the bottom right horizontal functor $a$ is an equivalence.
\end{lemma}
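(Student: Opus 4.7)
The plan is to handle the three assertions in sequence, working out from facts already established. First I will show that the horizontal right adjoints---the forgetful functors $A^{\wcell}\Mod\to\SH(k)$ and $A^{\wcell}\Mod^{\wcell}\to\SH(k)^{\wcell}$, and the restriction functor $A\Mod\to A^{\wcell}\Mod$ along $A^{\wcell}\to A$---are $t$-exact and conservative. Conservativity is immediate: a map of modules is an equivalence if and only if it is so on underlying objects. For $t$-exactness, each such right adjoint $U$ fits in an adjunction $F\dashv U$ where the free functor $F$ takes generators of the source $t$-structure (e.g.\ $\Th(\xi)$ or $A^{\wcell}\wedge\Th(\xi)$) to generators of the target $t$-structure, so $F$ is right-$t$-exact and hence $U$ is left-$t$-exact. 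For right-$t$-exactness of $U$, I use that $A\in\SH(k)_{c\ge 0}$ (given) and, by Lemma~\ref{lemm:cellularization-t-exact}, that $A^{\wcell}\in\SH(k)^{\wcell}_{c\ge 0}\subset\SH(k)_{c\ge 0}$; Proposition~\ref{prop:tensor} then shows that $U$ sends each generator $A\wedge\Th(\xi)$ or $A^{\wcell}\wedge\Th(\xi)$ into the target non-negative part. Combined with preservation of colimits and extensions, this gives right-$t$-exactness. I remark for later use that any $t$-exact conservative functor detects non-negativity: if $U(X)\in\mathcal{D}_{\ge 0}$ then $U(X_{<0})=U(X)_{<0}=0$, whence $X_{<0}=0$.

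Next I prove the equivalence $a:A^{\wcell}\Mod^{\wcell}\simeq A\Mod^{\wcell}$. The crux is that for any $X\in W$ and $\eta\in K(X)$ the canonical map $A^{\wcell}\wedge\Th(\eta)\to A\wedge\Th(\eta)$ realizes the cellularization in $\SH(k)$. Since $X\in W$, the object $\Th(\eta)$ is cellular, and since $X$ is smooth proper it is strongly dualizable with dual $\Th(-T_X-\eta)$, again a Thom spectrum on an object of $W$ and hence cellular. For arbitrary cellular $N$ I compute
\begin{align*}
\Map_{\SH(k)}(N, A\wedge\Th(\eta)) &\wequi \Map_{\SH(k)}(N\wedge\Th(-T_X-\eta), A) \\
&\wequi \Map_{\SH(k)^{\wcell}}(N\wedge\Th(-T_X-\eta), A^{\wcell}) \\
&\wequi \Map_{\SH(k)^{\wcell}}(N, A^{\wcell}\wedge\Th(\eta)),
\end{align*}
which is the universal property of cellularization. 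Granting this, for $X,Y\in W$ and $\xi\in K(X)$, $\eta\in K(Y)$ the mapping space between generators in $A^{\wcell}\Mod^{\wcell}$ computes as
\[\Map(A^{\wcell}\wedge\Th(\xi),A^{\wcell}\wedge\Th(\eta))\wequi \Map_{\SH(k)}(\Th(\xi),A^{\wcell}\wedge\Th(\eta))\wequi \Map_{\SH(k)}(\Th(\xi),A\wedge\Th(\eta)),\]
the last being the mapping space between $a$'s images in $A\Mod^{\wcell}$. Thus $a$ is fully faithful on a compact generating family; since $a$ preserves colimits and hits a compact generating family of $A\Mod^{\wcell}$, it is an equivalence. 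As a byproduct its right adjoint $a^{-1}$ is also $t$-exact and conservative, as the generators of the non-negative parts correspond under $a$.

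Finally, for the remaining cellularization functors $c_1:A^{\wcell}\Mod\to A^{\wcell}\Mod^{\wcell}$ and $c_2:A\Mod\to A\Mod^{\wcell}$, I exploit the Beck--Chevalley-style compatibility obtained by passing to right adjoints in the two commuting squares of left adjoints in the displayed diagram. This yields $c_0\circ U\wequi U^{\wcell}\circ c_1$ and $c_1\circ R\wequi R^{\wcell}\circ c_2$, where $U$, $U^{\wcell}$, $R$, $R^{\wcell}$ are the relevant horizontal right adjoints and $c_0$ is the $\SH(k)$-cellularization of Lemma~\ref{lemm:cellularization-t-exact}. Since $U$ and $c_0$ are $t$-exact, so is $U^{\wcell}\circ c_1$; combined with the fact that $U^{\wcell}$ is $t$-exact and conservative (first step) and hence detects non-negativity, this forces $c_1$ to be right-$t$-exact, while left-$t$-exactness is automatic as $c_1$ is right adjoint to the right-$t$-exact fully faithful inclusion. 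The same argument, now using that $R^{\wcell}=a^{-1}$ is $t$-exact and conservative, proves $c_2$ is $t$-exact.

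The main obstacle is the reduction in the second step showing that smashing a cellular Thom spectrum with the counit $A^{\wcell}\to A$ produces the $\SH(k)$-cellularization; once that is in hand, the rest is organized bookkeeping with adjunctions and the already-established Lemma~\ref{lemm:cellularization-t-exact}.
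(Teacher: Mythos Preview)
Your proof is correct and follows essentially the same route as the paper's: conservativity of the horizontal right adjoints from the left adjoints hitting generators, the equivalence $a$ by full faithfulness on compact generators via dualizability of $\Th(\xi)$, and $t$-exactness of the nontrivial cellularizations by commuting the right-adjoint squares and invoking the principle that a $t$-exact conservative functor detects $t$-structure membership.

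One small imprecision worth flagging: your uniform argument for right-$t$-exactness via Proposition~\ref{prop:tensor} works directly for the forgetful functors landing in $\SH(k)$ or $\SH(k)^{\wcell}$, but not for the restriction $R:A\Mod\to A^{\wcell}\Mod$. There the target non-negative part is generated by \emph{free $A^{\wcell}$-modules}, and it is not immediate from Proposition~\ref{prop:tensor} alone that $R(A\wedge\Th(\xi))=A\wedge\Th(\xi)$ lies in $A^{\wcell}\Mod_{c\ge 0}$. The paper handles $R$ separately: having first shown the forgetful $U':A^{\wcell}\Mod\to\SH(k)$ is $t$-exact and conservative, it uses $U'R=U$ together with exactly the ``detects non-negativity'' principle you record to conclude. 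You have that tool in hand; just deploy it for $R$ rather than folding $R$ into the Proposition~\ref{prop:tensor} argument.
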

\begin{proof}
If $F: \scr C \to \scr D$ is a functor of presentable stable $\infty$-categories such that $\scr D$ is generated under colimits by the essential image of $F$, and $F$ admits a right adjoint $G$, then $G$ is conservative.
Indeed given $\alpha: X \to Y \in \scr D$ such that $G\alpha$ is an equivalence, the class of objects $T \in \scr D$ such that $\Map(T, \alpha)$ is an equivalence is closed under colimits and contains the essential image of $F$, hence is all of $\scr D$, and so $\alpha$ is an equivalence.
Conservativity of the horizontal right adjoints follows from this observation, taking $F$ to be a horizontal left adjoint.
This also means that $a$ is an equivalence if and only if it is fully faithful, which we may test on the compact generators $\Sigma^{p,q}A^{\wcell} \wedge \Th(\xi)$, for $K$-theory points $\xi$ on smooth proper varieties $X\in W$.
In other words we need to show that $\pi_{*,*}(A^{\wcell}\wedge \Th(\xi)) \wequi \pi_{*,*}(A\wedge \Th(\xi))$. This holds by definition (and dualizability of $\Th(\xi)$).

We prove the $t$-exactness of the right adjoints. Since all the left adjoints are right $t$-exact, the right adjoints are left $t$-exact. Therefore it suffices to show that all the right adjoints are right $t$-exact.

First we prove the statement for the top horizontal arrows. We denote the functors by notations as in the following diagram
\begin{equation*}
\begin{tikzcd}
	F: \SH(k) \ar[r,"F'",yshift={.5ex}]\ar[r,leftarrow, "U'"', yshift={-0.5ex}] 
	& A^{\wcell}\Mod \ar[r,"L",yshift={.5ex}] \ar[r,leftarrow,"R"', yshift={-0.5ex}] 
	& A\Mod: U
\end{tikzcd}
\end{equation*}
where $F$ and $U$ are the composites.
%
%Consider the forgetful functor $U: A\Mod \to \SH(k)$ and its left adjoint $F$.
By assumption $A\in \SH(k)_{c\geq 0}.$ Thus for any $E\in \SH(k)_{c \ge 0} $, $UFE \wequi A \wedge E \in \SH(k)_{c \ge 0}$. Since the non-negative part of $A\Mod$ is generated by $F(\SH(k)_{c \geq 0})$ and $U$ preserves colimits (its left adjoint preserving compact generators), $U$ is right-$t$-exact.
%Since $F$ is right-$t$-exact as well, $U$ is left-$t$-exact, and hence $t$-exact.
By Lemma \ref{lemm:cellularization-t-exact}, \[ A^{\wcell} \in \SH(k)^{\wcell}_{c \ge 0} \subset \SH(k)_{c \ge 0}.\]
We may thus apply the same argument to $A^{\wcell}\Mod$, and deduce that the $U'$ is $t$-exact. Now we show the right $t$-exactness of $R$. Given $E\in A\Mod_{\geq 0}$, by $t$-exactness of $U$ and $U'$, we have $0\simeq U(\tau_{ <0}E)\simeq (U'R(E))_{<0}\simeq U' \tau_{<0}R(E).$ Using conservativity of $U{'}$, we conclude that $\tau_{< 0}R(E)\simeq 0$ and equivalently $R(E)\in A^{\wcell}\Mod_{\ge 0}.$

The same argument proves the $t$-exactness statements for the bottom left arrow (and the bottom right arrows are $t$-exact, being equivalences).

It remains to check that the vertical right adjoints (cellularizations) are $t$-exact.
This is true for the left hand vertical right adjoint by Lemma \ref{lemm:cellularization-t-exact}.
Exactness of the other two vertical right adjoints follows formally, since a functor is $t$-exact if and only if its composite with a $t$-exact conservative functor is.
\end{proof}

\subsubsection{$W$-cellular reconstruction}

\begin{corollary} \label{cor:cellular-comonadic-technical}
Let $-\infty \le m \le n \le \infty$ and $A \in \CAlg(\SH(k)_{c \ge 0})$.
The composite functor \[ A\Mod^{\wcell}_{[m,n]} \hookrightarrow A\Mod \xrightarrow{\tau_{[m,n]}} A\Mod_{[m,n]} \] is fully faithful.
\end{corollary}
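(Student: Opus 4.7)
The plan is to reduce the fully faithfulness to a simple vanishing statement, which will follow from the $t$-exactness of cellularization established in Lemma~\ref{lemm:t-exact-escalation}. First I would observe that $A\Mod^{\wcell}_{\ge m} \subseteq A\Mod_{\ge m}$: the generators $A\wedge\Th(\xi)$ of the former (with $X\in W$) are contained among the generators of the latter, and the non-negative part of a $t$-structure is closed under colimits and extensions. Consequently, for $X\in A\Mod^{\wcell}_{[m,n]}$ we have $\tau_{\ge m}X = X$, hence $\tau_{[m,n]}X = \tau_{\le n}X$.

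Next, for $X,Y\in A\Mod^{\wcell}_{[m,n]}$, using the fully faithful embedding $A\Mod_{[m,n]}\hookrightarrow A\Mod$ together with the adjunction $\tau_{\le n}\dashv \iota$, I would compute
\[ \Map_{A\Mod_{[m,n]}}(\tau_{[m,n]}X,\tau_{[m,n]}Y) \wequi \Map_{A\Mod}(\tau_{\le n}X,\tau_{\le n}Y) \wequi \Map_{A\Mod}(X,\tau_{\le n}Y). \]
Under these identifications, the natural map from $\Map_{A\Mod}(X,Y) = \Map_{A\Mod^{\wcell}}(X,Y)$ is induced by post-composition with $Y\to\tau_{\le n}Y$. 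Since the fiber of this truncation is $\tau_{\ge n+1}Y$, the claim becomes the vanishing $\map_{A\Mod}(X,\tau_{\ge n+1}Y)\wequi 0$.

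To finish, I would apply the adjunction between the inclusion $\iota:A\Mod^{\wcell}\hookrightarrow A\Mod$ and cellularization $(\ph)^{\wcell}$, yielding
\[ \map_{A\Mod}(X,\tau_{\ge n+1}Y) \wequi \map_{A\Mod^{\wcell}}(X,(\tau_{\ge n+1}Y)^{\wcell}). \]
By Lemma~\ref{lemm:t-exact-escalation} cellularization is $t$-exact, so $(\tau_{\ge n+1}Y)^{\wcell}\wequi \tau_{\ge n+1}^{\wcell}(Y^{\wcell}) = \tau_{\ge n+1}^{\wcell}Y$, and this vanishes because $Y\in A\Mod^{\wcell}_{\le n}$.

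The main non-formal input is the $t$-exactness of cellularization, already supplied by Lemma~\ref{lemm:t-exact-escalation} (ultimately resting on Lemma~\ref{lemm:cellularization-t-exact} and the vanishing from Theorem~\ref{thm:main-computation}); the rest of the argument is a routine manipulation of $t$-structure adjunctions. Note also that the boundary cases $m=-\infty$ or $n=\infty$ cause no issue, since the corresponding truncation functors are the identity, and the argument degenerates to a triviality.
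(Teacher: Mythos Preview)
Your proof is correct and follows essentially the same approach as the paper: both arguments reduce to the $t$-exactness of cellularization from Lemma~\ref{lemm:t-exact-escalation}. The paper packages this as checking that the unit of an induced adjunction $\tau_{\le n}: A\Mod^{\wcell}_{[m,n]} \rightleftarrows A\Mod_{[m,n]}: (\ph)^{\wcell}$ is an equivalence, i.e.\ that $(\tau_{\le n}E)^{\wcell}\wequi E$; you instead compute mapping spaces directly and show $(\tau_{\ge n+1}Y)^{\wcell}\wequi 0$, which is the same statement read off from the other half of the truncation cofiber sequence.
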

\begin{proof}
Since cellularization preserves $\ge m$ part by Lemma \ref{lemm:t-exact-escalation}, we have an adjunction \[ A\Mod^{\wcell}_{[m,\infty]} \adj A\Mod_{[m,\infty]}: (\ph)^{\wcell}. \]
Consider the following diagram of adjunction pairs:
\begin{equation*}
	\begin{tikzcd}
		A\Mod^{\wcell}_{[m,\infty]} \ar[r,hook,"",yshift={.5ex}]\ar[r,leftarrow,"(-)^{\wcell}"',yshift={-0.5ex}]\ar[d,"\tau_{\leq n}^\wcell"',xshift={-0.5ex}] & A\Mod_{[m,\infty]}\ar[d,"\tau_{\leq n}"',xshift={-0.5ex}]\\
		A\Mod^{\wcell}_{[m,n]} \ar[u,"i"',xshift={0.5ex},hook]& A\Mod_{[m,n]}\ar[u,xshift={0.5ex},hook]
	\end{tikzcd}.
\end{equation*}
We denote by $L: A\Mod^{\wcell}_{[m,\infty]}\to A\Mod_{[m,n]}$ the composite of the left adjoints and $R$ its right adjoint.
Note that the vertical adjunctions are in fact localizations, namely annihilating the $> n$ parts.
It follows that there is an induced adjunction of the localizations 
\[ \tau_{\le n}: A\Mod^{\wcell}_{[m,n]} \adj A\Mod_{[m,n]}: (\ph)^{\wcell}, \] 
where left adjoint is the composite $L\circ i$, and thus is equivalent to the functor we are required to show is fully faithful.
It thus suffices to show that $RL(E) \wequi E$ for any $E \in A\Mod^{\wcell}_{[m,n]}\subset A\Mod^{\wcell}_{[m,\infty]}$. 
Since cellularization is $t$-exact by Lemma \ref{lemm:t-exact-escalation}, we have 
$$RL(E)\simeq (\tau_{\le n} E)^{\wcell} \wequi \tau_{\le n}^{\wcell}(E^{\wcell}) \wequi \tau_{\le n}^{\wcell}(E) \wequi E. $$
This concludes the proof.
\end{proof}

\begin{corollary} \label{cor:comonadic-cellular}
Let $-\infty \le m \leq n < \infty$. 
\begin{enumerate}
\item The following diagram is a pullback square
\begin{equation*}
\begin{CD}
\1_{c=0}\Mod_{[m,n]}^{\wcell} @>>> \1_{c=0}\Mod_{[m,n]} \\
@VVV                               @VVV \\
\MGL_{c=0}\Mod_{[m,n]}^{\wcell} @>>> \MGL_{c=0}\Mod_{[m,n]}.
\end{CD}
\end{equation*}

\item The comonad $C$ on $\MGL_{c=0}\Mod$ restricts to a comonad $C^{\wcell}_{[m,n]}$ on $\MGL_{c=0}\Mod_{[m,n]}^{\wcell}$ and 
\[ C^{\wcell}_{[m,n]}\CoMod \wequi \1_{c=0}\Mod_{[m,n]}^{\wcell}. \]
\end{enumerate}
\end{corollary}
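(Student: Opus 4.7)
My strategy is to prove part~(2) by adapting the argument of Proposition~\ref{prop:comonadic-reconstruction} to the $W$-cellular setting, and then to obtain part~(1) as a formal consequence.

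For part~(2), I first need to verify that $C$ restricts to a $t$-exact comonad $C^\wcell_{[m,n]}$ on the bounded $W$-cellular subcategory. Preservation of the interval $[m,n]$ follows from $t$-exactness of smashing with $\MGL$ (Corollary~\ref{cor:pure}) together with Lemma~\ref{lemm:t-exact-escalation}. Preservation of $W$-cellularity reduces, since $C$ commutes with colimits, to checking it on the generators $\MGL_{c=0}\wedge \Th(\xi)$ with $X\in W$: here $C(\MGL_{c=0}\wedge \Th(\xi))\wequi(\MGL\wedge\MGL)\wedge\1_{c=0}\wedge \Th(\xi)$, which is $W$-cellular because $\MGL\wedge\MGL$ lies in the ordinary cellular subcategory of $\SH(k)$ and $Spec(k)\in W$.

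Next I would apply the Barr--Beck--Lurie theorem to the restricted adjunction $\1_{c=0}\Mod^\wcell_{[m,n]}\adj \MGL_{c=0}\Mod^\wcell_{[m,n]}$, which exists thanks to Lemma~\ref{lemm:t-exact-escalation}. Conservativity of the right adjoint goes through exactly as in Proposition~\ref{prop:comonadic-reconstruction}(2)(a): if $E\wedge_{\1_{c=0}}\MGL_{c=0}\wequi E\wedge\MGL\wequi 0$ then Corollary~\ref{cor:detect-infinity-connective} forces $E$ to be Chow-$\infty$-connective, and since $E$ is Chow-bounded above this gives $E\wequi 0$. Preservation of (split) totalizations transfers essentially verbatim from Proposition~\ref{prop:comonadic-reconstruction}(2)(b); the extra ingredient is that cellularization is $t$-exact (Lemma~\ref{lemm:cellularization-t-exact}) and commutes with the forgetful functors down to $\SH(k)$, so that the reduction to a finite $\Tot^{-i+1}$ of Chow-bounded-above spectra works unchanged. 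This yields the equivalence $\1_{c=0}\Mod^\wcell_{[m,n]}\wequi C^\wcell_{[m,n]}\CoMod$.

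Part~(1) is then formal. Since $C^\wcell_{[m,n]}$ is by construction the restriction of $C_{[m,n]}$ to $\MGL_{c=0}\Mod^\wcell_{[m,n]}$, its category of comodules is precisely the full subcategory of $C_{[m,n]}\CoMod$ on objects whose underlying $\MGL_{c=0}$-module is $W$-cellular. Combining this with the identification $\1_{c=0}\Mod_{[m,n]}\wequi C_{[m,n]}\CoMod$ from Proposition~\ref{prop:comonadic-reconstruction}(2) exhibits $\1_{c=0}\Mod^\wcell_{[m,n]}$ as the pullback displayed in~(1). The main obstacle I expect is the bookkeeping around split totalizations---checking that the Chow-bounded-above argument of Proposition~\ref{prop:comonadic-reconstruction}(2)(b) truly survives cellularization, which ultimately amounts to $\wedge\MGL$ commuting with the relevant finite limits inside the $W$-cellular bounded subcategory. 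Everything else is category-theoretic formal nonsense, once Lemma~\ref{lemm:t-exact-escalation} supplies $t$-exactness of all functors in the diagram.
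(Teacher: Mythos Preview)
Your proposal is correct but proceeds in the opposite order from the paper. You establish~(2) first by re-running the Barr--Beck--Lurie argument of Proposition~\ref{prop:comonadic-reconstruction} inside the $W$-cellular world, and then deduce~(1) from the formal fact that comodules over a restricted comonad are the preimage (under the forgetful functor) of the subcategory to which one restricted. The paper instead proves~(1) first and derives~(2) from it.

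The paper's proof of~(1) is shorter: given $E \in \1_{c=0}\Mod_{[m,n]}$ with $E \wedge \MGL$ cellular, one shows $E^\wcell \to E$ is an equivalence. Since $\wedge\,\MGL$ is already known to be conservative on $\1_{c=0}\Mod_{[m,n]}$ (from Proposition~\ref{prop:comonadic-reconstruction}), it suffices to check $E^\wcell \wedge \MGL \simeq E \wedge \MGL$; this holds because $\MGL$ is cellular, so $E^\wcell \wedge \MGL \simeq (E \wedge \MGL)^\wcell$, and $E \wedge \MGL$ is cellular by hypothesis. Then~(2) follows: since $\MGL_{c=0}\wedge\MGL$ is $W$-cellular, $C$ restricts, and $C^\wcell_{[m,n]}\CoMod$ is by construction the full subcategory of $C_{[m,n]}\CoMod \simeq \1_{c=0}\Mod_{[m,n]}$ on objects whose underlying $\MGL_{c=0}$-module is $W$-cellular---which is $\1_{c=0}\Mod^\wcell_{[m,n]}$ by~(1).

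The trade-off: the paper's route avoids re-verifying the two Barr--Beck--Lurie hypotheses in the cellular setting by reusing conservativity from Proposition~\ref{prop:comonadic-reconstruction} together with one clean observation ($\MGL$ is cellular, so cellularization commutes with $\wedge\,\MGL$). Your route is more self-contained and would work even without first proving the non-cellular version, but it repeats the split-totalization bookkeeping you yourself flag as the main obstacle. Both are valid; the paper's is more economical given what has already been established.
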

\begin{proof}
(1) In other words, given $E \in \1_{c=0}\Mod_{[m,n]}$, $E$ is in (the essential image of the fully faithful inclusion of) $\1_{c=0}\Mod_{[m,n]}^{\wcell}$ if and only if $E \wedge \MGL$ is in (the essential image of) $\MGL_{c=0}\Mod_{[m,n]}^{\wcell}$.
Since necessity is clear, we show sufficiency. \NB{Smashing with $\MGL$ instead of $\MGL_{c=0}$ over $\1_{c=0}$ ... doesn't make a difference but maybe mention somewhere?}
Since $\wedge \MGL$ is conservative on $\1_{c=0}\Mod_{[m,n]}$ as we proved in Proposition \ref{prop:comonadic-reconstruction}, it is enough to show that $E^{\wcell} \wedge \MGL \simeq E \wedge \MGL$.
Since $\MGL$ is cellular\NB{I have a hard time figuring out if this is true without $\MGL$ being cellular, but it is certainly obvious in that case} we have $E^{\wcell} \wedge \MGL \wequi (E \wedge \MGL)^{\wcell}$, and $E \wedge \MGL$ is cellular in $\MGL_{c=0}\Mod_{[m,n]}$ by assumption, so the result follows.

(2) $C$ restricts as claimed since $\MGL_{c=0} \wedge \MGL \in \MGL_{c=0}\Mod^{\wcell,\heart}$.
Now by construction, under the equivalence $C_{[m,n]}\CoMod \wequi \1_{c=0}\Mod_{[m,n]}$, $C^{\wcell}_{[m,n]}\CoMod$ is equivalent to the full subcategory of $\1_{c=0}\Mod_{[m,n]}$ spanned by those objects $E$ such that $E \wedge \MGL$ is cellular in $\MGL_{c=0}\Mod_{[m,n]}$; this is $\1_{c=0}\Mod_{[m,n]}^{\wcell}$ by (1).
\end{proof}

Arguing as in the proof of Proposition \ref{prop:MGLc0-mod}, we have the following cellular analogue.

\begin{proposition}
\label{prop:MGLc0-mod-cell}
	We have a canonical $t$-exact, symmetric monoidal equivalence 
	$$\MGL_{c=0}\Mod^\wcell \wequi \PSh_\SH(\PM_\MGL^\wcell (k)), $$
	where $\PM_\MGL^\wcell (k)\subset \PM_\MGL(k)$ denote the full subcategory spanned by $X\{i\}$ for $X\in W \text{ and } i\in \Z.$ 
\end{proposition}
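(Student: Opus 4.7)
The plan is to adapt the proof of Proposition \ref{prop:MGLc0-mod} almost verbatim, with $\PM_\MGL^\wcell(k)$ in place of $\PM_\MGL(k)$ and $\MGL_{c=0}\Mod^\wcell$ in place of $\MGL_{c=0}\Mod$. First I would verify the preliminary structural claims: $\PM_\MGL^\wcell(k)$ inherits from $\PM_\MGL(k)$ the structure of an essentially small symmetric monoidal additive $1$-category, since $\SH(k)^\wcell$ is closed under finite direct sums (so that $(E_1 \oplus E_2)\wedge \MGL_{c=0} \wequi (E_1 \wedge \MGL_{c=0}) \oplus (E_2 \wedge \MGL_{c=0})$ stays in the subcategory) and under tensor products (via $\Th(\xi_1) \wedge \Th(\xi_2) \wequi \Th(\xi_1 \boxplus \xi_2)$ combined with the closure of $W$ under finite products). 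A parallel check shows that $\MGL_{c=0}\Mod^\wcell$ is presentably symmetric monoidal, with tensor product inherited from $\MGL_{c=0}\Mod$, and unit $\MGL_{c=0} \wequi \1 \wedge \MGL_{c=0}$ cellular.

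Next, the universal property of Day convolution on $\PSh(\PM_\MGL^\wcell(k))$, descending through localization and stabilization exactly as in \cite[Remark 2.10]{aoki2020weight}, supplies a unique cocontinuous symmetric monoidal functor
\[ \Phi: \PSh_\SH(\PM_\MGL^\wcell(k)) \to \MGL_{c=0}\Mod^\wcell \]
extending the inclusion of $\PM_\MGL^\wcell(k)$. To see $\Phi$ is an equivalence, I would verify that it is fully faithful on the compact generating set $\PM_\MGL^\wcell(k)$ — mapping spectra on the source are given by Yoneda, and on the target they are inherited from the fully faithful chain $\PM_\MGL^\wcell(k) \hookrightarrow \PM_\MGL(k) \hookrightarrow \MGL_{c=0}\Mod$ computed in \S\ref{subsec:MGL-mod} — and that the essential image generates the target under colimits, which holds because every cellular generator $\MGL_{c=0} \wedge \Th(\xi)$ (for $\xi \in K(X)$, $X \in W$) lies in $\PM_\MGL^\wcell(k)$ by construction.

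Finally, $t$-exactness follows from Lemma \ref{lemm:spectra-presheaves-t-str}: the non-negative part of $\PSh_\SH(\PM_\MGL^\wcell(k))$ is generated under colimits and extensions by representables, which $\Phi$ sends to the defining generators of the non-negative part of the cellular Chow $t$-structure on $\MGL_{c=0}\Mod^\wcell$.

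The main point requiring attention is the essential surjectivity of $\Phi$ — namely verifying that after restricting from $\PM_\MGL(k)$ to its $W$-cellular analogue, one still has enough objects to generate the target. This is immediate from the definitions, since the required generators $\Th(\xi) \wedge \MGL_{c=0}$ (for $\xi \in K(X)$, $X \in W$) lie both in $\PM_\MGL(k)$ (as $X$ is smooth proper) and in the image of $\SH(k)^\wcell$ under smashing with $\MGL_{c=0}$. The remainder of the argument is purely formal and identical to the non-cellular case.
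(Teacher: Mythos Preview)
Your proposal is correct and follows precisely the approach the paper indicates: the paper's proof reads in its entirety ``The proof is similar to that of Proposition \ref{prop:MGLc0-mod}, by replacing the whole categories with the cellular categories,'' and you have carried out exactly that replacement with appropriate care about the auxiliary verifications (closure properties of $\PM_\MGL^\wcell(k)$, generation of $\MGL_{c=0}\Mod^\wcell$, and $t$-exactness via Lemma \ref{lemm:spectra-presheaves-t-str}).
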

\begin{proof}
	The proof is similar to that of Proposition \ref{prop:MGLc0-mod}, by replacing the whole categories with the cellular categories.
\end{proof}

\subsubsection{Field extensions}

%\subsection{cellular objects}
%Let $I \subset \SH(k)_{c \ge 0}$ as in \S\ref{subsec:MGL-hom}.
%Denote by $\SH(k)^{I\cell} \subset \SH(k)$ the subcategory generated under colimits and desuspensions by $I$.
%(If $I$ is generated by $S^{2n,n}$ for $n \in \Z$, then $\SH(k)^{I\cell} = \SH(k)^\cell$.)
%$I$ defines a $t$-structure on $\SH(k)^{I\cell}$, which we still call the Chow $t$-structure.

%\begin{proposition} \label{prop:Icell-recons}
%The analogs of Lemmas \ref{lemm:cellularization-t-exact} and \ref{lemm:t-exact-escalation} and Corollaries \ref{cor:cellular-comonadic-technical} and \ref{cor:comonadic-cellular} hold, with $I\cell$ in place of $\cell$.
%\end{proposition}
%\begin{proof}
%The proofs go through with only minor modification.\todo{really? need dualizable generation assumption somewhere?}
%\end{proof}

%\begin{example}
%Let $I$ be generated by objects of the form $\Sigma^{2n,n} \Sigma^\infty_+ Spec(l)$, where $l/k$ is a finite separable extension.
%This satisfies the assumptions of \S\ref{subsec:MGL-hom}, and Proposition \ref{prop:Icell-recons} applies.
%We write \[ \SH(k)^{I\cell} =: \SH(k)^\AT, \] and so on.
%\end{example}

We discuss the reconstruction theorems in the cellular cases for different choices of $W$.

First, set $W$ to be $\{Spec (l)\vert  ~~ l/k \textit{ is a finite separable extension} \}$.
Let $G = Gal(k)$ be the absolute Galois group.
Recall that the stable category of genuine $G$-spectra $\SH(BG)$ \cite[Example 9.12]{bachmann-norms} admits a $t$-structure with heart the category of $G$-Mackey functors \cite[Proposition 9.11]{bachmann-norms} \[ \PSh_\Ab(\Span(\Fin_G)) =: \Mack_G. \]
Here $\Fin_G$ denotes the category of finite discrete $G$-sets.
The canonical symmetric monoidal cocontinuous functor $\SH \to \SH(BG)$ induces the ``constant Mackey functor'' \[ \Ab \to \Mack_G, A \mapsto \ul{A}. \]
This is symmetric monoidal and so preserves rings, Hopf algebroids etc.
In particular, from the usual Hopf algebroid $(\MU_{2*}, \MU_{2*}\MU)$ we obtain the constant Hopf algebroid in (graded) Mackey functors $(\ul{\MU_{2*}}, \ul{\MU_{2*}\MU})$.

\begin{corollary}
\label{cor:wcellular_heart_absolute}
Set $W$ to be $\{Spec (l)\vert  ~~ l/k \textit{ is a finite separable extension} \}$ and let $G = Gal(k)$ be the absolute Galois group.
We have 
\begin{align*} 
\SH(k)^{\AT,\heart} & \wequi \ul{\MU_{2*}\MU}\CoMod,\\ 
\1_{c=0}\Mod^\AT & \wequi \Stable(\ul{\MU_{2*}\MU}), \\ 
\MGL_{c=0}\Mod^{\AT, \heart} & \wequi \ul{\MU_{2*}}\Mod. 
\end{align*}

Here $\CoMod$, $\Stable$ and $\Mod$ are performed relative to $\Mack_G$.
\end{corollary}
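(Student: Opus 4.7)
The plan is to combine the cellular reconstruction package (Corollary \ref{cor:comonadic-cellular}, Proposition \ref{prop:comonadic-reconstruction}, Proposition \ref{prop:MGLc0-mod-cell}) with a Morita-style identification of the $W$-cellular category of pure $\MGL$-motives $\PM_\MGL^\wcell(k)$ in the Galois setting. The third equivalence is the primary input; once it is in hand, the first two follow formally from the comonadic machinery.

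First I would tackle $\MGL_{c=0}\Mod^{\wcell, \heart}$. By the cellular analogue of Proposition \ref{prop:MGLc0-mod-cell} and Lemma \ref{lemm:spectra-presheaves-t-str}, there is a $t$-exact symmetric monoidal equivalence $\MGL_{c=0}\Mod^{\wcell, \heart} \wequi \PSh_\Ab(\PM_\MGL^\wcell(k))$. It therefore suffices to identify the additive $1$-category $\PM_\MGL^\wcell(k)$ with the full subcategory of $\ul{\MU_{2*}}\Mod_{\Mack_G}$ on the free modules on finite continuous $G$-sets, since then $\PSh_\Ab$ of the latter is $\ul{\MU_{2*}}\Mod_{\Mack_G}$. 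The objects of $W$ are precisely $Spec(l)$ for $l/k$ finite separable, which correspond bijectively to transitive $G$-sets $G/U$ with $U \le G$ open. Using
\[ Spec(l)\times_k Spec(l') \wequi \coprod_{g \in U \backslash G / U'} Spec(m_g) \]
with $m_g$ the fixed field of $U \cap gU'g^{-1}$, the mapping-set formula in $\PM_\MGL^\wcell(k)$ (an analogue of the one in \S\ref{subsec:MGL-mod}) decomposes as a direct sum over double cosets of $\MGL^{2*,*}(Spec(m_g))$. Now $\MGL^{2*,*}(Spec(m)) \wequi \MU_{2*}$ for any $m/k$ finite separable (compatibility of $\MGL_{*,*}$ with finite étale base change), and under these identifications composition is given by pullback-pushforward along the Cartesian diagrams of $G$-sets. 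This is exactly the composition in the Burnside $2$-category $\Span(\Fin_G)$ tensored with $\MU_{2*}$, proving the desired equivalence of additive categories.

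Next, for the algebra categories: Corollary \ref{cor:comonadic-cellular} identifies $\1_{c=0}\Mod^{\wcell, \heart}$ with $(C^\wcell)^\heart\CoMod$ acting on $\MGL_{c=0}\Mod^{\wcell, \heart} \wequi \ul{\MU_{2*}}\Mod_{\Mack_G}$. By the cellular version of Proposition \ref{prop:C-heart-explicit}, the comonad $C^\wcell$ is given on sections by tensoring with $\MU_{2*}\MU$ over $\MU_{2*}$ with counit and coproduct induced from the Hopf algebroid structure on $(\MU_{2*}, \MU_{2*}\MU)$. Since the constant-Mackey-functor functor $\Ab \to \Mack_G$ is symmetric monoidal and colimit preserving, this comonad on $\ul{\MU_{2*}}\Mod_{\Mack_G}$ is precisely the one whose coalgebras are $\ul{\MU_{2*}\MU}$-comodules in $\Mack_G$. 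The identification $\SH(k)^{\wcell, \heart} \wequi \1_{c=0}\Mod^{\wcell, \heart}$ (a cellular version of Corollary \ref{corr:heart-abstract}, via \cite[Lemma 27]{bachmann-tambara}) then gives the first equivalence, and the cellular version of Proposition \ref{prop:comonadic-reconstruction}(3) applied to compact generators in $\SH(k)^{\wcell, \heart}$ gives $\1_{c=0}\Mod^\wcell \wequi \Stable(\ul{\MU_{2*}\MU})$.

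The main obstacle is Step 1, the explicit identification of $\PM_\MGL^\wcell(k)$ as the $\ul{\MU_{2*}}$-linearized Burnside category of finite continuous $G$-sets. Concretely one must verify that (a) $\MGL^{2*,*}(Spec(l)) \wequi \MU_{2*}$ for $l/k$ finite separable (after inverting $e$), (b) the transfer maps along finite étale covers arising from duality and the six-functor formalism in $\SH$ match the Mackey transfers in $\Span(\Fin_G)$, and (c) composition of $\MGL$-correspondences agrees with span composition. All three points are essentially formal consequences of étale base change for $\MGL$ and the standard identification $\SH(BG) \supset \PSh_\Ab(\Span(\Fin_G))$; writing them out carefully is where the substance of the argument lies.
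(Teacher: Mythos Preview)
Your approach is correct and parallels the paper's overall architecture---establish the third equivalence $\MGL_{c=0}\Mod^{\wcell,\heart}\wequi\ul{\MU_{2*}}\Mod$ first, then derive the other two via the comonadic package---but the paper reaches the third equivalence by a different mechanism. Rather than computing $\PM_\MGL^\wcell(k)$ directly via double coset decompositions and matching composition with span composition by hand, the paper imports the symmetric monoidal cocontinuous functor $c\colon\SH(BG)\to\SH(k)$ of \cite[Proposition~10.6]{bachmann-norms}. This induces a functor $c^\Z\colon\Mack_G^\Z\to\MGL_{c=0}\Mod^{\wcell,\heart}$, and one computes that the underlying \emph{coefficient system} of $c_\Z^*(\MGL_{c=0})$ is the constant one $\ul{\MU_{2*}}$ (a straightforward calculation using \S\ref{subsec:MGL-mod}). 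The point is then that constant coefficient systems are {\'e}tale sheaves of commutative monoids, so their transfers are \emph{uniquely determined} by the restriction maps \cite[Corollary~C.13]{bachmann-norms}; this forces $c_\Z^*(\MGL_{c=0})\wequi\ul{\MU_{2*}}$ as a Mackey functor without any explicit check of transfer compatibility. Since both sides are presheaf categories and $c^\Z$ hits all generators, the equivalence of module categories follows.

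This neatly sidesteps your steps (b) and (c), which you rightly flag as where the work in your route lies. Your argument has the advantage of being self-contained and not invoking the comparison functor from genuine $G$-spectra, but it requires carefully verifying that $\MGL$-correspondence composition along finite {\'e}tale schemes agrees with span composition in $\Span(\Fin_G)$ tensored with $\MU_{2*}$. The paper's argument trades that verification for a one-line uniqueness principle about transfers on {\'e}tale sheaves of commutative monoids.
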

\begin{proof}
Recall the cocontinuous symmetric monoidal functor $c: \SH(BG) \to \SH(k)$ \cite[Proposition 10.6]{bachmann-norms}.
For $X \in \Fin_G$ it satisfies $c(\Sigma^\infty_+ X) \wequi \Sigma^\infty_+ cX$ for some $cX$ finite {\'e}tale over $k$, and hence $c(\SH(BG)_{\ge 0}) \subset \SH(k)_{c \ge 0}$.
There is thus an induced functor \[ c: \Mack_G \wequi \SH(BG)^\heartsuit \to \MGL_{c=0}\Mod^{\AT, \heart}. \]
Let $\Mack_G^\Z$ denote the category of $\Z$-graded Mackey functors.
The invertible object $\Sigma^{2,1}\MGL_{c=0} \in \MGL_{c=0}\Mod^{\AT,\heart}$ determines a symmetric monoidal functor $c^\Z: \Mack_G^\Z \to \MGL_{c=0}\Mod^{\AT, \heart}$, which admits a right adjoint $c^*_\Z$.

We claim that $c^*_\Z(\MGL_{c=0}) \wequi \ul{\MU_{2*}}$ and $\MGL_{c=0}\Mod^{\AT, \heart} \wequi \ul{\MU_{2*}}\Mod$.

Since both sides are of the form $\PSh_\Ab$ and the functor is surjective on strongly dualizable generators, the second claim follows from the first.
Let $X$ be the finite $G$-set $G/H$ for some subgroup $H\subset G$. We compute using \S\ref{subsec:MGL-mod} that 
\begin{align*}
	c^*_\Z(\MGL_{c=0})(X)_i 
	&\wequi \Map_{\MGL_{c=0}\Mod}(c(X)(i), \MGL_{c=0}) \\
	&\wequi \MGL_{2i,i}(Spec(l^H))\simeq \MU_{2i}.
\end{align*}
This is indeed the formula expected for the sections of a constant Mackey functor.
In other words, $c^*_\Z(\MGL_{c=0})(-)_*$ and $\ul{\MU_{2*}} \in \Mack_G = \PSh_\Ab(\Span(\Fin_G))$ have canonically identical sections.
What we need to check is that given $\alpha: X \to Y \in \Span(\Fin_G)$, the pullback maps $\alpha^*$ of the two presheaves agree.
Any map in $\Span(\Fin_G)$ is a composite of a ``usual'' map $f: X \to Y \in \Fin_G$, and a ``reverse'' map $g^t: Y \to Z \in \Span(\Fin_G)$ corresponding to $g: Z \to Y \in \Fin_G$.
It suffices to separately compare pullbacks along $f$ (restriction maps) and pullbacks along $g^t$ (transfers).
Our formula for $c^*_\Z(\MGL_{c=0})(X)_*$ is actually functorial in $X \in \Fin_G$, i.e. the restriction maps are as expected.
It remains to compare the transfers.
We employ the following trick.
The category $\Fin_G$ can be made into a site, where the coverings are those maps of finite $G$-sets which are surjective (ignoring the $G$-actions); this is often called the \emph{étale topology}.
Denote by $\Shv_\Ab(\Span(\Fin_G)) \subset \PSh_\Ab(\Span(\Fin_G))$ the full subcategory of those Mackey functors whose underlying presheaves on $\Fin_G$ are étale sheaves, and by $\Shv_\Ab(\Fin_G) \subset \PSh_\Ab(\Fin_G)$ the category of étale sheaves.
Note that the underlying presheaf $\ul{\MU_{2*}} \in \PSh_\Ab(\Fin_G)$ is a sheaf in the étale topology.
By \cite[Proposition C.11, Corollary C.13]{bachmann-norms}, the forgetful functor $\Shv_\Ab(\Span(\Fin_G)) \to \Shv_\Ab(\Fin_G)$ is an equivalence. In other words, the transfers are uniquely determined by the restriction maps.

\NB{We can give a slightly slicker proof as follows: let $\SH(BG)^{\mathbb{S}}$ denote the category of sphere-graded genuine $G$-spectra. Then there is a functor $c^\Z:\SH(BG)^{\mathbb{S}} \to \SH(k)$ and $\MGL_{c=0}\Mod \wequi c_\Z^*(\MGL_{c=0})\Mod$ for formal reasons. This spectrum is identified with $\ul{\MU_{2*}}$ as before. I figured it was best not to introduce the additional complication of sphere-graded objects.}

We have thus established the third equivalence.
The other two follow formally from Proposition \ref{cor:comonadic-cellular} and Proposition \ref{prop:C-heart-explicit}, i.e. the identification of $\SH(k)^{\AT, \heart}$ with $C^\AT\CoMod$, and the explicit description of $C$, translated into the category $\MGL_{c=0}\Mod^{\AT, \heart} \wequi \ul{\MU_{2*}}\Mod$ (and similarly for $\1_{c=0}\Mod^\AT$).
\end{proof}

We have similar results for another two choices of $W$:

\begin{corollary}
\label{cor:wcellular_heart_fin_Galois_ext}
Let $l/k$ be a finite Galois extension with Galois group $G$. Let $W$ be $\{Spec (l') ~~\vert ~~ l'$ is a subextension of $l/k \}$. 
We have 
\begin{align*} 
\SH(k)^{\AT,\heart} & \wequi \ul{\MU_{2*}\MU}\CoMod,\\
\1_{c=0}\Mod^\AT & \wequi \Stable(\ul{\MU_{2*}\MU}), \\ 
 \MGL_{c=0}\Mod^{\AT, \heart} & \wequi \ul{\MU_{2*}}\Mod. 
\end{align*}
Again, everything is relative to $\Mack_G$, but this time for the \emph{finite} group $G$.
\end{corollary}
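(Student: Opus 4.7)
The plan is to parallel the proof of Corollary \ref{cor:wcellular_heart_absolute} essentially verbatim, with the finite group $G = \mathrm{Gal}(l/k)$ in place of the absolute Galois group and subextensions of $l/k$ in place of arbitrary finite separable extensions. The same three equivalences will then follow from the same formal skeleton: identify the $\MGL_{c=0}$-module heart as a constant-Mackey-functor module category, and bootstrap to the other two statements via Corollary \ref{cor:comonadic-cellular} and Proposition \ref{prop:C-heart-explicit}.

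First I would exhibit a symmetric monoidal colimit-preserving functor $c: \SH(BG) \to \SH(k)$ (for our finite $G$) sending an orbit $G/H$ to $\Sigma^\infty_+ \mathrm{Spec}(l^H)$. In the finite-group situation this can be obtained, for instance, by restricting the pro-{\'e}tale construction \cite[Proposition 10.6]{bachmann-norms} along the canonical surjection $\mathrm{Gal}(k) \twoheadrightarrow G$, or by the Galois descent equivalence $\SH(BG) \wequi \SH(k)^{hG}$ applied to the $G$-Galois cover $\mathrm{Spec}(l) \to \mathrm{Spec}(k)$. Either way, the image of $\Fin_G$ consists precisely of finite coproducts of spectra in $W$, so $c$ factors through $\SH(k)^\AT$ and satisfies $c(\SH(BG)_{\ge 0}) \subset \SH(k)^\AT_{c \ge 0}$.

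Tensoring with the invertible object $\Sigma^{2,1}\MGL_{c=0} \in \MGL_{c=0}\Mod^{\AT,\heart}$ and passing to hearts then produces a symmetric monoidal functor $c^\Z: \Mack_G^\Z \to \MGL_{c=0}\Mod^{\AT,\heart}$ with right adjoint $c^*_\Z$. The key computation is $c^*_\Z(\MGL_{c=0}) \wequi \ul{\MU_{2*}}$. For the underlying coefficient system this is the same formula as in the absolute case,
\[ c^*_\Z(\MGL_{c=0})(X)_i \wequi [c^\Z(X(i)), \MGL_{c=0}] \wequi \MU_{2i}^{\pi_0 X}, \]
functorial in $X \in \Fin_G$. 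The transfers are then forced by the fact that constant coefficient systems are {\'e}tale sheaves of commutative monoids, whose transfers are determined by the restrictions \cite[Corollary C.13]{bachmann-norms}. Since $c^\Z$ is symmetric monoidal and surjective on generators between presheaf-of-abelian-groups categories, this identifies $\MGL_{c=0}\Mod^{\AT,\heart} \wequi \ul{\MU_{2*}}\Mod$.

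From there the remaining two equivalences are formal: Corollary \ref{cor:comonadic-cellular} gives $\SH(k)^{\AT,\heart} \wequi \1_{c=0}\Mod^{\AT,\heart} \wequi C^\AT\CoMod$, and the explicit description of $C$ in Proposition \ref{prop:C-heart-explicit}, transported across the identification of the base as $\ul{\MU_{2*}}\Mod$, yields the constant Hopf algebroid $\ul{\MU_{2*}\MU}$ acting by the usual comodule structure; the $\Stable$ version then follows by the same compact-generation argument as in Proposition \ref{prop:comonadic-reconstruction}(3). The main obstacle I anticipate is the construction and verification of $c$ in the finite-Galois setting --- specifically, checking that it is symmetric monoidal and that the Mackey-functor structure induced on $c^*_\Z(\MGL_{c=0})$ really is the constant one --- but once this is set up, every subsequent step is a direct transcription of the absolute case.
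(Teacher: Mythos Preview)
Your proposal is correct and follows essentially the same approach as the paper: the paper's proof simply invokes \cite[Proposition 10.6]{bachmann-norms} to obtain the functor $c_{l/k}: \SH(BG) \to \SH(k)$ for the finite group $G$ and then says ``the results follow from the same arguments as in the proof of Corollary \ref{cor:wcellular_heart_absolute}'', which is exactly the argument you have written out in detail.
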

\begin{proof}
	Similarly, by \cite[Proposition 10.6]{bachmann-norms}, we have a cocontinuous symmetric monoidal functor $c_{l/k}: \SH(BG) \to \SH(k)$. The results follow from the same arguments as in the proof of Corollary \ref{cor:wcellular_heart_absolute}.
\end{proof}

As mentioned in Remark \ref{rmk:wcellular_special_case}, we can also obtain results for the ordinary cellular category by taking $W$ to be $\{Spec (k)\}$.

\begin{corollary}
\label{cor:wcellular_heart_ordinary_cellular}

We have canonical, symmetric monoidal, $t$-exact equivalences 
\begin{align*} 
\SH(k)^{\cell,\heart} & \wequi \MU_{2*}\MU\CoMod, \\ 
\1_{c=0}\Mod^\cell & \wequi \Stable(\MU_{2*}\MU),\\ 
\MGL_{c=0}\Mod^{\cell,\heart} & \wequi \MU_{2*}\Mod. 
\end{align*}

%
%We have \begin{gather*} \SH(k)^{\AT,\heart} \wequi \ul{\MU_{2*}\MU}\CoMod, \quad \1_{c=0}\Mod^\AT \wequi \Stable(\ul{\MU_{2*}\MU}), \\ \text{and}\quad \MGL_{c=0}\Mod^{\AT, \heart} \wequi \ul{\MU_{2*}}\Mod. \end{gather*}
\end{corollary}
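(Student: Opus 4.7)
The plan is to deduce this corollary as the direct specialization of Corollary~\ref{cor:wcellular_heart_fin_Galois_ext} to the trivial field extension $l = k$, for which the Galois group $G$ is trivial and the associated set $W$ collapses to $\{Spec(k)\}$.

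First I would verify that this choice of $W$ makes $\SH(k)^\wcell$ coincide with the ordinary cellular subcategory $\SH(k)^\cell$ of \S\ref{subsec:cellular-chow-t-str}. Since $K(Spec(k))$ is the free abelian group generated by the class of the trivial line bundle $\scr O$, the generating Thom spectra $\Th(\xi)$ for $\xi \in K(Spec(k))$ are precisely $\Th(n\scr O) \wequi S^{2n,n}$, $n \in \Z$. Closing under colimits and desuspensions then recovers all bigraded spheres $S^{p,q}$, so $\SH(k)^\wcell = \SH(k)^\cell$, and the same identification passes to the induced cellular subcategories of $\1_{c=0}\Mod$ and $\MGL_{c=0}\Mod$ by the discussion at the start of \S\ref{subsec:cellular-objects}.

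Next I would identify the Mackey-functor targets in the case $G = 1$. For the trivial group, $\Fin_G = \Fin$, and a product-preserving abelian presheaf on $\Span(\Fin)$ is uniquely determined by its value on the one-point set, so there is a symmetric monoidal equivalence $\Mack_G \wequi \Ab$. Under this identification the constant Mackey functor construction becomes the identity functor, so the constant Hopf algebroid $(\ul{\MU_{2*}}, \ul{\MU_{2*}\MU})$ reduces to $(\MU_{2*}, \MU_{2*}\MU)$ and the relative notions $\CoMod$, $\Stable$, $\Mod$ relative to $\Mack_G$ become their ordinary counterparts in $\Ab$.

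Putting these two observations together, Corollary~\ref{cor:wcellular_heart_fin_Galois_ext} yields the three stated equivalences, and the canonical, symmetric monoidal, and $t$-exact properties are inherited directly from that corollary. The only thing to double-check is the elementary fact that the equivalence $\Mack_{\{1\}} \wequi \Ab$ is compatible with the symmetric monoidal structure and the grading used to produce the Hopf algebroid on the right-hand side; this presents no substantive obstacle, so I expect no real difficulty beyond the bookkeeping above.
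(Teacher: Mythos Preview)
Your proposal is correct and is exactly the paper's approach: the paper's proof is the single line ``It follows from Corollary~\ref{cor:wcellular_heart_fin_Galois_ext} by taking the trivial extension,'' and you have simply spelled out the identifications (that $W=\{Spec(k)\}$ recovers the ordinary cellular category, as noted in Remark~\ref{rmk:wcellular_special_case}, and that $\Mack_{\{1\}}\wequi\Ab$) which the paper leaves implicit.
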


\begin{proof}
	It follows from Corollary \ref{cor:wcellular_heart_fin_Galois_ext} by taking the trivial extension.
\end{proof}

\section{Towards Computations of Motivic Stable Stems}
In this section, we discuss a strategy for computing differentials in the motivic Adams spectral sequence based on $\textup{H}\Z/p$ for a general motivic spectrum $X$, and in particular consequentially computing motivic stable stems. This strategy has worked very well for $\1$ over $\C$ in the work \cite{GWX, IWX, IWX2}. Since the main structural theorems in this paper work for any field $k$, whose exponential characteristic is different from the prime $p$, we can now generalize the strategy over $\C$ to the field $k$. We will first discuss the general strategy over $k$, and then take a closer look at the cases $k = \C, \mathbb{R}$, and finite fields.

%In this section, we discuss applications of Corollary~\ref{cor:cellular-reconstruction} to computations. For simplicity, we work at the completion of a fixed prime $p$. Let $k$ be a field  Let $\cs$ be the $\textup{H}\mathbb{F}_p$-completed sphere spectrum.\tom{Do we even know the $\MGL$-homology of this guy? Better complete after taking the Postnikov--Whitehead tower?} Since $\MGL$ splits as a wedge of suspensions of $\BPGL$'s at a prime $p$, we have the following equivalence given by taking $\BPGL$-homology.\tom{Is there an implicit completion happening in $\BP_{2*}\BP$?}
%$$ \cs_{c=0}\Mod^\cell \wequi \Stable(\BP_{2*}\BP), $$
%where the right hand side is the unbounded derived category in the sense of Hovey \cite{Hovey} of $p$-completed $\BP_*\BP$-comodules that are concentrated in even degrees.\tom{How can this be right? LHS is not all $p$-complete...?}

\subsection{Algebraicity of the motivic Adams spectral sequence in the Chow heart}
The crucial point of the strategy for computing Adams differentials over $\C$ is the following theorem \cite[Theorem~1.3]{GWX}, describing the algebraicity of the motivic Adams spectral sequence of $\1^\comp_p/\tau$. 

\begin{theorem}[Gheorghe--Wang--Xu] \label{GWX iso ss}
For each prime p, there is an isomorphism of spectral sequences, between the $\textup{H}\Z/p$-based motivic Adams spectral sequence for $\1^\comp_p/\tau$, and the algebraic Novikov spectral sequence for $\BP_{2*}$.  
\end{theorem}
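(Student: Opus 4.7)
The plan is to recover Theorem \ref{GWX iso ss} as a special case of the general algebraicity result, Theorem \ref{thm:intro-iso-of-ss}, by specialising to $k = \C$ and $F = \1_{c=0}$.

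First, I would identify $\1_p^\comp/\tau$ within our framework. By Remark \ref{rmk:ring-justn}, the canonical map $\1_p^\comp \to (\1_{c=0})_p^\comp$ factors through $\1_p^\comp/\tau$ and induces an isomorphism on bigraded homotopy groups; thus $\1_p^\comp/\tau$ is the $p$-complete cellularization of $\1_{c=0}$ over $\C$. Under the equivalence of Theorem \ref{thm:intro-cell-heart}(2), the corresponding object in $\Stable(\MU_{2*}\MU)_p^\comp$ is the unit $\MU_{2*}$ equipped with the trivial comodule structure.

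Second, I would compute the algebraic input: by Corollary \ref{cor:chow-MGL-homology} applied with $E = \1$ and $i = 0$, we have $\MGL_{2*,*}(\1_{c=0}) \cong \MGL_{2*,*}(\1) \cong \MU_{2*}$ as graded $\MU_{2*}\MU$-comodules. Applying Theorem \ref{thm:intro-iso-of-ss} with $F = \1_{c=0}$ then yields an isomorphism, respecting all higher and multiplicative structure, between the trigraded $\textup{H}\Z/p$-based motivic Adams spectral sequence for $\1_{c=0}$ and the trigraded algebraic Novikov spectral sequence based on $H = \MU_{2*}\MU/(p, a_1, a_2, \ldots)$ for $M = \MU_{2*}$. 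Transporting this isomorphism through cellularization and $p$-completion, which are (lax) symmetric monoidal, produces an isomorphism with source the motivic Adams spectral sequence for $\1_p^\comp/\tau$.

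Third, I would translate the target from its $\MU$-theoretic to its $\BP$-theoretic form. The classical Quillen splitting $\MU_{(p)} \simeq \bigvee \BP_{(p)}$ induces a corresponding decomposition of the $p$-local algebraic Novikov spectral sequence; the summand indexed by the unit is precisely the algebraic Novikov spectral sequence for $\BP_{2*}$ based on $\BP_{2*}\BP/I$ with $I = (p, v_1, v_2, \ldots)$, matching the statement of Theorem \ref{GWX iso ss}. The main obstacle is the bookkeeping required to match abutments: Isaksen's description $\pi_{*,*}\1_p^\comp/\tau \cong \Ext^{*,*}_{\BP_{2*}\BP}(\BP_{2*}, \BP_{2*})$ must be reconciled with the intrinsic $\MU$-theoretic description $\pi_{*,*}\1_{c=0} \cong \Ext^{*,*}_{\MU_{2*}\MU}(\MU_{2*}, \MU_{2*})$ furnished by Theorem \ref{thm:intro-main}, and the correctness of this comparison ultimately comes down to the Quillen splitting being compatible with the relevant Hopf algebroid structures. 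Preservation of multiplicative and higher-order structure is automatic because every functorial identification invoked (cellularization, $p$-completion, the equivalence $\1_{c=0}\Mod^{\cell} \simeq \Stable(\MU_{2*}\MU)$, and the Quillen splitting) is symmetric monoidal or lax symmetric monoidal.
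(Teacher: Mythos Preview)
The paper does not prove this statement: it is quoted as Theorem~1.3 of \cite{GWX} and serves as motivation for the generalization in Theorems~\ref{thm: iso of ss_cellular} and~\ref{thm: iso of ss}. So there is no ``paper's own proof'' to compare against; your proposal is rather an attempt to \emph{rederive} the cited result from the paper's general machinery, which is exactly the direction the paper itself advertises.

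Your derivation is essentially correct but has one genuine gap. The phrase ``algebraic Novikov spectral sequence for $\BP_{2*}$'' in the original Gheorghe--Wang--Xu statement refers to the \emph{classical} spectral sequence of Novikov and Miller, obtained by filtering the cobar complex by powers of the augmentation ideal $I=(p,v_1,v_2,\dots)$. What Theorem~\ref{thm:intro-iso-of-ss} (equivalently Corollary~\ref{algNSS BP}) produces is the spectral sequence of Definition~\ref{def: iso of ss}, namely the Adams-type spectral sequence in $\Stable(\BP_{2*}\BP)$ based on the monoid $\BP_{2*}\BP/I$. These are not tautologically the same object, and the paper is explicit that their identification in the case $M=\BP_{2*}$ is itself a nontrivial input, proved in \S9 of \cite{GWX} (see the discussion after Corollary~\ref{algNSS BP} and Proposition~\ref{prop levin index}, which isolates the Levin-index-$1$ condition needed in general). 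Your third paragraph handles the $\MU\to\BP$ passage but never addresses this second identification, so as written your argument recovers only the $H$-based version of the target, not the classical one appearing in the statement.

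A minor point: the $\MU\to\BP$ translation is cleaner via the equivalence of abelian categories between $p$-local $\MU_{2*}\MU$-comodules and $\BP_{2*}\BP$-comodules (as the paper does just before Corollary~\ref{algNSS BP}) than via the Quillen splitting, though your route also works.
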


Since the algebraic Novikov spectral sequence for $\BP_{2*}$ can be completely computed by a computer program \cite{IWX2} within any reasonable range, Theorem~\ref{GWX iso ss} provides a source of nontrivial differentials in the motivic Adams spectral sequence for $\1^\comp_p/\tau$. We can then pullback these differentials along the map of spectral sequences induced by the unit map $\1^\comp_p \rightarrow \1^\comp_p/\tau$:
$$\mathbf{motASS}(\1^\comp_p) \longrightarrow \mathbf{motASS}(\1^\comp_p/\tau),$$
and push forward these differentials along the map of spectral sequences induced by the quotient map $\1^\comp_p/\tau \rightarrow \Sigma^{1, -1} \1^\comp_p$:
$$\mathbf{motASS}(\1^\comp_p/\tau) \longrightarrow \mathbf{motASS}(\Sigma^{1, -1} \1^\comp_p).$$

 As a consequence, we obtain differentials in the motivic Adams spectral sequence for $\1^\comp_p$.

Over an arbitrary field $k$, the substitute for $\1^\comp_p/\tau$ over $\C$ are the objects $\1_{c=n}$ for $n\geq 0$. They all live in a suspension of the Chow heart.  Theorem~\ref{thm: iso of ss} below is a generalization of Theorem~\ref{GWX iso ss}.

In general, let $F \in \SH(k)^\heart$. We can attempt to compute $\pi_{*,*} F$ by running the $\textup{H}\Z/p$-based motivic Adams spectral sequence.
This is the trigraded spectral sequence obtained from the graded cosimplicial spectrum \[ \map(S^{0,*}, F \wedge \CB(\textup{H}\Z/p)) \wequi \map(S^{0,*}, F \wedge_{\1_{c=0}} \CB_{\1_{c=0}}(\1_{c=0} \wedge \textup{H}\Z/p)). \]
The Hopkins--Morel isomorphism (with the exponential characteristic $e$ implicitly inverted) \cite{hoyois-algebraic-cobordism} \[ \textup{H}\Z \wequi \MGL/(a_1, a_2, \dots) \] implies that \[ \1_{c=0} \wedge \textup{H}\Z/p \wequi \1_{c=0} \wedge \MGL/(p, a_1, a_2, \dots) \] is cellular in $\1_{c=0}\Mod$, where $p \neq e$.
Under the equivalence with $\Stable(\MU_{2*}\MU)$ (see Corollary \ref{cor:wcellular_heart_ordinary_cellular}), $\1_{c=0} \wedge \textup{H}\Z/p$ thus corresponds to $\MU_{2*}\MU/(p, a_1, a_2, \dots)$.

We give a more general definition of the algebraic Novikov spectral sequence in the category of $\Stable(\MU_{2*}\MU)$.

\begin{definition}\label{def: iso of ss}
	Let $X$ be an object in $\Stable(\MU_{2*}\MU)$ and let $H\in \Stable(\MU_{2*}\MU) $ be a commutative monoid. Define the \emph{algebraic Novikov spectral sequence} based on $H$ for $X$ to be the spectral sequence associated to the cosimplicial object $X \wedge \CB(H)$ in $\Stable(\MU_{2*}\MU)$, of the form
$$E_1^{s,i,t}=\pi_s\map_{\Stable(\MU_{2*}\MU)}(\Sigma^{t} \MU_{2*}, X \wedge \mathrm{CB}^i(H))\implies  \pi_{s-i}\map_{\Stable(\MU_{2*}\MU)}(\Sigma^{t} \MU_{2*}, X),$$
with differentials $d_r: E_r^{s,i,t} \rightarrow E_r^{s+r-1,i+r,t}$. Here $\Sigma^{t} \MU_{2*}$ denote $\MU_{2*}$ with the internal degree of every element shifted positively by $t$.
A similar definition of the algebraic Novikov spectral sequence applies to the category of $\Stable(\BP_{2*}\BP)$.
\end{definition}

\begin{theorem} 
\label{thm: iso of ss_cellular}
Let $F \in \SH(k)^{\cell, \heart}$.
Write $M$ for the $\MU_{2*}\MU$-comodule associated to $F$ under the equivalence in Corollary \ref{cor:wcellular_heart_ordinary_cellular} and $H$ for $ \MU_{2*}\MU/(p, a_1, a_2, \dots)$.
The trigraded motivic Adams spectral sequence for $F$ based on $\textup{H}\Z/p$ is equivalent (with all higher and multiplicative structure) to the algebraic Novikov spectral sequence based on $H$ for $M$.
\end{theorem}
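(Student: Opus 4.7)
The plan is to transport the cosimplicial object defining the motivic Adams spectral sequence through the symmetric monoidal, $t$-exact equivalence
$$\Phi: \1_{c=0}\Mod^\cell \xrightarrow{\wequi} \Stable(\MU_{2*}\MU)$$
of Corollary~\ref{cor:wcellular_heart_ordinary_cellular}, and verify that its image is the cosimplicial object defining the algebraic Novikov spectral sequence. Since both spectral sequences arise functorially, via the associated totalization tower and its homotopy groups, from cosimplicial objects in presentably symmetric monoidal stable $\infty$-categories, any symmetric monoidal equivalence matching the two cosimplicial objects automatically induces an isomorphism of spectral sequences with all higher differentials and all multiplicative structure.

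First I would rewrite the motivic Adams cosimplicial object in a form suitable for transport. Since $F \in \SH(k)^{\cell,\heart}$ is canonically an $\1_{c=0}$-module (as discussed after Corollary~\ref{corr:heart-abstract}), associativity of the smash product yields
$$F \wedge \CB(\textup{H}\Z/p) \wequi F \wedge_{\1_{c=0}} \CB_{\1_{c=0}}(\1_{c=0} \wedge \textup{H}\Z/p),$$
as already noted in the discussion preceding the theorem. The right-hand side is a cosimplicial commutative $F$-algebra in $\1_{c=0}\Mod^\cell$, using that $\textup{H}\Z/p$ is cellular by the Hopkins--Morel isomorphism together with Theorem~\ref{thm:mgl-cells}. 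Applying $\Phi$: the object $F$ maps to $M$ by definition, and the Hopkins--Morel description $\1_{c=0} \wedge \textup{H}\Z/p \wequi \1_{c=0} \wedge \MGL/(p,a_1,a_2,\dots)$ combined with the canonical identification of $\MGL_{2*,*}\MGL$ with $\MU_{2*}\MU$ shows that $\Phi$ sends $\1_{c=0} \wedge \textup{H}\Z/p$ to the commutative algebra $H = \MU_{2*}\MU/(p,a_1,a_2,\dots)$. Symmetric monoidality of $\Phi$ then identifies the entire cosimplicial object with $M \wedge \CB(H)$ in $\Stable(\MU_{2*}\MU)$.

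Second, I would match the trigradings to recover the standard algebraic Novikov indexing. For $G \in \1_{c=0}\Mod^\cell$, the free--forgetful adjunction gives $\pi_{p,q}^{\SH(k)}(G) \wequi [\Sigma^{p,q}\1_{c=0}, G]_{\1_{c=0}\Mod^\cell}$. Under $\Phi$, the Tate sphere $\Sigma^{2,1}\1_{c=0}$ corresponds to the internal grading shift of the unit $\MU_{2*}$, while the topological circle $\Sigma^{1,0}\1_{c=0}$ corresponds to the stable suspension; hence $\Sigma^{p,q}\1_{c=0}$ corresponds to the internally $(2q)$-shifted $(p-2q)$-suspended unit in $\Stable(\MU_{2*}\MU)$. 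The resulting reindexing $(s,p,q) \mapsto (s, p-2q, q)$ is precisely the standard translation between motivic Adams and algebraic Novikov trigradings.

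The main obstacle I expect is the bookkeeping in this second step: formulating the trigraded correspondence cleanly so that the isomorphism is not merely abstract but matches the indexing conventions used in practice for computations. Everything else is formal: higher differentials are encoded by the cosimplicial structure itself, and the multiplicative structure comes from $\CB(\textup{H}\Z/p)$ being a cosimplicial commutative algebra, both preserved by the symmetric monoidal equivalence $\Phi$.
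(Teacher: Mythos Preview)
Your proposal is correct and follows essentially the same approach as the paper: rewrite the Adams cosimplicial object as $F \wedge_{\1_{c=0}} \CB_{\1_{c=0}}(\1_{c=0} \wedge \textup{H}\Z/p)$ using the $\1_{c=0}$-module structure on $F$, then transport through the symmetric monoidal equivalence of Corollary~\ref{cor:wcellular_heart_ordinary_cellular} to obtain $M \wedge \CB(H)$. The paper phrases this as a chain of equivalences of graded cosimplicial mapping spectra (inserting the free--forgetful adjunction $\map(\1,\ph) \wequi \map_{\1_{c=0}\Mod}(\1_{c=0},\ph)$ explicitly), whereas you make the grading bookkeeping more explicit; both arguments are the same in substance.
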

\begin{proof}
We have canonical equivalences of graded cosimplicial spectra:
\begin{align*}
\map(\1, F \wedge \CB(\textup{H}\Z/p))
\simeq & \ \map(\1, \ F \wedge_{\1_{c=0}} \CB_{\1_{c=0}}(\1_{c=0} \wedge \textup{H}\Z/p))\\
\simeq & \ \map_{\1_{c=0}\Mod}(\1_{c=0}, \ F \wedge_{\1_{c=0}} \CB_{\1_{c=0}}(\1_{c=0} \wedge \textup{H}\Z/p))& \\
\simeq & \ \map_{\Stable(\MU_{2*}\MU)}(\MU_{2*}, \ M \wedge \CB(H));
\end{align*}
here the first equivalence follows from the fact that $F$ is a module over $\1_{c=0}$, the second is by adjunction, and the third is by Corollary \ref{cor:wcellular_heart_ordinary_cellular}.
\end{proof}

More generally we have the following.
\begin{theorem} 
\label{thm: iso of ss}
Let $F \in \SH(k)^\heart$. Write $M$ for the $\MU_{2*}\MU$-comodule associated to $F^{\cell}$ under the equivalence in Corollary \ref{cor:wcellular_heart_ordinary_cellular} and $H$ for $ \MU_{2*}\MU/(p, a_1, a_2, \dots)$. The same equivalence of spectral sequences holds.
\end{theorem}
\begin{proof}
Indeed for general $F \in \SH(k)^\heart$, its $\textup{H}\Z/p$-based motivic Adams spectral sequence coincides with that for $F^\cell$.
\end{proof}

The category of $p$-local $\MU_{2*}\MU$-comodules and the category of $\BP_{2*}\BP$-comodules are equivalent as abelian categories (see \cite[Proposition~1.2.3]{Morava} for example). The comodule $\MU_{2*}\MU/(p, a_1, a_2,\dots)$ is $p$-local, and it corresponds to
$$\BP_{2*} \otimes_{\MU_{2*}} \MU_{2*}\MU/(p, a_1, a_2,\dots) \cong \BP_{2*}\BP/I$$
as a $\BP_{2*}\BP$-comodule, where $I = (p, v_1, v_2, \dots)$ is the augmentation ideal of $\BP_{2*}$.
Therefore, the algebraic Novikov spectral sequence of an $\MU_{2*}\MU$-comodule $X$ based on $H = \MU_{2*}\MU/(p, a_1, a_2, \dots)$, is equivalent to the algebraic Novikov spectral sequence of its $p$-localization $X_{(p)}$ based on $H$, and is equivalent to the algebraic Novikov spectral sequence of its corresponding $\BP_{2*}\BP$-comodule based on $\BP_{2*}\BP/I$.

\begin{corollary} \label{algNSS BP}
Let $F \in \SH(k)^\heart$. Then the trigraded motivic Adams spectral sequence for $F$ based on $\textup{H}\Z/p$ is equivalent (with all higher and multiplicative structure) to the trigraded algebraic Novikov spectral sequence for $\BPGL_{2*,*}F$ based on $\BP_{2*}\BP/I$.
\end{corollary}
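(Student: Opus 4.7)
The plan is to combine Theorem \ref{thm: iso of ss} with the change-of-rings discussion in the paragraph immediately preceding the corollary. Theorem \ref{thm: iso of ss} identifies the $\textup{H}\Z/p$-based motivic Adams spectral sequence for $F$ with the algebraic Novikov spectral sequence (in the sense of Definition \ref{def: iso of ss}) for $M = \MGL_{2*,*}F$ based on $H = \MU_{2*}\MU/(p, a_1, a_2, \dots)$, so all that remains is to rewrite this latter spectral sequence in $\BP$-terms.

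First I would observe that because $H$ is $p$-local, the cosimplicial object $M \wedge \CB(H)$, the mapping spectrum out of $\MU_{2*}$, and hence the resulting spectral sequence depend only on $M_{(p)}$. Next I would invoke the classical equivalence of abelian categories between $p$-local $\MU_{2*}\MU$-comodules and $\BP_{2*}\BP$-comodules arising from the Quillen idempotent $\MU_{(p)} \to \BP$: under this equivalence $M_{(p)}$ corresponds to $\BP_{2*} \otimes_{\MU_{2*,(p)}} M_{(p)} \cong \BPGL_{2*,*}F$, and $H_{(p)}$ corresponds to $\BP_{2*}\BP/I$, as recalled just above the corollary. Since the equivalence is symmetric monoidal and unit-preserving, it transports cosimplicial cobar constructions and mapping spectra out of the unit, yielding the desired identification of trigraded spectral sequences with all higher and multiplicative structure.

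The main technical subtlety will be to upgrade the classical abelian-category equivalence to a symmetric monoidal equivalence of the stable comodule $\infty$-categories $\Stable(\MU_{2*}\MU)_{(p)} \wequi \Stable(\BP_{2*}\BP)$, so that the output of Definition \ref{def: iso of ss} really is invariant under the change of rings. Given the $\Ind$-of-thick-subcategory presentation $\Stable(C) = \Ind(\Thick(C\CoMod^{\heart\omega}))$ from \S\ref{subsub:compact-gen}, this should follow formally once one checks that the Quillen idempotent induces a symmetric monoidal equivalence on hearts sending compact generators to compact generators, which is classical.
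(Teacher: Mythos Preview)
Your proposal is correct and follows essentially the same approach as the paper: the paper's proof is literally ``Immediate from Theorem~\ref{thm: iso of ss} and the above discussion,'' where ``the above discussion'' is precisely the $p$-localization and $\MU$-to-$\BP$ change-of-rings passage you outline. Your only addition is to flag the upgrade from the abelian equivalence to a symmetric monoidal equivalence of stable comodule categories as a technical point; the paper treats this as part of the preceding discussion and does not isolate it, but your sketch of why it holds (via the $\Ind(\Thick(\ph))$ presentation) is reasonable.
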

\begin{proof}
Immediate from Theorem~\ref{thm: iso of ss} and the above discussion.
\end{proof}

%For the case $F = \1_{c=n}$, the $p$-localized $\MGL_{*,*}F$ viewed as a $\BP_{2*}\BP$-comodule, is $\BPGL_{2*+n, *}$.

The original definition of the algebraic Novikov spectral sequence (see \cite{Miller, Novikov}) is defined at each prime $p$ in the category of $\BP_{2*}\BP$-comodules, and only for $\BP_{2*}$. It converges to the $E_2$-page of the Adams-Novikov spectral sequence $\Ext^{*, *}_{\BP_{2*}\BP}(\BP_{2*}, \BP_{2*})$ for the ($p$-completed) sphere spectrum. It is defined by filtering the cobar complex using the powers of the augmentation ideal $I = (p, v_1, v_2, \dots)$. 

In the case of $\BP_{2*}$, our Definition~\ref{def: iso of ss} of the algebraic Novikov spectral sequence based on $\BP_{2*}\BP/I$ is equivalent to the original one. This is proved in \S9 of \cite{GWX}.

A natural question is the following.

\begin{question} \label{two def}
For a $\BP_{2*}\BP$-comodule $Y$, we have the following two definitions of the algebraic Novikov spectral sequence.
\begin{itemize}
\item The one using the filtration of the powers of the augmentation ideal $I$, and
\item the one in Definition~\ref{def: iso of ss} based on $\BP_{2*}\BP/I$.
\end{itemize} 
When are they equivalent?
\end{question}

We give a sufficient condition for such an equivalence in Proposition~\ref{prop levin index}.

We first recall the following definition in homological algebra (see e.g. \cite{Sega}).

\begin{definition}
A $\BP_{2*}$-module $Y$ \emph{has Levin index 1}, if the following maps that are induced by $I^{n}\rightarrow I^{n-1}$ are zero for all $n \ge 1$:
$$\Ext^{{*,*}}_{\BP_{2*}}(I^{n-1}Y, \mathbb{F}_p) \rightarrow \Ext^{{*,*}}_{\BP_{2*}}(I^{n}Y, \mathbb{F}_p),$$
where $I=(p, v_1, v_2, \dots)$.	
\end{definition}

It is proved in \cite{Sega} and \cite[Lemma~9.7]{GWX} that $\BP_{2*}$ itself has Levin index 1.

\begin{proposition} \label{prop levin index}
Let $Y$ be a $\BP_{2*}\BP$-comodule whose underlying $\BP_{2*}$-module has Levin index 1.

Then the two definitions of the algebraic Novikov spectral sequence for $Y$ in Question~\ref{two def} are equivalent.
%defined using the filtration of the powers of the augmentation ideal $I$, is equivalent to the algebraic Novikov spectral sequence for $Y$ defined in Definition~\ref{def: iso of ss} based on $\BP_{2*}\BP/I$.
\end{proposition}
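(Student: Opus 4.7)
The plan is to follow the strategy for $Y = \BP_{2*}$ given in \cite[\S9]{GWX}, observing that the essential homological input used there is the Levin index 1 property of $\BP_{2*}$. For a general $\BP_{2*}\BP$-comodule $Y$ whose underlying $\BP_{2*}$-module has Levin index 1, I would check that the argument adapts with $Y$ in place of $\BP_{2*}$.

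First, I would realize both spectral sequences as arising from a common bifiltered object in $\Stable(\BP_{2*}\BP)$. The object $Y \wedge \CB(\BP_{2*}\BP/I)$ carries its cobar filtration, whose associated spectral sequence is the one in Definition~\ref{def: iso of ss}. By filtering $Y$ by powers of $I$ we obtain a compatible decreasing filtration whose associated spectral sequence is the $I$-adic algebraic Novikov spectral sequence after applying $\pi_\ast \map_{\Stable(\BP_{2*}\BP)}(\BP_{2*}, -)$. A natural comparison map between the two spectral sequences then exists tautologically, since both arise from the same underlying object.

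Second, I would identify the $E_1$-pages. The $I$-adic spectral sequence has $E_1^{s,n}$ computing $\Ext^{s}_{\BP_{2*}\BP}(\BP_{2*}, I^n Y / I^{n+1} Y)$. Using the Koszul resolution of $\mathbb{F}_p$ over $\BP_{2*}$, one obtains $\BP_{2*}\BP/I \wequi \mathbb{F}_p \otimes^L_{\BP_{2*}} \BP_{2*}\BP$ in $\Stable(\BP_{2*}\BP)$, so that the cobar-based spectral sequence has an $E_1$-page expressible in terms of derived $\Ext^\ast_{\BP_{2*}}$-groups with $\mathbb{F}_p$-coefficients. The Levin index 1 hypothesis precisely says that the maps induced by $I^n Y \hookrightarrow I^{n-1} Y$ on $\Ext^{\ast,\ast}_{\BP_{2*}}(-, \mathbb{F}_p)$ vanish; this splits the long exact sequences coming from the $I$-adic filtration into short exact sequences and allows the two $E_1$-page descriptions to be matched term by term.

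The hard part will be making the $E_1$-identification canonical enough to upgrade to an isomorphism of the full spectral sequences, including all differentials and multiplicative structure. This requires carefully aligning the coboundary maps in the cobar resolution with those induced by the $I$-adic filtration; once this bookkeeping is in place (which is the technical core of \cite[\S9]{GWX}), the convergence of both spectral sequences to the common target $\Ext^{\ast,\ast}_{\BP_{2*}\BP}(\BP_{2*}, Y)$ combined with the $E_1$-isomorphism forces the two spectral sequences to be equivalent as filtered objects.
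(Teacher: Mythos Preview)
Your proposal is correct and takes essentially the same approach as the paper: both defer to the argument in \cite{GWX} (the paper cites Theorem~8.3 and Lemma~9.7 there), observing that the only place the specific comodule $\BP_{2*}$ enters is through its Levin index~1 property, which you have assumed for $Y$. Your sketch of how that argument runs (bifiltered object, $E_1$-identification via the vanishing maps on $\Ext_{\BP_{2*}}(-,\mathbb{F}_p)$) is a faithful outline of the GWX proof, so nothing more is needed.
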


\begin{proof}
  The proof is essentially identical to the proof of Theorem~8.3 of \cite{GWX}, where the technical condition that $\BP_{2*}$ has Levin index 1 (Lemma~9.7 of \cite{GWX}) is replaced by the general condition that $Y$ has Levin index 1.
\end{proof}

\begin{remark}
In general, without the condition that $Y$ has Levin index 1, the two definitions of the algebraic Novikov spectral sequences are not equivalent. In fact, for $F \in \SH(k)^\heart$, suppose that $\BPGL_{{2*,*}}F$ has Levin index at least 2. Then the algebraic Novikov spectral sequence for $\BPGL_{{2*,*}}F$ defined by powers of $I$ is isomorphic to a modified version of the motivic Adams spectral sequence in the sense of \cite[\S3]{BHHM}.
%	As a $\BP_*$-module, $\BP_*/p^2$ has Levin index 2, instead of 1. See \cite{Sega} for a general definition of Levin index $n$ and its properties. For the second part of Theorem~\ref{thm:algNSS}, when $\BPGL_{{*,*}}Y$ has Levin index at least 2, the algebraic Novikov spectral sequence of $\BPGL_{{*,*}}Y$ in general is not isomorphic to the motivic Adams spectral sequence for $Y$. It is isomorphic to a modified version of the Adams spectral sequence in the sense of \cite[\S3]{BHHM}.
\end{remark}

%For the rest of the discussion, unless mentioned otherwise, we always assume that $\BPGL_{{*,*}}\cs_{c=n}$ has Levin index 1 for all $n$.\tom{Is this true in many examples?}

\subsection{The strategy} \label{strategy}
For a motivic spectrum $X$ with the property that $X \simeq X_{c \geq 0}$, we consider its Postnikov--Whitehead tower with respect to the Chow $t$-structure.

\begin{center}
\begin{tikzcd}
& \arrow[d] &\\
& {X}_{c \geq 2} \arrow[d] \arrow[r] & {X}_{c=2}   \\
& {X}_{c \geq 1} \arrow[d] \arrow[r] & {X}_{c=1}  \\
X \arrow[r, equal] & X_{c \geq 0} \arrow[r] & {X}_{c=0}
\end{tikzcd}
\end{center}

\begin{remark} \label{rem chow tower}
\begin{itemize}
\item
By Corollary~\ref{cor:chow-MGL-homology} and for degree reasons, if we take $\MGL_{{*,*}}$ of each term of this Postnikov--Whitehead tower, the corresponding Chow spectral sequence computing $\MGL_{{*,*}}X$ collapses at its $E_1$-page. 
\item
If we take $\pi_{{*,*}}$ of each term of this Postnikov--Whitehead tower, by Theorem~\ref{thm:main-computation}, we have
$$\pi_{*,*}{X}_{c=n} \wequi \Ext^{*,*}_{\MU_{2*}\MU}(\MU_{2*}, \MGL_{2*+n,*}X).$$
The $E_1$-page of the corresponding tower spectral sequence computing $\pi_{{*,*}}{X}$ is isomorphic to the $E_2$-page of the motivic Adams--Novikov spectral sequence. Over $\mathbb{C}$, for the case $X  = \1^\comp_p$, it was proved in \cite{Isa} that after a re-grading, these two spectral sequences are isomorphic. We suspect that this is an isomorphism of spectral sequences after a re-grading over a general field.
%\item
%If we take $\textup{H}\Z/p_{*,*}$ to each term of this Postnikov--Whitehead tower, we result in an algebraic Chow spectral sequence. The $E_1$-page of the algebraic Chow spectral sequence is the direct sum of the $\Ext$-groups corresponding to the spectra $\1_{c=n}$ for each $n \geq 0$. 
%$$E_1 = \bigoplus_{n \geq 0} \Ext_A^{*,*,*} (\textup{H}\Z/p_{*,*}, \textup{H}\Z/p_{*,*}\1_{c=n}) \implies \Ext_A^{*,*,*} (\textup{H}\Z/p_{*,*}, \textup{H}\Z/p_{*,*}\1).$$
%Here $A$ is the $k$-motivic Steenrod algebra. Each direct summand of the $E_1$-page can be computed by a computer program (see step (3) in the following strategy). Note that $\textup{H}\Z/p_{*,*}\1_{c=n} \cong \textup{H}\Z/p_{2*+n,*}$, i.e., the Chow degree $n$ part of $\textup{H}\Z/p_{*,*}$.

\end{itemize}
\end{remark}

Now consider the motivic Adams spectral sequences based on $\textup{H}\Z/p$ for every term in the Postnikov--Whitehead tower with respect to the Chow $t$-structure, and the induced maps among them. This gives us the following tower of motivic Adams spectral sequences.
\begin{center}
\begin{tikzcd}
& \arrow[d] &\\
& \mathbf{motASS}({X}_{c \geq 2}) \arrow[d] \arrow[r] & \mathbf{motASS}({X}_{c=2}) \arrow[r, equal] & \mathbf{algNSS}(\BPGL_{{2*+2, *}}X)   \\
& \mathbf{motASS}({X}_{c \geq 1}) \arrow[d] \arrow[r] & \mathbf{motASS}({X}_{c=1}) \arrow[r, equal] & \mathbf{algNSS}(\BPGL_{{2*+1,*}}X)  \\
\mathbf{motASS}({X}) \arrow[r, equal] & \mathbf{motASS}({X_{c \geq 0}}) \arrow[r] & \mathbf{motASS}({X}_{c=0}) \arrow[r, equal] & \mathbf{algNSS}(\BPGL_{{2*, *}}X)
\end{tikzcd}
\end{center}

We now have a strategy for computing differentials in the motivic Adams spectral sequence for $X$ based on $\textup{H}\Z/p$. The input of our computations is $\BPGL_{*,*}X$ and $\textup{H}\Z/p_{*,*}X$ over the base field $k$.

For every $\BP_*\BP$-comodule $\BPGL_{{2*+n,*}}X$, its algebraic Novikov spectral sequence can be computed in a large range by a computer program. This gives many nonzero differentials in the motivic Adams spectral sequence for ${X}_{c=n}$. We may then pullback these Adams differentials to the motivic Adams spectral sequences for ${X}_{c \geq n}$, and push forward them to the motivic Adams spectral sequences for ${X}$.

Here are detailed steps of this strategy.

\begin{enumerate}

\item Compute $\Ext_A^{*,*,*}(\textup{H}\Z/p_{*,*}, \textup{H}\Z/p_{*,*}X)$ over the $k$-motivic Steenrod algebra by a computer program. These $\Ext$-groups consist of the $E_2$-page of the motivic Adams spectral sequence for ${X}$ based on $\textup{H}\Z/p$.

\begin{itemize}
\item Over $\C$ for the spectrum $\1^\comp_2$, this was computed in a large range in \cite{IWX, IWX2}.

\item Over $\mathbb{R}$ for the spectrum $\1^\comp_2$, this was computed in a range in \cite{DugIsa, BelIsa} using the $\rho$-Bockstein spectral sequence and computations over $\C$. This $\rho$-Bockstein computation can also be automated.

\item Over finite fields for the spectrum $\1^\comp_2$, this was computed in \cite{WilsonOst} by a long exact sequence with the other two terms the $\C$-motivic $\Ext$. The boundary homomorphism corresponds to the Steenrod operation action on powers of $\tau$.

%\item Over a general field $k$, we can run an algebraic Chow spectral sequence (see Remark~\ref{rem chow tower}) that converges to the cohomology of the $k$-motivic Steenrod algebra. Since this algebraic Chow spectral sequence comes from a filtration on $\textup{H}\Z/p_{*,*}$, it can be viewed as an algebraic Atiyah-Hirzebruch spectral sequence, and therefore can be computed by a computer program (see \cite{algAHSS} for example). 

%by the description of $\textup{H}\Z/p_{*,*}$ as a module over the Steenrod algebra in \cite{Hoyois2017}, these $\Ext$-groups can be computed similarly to the case over $\mathbb{R}$ by a Bockstein spectral sequence using $\C$-motivic data.  %Also see Subsection~\ref{finite fields} for more details.
\end{itemize} 

\item Compute by a computer program the algebraic Novikov spectral sequence based on $\BP_{2*}\BP/I$ for the $\BP_{2*}\BP$-comodules $\BPGL_{2*+n,*}X$ for every $n$ in a reasonable range. The structures of these comodules $\BPGL_{2*+n,*}X$ depend on each field $k$ for $n \geq 1$. For $n = 0$, $\BPGL_{2*+n,*} = \BP_{2*}$ after $p$-completion for all fields $k$. The case for $\BP_{2*}$ was computed in a large range in \cite{IWX, IWX2}. This computation includes all differentials, and the multiplicative structure of the cohomology of the Hopf algebroid $(\BP_{2*}, \BP_{2*}\BP)$.

\item Identify the $k$-motivic Adams spectral sequence based on $\textup{H}\Z/p$ for each $X_{c=n}$, with the algebraic Novikov spectral sequence based on $\BP_{2*}\BP/I$ for $\BPGL_{2*+n,*}X$, for $n \geq 0$. This computation includes an identification of the abutments with the multiplicative (and higher) structures.

\item Compute the mod $p$ motivic homology of $X_{c \geq n}$ using the universal coefficient spectral sequence (see Propositions~7.7 and 7.10 of \cite{dugger2005motivic}).
$$\bigoplus_{k \geq n}\textup{Tor}_{*,*}^{\BP_{2*}}(\BPGL_{2*+k,*}X, \ \Z/p) \implies \pi_{*,*} X_{c \geq n} \wedge \textup{H}\Z/p.$$
Here we use the fact that $\1_{c=0} \wedge \BPGL/(p, v_1, \dots)$ is a cellular module over $\1_{c=0} \wedge \BPGL$ and the  equivalences 
\begin{align*}
X_{c \geq n} \wedge \textup{H}\Z/p & \simeq X_{c \geq n} \wedge \BPGL/(p, v_1, \dots)\\ 
& \simeq (X_{c \geq n} \wedge \BPGL) \wedge_{(\1_{c = 0} \wedge \BPGL)} (\1_{c=0} \wedge \BPGL/(p, v_1, \dots)),
\end{align*}
and the isomorphisms
\begin{align*}
\pi_{*,*} \1_{c=0} \wedge \BPGL & \cong \BP_{2*},\\
\pi_{*,*}X_{c \geq n} \wedge \BPGL & \cong \bigoplus_{k \geq n}\BPGL_{2*+k,*}X,\\
\pi_{*,*} \1_{c=0} \wedge \BPGL/(p, v_1, \dots) & \cong  \Z/p.
\end{align*}

\item Compute by a computer program the $E_2$-pages of the motivic Adams spectral sequence for ${X_{c \geq n}}$ based on $\textup{H}\Z/p$, using the computation of $\pi_{*,*} X_{c \geq n} \wedge \textup{H}\Z/p$ in step (4).

\item Pull back motivic Adams differentials for $X_{c=n}$ to motivic Adams differentials for $X_{c \geq n}$, and then push forward to motivic Adams differentials for $X$.

\item Deduce additional Adams differentials for $X$ with a variety of ad hoc arguments. The most important methods are Toda bracket shuffles and comparison to known results in the $\C$-motivic Adams spectral sequence.

\end{enumerate}
Most of the computations in steps (1) -- (5) can be automated. In the rest of this section, we will elaborate on step $(6)$ of this strategy, in the cases of $X = \1$ and $k = \C, \ \mathbb{R}$ and finite fields. 

\subsection{Continuous Adams spectral sequences} \label{CTS ASS}
For the sphere spectrum $\1$, the inputs of our strategy are $\textup{H}\Z/p_{*,*}$ and $\BPGL_{*,*}$. In practice, it is more plausible to know $\pi_{*,*}(\BPGL_p^\comp)$ rather than $\BPGL_{*,*}$. In order to take the $p$-completion into account, we consider pro-objects as a substitute.

\begin{definition}
Let $(X_i)_{i \in I}$ be a pro-object in $\SH(k)$.
We call the cosimplicial object \[ \Delta \ni n \mapsto \lim_i [X_i \wedge (\textup{H}\Z/p)^{\wedge n}] \] the \emph{continuous $\textup{H}\Z/p$-resolution} of $(X_i)_{i \in I}$, and the associated spectral sequence the \emph{continuous Adams spectral sequence} of $(X_i)_{i \in I}$.
\end{definition}

\begin{remark} \label{rmk:pro-p-cts}
Given $X \in \SH(k)$, write \[ F_p(X) = (\dots \to X \xrightarrow{p} X \xrightarrow{p} X) \in \Pro(\SH(k)), \] and $(X/p^n)_n \in \Pro(\SH(k))$ for the pro-$p$-completion.
Then there is a fiber sequence \[ F_p(X) \to X \to (X/p^n)_n, \] where in the middle we mean the constant pro-object.
Thus if $Y \in \SH(k)$ such that $p: Y \to Y$ is equivalent to the trivial map, then $F_p(X) \wedge Y \wequi 0$ in $\Pro(\SH(k))$ and so $X \wedge Y \wequi (X/p^n \wedge Y)_n $ in $ \Pro(\SH(k))$.
In particular the continuous $\textup{H}\Z/p$-resolution of the pro-$p$-completion $(X/p^n)_n$ is just the usual $\textup{H}\Z/p$-resolution of $X$.
\end{remark}

We now consider the following tower of pro-objects \[ \dots \to ((\1/p^n)_{c \ge i})_n \to \dots \to ((\1/p^n)_{c \ge 1})_n \to ((\1/p^n)_{c \ge 0})_n \wequi (\1/p^n)_n. \]
The layers are given by \[ ((\1/p^n)_{c=i})_n \in \Pro(\SH(k)^{\heart}). \]
Their continuous Adams spectral sequences are related in the way outlined above to the continuous Adams spectral sequence for $((\1/p^n)_{c \ge 0})_n$, 
which is just the pro-$p$-completion of $\1$, and so by Remark \ref{rmk:pro-p-cts} we recover information about the usual motivic Adams spectral sequence.
It remains to identify the continuous Adams spectral sequence of the layers; we can achieve this under some simplifying assumptions.
 
\begin{proposition}
Suppose that $\pi_{*,*} \BPGL_p^\comp$ has bounded $p$-torsion.
Put \[ B^i_* = \pi_{2*+i,*} \BPGL_p^\comp, \] viewed as a $\BP_{2*}\BP$-comodule.
Then $(\pi_{*,*}(\1/p^n)_{c=i})_n$ corresponds to the pro-$p$-completion of $B_*^i \in \Stable(\MU_{2*}\MU)$.
%Then (the global sections of) $((\1/p^n)_{c=i})_n$ is equivalent to the pro-$p$-completion of $B_*^i \in \Stable(\MU_{2*}\MU)$.
In particular the continuous Adams spectral sequence of $((\1/p^n)_{c=i})_n$ is isomorphic to the algebraic Novikov spectral sequence of $B^i_*$.
\end{proposition}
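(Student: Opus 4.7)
The plan is to apply the cellular reconstruction equivalence of Corollary \ref{cor:wcellular_heart_ordinary_cellular} level-wise in $n$, then pass to the inverse limit, and finally compare continuous Adams spectral sequences. First I would verify that $(\1/p^n)_{c=i}$ is cellular: since $\1/p^n$ is cellular and cellularization is Chow-$t$-exact by Lemma \ref{lemm:cellularization-t-exact}, the Chow truncation of a cellular spectrum is again cellular. It therefore lies in $\Sigma^i \SH(k)^{\cell,\heart}$, and under the equivalence $\1_{c=0}\Mod^{\cell} \wequi \Stable(\MU_{2*}\MU)$ it corresponds to the $\MU_{2*}\MU$-comodule
\[ \MGL_{2*+i,*}(\1/p^n) \;\simeq\; \pi_{2*+i,*}(\MGL/p^n), \]
naturally in $n$.

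Next I would pass to the pro-system. On the algebraic side, the pro-$p$-completion of $B^i_* \in \Stable(\MU_{2*}\MU)$ is the pro-object $(B^i_*/p^n)_n$; using the equivalence of $p$-local $\MU_{2*}\MU$-comodules with $\BP_{2*}\BP$-comodules recalled after Corollary \ref{algNSS BP}, and the standard identification of $\MGL_p^\comp$ with a wedge of shifts of $\BPGL_p^\comp$, this is exactly the pro-system $(\pi_{2*+i,*}(\MGL/p^n))_n$ matched above. The role of the bounded $p$-torsion hypothesis is to guarantee that the tower $\{\pi_{*,*}(\MGL/p^n)\}_n$ is Mittag--Leffler, so that the inverse system genuinely models the pro-$p$-completion of $B^i_*$ in $\Pro(\Stable(\MU_{2*}\MU))$ rather than only doing so after further truncation, and to ensure that $\BPGL_p^\comp/p^n \wequi \BPGL/p^n$ level-wise.

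For the spectral sequence statement, I would apply Theorem \ref{thm: iso of ss_cellular} (equivalently Corollary \ref{algNSS BP}) level-wise in $n$: for each fixed $n$ the $\textup{H}\Z/p$-based motivic Adams spectral sequence of $(\1/p^n)_{c=i}$ is canonically isomorphic, with all higher structure, to the algebraic Novikov spectral sequence of the comodule $\MGL_{2*+i,*}(\1/p^n)$ based on $\BP_{2*}\BP/I$. The continuous Adams spectral sequence is by definition built from $\lim_n \map(\1, (\1/p^n)_{c=i} \wedge (\textup{H}\Z/p)^{\wedge \bullet})$; transporting this cosimplicial diagram across the reconstruction equivalence and commuting $\lim_n$ past the cosimplicial construction (again justified by Mittag--Leffler) identifies it with the algebraic Novikov spectral sequence of the pro-$p$-completion of $B^i_*$.

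The main obstacle will be the pro-systemic bookkeeping: one must verify that the bounded $p$-torsion hypothesis is strong enough to commute $\lim_n$ simultaneously past $\MGL$-homology, the cellular reconstruction equivalence, and the cobar construction built from $\textup{H}\Z/p$, so that both the underlying pro-object and its continuous Adams spectral sequence match the algebraic ones on the nose. Transporting between $\MU_{2*}\MU$- and $\BP_{2*}\BP$-comodule descriptions after $p$-localization is more routine and is already implicit in the discussion preceding Corollary \ref{algNSS BP}.
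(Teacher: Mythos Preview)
Your overall strategy---apply the cellular reconstruction equivalence level-wise and then pass to pro-objects---matches the paper's. However, there is a genuine gap in your identification of the pro-systems.

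You write that the pro-$p$-completion of $B^i_*$ in $\Stable(\MU_{2*}\MU)$ is $(B^i_*/p^n)_n$, and that this is ``exactly'' $(\pi_{2*+i,*}(\MGL/p^n))_n$. Neither is true level-wise. First, the pro-$p$-completion in the stable category is the \emph{derived} cofiber $Y^i_n := \mathrm{cof}(B^i_* \xrightarrow{p^n} B^i_*)$, which differs from the underived quotient $B^i_*/p^n$ by a shifted copy of $K^i_n := \ker(p^n \mid B^i_*)$. Second, the long exact sequence for $\MGL_p^\comp \xrightarrow{p^n} \MGL_p^\comp \to \MGL/p^n$ gives a short exact sequence
\[
0 \to B^i_*/p^n \to X^i_n \to K^{i-1}_n \to 0,
\]
where $X^i_n := \pi_{2*+i,*}(\MGL/p^n)$; so $X^i_n \ne B^i_*/p^n$ unless $B^{i-1}_*$ is $p$-torsion-free. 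The paper's argument is that the bonding maps in the tower $(K^{?}_n)_n$ are multiplication by $p$, so bounded $p$-torsion forces this tower to be pro-zero; this simultaneously collapses both discrepancies and yields $(Y^i_n)_n \simeq (B^i_*/p^n)_n \simeq (X^i_n)_n$ as pro-objects. Your invocation of Mittag--Leffler is in the right spirit but does not isolate this mechanism, and without it the identification you assert does not go through.

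For the spectral sequence statement, your plan to commute $\lim_n$ past the cobar construction is unnecessary. Once you know the pro-object is the pro-$p$-completion of $B^i_*$, Remark~\ref{rmk:pro-p-cts} applies verbatim on the algebraic side: since $p=0$ on $H=\MU_{2*}\MU/(p,a_1,\dots)$, the continuous $H$-resolution of the pro-$p$-completion is the ordinary $H$-resolution of $B^i_*$, so the continuous Adams spectral sequence is the algebraic Novikov spectral sequence of $B^i_*$ on the nose.
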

\begin{proof}
We may replace $\BP$ by $\MU$ throughout.
We know that $(\1/p^n)_{c=i} \in \SH(k)^{\cell, \heart}$ corresponds to the $\MU_{2*}\MU$-comodule $X^i_n = \pi_{2*+i,*} (\MGL/p^n)$.
Let $K^i_n$ be the kernel of multiplication by $p^n$ on $B^i_*$.
Then we have a short exact sequence \[ 0 \to (B^i/p^n)_n \to (X^i_n)_n \to (K^{i-1}_n)_n \to 0 \] of pro-objects in $\MU_{2*}\MU$-comodules.
The bonding maps in $(K^{i-1}_n)_n$ are given by multiplication by $p$, and since the $p$-torsion is bounded, sufficiently long composites of bonding maps are zero.
It follows that $(K^{i-1}_n)_n \wequi 0$ as a pro-object, and so $(B^i/p^n)_n \wequi (X^i_n)_n$ as pro-objects.
Let \[Y^i_n = cof(B^i \xrightarrow{p^n} B^i) \in \Stable(\MU_{2*}\MU); \] in other words $(Y^i_n)_n$ is the pro-$p$-completion of $B^i$ in the derived category of comodules.
We have a cofiber sequence \[ \Sigma (K^{i}_n)_n \to (Y^i_n)_n \to (B^i/p^n)_n, \] and so arguing as before and combining with the previous result we find that \[ (Y^i_n)_n \wequi (B^i/p^n)_n \wequi (X^i_n)_n. \]

The last sentence follows from Remark \ref{rmk:pro-p-cts}.
\end{proof}

\subsection{Over the complex numbers}

Over the complex numbers, this strategy has been proved to be very successful. In \cite{IWX, IWX2}, using this method, the knowledge of both $\C$-motivic and classical stable homotopy groups of spheres at the prime 2 was extended from 60 to 90. It is also used in \cite[\S10]{GWX} to provide one-line proofs for many historically hard Adams differentials.

In fact, over $\C$, we have $\pi_{{*, *}}\BPGL^\comp_p = (\BP_{2*})^\comp_p [\tau] = \Z^\comp_p [\tau, v_1, v_2, \cdots]$, with $|\tau| = (0,-1), \ |v_i| = (2p^i-2, p^i-1)$. In other words, we have
% and $v_i$ in bidegree $(2i, i)$. So all the nonnegative and even Chow degree parts of $\BPGL_{{*,*}}$ are isomorphic to shifts $\BP_*$, and other Chow degree part of $\BPGL_{{*,*}}$ are zero. Therefore, we have
\[\pi_{{2*+n, *}} \BPGL^\comp_p  \cong \begin{cases} \Sigma^{0,-m} (\BP_{2*})^\comp_p , & \text{if } n = 2m \ \text{and} \ m \geq 0, \\ 0, &\text{otherwise}, \end{cases} \]
where $\Sigma^{0,-m} (\BP_{2*})^\comp_p$ means every element in $(\BP_{2*})^\comp_p$ is shifted by bidegree $(0, -m)$.
%In particular, we have $\BPGL_{2*,*} \cong \BP_{2*}$.

This corresponds to the fact that
\[  \pi_{*,*}(\1^\comp_p)_{c=n} \cong \begin{cases} \pi_{*,*} \Sigma^{0,-m} \1^\comp_p/\tau, & \text{if } n = 2m \ \text{and} \ m \geq 0, \\ 0, &\text{otherwise}. \end{cases} \]

By Proposition~7.2 of \cite{GWX}, we have $\MGL_{*,*}\1^\comp_p \cong \pi_{{*, *}}\MGL^\comp_p$ over $\C$. By a similar proof to the one of Proposition~7.2 of \cite{GWX}, we have $\BPGL_{*,*}\1^\comp_p \cong \pi_{{*, *}}\BPGL^\comp_p$. 
Then the tower of motivic Adams spectral sequences based on $\textup{H}\Z/p$ becomes
\begin{center}
\begin{tikzcd}
& \arrow[d] &\\
& \mathbf{motASS}({(\1^\comp_p)}_{c \geq 4}) \arrow[d] \arrow[r] & \mathbf{motASS}({(\1^\comp_p)}_{c=4}) \arrow[r, equal] & \mathbf{algNSS}(\Sigma^{0,-2} (\BP_{{2*}})^\comp_p )   \\
& \mathbf{motASS}((\1^\comp_p)_{c \geq 2}) \arrow[d] \arrow[r] & \mathbf{motASS}({\1^\comp_p}_{c=2}) \arrow[r, equal] & \mathbf{algNSS}(\Sigma^{0,-1} (\BP_{{2*}})^\comp_p )  \\
\mathbf{motASS}({\1}^\comp_p) \arrow[r, equal] & \mathbf{motASS}({\1}^\comp_p) \arrow[r] & \mathbf{motASS}({(\1^\comp_p)}_{c=0}) \arrow[r, equal] & \mathbf{algNSS}((\BP_{{2*}})^\comp_p)
\end{tikzcd}
\end{center}

Here we may view $\Sigma^{m,n} (\BP_{2*})^\comp_p$ as an element in the derived category of $p$-completed $\BP_{2*}\BP$-comodules: It is a cochain complex concentrated in cohomological degree $2n-m$, with the comodule $\Sigma^{2n}(\BP_{2*})^\comp_p$ in that cohomological degree. Under the equivalence of the categories in Corollary~\ref{cor:wcellular_heart_ordinary_cellular}, $\Sigma^{m,n} (\BP_{2*})^\comp_p$ corresponds to $\Sigma^{m,n} \1^\comp_p/\tau$. 

In this case, all horizontal arrows are identical, up to a shift of degrees. So the information above the bottom row is not of much use. This is indeed the case in the work of \cite{IWX, IWX2}, where only the naturality of the motivic Adams spectral sequences of the unit map $\1^\comp_p \rightarrow \1^\comp_p/\tau$ was used.

%This identification of all rows in the Postnikov--Whitehead tower is not true in general, for example in the $\mathbb{R}$-motivic case that we are going to discuss next. Given the success of the computations over $\C$, this is the exactly the reason that we believe this method of computing Adams differentials could be even more successful over other base fields.

\subsection{Over the real numbers} \label{subsec:real-computation}

Over the real numbers, at the prime 2, recall that we have
$$\pi_{{*,*}}\textup{H}\Z/2 = \Z/2[\rho, \tau],$$
with $|\rho| = (-1,-1), \ |\tau| = (0,-1)$. The class $\rho$ is a map $$\rho: \1 \rightarrow \Gm = \Sigma^{1,1}\1,$$
that corresponds to $-1 \in \mathbb{R}^\times$ (see \cite{bachmannrealetale} for example),
and can be further viewed as a map 
$$\rho: \Sigma^{-1, -1} \1^\comp_2 \rightarrow \1^\comp_2.$$
We denote by $\1^\comp_2/\rho$ the cofiber of $\rho$.  

By Corollary~1.9 of \cite{BJ}, there is an equivalence between the category of cellular spectra over $\mathbb{C}$ and the category of cellular modules over $\1^\comp_2/\rho$ over $\mathbb{R}$, under which the $\mathbb{C}$-motivic sphere $\1^\comp_2$ corresponds to $\1^\comp_2/\rho$ over $\mathbb{R}$.

The class $\tau$ then corresponds to a map
$$\tau: \Sigma^{0,-1}\1^\comp_2/\rho \rightarrow \1^\comp_2/\rho.$$  
We denote by $\1^\comp_2/(\rho, \tau)$ the cofiber of $\tau$.

The $\mathbb{R}$-motivic $\pi_{{*, *}}\BPGL^\comp_2$ was computed by Hu--Kriz \cite{HuKriz} and Hill \cite{Hill}. 

\begin{proposition} \label{prop bpgl over R}
$$
\pi_{{*, *}}\BPGL^\comp_2 \cong \mathbb{Z}^\comp_{2}\left[\begin{array}{cccccc}
\rho, & \\	
v_0, & \tau^2 v_0, & \tau^4 v_0, & \tau^6 v_0, & \tau^8 v_0, & \cdots\\
v_1, & & \tau^4 v_1, & &\tau^8 v_1, & \cdots\\
v_2, & & & & \tau^8 v_2, & \cdots\\
\cdots & 
\end{array}\right]/
\left[\mkern-10mu\begin{array}{c}
v_0 = 2\\  \rho v_0 = 0 \\ \rho^3 v_1 = 0 \\  \rho^7 v_2 = 0 \\ \cdots  \end{array}\mkern-10mu\right]$$
and the generators satisfy the further relations	 
$$\tau^{2^{i+1}\cdot j} v_i \cdot \tau^{2^{k+1}\cdot l} v_k  = \tau^{2^{i+1}(j+2^{k-i}l)} v_i v_k$$
when $i \leq k$, as if the class $\tau$ were an element in this ring.

Here the bidegrees of the generators are 
$$|\tau| = (0,-1), \ |\rho| = (-1,-1), \ |v_n| = (2^{n+1}-2, 2^n -1).$$
\end{proposition}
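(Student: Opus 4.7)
The plan is to reduce the computation to the known $\C$-motivic $\BPGL^\comp_2$ via the $\rho$-Bockstein spectral sequence, and to import equivariant input to pin down the differentials.

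First, I would build the $\rho$-Bockstein spectral sequence by smashing $\BPGL^\comp_2$ with the filtration of $\1^\comp_2$ by powers of $\rho : \Sigma^{-1,-1}\1^\comp_2 \to \1^\comp_2$, obtaining
\[ E_1 = \pi_{*,*}(\BPGL^\comp_2/\rho)[\rho] \Longrightarrow \pi_{*,*}\BPGL^\comp_2. \]
By Corollary~1.9 of \cite{BJ} (as cited in \S\ref{subsec:real-computation}), the $\R$-motivic spectrum $\BPGL^\comp_2/\rho$ corresponds to $\BPGL^\comp_2$ over $\C$, whose bigraded homotopy ring is the well-known $(\BP_{2*})^\comp_2[\tau]$ with $|\tau|=(0,-1)$. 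This supplies a clean $E_1$-page isomorphic to $(\BP_{2*})^\comp_2[\tau][\rho]$.

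Second, I would produce permanent cycles. Lifts $v_n \in \pi_{2^{n+1}-2,\,2^n-1}\BPGL^\comp_2$ exist integrally (from the Quillen idempotent on $\MGL$), and a bidegree argument shows that each admissible class $\tau^{2^{n+1}j}v_n$ refines to a genuine element of $\pi_{*,*}\BPGL^\comp_2$, hence is a permanent cycle. The elements of $E_1$ not of this admissible form must then be boundaries, which forces differentials of the schematic shape
\[ d_{2^{n+1}-1}\bigl(\tau^{2^n}\bigr) \doteq \rho^{2^{n+1}-1} v_n \]
modulo corrections in higher $\rho$-filtration. In particular one obtains the relations $\rho^{2^{n+1}-1} v_n = 0$ in the abutment.

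Third, I would pin down the correction terms and hidden extensions by comparison with the $C_2$-equivariant computation of the Real Brown--Peterson spectrum $\BP_\R$ due to Hu--Kriz. The Betti realization $\SH(\R) \to \SH(BC_2)$ is symmetric monoidal and sends $\BPGL^\comp_2$ to $(\BP_\R)^\comp_2$; on appropriately completed cellular parts it is faithful enough on bigraded homotopy groups that every differential and every multiplicative relation in the Hu--Kriz answer transfers back to the motivic side. Alternatively one can run the slice spectral sequence for $\BPGL$ directly over $\R$ and import Hill--Hopkins--Ravenel-style slice differentials. The fractional multiplicative rule
\[ \tau^{2^{i+1}j} v_i \cdot \tau^{2^{k+1}l} v_k = \tau^{2^{i+1}(j + 2^{k-i}l)} v_i v_k \qquad (i \le k) \]
then drops out as the unique surviving admissible representative in its bidegree, and $v_0 = 2$ is forced by the Hurewicz map.

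The main obstacle will be the precise higher $\rho$-filtration corrections in $d_{2^{n+1}-1}(\tau^{2^n})$: the $\rho$-Bockstein run purely inside $\SH(\R)$ only determines the leading terms, and it is genuine equivariant input (the slice filtration or the equivariant Adams computation of Hu--Kriz) that upgrades the schematic differentials to an exact calculation. This is the content behind the Hu--Kriz \cite{HuKriz} and Hill \cite{Hill} theorems cited in the statement.
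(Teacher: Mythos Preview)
The paper does not give its own proof of this proposition: it is stated as a citation of Hu--Kriz \cite{HuKriz} and Hill \cite{Hill}, and the paper simply imports the answer. So there is nothing to compare your argument against in the paper itself.

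That said, your outline is essentially the route taken in those references. Hill runs the $\rho$-Bockstein spectral sequence from the $\C$-motivic answer, and the differentials are determined by comparison with the $C_2$-equivariant computation of $\pi_\star\BP_\R$ from Hu--Kriz (which in turn is a Borel/Tate type computation). Your identification of the $E_1$-page, the schematic differentials $d_{2^{n+1}-1}(\tau^{2^n}) \doteq \rho^{2^{n+1}-1} v_n$, and the use of Betti realization to import the equivariant answer are all correct in spirit. One small caution: you should not rely on \cite{BJ} to identify $\BPGL^\comp_2/\rho$ with the $\C$-motivic $\BPGL^\comp_2$, since that result may itself depend on knowing $\pi_{*,*}\BPGL^\comp_2$; the cleaner route is to use base change along $Spec(\C) \to Spec(\R)$ together with the cofiber sequence for $\rho$ directly, or to work with the slice spectral sequence for $\BPGL$ over $\R$ from the start, as Hill does.
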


For the Chow degrees, we have $\tau$ and $\rho$ in degrees 2 and 1, and we have all $v_n$'s in degree 0. Therefore, we can read off the Chow degree $n$ part of the $\mathbb{R}$-motivic $\pi_{{*, *}}\BPGL^\comp_2$. The first few are the following.\\

\begin{itemize}
\item {\bf Chow = 0}: $\mathbb{Z}^\comp_2[v_0, v_1, \cdots] = (\BP_{2*})^\comp_2$.\\
\item {\bf Chow = 1}: $\rho \cdot \mathbb{Z}^\comp_2[v_0, v_1, \cdots]/ \rho v_0 = \Sigma^{-1,-1}\BP_{2*}/2$.\\
\item {\bf Chow = 2}: $\rho^2 \cdot \mathbb{Z}^\comp_2[v_0, v_1, \cdots]/ \rho v_0 = \Sigma^{-2,-2}\BP_{2*}/2$.\\
\item {\bf Chow = 3}: $\rho^3 \cdot \mathbb{Z}^\comp_2[v_0, v_1, \cdots]/ (\rho v_0, \rho^3 v_1) = \Sigma^{-3,-3}\BP_{2*}/(2, v_1)$.\\
\item {\bf Chow = 4}: $\rho^4 (\BP_{2*})^\comp_2 \oplus \tau^2v_0(\BP_{2*})^\comp_2 = \Sigma^{-4,-4}\BP_{2*}/(2, v_1) \oplus \Sigma^{0,-2}(\BP_{2*})^\comp_2$.\\
\item {\bf Chow = 5}: $\rho^5 (\BP_{2*})^\comp_2  = \Sigma^{-5,-5}\BP_{2*}/(2, v_1) $.\\
\item {\bf Chow = 6}: $\rho^6 (\BP_{2*})^\comp_2  = \Sigma^{-6,-6}\BP_{2*}/(2, v_1) $.\\
\item {\bf Chow = 7}: $\rho^7 (\BP_{2*})^\comp_2  = \Sigma^{-7,-7}\BP_{2*}/(2, v_1, v_2) $.\\
\item {\bf Chow = 8}: 
$$\rho^8 (\BP_{2*})^\comp_2 \oplus \frac{\tau^4v_0(\BP_{2*})^\comp_2 + \tau^4v_1(\BP_{2*})^\comp_2}{\tau^4v_0\cdot v_1 - \tau^4v_1\cdot v_0}$$ 
$$ = \Sigma^{-8,-8}\BP_{2*}/(2, v_1, v_2) \oplus (\Sigma^{0,-4}(\BP_{2*})^\comp_2 + \Sigma^{2,-3}\BP_{2*}/2). $$
\end{itemize}
On the spectrum level, this corresponds to the fact that
$$\pi_{*,*}(\1^\comp_2)_{c=0} \cong \pi_{*,*} \1^\comp_2/(\rho, \tau).$$
In the category of $\BP_{2*}\textup{BP}$-comodules, we have short exact sequences

\begin{center}
\begin{tikzcd}
0 \arrow[r] & \BP_{2*}/I_n \arrow[r, "v_{n+1}"] & \BP_{2*}/I_n \arrow[r] & \BP_{2*}/I_{n+1} \arrow[r] & 0,
\end{tikzcd}
\end{center}
where $I_n = (2, v_1, \cdots, v_n)$. This implies that we could realize the comodule $\BP_{2*}/I_n$ inductively by the spectrum $\1^\comp_2/(\rho, \tau, 2, v_1, \cdots, v_n)$. These spectra are all $\scr E_\infty$.

In higher Chow degrees, these comodules are more complicated. However, Corollary~\ref{cor:wcellular_heart_ordinary_cellular} tells us they correspond to module spectra over $\1^\comp_2/(\rho, \tau)$. By Proposition~\ref{prop bpgl over R}, these comodules can be filtered such that the subquotients are shifts of BP$_{2*}/I_n$. Then by naturality, we can use the motivic Adams spectral sequences for shifts of the spectra $\1^\comp_2/(\rho, \tau, 2, v_1, \cdots, v_n)$ to study the one for $(\1^\comp_2)_{c=n}$.

For example in Chow degree 8, by Corollary~\ref{cor:wcellular_heart_ordinary_cellular}, the second summand of the comodule, denoted by $M_8$, corresponds to a module spectrum over $\1^\comp_2/(\rho, \tau)$, denoted by $X_8$. Then the short exact sequence of comodules
\begin{center}
\begin{tikzcd}
0 \arrow[r] & \Sigma^{0,-4}(\BP_{2*})^\comp_2 \arrow[r] & M_8 \arrow[r] & \Sigma^{2,-3}\BP_{2*}/2 \arrow[r] & 0
\end{tikzcd}
\end{center}
gives us a cofiber sequence of module spectra over $\1^\comp_2/(\rho, \tau)$
\begin{center}
\begin{tikzcd}
\Sigma^{0,-4}\1^\comp_2/(\rho, \tau) \arrow[r] & X_8 \arrow[r] & \Sigma^{2,-3}\1^\comp_2/(\rho, \tau, 2).
\end{tikzcd}
\end{center}
This cofiber sequence gives us maps of motivic Adams spectral sequences
\begin{center}
\begin{tikzcd}
\mathbf{motASS}(\Sigma^{0,-4}\1^\comp_2/(\rho, \tau)) \arrow[r] & \mathbf{motASS}(X_8) \arrow[r] & \mathbf{motASS}(\Sigma^{2,-3}\1^\comp_2/(\rho, \tau, 2)).
\end{tikzcd}
\end{center}
We anticipate that this would allow us to compute motivic Adams differentials for $X_8$ from the ones for $\Sigma^{0,-4}\1^\comp_2/(\rho, \tau)$ and $\Sigma^{2,-3}\1^\comp_2/(\rho, \tau, 2)$, which are purely algebraic.

%As for the Postnikov--Whitehead tower associated with the Chow $t$-structure, we can identify the level $n$ piece. The first a few are the following.\\

%\begin{itemize}
%\item 	$\widehat{\1}_{c=0} \simeq \widehat{\1}/(\rho, \tau)$.\\
%\item 	$\widehat{\1}_{c=1} \simeq \Sigma^{-1,-1}\widehat{\1}/(\rho, \tau, 2)$.\\
%\item 	$\widehat{\1}_{c=2} \simeq \Sigma^{-2,-2}\widehat{\1}/(\rho, \tau, 2)$.\\
%\item 	$\widehat{\1}_{c=3} \simeq \Sigma^{-3,-3}\widehat{\1}/(\rho, \tau, 2, v_1)$.\\
%\item 	$\widehat{\1}_{c=4} \simeq \Sigma^{-4,-4}\widehat{\1}/(\rho, \tau, 2, v_1) \vee \Sigma^{0,-2}\widehat{\1}/(\rho, \tau)$.\\
%\item 	$\widehat{\1}_{c=5} \simeq \Sigma^{-5,-5}\widehat{\1}/(\rho, \tau, 2, v_1)$.\\
%\item 	$\widehat{\1}_{c=6} \simeq \Sigma^{-6,-6}\widehat{\1}/(\rho, \tau, 2, v_1)$.\\
%\item 	$\widehat{\1}_{c=7} \simeq \Sigma^{-7,-7}\widehat{\1}/(\rho, \tau, 2, v_1, v_2)$.\\
%\item 	$\widehat{\1}_{c=8} \simeq \Sigma^{-8,-8}\widehat{\1}/(\rho, \tau, 2, v_1, v_2) \vee X$, where $X$ is the cofiber of the map $(v_0, v_1)$:
%$$\Sigma^{-2,-3}\widehat{\1}/(\rho, \tau) \longrightarrow \Sigma^{-2,-3}\widehat{\1}/(\rho, \tau) \vee \Sigma^{0,-4}\widehat{\1}/(\rho, \tau)$$
%\end{itemize}

This gives us a tower of motivic Adams spectral sequences. %associated to the Postnikov--Whitehead towner coming from the Chow $t$-structure.

\begin{center}
\begin{tikzcd}
& \arrow[d] &\\
& \mathbf{motASS}((\1^\comp_2)_{c \geq 3}) \arrow[d] \arrow[r] & \mathbf{motASS}((\1^\comp_2)_{c=3}) \arrow[r, equal] & \shortstack{$\mathbf{algNSS}$\\$(\Sigma^{-3,-3}\textup{BP}_{2*}/(2,v_1))$}   \\
& \mathbf{motASS}((\1^\comp_2)_{c \geq 2}) \arrow[d] \arrow[r] & \mathbf{motASS}((\1^\comp_2)_{c=2}) \arrow[r, equal] & \shortstack{$\mathbf{algNSS}$\\$(\Sigma^{-2,-2}\textup{BP}_{2*}/2)$}   \\
& \mathbf{motASS}((\1^\comp_2)_{c \geq 1}) \arrow[d] \arrow[r] & \mathbf{motASS}((\1^\comp_2)_{c=1}) \arrow[r, equal] & \shortstack{$\mathbf{algNSS}$\\$(\Sigma^{-1,-1}\textup{BP}_{2*}/2)$}   \\
\mathbf{motASS}(\1^\comp_2) \arrow[r, equal] & \mathbf{motASS}(\1^\comp_2) \arrow[r] & \mathbf{motASS}((\1^\comp_2)_{c=0}) \arrow[r, equal] & \mathbf{algNSS}((\BP_{2*})^\comp_2)
\end{tikzcd}
\end{center}
Comparing to the case over $\mathbb{C}$, it is clear that over $\mathbb{R}$ different rows contain different algebraic information of motivic Adams differentials. We anticipate that this allows us to obtain motivic Adams differentials for $\1^\comp_2$ from the algebraic Novikov spectral sequences for various $\BP_{2*}\textup{BP}$-comodules.

\subsection{Over finite fields} \label{finite fields}

Next, we discuss the cases of finite fields $\mathbb{F}_q$, where $q$ is a power of an odd prime. We are going to concentrate on the computations at the prime 2.

Recall from \cite{Hoyois2017} and Theorem~8.2 of \cite{Kylling} (see also \cite{WilsonOst} and Appendix C) that we have 

\begin{itemize}
\item  $\pi_{{*,*}}\textup{H}\Z/2 = 
\begin{cases} 
\Z/2[\tau, u]/u^2,  \ Sq^1\tau=0 &\text{if } q \equiv 1 \ \textup{mod} \ 4, \\ 
\Z/2[\tau, \rho]/\rho^2, \ Sq^1\tau=\rho &\text{if } q \equiv 3 \ \textup{mod} \ 4. 
\end{cases} $
$$\text{where} \ |\tau|=(0,-1), \ |u|= |\rho|=(-1,-1).$$
\item $\pi_{{*,*}}\textup{H}\mathbb{Z}^\comp_2 = 
\begin{cases} \mathbb{Z}^\comp_2 &\text{if } ({*,*}) = (0,0), \\ (\mathbb{Z}/(q^w-1))^\comp_2 &\text{if } ({*,*}) = (-1,-w) \ \text{and} \ w\geq 1,\\
0 &\text{otherwise}.
 \end{cases} $

\item $\pi_{{*,*}}\BPGL^\comp_2= \pi_{{*,*}}\textup{H}\mathbb{Z}^\comp_2 \otimes \BP_{2*}$. 	
\end{itemize}
We can read off the Chow degree $n$ part of $\pi_{{*,*}}\textup{BPGL}^\comp_2$. In particular, it is concentrated in Chow degrees 0 and $2w-1$ for $w \geq 1$. In positive Chow degrees, we have
\begin{align*}
\BPGL_{*,*} (\1^\comp_2)_{c=2w} & = 0, \\
\BPGL_{*,*} (\1^\comp_2)_{c=2w-1} & = \Sigma^{-1,-w}\BP_{2*}/2^{\nu_2(q^w-1)},
\end{align*}
where $\nu_2(-)$ is the 2-adic valuation function. 

\begin{remark}
We remark that in positive chow degrees, most of these $\BP_{2*}$-modules do not have Levin index 1. For example in Chow degree 1, it is $\Sigma^{-1,-1}\BP_{2*}/2^{\nu_2(q-1)}$. In particular, when $q \equiv 5 \ \textup{mod} \ 8$, we have $\Sigma^{-1,-1}\BP_{2*}/4$, and when $q \equiv 1 \ \textup{mod} \ 8$, we have $\Sigma^{-1,-1}\BP_{2*}/8\cdot2^m$ for $m \geq 0$. Both of them do not have Levin index 1. In fact, one can show that $\BP_{2*}/2^{\nu_2(q^w-1)}$ has Levin index 1 if and only if $\nu_2(q^w-1) =1$.
\end{remark}

For every spectrum $(\1^\comp_2)_{c=n}$, by a change-of-ring isomorphism and the description of $\BPGL_{*,*} (\1^\comp_2)_{c=n}$, the universal coefficient spectral sequence collapses for degree reasons (see Propositions~7.7 and 7.10 of \cite{dugger2005motivic} and step $(4)$ of our strategy in \S\ref{strategy}). 
$$\textup{Tor}_{*,*}^{(\BP_{2*})^\comp_2}(\BPGL_{*,*} (\1^\comp_2)_{c=n}, \ \Z/2) \implies \textup{H}\Z/2_{*,*} (\1^\comp_2)_{c=n}.$$
We have that for $w \geq 1$,
\begin{align*}
\textup{H}\Z/2_{*,*} (\1^\comp_2)_{c=2w} & = 0, \\
\textup{H}\Z/2_{*,*} (\1^\comp_2)_{c=2w-1} & = \begin{cases} 
\Z/2\{\tau^w\} \oplus  \Z/2\{u \tau^{w-1}\}  &\text{if } q \equiv 1 \ \textup{mod} \ 4, \\ 
\Z/2\{\tau^w\} \oplus  \Z/2\{\rho \tau^{w-1}\}  &\text{if } q \equiv 3 \ \textup{mod} \ 4. 
\end{cases}
\end{align*}
Here $\tau^w, \ u \tau^{w-1}, \ \rho \tau^{w-1}$ denote the generators of the groups $\Z/2$, in bidegrees $(0,-w), \ (-1,-w), \ (-1,-w)$. Note that 
$$\textup{H}\Z/2_{*,*} (\1^\comp_2)_{c=0} = \textup{H}\Z/2\{1\},$$
generated by 1 in bidegree $(0,0)$. By Corollary~\ref{cor:wcellular_heart_ordinary_cellular}, these spectra $(\1^\comp_2)_{c=n}$ are cellular modules over $(\1^\comp_2)_{c=0}$. Therefore, we learn that for $w \geq 1$,
\begin{align*}
\pi_{*,*} (\1^\comp_2)_{c=2w} & = 0, \\
\pi_{*,*} (\1^\comp_2)_{c=2w-1} & = \pi_{*,*} \Sigma^{-1,-w}(\1^\comp_2)_{c=0}/2^{\nu_2(q^w-1)}.
\end{align*}
In particular, the spectrum $(\1^\comp_2)_{c=2w-1}$ has the same homotopy groups as 
a 2-cell complex over $(\1^\comp_2)_{c=0}$, with cells in bidegrees $(-1, -w)$ and $(0, -w)$, and attaching map $2^{\nu_2(q^w-1)}$, or equivalently $q^w-1$.

We emphasis that the spectrum $(\1^\comp_2)_{c=0}$ is very well understood independent of the base field (whose characteristics is not 2, which includes these finite fields $\F_q$). By Corollaries~\ref{cor:wcellular_heart_ordinary_cellular} and \ref{algNSS BP}, the motivic Adams spectral sequence of $(\1^\comp_2)_{c=0}$ is isomorphic to the algebraic Novikov spectral sequence for $\BP_{2*}$, which is again independent of the base field. Over $\C$, this spectral sequence is isomorphic to the motivic Adams spectral sequence of $\1^\comp_2/\tau$, and is well understood in a large range of dimensions \cite{IWX, IWX2}. For the spectrum $(\1^\comp_2)_{c=0}/2^{\nu_2(q^w-1)}$, its motivic Adams spectral sequence is isomorphic to the algebraic Novikov spectral sequence for $\BP_{2*}/2^{\nu_2(q^w-1)}$, which can also be understood through known computations over $\C$.

As an application of the Postnikov--Whitehead tower of the Chow $t$-structure, we compute all Adams differentials on powers of $\tau$, reproving Corollary~7.12 of \cite{WilsonOst}. By Leibniz's rule, we only need to compute all nonzero differentials on classes of the form $\tau^{2^n}$.

\begin{proposition} \label{diff tau power}
Let $m(n) = \nu_2(q^{2^{n}}-1)$, where $\nu_2(-)$ is the 2-adic valuation function.
\begin{enumerate}
\item When $q \equiv 1 \ \textup{mod} \ 4$, all nonzero Adams differentials on the classes $\tau^{2^n}$ are 
$$d_{m(n)}(\tau^{2^n}) = h_0^{m(n)} u \tau^{2^n-1}, \ \text{for} \ n \geq 0.$$

\item When $q \equiv 3 \ \textup{mod} \ 4$, all nonzero Adams differentials on the classes $\tau^{2^n}$ are 
$$d_{m(n)}(\tau^{2^n}) = h_0^{m(n)} \rho \tau^{2^n-1}, \ \text{for} \ n \geq 1.$$
\end{enumerate}
\end{proposition}

\begin{proof}
Suppose that $q \equiv 1 \ \textup{mod} \ 4$. For degree reasons, the only nonzero Adams differential on $\tau^{2^n}$ has the form
$$d_{m}(\tau^{2^n}) = h_0^{m} u \tau^{2^n-1},$$
for some integer $m$, depending on $n$.

Let $w = 2^{n}$. From the above discussion, we have
$$\textup{H}\Z/2_{*,*} (\1^\comp_2)_{c=2^{n+1}-1} = \Z/2\{\tau^{2^n}\} \oplus  \Z/2\{u \tau^{2^n-1}\}.$$
So the differentials at interest are present in the motivic Adams spectral sequence for $(\1^\comp_2)_{c=2^{n+1}-1}$, and the ones for $\1^\comp_2$ follows from naturality of the motivic Adams spectral sequences and the following zigzag in the Postnikov--Whitehead tower.

\begin{center}
\begin{tikzcd}
\shortstack{ $\mathbf{motASS}$\\$((\1^\comp_2)_{c \geq 2^{n+1}-1})$ } \arrow[d] \arrow[r] &
 \shortstack{ $\mathbf{motASS}$\\$((\1^\comp_2)_{c=2^{n+1}-1})$ } \arrow[r, equal] & \shortstack{$\mathbf{algNSS}$\\$(\BPGL_{*,*} (\1^\comp_2)_{c=2^{n+1}-1})$}   \\
 \mathbf{motASS}(\1^\comp_2)  &  & 
\end{tikzcd}
\end{center}
We have
$$\BPGL_{*,*} (\1^\comp_2)_{c=2^{n+1}-1} = \Sigma^{-1,-2^n}\BP_{2*}/2^{\nu_2(q^{2^n}-1)}.$$
Therefore, in the algebraic Novikov spectral sequence, we have 
$$d_{m(n)}(\tau^{2^n}) = h_0^{m(n)} u \tau^{2^n-1},$$
where $m(n) = \nu_2(q^{2^{n}-1})$. This completes the proof for the cases $q \equiv 1 \ \textup{mod} \ 4$.

For the cases $q \equiv 3 \ \textup{mod} \ 4$, note that $\tau^2$ is an indecomposable element on the motivic Adams $E_2$-page. The rest of the proof works in the exact same way.
\end{proof}

In \cite{WilsonOst}, Wilson--\O stv\ae r computed $\pi_{n, 0} {\1}^\comp_2$ for $0 \leq n \leq 20$ when $q \equiv 1 \ \textup{mod} \ 4$ and for $0 \leq n \leq 18$ when $q \equiv 3 \ \textup{mod} \ 4$. The difference of ranges is due to the computation of an Adams $d_2$-differential from the 20-stem to the 19-stem in the cases $q \equiv 1 \ \textup{mod} \ 4$, while the answer for an Adams $d_2$-differential is unknown in the cases $q \equiv 3 \ \textup{mod} \ 4$. This $d_2$-differential is also the hardest one in the computation up to the 20-stem.
%Moreover, the answers of these Adams $d_2$ differentials on the class $\tau g$ are different in the cases $q \equiv 1 \ \textup{mod} \ 4$: they are zero when $q \equiv 1 \ \textup{mod} \ 8$ and nonzero when $q \equiv 5 \ \textup{mod} \ 8$. 
In fact, Wilson--\O stv\ae r proved

\begin{proposition} [Proposition~7.16 and Remark~7.19 in \cite{WilsonOst}] \label{prop:WO diff}

Over $\mathbb{F}_q$, where $q$ is a power of an odd prime, in the motivic Adams spectral sequence for ${\1}^\comp_2$, we have
\begin{enumerate}
\item $d_2(\tau g) = u h_0^2g$, when $q \equiv 5 \ \textup{mod} \ 8$,
\item $d_2(\tau g) = 0$, when $q \equiv 1 \ \textup{mod} \ 8$,
\item $d_2(\tau^2 g) = \rho\tau h_0^2g$ or $0$, when $q \equiv 3 \ \textup{mod} \ 4$.
\end{enumerate}
\end{proposition}

As an application of Proposition~\ref{diff tau power} and the Chow $t$-structure method, we give much simpler proofs of the Adams $d_2$-differentials in the cases $(1)$ and $(2)$ in Proposition~\ref{prop:WO diff}. We also compute the unknown Adams $d_2$-differential in the cases $q \equiv 3 \ \textup{mod} \ 4$ in Proposition~\ref{prop:WO diff}. The intuitions of our proofs are explained in Remarks~\ref{remark 1} and \ref{remark 2}.

\begin{proof}[Proof of Proposition~\ref{prop:WO diff} (1) and (2)]
In both cases $q \equiv 1, \ 5 \ \textup{mod} \ 8$, as discussed in \cite{WilsonOst}, for degree reasons, the only possibilities are
$$d_2(\tau g) = u h_0^2g \ \text{or} \ 0.$$
On the motivic Adams $E_2$-page, we have
\begin{align*}
\tau g \cdot d_0 & = \tau e_0^2 \neq 0,\\
u h_0^2g \cdot d_0 & = u h_0^2 e_0^2 \neq 0.
\end{align*}
See \cite[Proposition~7.1]{WilsonOst} for the multiplicative structure of the Adams $E_2$-page.

By Proposition~\ref{diff tau power}~(1) for the case $n=0$, we have
$$d_2(\tau) = \begin{cases} 
0  &\text{if } q \equiv 1 \ \textup{mod} \ 8, \\ 
u h_0^2  &\text{if } q \equiv 5 \ \textup{mod} \ 8. 
\end{cases}
$$
By Leibniz's rule, we have $d_2(e_0^2) = 0$. Thus,
$$d_2(\tau \cdot e_0^2) = d_2(\tau) \cdot e_0^2 = \begin{cases} 
0  &\text{if } q \equiv 1 \ \textup{mod} \ 8, \\ 
u h_0^2 e_0^2  &\text{if } q \equiv 5 \ \textup{mod} \ 8. 
\end{cases}
$$
The element $d_0$ is a permanent cycle for degree reasons. So we have $d_2(\tau g \cdot d_0) = d_2(\tau g) \cdot d_0$, and
$$d_2(\tau g) = \begin{cases} 
0  &\text{if } q \equiv 1 \ \textup{mod} \ 8, \\ 
u h_0^2 g  &\text{if } q \equiv 5 \ \textup{mod} \ 8. 
\end{cases}
$$

\end{proof}

\begin{remark} \label{remark 1}
There are two reasons that direct proofs of Proposition~\ref{prop:WO diff} (1) and (2) are harder to come up with. 
\begin{itemize}
\item The element $\tau g$ is indecomposable on the motivic Adams $E_2$-page.
\item On the $E_2$-pages, the element $\tau g$ is detected by $(\1^\comp_2)_{c=0}$, while its potential target $u h_0^2g$ is detected by $(\1^\comp_2)_{c=1}$. They live in different Chow degrees.
\end{itemize}
After multiplication by $d_0$, both $\tau g \cdot d_0 = \tau e_0^2$ and $u h_0^2g \cdot d_0 = u h_0^2 e_0^2$ are detected by $(\1^\comp_2)_{c=1}$ on the motivic Adams $E_2$-pages, so this $d_2$-differential can be obtained through the algebraic Novikov spectral sequence, something purely algebraic.
\end{remark}

\begin{proposition}
In the motivic Adams spectral sequence for ${\1}^\comp_2$ over $\mathbb{F}_q$, , where $q$ is a power of an odd prime, and $q \equiv 3 \ \textup{mod} \ 4$, we have
$$d_2(\tau^2 g) = 0.$$
\end{proposition}

\begin{proof}
For degree reasons, the only possibilities are
$$d_2(\tau^2 g) = \rho\tau h_0^2g \ \text{or} \     0.$$
On the motivic Adams $E_2$-page, we have
\begin{align*}
\tau^2 g \cdot d_0 & = \tau^2 e_0^2 \neq 0,\\
\rho\tau h_0^2g \cdot d_0 & = \rho\tau h_0^2 e_0^2 \neq 0.
\end{align*}
See \cite[Proposition~7.1]{WilsonOst} for the multiplicative structure of the Adams $E_2$-page.

By Proposition~\ref{diff tau power}~(2) for the case $n=1$, we have
$$d_2(\tau^2) = 0.$$
(In fact, we have $d_3(\tau^2) = \rho\tau h_0^3$ when $q \equiv 3 \ \textup{mod} \ 8$, and $\tau^2$ supports nonzero $d_4$ or higher when $q \equiv 7 \ \textup{mod} \ 8$.)

By Leibniz's rule, we have
$$d_2(\tau^2 g) \cdot  d_0 = d_2(\tau^2 g \cdot d_0) = d_2 (\tau^2 \cdot e_0^2) = d_2 (\tau^2) \cdot e_0^2 = 0.$$
The element $d_0$ is a permanent cycle for degree reasons. Since $\rho\tau h_0^2g \cdot d_0 \neq 0$, we have
$$d_2(\tau^2 g) = 0.$$
\end{proof}

\begin{remark} \label{remark 2}
Similarly to Remark~\ref{remark 1}, on the $E_2$-pages, the element $\tau^2 g$ is detected by $(\1^\comp_2)_{c=1}$, while its potential target $\rho\tau h_0^2g$ is detected by $(\1^\comp_2)_{c=3}$. They live in different Chow degrees. After multiplication by $d_0$, they are both detected by $(\1^\comp_2)_{c=3}$, and the $d_2$-differential can be obtained through the algebraic Novikov spectral sequence.
\end{remark}

\begin{remark}
We comment that by Leibniz's rule, $d_2(\tau^2 g) = 0$ in all cases for $q$.
\end{remark}

\begin{appendices}
\section{A cell structure for $\MGL$} \label{app:cells}
We record the fact that the Thom spectrum $\MGL$ admits a very well-behaved cell structure.
We suspect that this is well-known, but do not know a reference.

Write $\SH(S)^{\pure\tate\ge d}$ for the subcategory generated under filtered colimits, extensions and sums by $\Th_S(\scr O^n) \simeq S^{2n,n}$ with $n \ge d$.
%Here by closure under extension we mean the following: if $\scr X, \scr Z \in \Spc(S)_*^{\pure\tate\ge d}$ and there is a cofibration sequence $\scr X \to \scr Y \to \scr Z$, then also $\scr Y \in \Spc(S)_*^{\pure\tate\ge d}$.
The result we need is as follows.
\begin{theorem} \label{thm:mgl-cells}
We have $\MGL \in \SH(S)^{\pure\tate\ge 0}$.
The same is true for the motivic Thom spectra $\mathrm{MSL}, \mathrm{MSp}$.
\end{theorem}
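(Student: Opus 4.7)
The plan is to exhibit $\MGL$ as a filtered colimit of objects built under extensions from Tate spheres $\Th_S(\scr O^n) \wequi S^{2n,n}$ with $n \ge 0$.

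My starting point is the expression $\MGL \wequi \colim_n \Sigma^{-2n,-n} \Th_S(\gamma_n)$, where $\gamma_n$ is the tautological rank $n$ bundle on $BGL_n \wequi \colim_N Gr(n,n+N)$ (with compatible universal bundles on the Grassmannians). Since $\SH(S)^{\pure\tate\ge 0}$ is closed under filtered colimits by definition, and shifting by $\Sigma^{-2n,-n}$ turns $\SH(S)^{\pure\tate\ge n}$ into $\SH(S)^{\pure\tate\ge 0}$, it suffices to prove that for all $n, N \ge 0$,
\[ \Th_S(\gamma_n|_{Gr(n,n+N)}) \in \SH(S)^{\pure\tate\ge n}. \]

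For this, I would use the Schubert cell decomposition of $Gr := Gr(n,n+N)$: the cells $C_\lambda \cong \A^{|\lambda|}$ are indexed by partitions $\lambda$ fitting in an $n \times N$ box. For each $d \ge 0$ let $V_d \subset Gr$ be the union of Schubert cells of dimension $\ge d$. By standard combinatorics of the Bruhat order, $V_d$ is open in $Gr$, $V_{d+1}$ is open in $V_d$, and the closed complement $V_d \setminus V_{d+1}$ is a disjoint union of copies of $\A^d$, smooth closed in $V_d$ of codimension $c_d := \dim Gr - d = nN - d \ge 0$.

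The key step is to combine motivic purity for the smooth closed pair $(V_d \setminus V_{d+1}) \hookrightarrow V_d$ with the projection formula (using $\otimes$-invertibility of Thom spectra in $\SH(V_d)$) to obtain, for each $d$, a cofiber sequence in $\SH(S)$
\[ \Th_S(\gamma_n|_{V_{d+1}}) \to \Th_S(\gamma_n|_{V_d}) \to \Th_S\bigl((\gamma_n \oplus N_{V_d\setminus V_{d+1}/V_d})|_{V_d\setminus V_{d+1}}\bigr). \]
The bundle on the right has rank $n+c_d$ on a finite disjoint union of affine spaces $\A^d$. Since every vector bundle on $\A^d$ is $\A^1$-equivalent to a trivial bundle, this cofiber identifies with a finite wedge of copies of $S^{2(n+c_d),n+c_d}$, which lies in $\SH(S)^{\pure\tate\ge n}$ since $c_d \ge 0$. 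A descending induction starting at $V_{nN+1} = \emptyset$ (with Thom spectrum $\ast$) and ending at $V_0 = Gr$ then shows that $\Th_S(\gamma_n|_{Gr})$ is built by extensions from Tate spheres in $\SH(S)^{\pure\tate\ge n}$, and so lies there.

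The hardest part will be the cofiber identification above: correctly deriving the purity cofiber sequence for Thom spectra along a smooth closed pair sitting inside a (possibly non-proper) smooth $S$-scheme. This amounts to applying $f_{V_d,\#}$ to the localization sequence $j_!j^\ast \to \id \to i_\ast i^\ast$ in $\SH(V_d)$, using the projection formula against the invertible object $\Th_{V_d}(\gamma_n|_{V_d})$, and invoking motivic purity in the form $f_{V_d,\#} i_\ast \wequi f_{Z,\#}(\ph \wedge \Th(N_{Z/V_d}))$ for the closed immersion $i: Z = V_d\setminus V_{d+1} \hookrightarrow V_d$ of smooth $S$-schemes.

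For $\mathrm{MSL}$ and $\mathrm{MSp}$, the same strategy applies with $BGL_n$ replaced by $BSL_n$ and $BSp_{2n}$, respectively. Both of these classifying spaces admit filtrations by open subvarieties whose successive strata are disjoint unions of affine spaces: for $\mathrm{MSp}$ one uses the Schubert cell decomposition of Lagrangian Grassmannians, while for $\mathrm{MSL}$ one can either exploit the $\Gm$-torsor $BSL_n \to BGL_n$ given by the determinant (reducing to the $BGL_n$ case) or use the Schubert decomposition of $\mathrm{SL}_n$-Grassmannians directly. In each case the cofiber analysis is identical: the bundles restrict trivially to affine cells, producing cofibers that are wedges of Tate spheres of the correct Chow degree.
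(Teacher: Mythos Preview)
Your approach is essentially the paper's: both use the Schubert/Bruhat cell decomposition of the relevant Grassmannians together with a purity cofiber sequence for Thom spectra (the paper records this separately as a lemma, $\Th(V|_U) \to \Th(V) \to \Th(V|_Z \oplus N_{Z/X})$, proved by a direct computation rather than via six functors, but the content is the same). The overall architecture---filtered colimit of $\Sigma^{-2n,-n}\Th(\gamma_n)$, then descending induction on cells---matches.

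There is one genuine gap. You assert that ``every vector bundle on $\A^d$ is $\A^1$-equivalent to a trivial bundle'' and use this to identify the cofibers with wedges of Tate spheres. Over an arbitrary base $S$ this is false: a bundle on $\A^d_S$ is $\A^1$-equivalent to its restriction along the zero section $S \hookrightarrow \A^d_S$, which need not be trivial if $S$ itself carries nontrivial bundles, and then the Thom spectrum is not a Tate sphere. The paper handles this by first reducing to $S = \mathrm{Spec}(\Z)$ (using that $\MGL$, $\mathrm{MSL}$, $\mathrm{MSp}$ are stable under base change) and then invoking Lindel's theorem (the Bass--Quillen conjecture over $\Z$) to conclude that vector bundles on $\A^d_\Z$ are genuinely trivial. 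You could equally fix your argument by observing that the specific bundles you need---the restrictions of the tautological bundle and of the normal bundles of the Schubert strata---are all pulled back from $\A^d_\Z$, hence trivial there by Quillen--Suslin/Lindel, and remain trivial after base change to $S$.
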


We need a few preparations for the proof.
\begin{lemma} \label{lemm:thom-cof}
Let $X \in \Sm_S$, $U \subset X$ open with smooth complement $Z \subset X$.
Let $V$ be a vector bundle on $X$.
Then there is a cofibration sequence \[ \Th(V|_U) \to \Th(V) \to \Th(V|_Z \oplus N_{Z/X}) \in \Spc(S)_*. \]
\end{lemma}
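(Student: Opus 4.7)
The plan is to apply the Morel--Voevodsky homotopy purity theorem twice: once implicitly in the definition of Thom spaces as quotients, and once to identify the cofiber of the map $\Th(V|_U) \to \Th(V)$ as a Thom space associated to a suitable closed immersion into $V$.

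First I would set up notation. Write $0_X \colon X \hookrightarrow V$ for the zero section and $0_U, 0_Z$ for its restrictions to $U$ and $Z$. By definition of the Thom space one has $\Th(V|_U) = V|_U/(V|_U \setminus 0_U(U))$ and $\Th(V) = V/(V \setminus 0_X(X))$, and the open immersion $V|_U \hookrightarrow V$ induces the map in the statement. Given any commutative square of pointed motivic spaces with $A \hookrightarrow A'$, $Y \hookrightarrow Y'$, $A \hookrightarrow Y$, $A' \hookrightarrow Y'$ and $A = A' \cap Y$, the cofiber of the induced map $Y/A \to Y'/A'$ is $Y'/(A' \cup_A Y)$. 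In our situation $A' \cup_A Y$ is the pushout of the Zariski opens $V \setminus 0_X(X)$ and $V|_U$ over their intersection $V|_U \setminus 0_U(U)$; since $U$ and $Z$ partition $X$, this pushout is the open subscheme $V \setminus 0_Z(Z) \subset V$. Hence the cofiber of $\Th(V|_U) \to \Th(V)$ is $V/(V \setminus 0_Z(Z))$.

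Next I would apply Morel--Voevodsky homotopy purity to the closed immersion $0_Z \colon Z \hookrightarrow V$ of smooth $S$-schemes ($Z$ is smooth by hypothesis) to identify $V/(V \setminus 0_Z(Z)) \simeq \Th(N_{0_Z/V})$. To compute the normal bundle I would factor $0_Z$ through the closed immersion $V|_Z \hookrightarrow V$, giving the short exact sequence
\[ 0 \to N_{Z/V|_Z} \to N_{0_Z/V} \to N_{V|_Z/V}|_Z \to 0. \]
The normal bundle of the zero section of a vector bundle is the bundle itself, so $N_{Z/V|_Z} \cong V|_Z$; and the normal bundle of the pullback of a regular immersion along a smooth map is the pullback of the normal bundle, so $N_{V|_Z/V}|_Z \cong N_{Z/X}$. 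The sequence splits canonically via the linear structure on the fibres of $V$, giving $N_{0_Z/V} \cong V|_Z \oplus N_{Z/X}$ and hence the desired cofiber sequence.

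The argument is essentially routine, and I expect the only delicate step to be the identification $V|_U \cup (V \setminus 0_X(X)) = V \setminus 0_Z(Z)$ required to express the cofiber of $\Th(V|_U) \to \Th(V)$ as a quotient to which purity can be applied.
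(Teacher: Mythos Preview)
Your proof is correct and follows essentially the same route as the paper: compute the cofiber as $V/(V \setminus 0_Z(Z))$, apply homotopy purity, and then identify $N_{Z/V}$. The only difference is in the last step: you factor $Z \hookrightarrow V|_Z \hookrightarrow V$ and argue that the resulting normal bundle sequence splits, whereas the paper factors $Z \hookrightarrow X \hookrightarrow V$ and, rather than claiming a splitting, cites Hoyois's result that $\Th$ of an extension agrees with $\Th$ of the direct sum. Your splitting claim is in fact justified (since $T_V|_X \cong T_X \oplus V$ canonically along the zero section, restricting to $Z$ gives $N_{Z/V} \cong N_{Z/X} \oplus V|_Z$ directly), so your version is slightly more self-contained, while the paper's citation would cover the case of a non-split extension as well.
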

\begin{proof}
We have \[ \Th(V)/\Th(V|_U) = \frac{V/V \setminus 0}{V|_U / V|_U \setminus 0} \wequi \frac{V}{V \setminus 0 \cup V|_U} = V/V \setminus Z, \] and that latter space is equivalent to $\Th(N_{Z/V})$ by purity.
The short exact sequence (see e.g. \cite[Tag 0690]{stacks-project}) \[ 0 \to N_{Z/X} \to N_{Z/V} \to N_{X/V} \wequi V \to 0 \] implies the desired result by \cite[(3.26)]{hoyois-equivariant}.\NB{stably this is clear since the Thom spectrum functor splits exact sequences (e.g. because it factors through $K$-theory), but unstably this is not so obvious. (we only need the stable statement anyway...)}
\end{proof}

Denote by $\Spc(S)_*^{\pure\tate\ge d}$ the smallest full subcategory of $\Spc(S)_*$ which (1) contains $*$ and $S^{2n,n}$ for $n \ge d$, (2) is closed under filtered colimits, and (3) if $A \to B \to C$ is a \emph{cofiber} sequence with $A, C \in \Spc(S)_*^{\pure\tate\ge d}$ then $B \in \Spc(S)_*^{\pure\tate\ge d}$.
\begin{theorem}[Wendt] \label{thm:wendt-cells}
Let $S$ be the spectrum of a field, $G$ a split reductive group, $X = G/P$ a homogeneous space and $V$ a vector bundle on $X$ of rank $d$.
Then $\Th(V) \in \Spc(S)_*^{\pure\tate\ge d}$.
\end{theorem}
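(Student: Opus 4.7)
The plan is to exploit the Bruhat/Schubert decomposition of $X = G/P$ into affine cells. Concretely, I would start from the decomposition $X = \coprod_w C_w$ into Schubert cells $C_w \cong \mathbb{A}^{\ell(w)}$, using the standard fact that $\overline{C_w} = \coprod_{w' \le w} C_{w'}$ in the Bruhat order. Choosing a linear extension $w_1, \ldots, w_N$ of that order, each set $Y_k := C_{w_1} \cup \cdots \cup C_{w_k}$ is saturated downwards and hence closed. Setting $V_k := X \setminus Y_{N-k}$ yields an ascending filtration by open subschemes $\emptyset = V_0 \subset V_1 \subset \cdots \subset V_N = X$ whose successive difference $Z_k := V_k \setminus V_{k-1} \cong \mathbb{A}^{\ell_k}$ is a single Schubert cell, smooth and closed inside the smooth open $V_k$, of some codimension $c_k \ge 0$.

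Next I would induct on $k$, using $\Th(V|_{V_0}) = \ast$ as the base case. For the inductive step, apply Lemma~\ref{lemm:thom-cof} to the pair $V_{k-1} \hookrightarrow V_k$ with smooth closed complement $Z_k$, producing a cofiber sequence in $\Spc(S)_*$:
\[ \Th(V|_{V_{k-1}}) \to \Th(V|_{V_k}) \to \Th\bigl(V|_{Z_k} \oplus N_{Z_k/V_k}\bigr). \]
The right-hand term is the Thom space of a rank-$(d+c_k)$ vector bundle on $\mathbb{A}^{\ell_k}_S$. Because $S$ is the spectrum of a field, Quillen--Suslin trivializes that bundle, and $\mathbb{A}^1$-homotopy invariance of the Thom space functor identifies this cofactor with $\Th_S(\mathcal{O}^{d+c_k})$, which is (up to suspension/shift) one of the defining generators of $\Spc(S)_*^{\pure\tate\ge d}$ since $d + c_k \ge d$. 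Closure of $\Spc(S)_*^{\pure\tate\ge d}$ under extensions (cofiber sequences) then forces $\Th(V|_{V_k}) \in \Spc(S)_*^{\pure\tate\ge d}$, and at $k=N$ we obtain the desired conclusion for $\Th(V)$.

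The main obstacles I expect are essentially bookkeeping. First, one must verify that the linear-extension trick really gives a stratification whose successive closed strata are individual Schubert cells (smooth, isomorphic to affine space), so that Lemma~\ref{lemm:thom-cof} genuinely applies at each step. Second, one needs \emph{unstable} $\mathbb{A}^1$-invariance of the Thom space construction, i.e.\ for any vector bundle $W$ on $\mathbb{A}^n_S$ the zero-section inclusion induces an equivalence $\Th(W|_0) \to \Th(W)$ in $\Spc(S)_*$, so that combining this with Quillen--Suslin yields $\Th(V|_{Z_k} \oplus N_{Z_k/V_k}) \simeq \Th_S(\mathcal{O}^{d+c_k})$ in pointed motivic spaces (and not merely in $\SH(S)$). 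Both points are standard but should be stated explicitly. With these in hand the induction runs mechanically, and nothing in the argument sees the specific structure of $G$ or $V$ beyond its rank, so the same proof handles $\mathrm{MSL}$ and $\mathrm{MSp}$.
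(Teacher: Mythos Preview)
Your proposal is correct and follows essentially the same approach as the paper's proof: Bruhat decomposition, then inductive application of Lemma~\ref{lemm:thom-cof} along the resulting cell filtration, using Quillen--Suslin over a field to trivialize the bundles on the affine cells. The only cosmetic difference is that the paper filters by all Schubert cells of a given length at once (so each stratum is a finite disjoint union of affine spaces) rather than refining to a linear extension with one cell per step; this changes nothing in substance.
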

\begin{proof}
We use the ideas of \cite[Proposition 2.2]{wendt2010more}.
The Bruhat decomposition provides a filtration \[ Z_0 = \emptyset \subset Z_1 \subset \dots \subset Z_n = X \] by closed subschemes, such that $Z_i \setminus Z_{i-1}$ is a finite disjoint union of affine spaces (see \cite[Proof of Proposition 3.7]{wendt2010more}).
We obtain via Lemma \ref{lemm:thom-cof} cofibration sequences \[ \Th(V|_{X \setminus Z_i}) \to \Th(V|_{X \setminus Z_{i-1}}) \to \Th(N_{(Z_{i} \setminus Z_{i-1})/(X\setminus Z_{i-1})} \oplus V|_{Z_i \setminus Z_{i-1}}). \]
For $i = n$ we have $X \setminus Z_i = \emptyset$ and hence the left hand Thom space
is contractible; by induction on $i$ it thus suffices to show that if $E$ is a vector bundle on $Z_i \setminus Z_{i-1}$ then $\Th(E) \in \Spc(S)_*^{\pure\tate\ge d}$.
But $Z_i \setminus Z_{i-1}$ is a disjoint union of affine spaces, so $N \oplus V|_{Z_i \setminus Z_{i-1}}$ is trivial by Quillen--Suslin \cite{quillen1976projective}, and the result follows.
\end{proof}
\begin{remark} \label{rmk:wendt-cells}
The assumption that $S$ is the spectrum of a field was used twice in the above proof: (1) to know that there is a Bruhat decomposition of $G/P$ into affine space cells, and (2) to know that vector bundles on affine spaces are trivial.
For usual, symplectic and special linear Grassmannians, (1) holds over $\Z$ (and hence any base).\NB{reference?}
(2) will hold over any base $S$ such that the 
 Bass--Quillen conjecture holds (i.e. any vector bundle on $\A^n_S$ is extended from $S$) and such that all vector bundles on $S$ are trivial, e.g. $S=Spec(\Z)$ (by Lindel's theorem, see e.g. \cite[Theorem 5.2.1]{asok2015affine-I}).
In fact, in order to establish Theorem \ref{thm:mgl-cells} we only need the stable analog.
Since Thom spectra of vector bundles only depend on their classes in homotopy $K$-theory \cite[Remark 16.11]{bachmann-norms}, this holds over any base.
\end{remark}

\begin{proof}[Proof of Theorem \ref{thm:mgl-cells}]
Since the spectra $\MGL, \mathrm{MSL}, \mathrm{MSp}$ are stable under base change, we may assume that $S=Spec(\Z)$.
Each of these spectra is obtained as \[ \colim_i \Th(V_i \ominus \scr O^{rk(V_i)}), \] for suitable Grassmannians (usual, special linear or symplectic) $X_i$ and vector bundles $V_i$ on them.
We have \begin{align*} \Th(V_i \ominus \scr O^{rk(V_i)}) &\wequi \Sigma^\infty \Th(V_i) \wedge S^{-2rk(V_i), -rk(V_i)} \\ & \in \Sigma^\infty \Spc(S)_*^{\pure\tate\ge rk(V_i)} \wedge S^{-2rk(V_i), -rk(V_i)} \\ & \subset \SH(S)^{\pure\tate\ge 0}; \end{align*} here we have used Remark \ref{rmk:wendt-cells} for the first containment.
This concludes the proof.
\end{proof}

\section{A vanishing result for algebraic cobordism} \label{app:vanishing}
We are confident that the following result is well-known, but we could not locate a reference.

\begin{theorem} \label{thm:MGL-vanishing}
Let $X$ be essentially smooth over a semi-local PID and let $S \subset \N$ denote the set of positive residue characteristics of $X$.
Then for $i>0$ we have\NB{Can we get this without inverting $S$? Over more general bases? Remove smoothness by expressing in terms of Borel-Moore homology?} \[ \MGL^{2*+i,*}(X)[1/S] = 0. \]
\end{theorem}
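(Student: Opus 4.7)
The plan is to reduce the vanishing for $\MGL[1/S]$-cohomology to the corresponding vanishing for motivic cohomology $H\Z[1/S]$-cohomology, and then to invoke the interpretation of motivic cohomology of essentially smooth schemes over semi-local PIDs in terms of Bloch's higher Chow groups, which vanish in negative simplicial degree.

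First I would use the Hopkins--Morel isomorphism (Hoyois \cite{hoyois-algebraic-cobordism} over fields, extended by Spitzweck to semi-local PIDs) to obtain the equivalence $\MGL/(a_1, a_2, \dots)[1/S] \simeq \textup{H}\Z[1/S]$, where $a_i \in \pi_{2i,i}\MGL$ are the Quillen generators. Combining this with the cellular presentation of $\MGL$ from Theorem \ref{thm:mgl-cells}, one obtains a tower on $\MGL[1/S]$ whose associated graded pieces are wedges of shifted copies $\Sigma^{2q, q} \textup{H}\Z[1/S]$, with multiplicity controlled by a polynomial basis of $\MU_{2q}$. The associated spectral sequence takes the form
\[ E_1^{p,w,q} = H^{p-2q,\, w-q}(X;\Z)[1/S] \otimes_{\Z} \pi_{2q}(\MU) \Longrightarrow \MGL^{p,w}(X)[1/S], \]
where $q$ ranges over $q \geq 0$ since $\pi_{2q}(\MU) = 0$ for $q<0$.

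Next I would specialize to $(p,w) = (2n+i, n)$ with $i > 0$ and verify that every term of the $E_1$-page vanishes. Setting $m = n-q$, a nonzero contribution would require $H^{2m+i, m}(X; \Z)[1/S] \neq 0$. Invoking the identification of motivic cohomology of essentially smooth schemes over a semi-local PID with higher Chow groups (Bloch, Levine, and extended by Spitzweck to mixed characteristic), one has $H^{2m+i, m}(X;\Z) \cong CH^m(X, -i)$, which vanishes since higher Chow groups are defined only in nonnegative simplicial degree and $-i < 0$. Hence every entry of $E_1$ that could contribute vanishes, and so does the abutment.

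The main obstacle is convergence of this spectral sequence. One needs to ensure that in the fixed target bidegree $(2n+i, n)$, only finitely many $q$ contribute and that the tower is suitably complete. Theorem \ref{thm:mgl-cells} gives $\MGL \in \SH(S)^{\pure\tate\ge 0}$, which implies that $\MGL$ is bounded below in the homotopy $t$-structure; combined with the fact that $\Sigma^{2q,q}\textup{H}\Z$ becomes increasingly connective as $q$ grows, standard bounded-below convergence arguments apply. Over a semi-local PID (rather than a field) this is the genuinely technical point, but it can be handled either by a direct check using the explicit $a_i$-tower after inverting $S$, or by reducing to the residue fields and generic points via a continuity/base-change argument and applying the field case from \cite{hoyois-algebraic-cobordism}.
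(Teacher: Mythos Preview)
Your proposal is correct and follows essentially the same route as the paper: both reduce to motivic cohomology via the Hopkins--Morel/slice spectral sequence and then invoke the identification with higher Chow groups, which vanish in negative simplicial degree. The paper simply cites the strongly convergent spectral sequence directly (Hoyois \cite[(8.14)]{hoyois-algebraic-cobordism} over fields, Spitzweck \cite[Theorem 11.3]{spitzweck2012commutative} over Dedekind domains) rather than rebuilding it, so your appeal to Theorem \ref{thm:mgl-cells} is a red herring---the tower with $\Sigma^{2q,q}H\Z[1/S]$ layers is the slice filtration of $\MGL[1/S]$, which follows from Hopkins--Morel alone, and its strong convergence is handled in those references.
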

\begin{proof}
The slice tower for $\MGL[1/S]$ of the form \[ \MGL[1/S] = f_0(\MGL)[1/S] \leftarrow f_1(\MGL)[1/S] \leftarrow \dots \] has increasing connectivity and subquotients given by $\Sigma^{2*,*}\textup{H}\Z[1/S] \otimes L_{2*}$ (where $L_{2t}$ is the degree $2t$ part of the Lazard ring).
This follows by combining \cite[proof of Theorem 4.7]{spitzweck2010relations}, \cite[Theorem 11.3]{spitzweck2012commutative} and \cite[Proposition 3.7]{schmidt2018stable}.
Induction up to tower thus reduces to proving the same result for motivic cohomology.
By construction \[ H^{p,q}(X, \Z) = \mathrm{CH}^q(X, 2q-p) \] and hence it is enough to show that $\mathrm{CH}^*(X, -i) = 0$ for $i>0$.
This follows from the fact that $\mathrm{CH}^q(X, -i) = H_{-i}(z^q(X))$, where $z^q(X)$ is a complex concentrated in non-negative degrees \cite[middle of p.3]{levine2001techniques} (this is where we use that the base is semi-local).
%There is a strongly convergent spectral sequence \[ H^{p+2t,q+t}(X,\Z[1/S]) \otimes L_{2t} \Rightarrow \MGL^{p,q}(X)[1/S], \]
%where $L_{2t}$ is the degree $2t$ part of the Lazard ring.
%This is proved for $X$ essentially smooth over a field in \cite[(8.14)]{hoyois-algebraic-cobordism}; the generalization to Dedekind domains follows from \cite[Theorem 11.3]{spitzweck2012commutative} by similar arguments.
\end{proof}

\section{Motivic cohomology}
The motivic cohomology of a point with finite coefficients can be computed as a consequence of the Beilinson--Lichtenbaum and Bloch--Kato conjectures.
These were established by Voevodsky, Rost and others \cite{voevodsky2003motivic,voevodsky2011motivic}.
For a textbook treatment, see \cite{haesemeyer2019norm}.
We collect here some of their results.

\begin{theorem} \label{thm:BL-BK}
Let $k$ be a field of exponential characteristic $e$ and $n > 0$ coprime to $e$.
\begin{enumerate}
\item The cycle class map induces an equivalence $L_\et \Z/n(d) \wequi L_\et \mu_n^{\otimes d}$, as complexes of sheaves on $\Sm_k$.
\item The induced map $\Z/n(d) \to L_\et \mu_n^{\otimes d}$ induces an equivalence $\Z/n(d) \wequi \tau_{\ge -d} L_\et \mu_n^{\otimes d}$, the truncation as complexes of Nisnevich sheaves on $\Sm_k$.
\item There is a canonical isomorphism $H^m_\et(k, \mu_n^{\otimes m}) \wequi K_m^M(k)/n$.
\end{enumerate}
\end{theorem}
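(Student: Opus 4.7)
The plan is to organize these three statements around the Bloch--Kato/Beilinson--Lichtenbaum results, presenting (3) as the central input and deriving the others from it. In fact all three statements are essentially compilations of well-known results, so the ``proof'' is really a matter of identifying the right references and assembling them coherently.

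First I would state (3) as the Bloch--Kato norm residue isomorphism theorem for fields: the Galois symbol map $K_m^M(k)/n \to H^m_\et(k,\mu_n^{\otimes m})$ is an isomorphism. This is the deepest input; it was conjectured by Bloch and Kato, proved for $n = 2$ by Voevodsky and in general by Voevodsky with contributions of Rost and others. I would simply cite \cite{voevodsky2003motivic, voevodsky2011motivic} and the textbook treatment \cite{haesemeyer2019norm}.

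Next, for (2), I would explain that Beilinson--Lichtenbaum is known to be equivalent to Bloch--Kato (a theorem of Suslin--Voevodsky, see also Geisser--Levine). More precisely, the comparison map $\Z/n(d) \to \tau_{\ge -d} L_\et \mu_n^{\otimes d}$ in the derived category of Nisnevich sheaves on $\Sm_k$ becomes an isomorphism precisely because Bloch--Kato holds for every finitely generated extension of $k$ in degrees $\le d$: one reduces to checking the map on stalks, i.e.\ to henselian local rings of smooth $k$-schemes, and then the Gersten-type description of motivic cohomology together with Bloch--Kato forces the top motivic cohomology group to match the étale cohomology group in the relevant degree.

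For (1), I would argue that after étale sheafification the right-hand side $L_\et \mu_n^{\otimes d}$ is already an étale sheaf, and by (2) the fiber of $\Z/n(d) \to L_\et \mu_n^{\otimes d}$ is concentrated in cohomological degrees $> d$, hence is killed by étale sheafification (the stalks live on henselian local rings of smooth schemes, which have étale cohomological dimension controlled by the transcendence degree, and by Suslin rigidity the higher terms vanish). Alternatively one invokes the Suslin rigidity theorem directly: étale sheafification of $\Z/n(d)$ is computed by $\mu_n^{\otimes d}$, so $L_\et \Z/n(d) \wequi L_\et \mu_n^{\otimes d}$.

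The main obstacle is of course (3), which is the Bloch--Kato theorem and whose proof is far beyond the scope of this appendix; the right move is to cite it rather than to attempt any sketch. The subsidiary obstacle is the packaging of (1) and (2): one needs to be a little careful about which topology one sheafifies in, and about the truncation in (2), but once Bloch--Kato is accepted these are formal consequences in the literature.
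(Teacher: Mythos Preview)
Your proposal is correct in substance---all three items are indeed packaged citations---but you organize the dependencies in the opposite direction from the paper. You take (3), the Bloch--Kato isomorphism, as the deep input and derive (2) via the Suslin--Voevodsky/Geisser--Levine equivalence, then (1) from (2) or from rigidity. The paper instead cites (1) and (2) directly (the latter as the Beilinson--Lichtenbaum statement, from \cite{haesemeyer2019norm} for $n$ prime, then extending by d\'evissage since $\Z/n$ is an iterated extension of $\Z/\ell$'s), and \emph{derives} (3) from (2): once $\Z/n(m) \simeq \tau_{\ge -m} L_\et \mu_n^{\otimes m}$, evaluating at $Spec(k)$ in top degree gives $H^m_\et(k,\mu_n^{\otimes m}) \simeq H^{m,m}(k,\Z/n)$, and the right-hand side is $K_m^M(k)/n$ by the elementary Nesterenko--Suslin/Totaro identification of motivic cohomology in bidegree $(m,m)$.

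Both routes are legitimate since Bloch--Kato and Beilinson--Lichtenbaum are known to be equivalent; the paper's route is marginally cleaner because it avoids invoking that equivalence as a separate result and reduces (3) to a statement about motivic cohomology that predates the whole Bloch--Kato saga. One small point: your first argument for (1) from (2), via ``cohomological dimension controlled by transcendence degree,'' is not quite the right justification. What actually makes it work is that $L_\et$ is left inverse to the pushforward $\epsilon_*$ from \'etale to Nisnevich sheaves, so applying $L_\et$ to $\tau_{\ge -d} R\epsilon_* \mu_n^{\otimes d}$ recovers $\mu_n^{\otimes d}$; your alternative via Suslin rigidity (which is how the paper's cited reference proceeds) is the cleaner path.
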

\begin{proof}
(1) is well-known, see e.g. \cite[Theorem 10.3]{lecture-notes-mot-cohom}.
(2) in the case $n=\ell$ is \cite[Theorem C]{haesemeyer2019norm}.
The general case follows from this since $\Z/n$ is a sum of iterated extensions of $\Z/\ell$ (for various $\ell$).
(3) By (2) we have $H^m_\et(k, \mu_n^{\otimes m}) \wequi H^{m,m}(k, \Z/m)$.
This is the same as $H^{m,m}(k, \Z)/n$ by \cite[Theorem 3.6]{lecture-notes-mot-cohom}, which is $K_m^M(k)/n$ by \cite[Theorem 5.1]{lecture-notes-mot-cohom}.
\end{proof}

\begin{corollary}
Assumptions as above.
\begin{enumerate}
\item We have \[ \pi_{p,q}(\textup{H}\Z/n) \wequi H^{-p}_\et(k, \mu_n^{\otimes -q}) \text{ for $0 \ge p \ge q$, and $\wequi 0$ else}. \]
\item Suppose that $k$ contains a primitive $n$-th root of unity.
  Then \[ \pi_{*,*}(\textup{H}\Z/n) \wequi H^*_\et(k, \Z/n)[\tau] \wequi K_*^M(k)/n[\tau]. \]
  Here $H^*_\et(k, \Z/n) \wequi K_*^M(k)/n$ is placed in bidegree $(-*,-*)$, and $|\tau| = (0, -1)$.
\end{enumerate}
\end{corollary}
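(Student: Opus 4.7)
The plan is to extract both statements directly from Theorem~\ref{thm:BL-BK} by translating between homotopy-theoretic and cohomological conventions for the motivic Eilenberg--MacLane spectrum.

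For part (1), first unpack the definition of the bigraded homotopy of $H\Z/n$ evaluated on $Spec(k)$ in terms of motivic cohomology: $\pi_i(H\Z/n)_j \wequi H^{j-i, j}(k, \Z/n) = H^{j-i}(k, \Z/n(j))$. The vanishing claim for $j < 0$ is then immediate, since the motivic complex $\Z/n(j)$ is null in negative weights. For $j \ge 0$, Theorem~\ref{thm:BL-BK}(2) supplies a quasi-isomorphism $\Z/n(j) \wequi \tau_{\ge -j} L_\et \mu_n^{\otimes j}$ of complexes of Nisnevich sheaves on $\Sm_k$. Evaluating on $Spec(k)$, the Nisnevich hypercohomology of $L_\et \mu_n^{\otimes j}$ computes Galois (i.e.\ étale) cohomology, and the truncation $\tau_{\ge -j}$ is invisible in the Beilinson--Lichtenbaum range $j-i \le j$ (equivalently $i \ge 0$), recovering exactly $H^{j-i}_\et(k, \mu_n^{\otimes j})$.

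For part (2), a choice of primitive $n$-th root of unity $\zeta \in k$ furnishes a canonical trivialization $\mu_n \wequi \ul{\Z/n}$ of constant étale sheaves on $Spec(k)$, inducing $\mu_n^{\otimes j} \wequi \Z/n$ for every $j$ and hence the first equivalence. The second equivalence is exactly Theorem~\ref{thm:BL-BK}(3), applied in each weight. To upgrade these weight-by-weight identifications to the polynomial ring structure, let $\tau \in \pi_{0,-1}(H\Z/n) = H^{0,1}(k, \Z/n) = \mu_n(k)$ correspond to $\zeta$. Multiplication by $\tau$ realizes the weight-raising maps $H^p_\et(k, \mu_n^{\otimes q}) \xrightarrow{\cup \zeta} H^p_\et(k, \mu_n^{\otimes q+1})$, which are isomorphisms because $\zeta$ trivializes $\mu_n$; combined with graded multiplicativity of $\pi_{**}(H\Z/n)$, this assembles the bigraded ring into the claimed polynomial algebra $K_*^M(k)/n[\tau]$, with Milnor $K$-theory placed in bidegrees $(-*,-*)$ by Theorem~\ref{thm:BL-BK}(3) and $|\tau| = (0,-1)$.

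The main obstacle is bookkeeping: one must settle the indexing convention relating $\pi_i(\ph)_*$ to $H^{p,q}$, and verify that the Nisnevich truncation $\tau_{\ge -*}$ in Theorem~\ref{thm:BL-BK}(2) exactly accounts for the discrepancy between motivic and étale cohomology in the range of interest. Once these conventions are fixed, both parts reduce to immediate applications of Theorem~\ref{thm:BL-BK}, with no genuine calculation beyond what is already packaged in the Beilinson--Lichtenbaum and Bloch--Kato statements.
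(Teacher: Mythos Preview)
Your proof is correct and follows essentially the same approach as the paper's: both reduce everything to Theorem~\ref{thm:BL-BK}, with the trivialization $\mu_n \wequi \Z/n$ coming from the choice of primitive root. The paper's proof is terser (it simply says ``direct consequence''), while you spell out the indexing and the role of the truncation $\tau_{\ge -*}$; your identification of $\tau$ with $\zeta \in \mu_n(k)$ and the paper's identification of $\tau$ with $1 \in \Z/n$ agree under the chosen isomorphism.
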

\begin{proof}
In (2) we have $\mu_n^{\otimes r} \wequi \Z/n$ for all $r$.
The element $\tau$ corresponds to \[ 1 \in H^0_\et(k, \mu_n^{\otimes 1}) \wequi H^0_\et(k, \Z/n) \wequi \Z/n. \]
Both (1) and (2) are now a direct consequences of Theorem \ref{thm:BL-BK}.
\end{proof}
\end{appendices}

\bibliographystyle{alpha}
\bibliography{bibliography}

\end{document}